\documentclass[10pt]{amsart}

\pagestyle{headings}

\usepackage{amsmath,amssymb,amsthm,amsfonts,mathrsfs}
\usepackage{amscd,stmaryrd}
\usepackage[usenames,dvipsnames]{color}
\usepackage{hyperref}
\usepackage{graphicx,subfigure}
\definecolor{nicegreen}{RGB}{0,180,0}
\hypersetup{colorlinks=true,citecolor=nicegreen,linkcolor=Red,urlcolor=blue,pdfstartview=FitH,
pdfauthor=Koziol}
\usepackage[divide={2.45cm,*,2.45cm}]{geometry}
\usepackage{enumerate}
\usepackage{color}
\usepackage[all]{xy}
\usepackage{tikz-cd}
\usepackage{wasysym}

\allowdisplaybreaks

\newtheorem{thm}{Theorem}[section]
\newtheorem*{thm*}{Theorem}
\newtheorem{cor}[thm]{Corollary}
\newtheorem{lemma}[thm]{Lemma}

\newtheorem{propn}[thm]{Proposition}
\newtheorem*{propn*}{Proposition}

\theoremstyle{definition}
\newtheorem{defn}[thm]{Definition}
\newtheorem{rmk}[thm]{Remark}

\newcommand{\qp}{\mathbb{Q}_p}

\newcommand{\qpb}{\overline{\mathbb{Q}}_p}
\newcommand{\fpb}{\overline{\mathbb{F}}_p}

\newcommand{\zp}{\mathbb{Z}_p}
\newcommand{\T}{\textnormal{T}}
\newcommand{\salpha}{\widehat{s_{\fat{\alpha}}}}
\newcommand{\bo}{\bar{\omega}}
\newcommand{\boldl}{\boldsymbol{\ell}}
\newcommand{\hgt}{\textnormal{ht}}

\newcommand{\soc}{\textnormal{soc}}
\newcommand{\Hom}{\textnormal{Hom}}
\newcommand{\End}{\textnormal{End}}
\newcommand{\ind}{\textnormal{Ind}}
\newcommand{\cind}{\textnormal{c-ind}}
\newcommand{\Ext}{\textnormal{Ext}}

\newcommand{\aff}{\textnormal{aff}}
\newcommand{\fat}[1]{\pmb{\boldsymbol{#1}}}
\newcommand{\fil}{\textnormal{Fil}}
\newcommand{\gr}{\textnormal{Gr}}

\newcommand*{\longhookrightarrow}{\ensuremath{\lhook\joinrel\relbar\joinrel\rightarrow}}
\newcommand*{\longtwoheadrightarrow}{\ensuremath{\relbar\joinrel\twoheadrightarrow}}
\newcommand{\sub}[2]{\genfrac{}{}{0pt}{}{#1}{#2}}

\hyphenation{rep-re-sent-a-tion}
\hyphenation{rep-re-sent-a-tions}

\newcommand{\cA}{{\mathcal{A}}}

\newcommand{\cC}{{\mathcal{C}}}

\newcommand{\cH}{{\mathcal{H}}}

\newcommand{\cP}{{\mathcal{P}}}

\newcommand{\cW}{{\mathcal{W}}}
\newcommand{\cX}{{\mathcal{X}}}

\newcommand{\bB}{{\mathbf{B}}}

\newcommand{\bG}{{\mathbf{G}}}
\newcommand{\bH}{{\mathbf{H}}}

\newcommand{\bL}{{\mathbf{L}}}
\newcommand{\bM}{{\mathbf{M}}}
\newcommand{\bN}{{\mathbf{N}}}

\newcommand{\bP}{{\mathbf{P}}}

\newcommand{\bS}{{\mathbf{S}}}
\newcommand{\bT}{{\mathbf{T}}}
\newcommand{\bU}{{\mathbf{U}}}

\newcommand{\bX}{{\mathbf{X}}}

\newcommand{\bZ}{{\mathbf{Z}}}

\newcommand{\bbF}{{\mathbb{F}}}

\newcommand{\bbR}{{\mathbb{R}}}

\newcommand{\bbZ}{{\mathbb{Z}}}

\newcommand{\tW}{\widetilde{W}}

\newcommand{\fS}{{\mathfrak{S}}}

\newcommand{\ff}{{\mathfrak{f}}}

\newcommand{\fm}{{\mathfrak{m}}}
\newcommand{\fn}{{\mathfrak{n}}}
\newcommand{\fo}{{\mathfrak{o}}}
\newcommand{\fp}{{\mathfrak{p}}}

\newcommand{\fs}{{\mathfrak{s}}}

\newcommand{\fv}{{\mathfrak{v}}}

\begin{document}
\nocite{}

\title[First pro-$p$-Iwahori cohomology of principal series]{The first pro-$p$-Iwahori cohomology of mod-$p$ principal series for $p$-adic $\textnormal{GL}_n$}
\date{}
\author{Karol Kozio\l}
\address{Department of Mathematics, University of Toronto, 40 St. George Street, Toronto, ON M5S 2E4, Canada} \email{karol@math.toronto.edu}

\subjclass[2010]{20C08 (primary), 20J06, 22E50 (secondary)}

\begin{abstract}
Let $p\geq 3$ be a prime number and $F$ a $p$-adic field.  Let $I_1$ denote the pro-$p$-Iwahori subgroup of $\textnormal{GL}_n(F)$, and $\mathcal{H}$ the pro-$p$-Iwahori--Hecke algebra of $\textnormal{GL}_n(F)$ with respect to $I_1$ (over a coefficient field of characteristic $p$).  We compute the structure of $\textnormal{H}^1(I_1,\pi)$ as an $\mathcal{H}$-module, where $\pi$ is a mod-$p$ principal series representation of $\textnormal{GL}_n(F)$.  We also give some partial results about the structure of $\textnormal{H}^1(I_1,\pi)$ for a general split reductive group with irreducible root system.  
\end{abstract}

\maketitle

\section{Introduction}

There has been a great deal of recent activity surrounding the mod-$p$ representation theory of $p$-adic reductive groups, mainly due to its applications in the mod-$p$ and $p$-adic Local Langlands programs (see \cite{breuil:icm} and the references therein).  For a general connected reductive group $G$, work of Abe--Henniart--Herzig--Vign\'eras \cite{ahhv} gives a classification of the smooth irreducible admissible mod-$p$ representations of $G$ in terms of the so-called supersingular representations.  However, these supersingular representations have only been classified for the group $\textnormal{GL}_2(\qp)$ (and other small examples), and therefore our understanding of the category of smooth $G$-representations remains woefully incomplete.

One way of addressing this shortcoming is by passing to the derived category.  Let $I_1$ denote a fixed choice of pro-$p$-Iwahori subgroup of $G$, which we assume to be torsion-free.  In this setting, Schneider has shown that the (unbounded) derived category of smooth mod-$p$ $G$-representations is equivalent to the (unbounded) derived category of differential graded $\cH^\bullet$-modules, where $\cH^\bullet$ is the derived Hecke algebra of $G$ with respect to $I_1$ (see \cite{schneider:dga}).  The equivalence is given by taking a complex $\pi^\bullet$ of smooth $G$-representations to the complex $\textnormal{R}\Gamma(I_1,\pi^\bullet)$, which has a natural action of $\cH^\bullet$.  For some speculations on this derived equivalence and its relation to the mod-$p$ Local Langlands program, see \cite{harris:specs}.

In order to shed some light on the derived equivalence above, we specialize the situation.  Assume $p\geq 3$, let $F$ denote a finite extension of $\qp$, and set $G := \textnormal{GL}_n(F)$.  We let $C$ denote an algebraically closed field of characteristic $p$ (which will serve as our coefficient field), and let $\cH := \End_G(\cind_{I_1}^G(C))$ denote the \emph{pro-$p$-Iwahori--Hecke algebra} over $C$.  Furthermore, we take the complex $\pi^\bullet$ to be a single principal series representation $\ind_B^G(\chi)$ concentrated in degree 0, where $B$ is the upper-triangular Borel subgroup of $G$ and $\chi$ is a smooth $C^\times$-valued character of $B$.

Computing the derived invariants of $\ind_B^G(\chi)$ is still quite difficult, so we consider instead the associated cohomology module.  The space $\bigoplus_{i\geq 0} \textnormal{H}^i(I_1,\ind_B^G(\chi))$ then becomes a graded module over the cohomology algebra $\bigoplus_{i\geq 0}\textnormal{H}^i(\cH^\bullet)$.  In particular, the degree 0 piece $\textnormal{H}^0(\cH^\bullet)$, which naturally identifies with $\cH$, acts on $\bigoplus_{i\geq 0} \textnormal{H}^i(I_1,\ind_B^G(\chi))$.  Our goal in this article will be to focus on the degree 1 piece of the cohomology module, and compute the structure of $\textnormal{H}^1(I_1,\ind_B^G(\chi))$ as an $\cH$-module.  This generalizes work of Breuil--Pa\v{s}k\={u}nas in the $n = 2$ case (\cite[Chapter 7]{breuilpaskunas}).  We hope that this calculation will be useful in understanding the nature of Schneider's derived equivalence.

We now describe the contents of this article in more detail.  After recalling the necessary notation in Section \ref{notation}, we discuss Hecke algebras in Section \ref{heckesect}.  Throughout the course of the article we will make use of the functor $\ind_{\cH_M}^{\cH}$ of parabolic induction for Hecke modules, where $M$ is a standard Levi subgroup of $G$, so we collect the relevant results.  In particular, we recall recent work of Abe (\cite{abe:inductions}) on the left and right adjoint functors of $\ind_{\cH_M}^{\cH}$, as well as results of Ollivier--Vign\'eras (\cite{olliviervigneras}) relating the functors $\ind_{\cH_M}^{\cH}$ and $\ind_P^G$ (where $P$ is the standard parabolic subgroup with Levi component $M$).  Specifically, if $\sigma$ is a smooth $M$-representation over $C$, we have
$$\ind_P^G(\sigma)^{I_1} \cong \ind_{\cH_M}^{\cH}(\sigma^{I_1\cap M})$$
as $\cH$-modules.  In the final subsection, we recall the explicit action of $\cH$ on $I_1$-cohomology in terms of cocycles; this will be our main computational tool.  In the following Section \ref{calcofxi}, we calculate the functions $\xi_x$ which are the input for the Hecke action on cohomology.

In Section \ref{prinseries} we begin to investigate the $\cH$-module $\textnormal{H}^1(I_1,\ind_B^G(\chi))$.  By applying the Mackey formula and Shapiro's lemma, we obtain an isomorphism of vector spaces
$$\textnormal{H}^1\big(I_1,\ind_B^G(\chi)\big) \cong \bigoplus_{w\in W_0}\textnormal{H}^1(I_1\cap w^{-1}Bw,C) = \bigoplus_{w\in W_0}\Hom^{\textnormal{cts}}(I_1\cap w^{-1}Bw,C),$$
where $W_0$ denotes the Weyl group of $G$.  Since the right-hand side is easier to handle than the cohomology classes on the left-hand side, we use the isomorphism above to transfer the Hecke action.  The description of the action of $\cH$ on these tuples of homomorphisms is the content of Lemmas \ref{mainlemma} and \ref{Tomega}.  Unfortunately, this action is still cumbersome to deal with, and we introduce in Subsection \ref{hgtfilt} an $\cH$-stable filtration $\fil_\bullet$ on $\textnormal{H}^1(I_1,\ind_B^G(\chi))$, indexed by the height of a positive root, in order to simplify the formulas.  Given this filtration, we are able to calculate the $\cH$-module structure of the first two graded pieces $\gr_0$ and $\gr_1$ ``by hand.''  Namely, Proposition \ref{gr0} gives
$$\gr_0 \cong \ind_{\cH_T}^{\cH}(\chi)^{\oplus n[F:\qp]},$$
where $T$ is the diagonal maximal torus of $G$, and Proposition \ref{gr1} gives
$$\gr_1 \cong \bigoplus_{\beta\in \Pi}\bigoplus_{r = 0}^{f - 1}\ind_{\cH_{M_{\beta}}}^{\cH}(\fn_{F,\beta,r}),$$
where $\Pi$ is the set of simple roots determined by $B$, where $f$ denotes the degree of the residue field of $F$ over $\bbF_p$, and where $\fn_{F,\beta,r}$ is a certain $\cH_{M_\beta}$-module described in Subsection \ref{gr1sect}.  Furthermore, Lemma \ref{splitfilt} gives a necessary and sufficient condition for the splitting of the first piece of the filtration: the short exact sequence of $\cH$-modules
$$0\longrightarrow \gr_0 \longrightarrow \fil_1 \longrightarrow \gr_1 \longrightarrow 0$$
is nonsplit if and only if $F = \qp$ and $\chi = \chi^{s_\alpha}\overline{\alpha}$ for some simple root $\alpha$ (here $\chi^{s_\alpha}$ denotes the twist of $\chi$ by the simple reflection $s_\alpha$, and $\overline{\alpha}$ is the composition of $\alpha$ with the mod-$p$ cyclotomic character).

A key feature in the calculation of the $\cH$-modules $\gr_0$ and $\gr_1$ is the fact that we can easily write down a basis for each space.  This becomes more complicated for the remaining graded pieces, and we therefore employ the adjoint functors of parabolic induction in order to cimcumvent this difficulty.  This takes up the majority of Section \ref{prinseriesgln}.  The main step is the computation of the $\cH_M$-module structure of the right adjoint $R_{\cH_M}^{\cH}(\gr_{a})$, which is carried out in Proposition \ref{rightadjbeta}.  By adjunction, this is enough to deduce the structure of $\gr_a$ as an $\cH$-module (Corollary \ref{mbetaindcor}). With this in hand, we fully determine when the filtration $\fil_\bullet$ splits, and arrive at our main result.  In the statement of the theorem below, $\Phi^+$ denotes the set of positive roots determined by $B$, and $\Phi_M$ denotes the set of roots attached to a standard Levi subgroup $M$.

\begin{thm*}[Theorem \ref{mainthm}]
Let $\beta\in \Phi^+$, and let $M_\beta$ denote the smallest standard Levi subgroup for which $\beta\in \Phi_{M_\beta}$.  We then have an isomorphism of $\cH$-modules
$$\textnormal{H}^1\big(I_1,\ind_B^G(\chi)\big)~  \cong~  \fil_1 \oplus \bigoplus_{a = 2}^{n - 1}\gr_a~ \cong~  \fil_1 \oplus \bigoplus_{\sub{\beta\in \Phi^+}{\hgt(\beta)\geq 2}}\bigoplus_{r = 0}^{f - 1}\ind_{\cH_{M_\beta}}^{\cH}(\fn_{\beta,r}),$$
where $\fn_{\beta,r}$ is a supersingular $\cH_{M_\beta}$-module, and where the splitting of $\fil_1$ is governed by Lemma \ref{splitfilt}.  In particular, if $F\neq \qp$, we have
$$\textnormal{H}^1\big(I_1,\ind_B^G(\chi)\big)~ \cong~ \ind_{\cH_T}^{\cH}(\chi)^{\oplus n[F:\qp]} \oplus \bigoplus_{\beta\in \Phi^+}\bigoplus_{r = 0}^{f - 1}\ind_{\cH_{M_\beta}}^{\cH}(\fn_{\beta,r}),$$
with $\fn_{\beta,r}$ supersingular.  
\end{thm*}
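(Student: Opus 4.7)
The plan is to analyze the height filtration $\fil_\bullet$ layer by layer, compute each graded piece via the right adjoint of parabolic induction, and then show that the extensions split above $\fil_1$. The bottom layers $\gr_0$ and $\gr_1$, together with the splitting criterion for $\fil_1$, have already been pinned down by the direct computations in Propositions \ref{gr0} and \ref{gr1} and Lemma \ref{splitfilt}; thus the substance of the theorem is the identification of $\gr_a$ for $a \geq 2$ and the splitting of the filtration above $\fil_1$.

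For the identification of $\gr_a$, the approach is to apply the right adjoint functor $R_{\cH_{M_\beta}}^{\cH}$ of parabolic induction to the graded piece. Proposition \ref{rightadjbeta} will give an explicit description of $R_{\cH_{M_\beta}}^{\cH}(\gr_a)$, expressing it as a direct sum of $f$ copies of a supersingular $\cH_{M_\beta}$-module $\fn_{\beta,r}$, one contribution for each positive root $\beta$ of height $a$ whose smallest containing Levi equals $M_\beta$. Adjunction then furnishes canonical maps $\ind_{\cH_{M_\beta}}^{\cH}(\fn_{\beta,r}) \to \gr_a$, and Corollary \ref{mbetaindcor} records that aggregating these maps yields the desired isomorphism
$$\gr_a \;\cong\; \bigoplus_{\sub{\beta \in \Phi^+}{\hgt(\beta) = a}} \bigoplus_{r = 0}^{f - 1}\ind_{\cH_{M_\beta}}^{\cH}(\fn_{\beta,r}).$$

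To split the filtration above $\fil_1$, I would construct a section of each surjection $\fil_{a+1} \twoheadrightarrow \gr_{a+1}$ for $a \geq 1$. Fixing a summand $\ind_{\cH_{M_\beta}}^{\cH}(\fn_{\beta,r})$ of $\gr_{a+1}$, the plan is to apply $R_{\cH_{M_\beta}}^{\cH}$ to the short exact sequence $0 \to \fil_a \to \fil_{a+1} \to \gr_{a+1} \to 0$ and check that the resulting extension of $\fn_{\beta,r}$ by $R_{\cH_{M_\beta}}^{\cH}(\fil_a)$ splits at the Levi level, whence adjunction lifts this splitting back to $\cH$-modules. The main obstacle will be controlling the image of $R_{\cH_{M_\beta}}^{\cH}(\fil_a)$ carefully enough that no $\cH_{M_\beta}$-subquotient isomorphic to $\fn_{\beta,r}$ can occur in it; this should reduce to a combinatorial statement about which supersingular $\cH_{M_\beta}$-modules appear in $\gr_b$ for $b \leq a$, and the vanishing of certain Hecke operators forced by the supersingularity of $\fn_{\beta,r}$ should render the resulting $\Ext^1$ zero.

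Once every $\gr_a$ with $a \geq 2$ has been split off in this way, one obtains the first displayed isomorphism of the theorem. For the second, cleaner statement in the case $F \neq \qp$, one simply notes that the exceptional condition $\chi = \chi^{s_\alpha}\overline{\alpha}$ of Lemma \ref{splitfilt} cannot be satisfied when $[F:\qp] > 1$ (since $\overline{\alpha}$ factors through the cyclotomic character of $\qp$), so $\fil_1$ splits as $\gr_0 \oplus \gr_1$ and the claimed decomposition follows by collating the formulas for the $\gr_a$ over all $a \geq 0$.
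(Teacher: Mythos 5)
Your overall architecture — height filtration, graded pieces identified via the right adjoint and adjunction (Proposition \ref{rightadjbeta}, Corollary \ref{mbetaindcor}), splitting above $\fil_1$, and then invoking Lemma \ref{splitfilt} — matches the paper's. The gap is in the mechanism you propose for splitting the filtration. You want to show, for each summand $\ind_{\cH_{M_\beta}}^{\cH}(\fn_{\beta,r})$ of $\gr_{a+1}$, that the unit $\fn_{\beta,r}\to R_{\cH_{M_\beta}}^{\cH}(\gr_{a+1})$ lifts along $R_{\cH_{M_\beta}}^{\cH}(\fil_{a+1})\twoheadrightarrow R_{\cH_{M_\beta}}^{\cH}(\gr_{a+1})$, for which you'd need $\Ext^1_{\cH_{M_\beta}}(\fn_{\beta,r}, R_{\cH_{M_\beta}}^{\cH}(\fil_a))=0$. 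But your stated justification — that $R_{\cH_{M_\beta}}^{\cH}(\fil_a)$ has no subquotient isomorphic to $\fn_{\beta,r}$, reinforced by a vague appeal to ``vanishing of certain Hecke operators'' — does not yield $\Ext^1 = 0$: two finite-dimensional $\cH_{M_\beta}$-modules with disjoint Jordan--H\"older factors can still have nonzero $\Ext^1$ between them, and you give no argument that actually kills the class. Moreover, pushing this route requires controlling the $\cH_{M_\beta}$-module $R_{\cH_{M_\beta}}^{\cH}(\fil_a)$ for every $a$ and every $\beta$, which the paper never needs to do.

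The paper uses the other adjoint: it shows directly that $\Ext^1_{\cH}(\gr_{b+1},\gr_a)=0$ for $a\leq b$ by passing to the left adjoint $L$. Writing $\gr_{b+1}$ as $\bigoplus \ind_{\cH_{M'}}^{\cH}(\fn')$ with $M'$ a Levi on $b+1$ consecutive simple roots and $\fn'$ supersingular, and $\gr_a$ as $\bigoplus \ind_{\cH_{M}}^{\cH}(\fn)$ with $M$ on $a\le b$ consecutive simple roots, the exactness of $L$ together with the Mackey-type formula of \cite[Cor.~5.8]{abe:inductions} gives
$$\Ext^1_{\cH}\left(\ind_{\cH_{M'}}^{\cH}(\fn'),\ind_{\cH_M}^{\cH}(\fn)\right)\cong \Ext^1_{\cH_M}\left(\ind_{\cH_{M'\cap M}}^{\cH_M}\big(L_{\cH_{M'\cap M}}^{\cH_{M'}}(\fn')\big),\fn\right),$$
and this vanishes because $M'\cap M$ is a \emph{proper} Levi of $M'$ (as $\Pi_{M'}$ is strictly longer than $\Pi_M$) and $\fn'$ is supersingular, so $L_{\cH_{M'\cap M}}^{\cH_{M'}}(\fn')=0$. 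The same calculation with $\gr_0$ and $\gr_1$ in place of $\gr_a$ handles $\Ext^1_{\cH}(\gr_{b+1},\fil_1)$. This is the ingredient your proposal is missing: the Mackey factorization and the characterization of supersingularity via vanishing of $L$ on proper Levis, rather than a ``no common subquotient'' heuristic. (Your final remark on $F\neq\qp$ is correct but for the simpler reason that Lemma \ref{splitfilt} explicitly requires $F=\qp$ as one of the two nonsplitting conditions.)
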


We observe that $\textnormal{H}^1(I_1,\ind_B^G(\chi))$ always contains a supersingular $\cH$-module if $n\geq 3$.  We can also be more explicit about the structure of the supersingular modules $\fn_{\beta,r}$ appearing in the above theorem.

\begin{propn*}[Proposition \ref{rightadjbeta}, Remarks \ref{fil'notsplit} and \ref{gr'irred}]
Let $\beta \in \Phi^+$ and suppose $\hgt(\beta)\geq 2$.  The $\cH_{M_\beta}$-module $\fn_{\beta,r}$ is supersingular of dimension $(\hgt(\beta) + 1)!/\hgt(\beta)$.  More precisely, $\fn_{\beta,r}$ has an $\cH_{M_\beta}$-stable filtration indexed by $\textnormal{stab}_{W_{M_\beta,0}}(\beta)$, where $W_{M_\beta,0}$ denotes the Weyl group of $M_\beta$.  The graded piece indexed by $w\in \textnormal{stab}_{W_{M_\beta,0}}(\beta)$ is an $(\hgt(\beta) + 1)$-dimensional cyclic supersingular module, which is simple for generic $\chi$.  The filtration on $\fn_{\beta,r}$ does not split in general.
\end{propn*}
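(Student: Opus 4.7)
The strategy is to reduce to a computation entirely inside $M_\beta$ via Frobenius reciprocity. Specifically, I would use the right adjoint $R_{\cH_{M_\beta}}^{\cH}$ of $\ind_{\cH_{M_\beta}}^{\cH}$ from Section \ref{heckesect} to extract $\fn_{\beta,r}$ from the graded piece $\gr_a$ with $a = \hgt(\beta)$. Concretely, applying the adjunction to the explicit cocycle description $\gr_a \subseteq \bigoplus_{w \in W_0}\Hom^{\textnormal{cts}}(I_1\cap w^{-1}Bw, C)$ isolates the contribution of Bruhat cells whose representatives lie in $W_{M_\beta,0}$ and whose cocycle component is nonzero only in the direction of the height-$a$ root $\beta$.

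Next I would perform the dimension count. Since $\beta$ is by definition the highest root of $M_\beta \cong \textnormal{GL}_{\hgt(\beta)+1}$ (up to trivial $\textnormal{GL}_1$-factors), the Weyl group $W_{M_\beta,0}\cong S_{\hgt(\beta)+1}$ acts transitively on the $\hgt(\beta)(\hgt(\beta)+1)$ roots of $M_\beta$, forcing $|\textnormal{stab}_{W_{M_\beta,0}}(\beta)| = (\hgt(\beta)-1)!$. Combined with the target graded-piece dimension $\hgt(\beta)+1$, this gives total dimension $(\hgt(\beta)+1)(\hgt(\beta)-1)! = (\hgt(\beta)+1)!/\hgt(\beta)$, as claimed.

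To construct the filtration $\fil'_\bullet$ and establish its graded pieces, I would order $\textnormal{stab}_{W_{M_\beta,0}}(\beta)$ compatibly with the length function, and define $\fil'_w \subseteq \fn_{\beta,r}$ as the span of the cocycles indexed by $w'\leq w$. Hecke stability then follows from Lemma \ref{mainlemma} applied inside $M_\beta$: the operators $T_{s_\gamma}$ for $\gamma$ a simple root of $M_\beta$ act by signed Weyl translates that cannot increase the length of the stabilizer coset. For each graded piece, supersingularity amounts to showing the central Hecke operators $T_\omega$ act by zero; this will follow from the calculation of $\xi_x$ in Section \ref{calcofxi} combined with Lemma \ref{Tomega}, where the central twist vanishes on height-$a$ cocycles once $a \geq 2$. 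Cyclicity follows by exhibiting a single cocycle generator whose $\cH_{M_\beta}$-orbit spans the full $(\hgt(\beta)+1)$-dimensional graded piece, and simplicity for generic $\chi$ is obtained by checking that the resulting cyclic Hecke module has no proper submodule when the character values $\chi(\alpha^\vee(\cdot))$ are Zariski-generic.

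The main obstacle, I expect, is the careful bookkeeping of the Hecke action across the adjunction isomorphism: the cocycle description makes the underlying vector space transparent, but the Hecke action mixes different Bruhat cells and character twists in ways that require precise tracking through the formulas of Lemmas \ref{mainlemma} and \ref{Tomega} and the values of $\xi_x$ from Section \ref{calcofxi}. For the final assertion that the filtration fails to split in general, I would produce an explicit character $\chi$ for which the extension class between two consecutive graded pieces is nonzero, generalizing the mechanism of Lemma \ref{splitfilt}; the expected obstruction arises when $\chi$ satisfies an equation of the form $\chi = \chi^{s_\gamma}\overline{\gamma}$ for some simple root $\gamma$ of $M_\beta$.
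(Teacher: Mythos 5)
Your overall scaffolding matches the paper's: compute $R_{\cH_{M_\beta}}^{\cH}$ of the graded piece (via the right adjoints from Section \ref{heckesect}), read off a basis inside $\cW_\beta'\cap W_{M_\beta,0} = w_{\beta,\circ}\,\textnormal{stab}_{W_{M_\beta,0}}(\beta)\,\overline{\Omega_{\bM_\beta}}$, count dimensions, and filter by $\textnormal{stab}_{W_{M_\beta,0}}(\beta)$. The dimension count is correct. However, two of your key mechanisms are wrong.

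First, the supersingularity criterion. You claim supersingularity ``amounts to showing the central Hecke operators $T_\omega$ act by zero'' and that ``the central twist vanishes on height-$a$ cocycles once $a\geq 2$.'' This cannot be right: for $\omega\in\widetilde{\Omega}$ the operator $\T_\omega$ is invertible in $\cH_{M_\beta}$ (with inverse $\T_{\omega^{-1}}$), so it never acts by zero on a nonzero module. Supersingularity is detected on the restriction to the affine subalgebra $\cH_{M_\beta,\aff}$: one must show the $\cH_{M_\beta,\aff}$-character on each graded piece is not a twist of the trivial or sign character. Concretely, the paper computes the action of the conjugates $\T_{\omega_\beta^b\widehat{s_m}\omega_\beta^{-b}}^{M_\beta}$ of the simple reflection operator on the graded line spanned by $\bar{\bar{\eta}}^{w_{\beta,\circ}w\bo'}$, then argues as in Proposition \ref{ssingprop}: the value on $\T_{\widehat{s_{(-\alpha_0,1)}}}$-type operators rules out the sign twist, and a $\T_{t_0}$-eigenvalue comparison rules out the trivial twist. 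This step cannot be replaced by an assertion about $\T_\omega$ vanishing.

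Second, the nonsplitting mechanism. You expect the obstruction to splitting of $\fil'_\bullet$ to arise when $\chi = \chi^{s_\gamma}\overline{\gamma}$ for some simple $\gamma$ of $M_\beta$, ``generalizing'' Lemma \ref{splitfilt}. But Lemma \ref{splitfilt} governs the splitting of the \emph{outer} filtration $\fil_0\subset\fil_1$ on $\textnormal{H}^1(I_1,\ind_B^G\chi)$, and its mechanism (a nonsplit self-extension of $\ind_{\cH_T}^{\cH}(\chi)$) is different in kind. The nonsplitting of the \emph{inner} filtration $\fil'_\bullet$ on $\fn_{\beta,r}$ in Remark \ref{fil'notsplit} is shown for a \emph{generic} $\chi$ (one with $w(\chi\overline{\beta}^{-1})|_{T_0}\neq\chi\overline{\beta}^{-1}|_{T_0}$ for all $w\neq 1$), not a special one: in the $\textnormal{GL}_4(\qp)$, $\beta=\alpha_{1,4}$ example, the class $\bar{\bar{\eta}}^{s_1s_2}\in\gr'_{s_2}$ is killed by $\T_1$, but any lift to $\fm_{\beta,0}$ picks up the term $d_{\alpha_1,s_1s_2s_1(\beta)}\bar{\eta}^{s_1s_2s_1}\neq 0$, so no $\cH_{M_\beta}$-equivariant splitting exists. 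Your guessed obstruction would likely produce no nonsplitting at all in the generic range where the paper exhibits it.

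Finally, a smaller imprecision: the filtration $\fil'_\bullet$ is proved $\cH_{M_\beta}$-stable not merely by ``signed Weyl translates via Lemma \ref{mainlemma}'' but by a case-by-case check (Lemma \ref{Wfilt}) on the three generator families $\T_{\omega_\beta}^{M_\beta}$, $\T_{\widehat{s_m}}^{M_\beta}$, and $\T_t^{M_\beta}$ for $t\in Z_{M_\beta}T_0$; in particular, the $\T_{\omega_\beta}^{M_\beta}$-action is what cycles through $\overline{\Omega_{\bM_\beta}}$ and produces cyclicity of each graded piece.
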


In fact, we can be even more precise about the action of $\cH_{M_\beta}$ on $\fn_{\beta,r}$; see the proofs of Proposition \ref{rightadjbeta} and Lemma \ref{Wfilt}.

As an application of the above results, we study extensions of $G$-representations.  Suppose we have a short exact sequence of the form 
$$0\longrightarrow \ind_B^G(\chi) \longrightarrow \pi \longrightarrow \tau \longrightarrow 0,$$
where $\tau$ is an irreducible, admissible, supersingular representation of $G$.  Hu has shown that such an extension does \emph{not} necessarily split (contrary to the case of complex coefficients; see \cite{hu:exts}).  Using the calculation of $\textnormal{H}^1(I_1,\ind_B^G(\chi))$, we give in Proposition \ref{neccond} a necessary condition for the short exact sequence above to be nonsplit.  We conclude Subsection \ref{subsectexts} by giving an example of how this result relates to Serre weight conjectures.

We have assumed in this introduction that $G = \textnormal{GL}_n(F)$ and $I_1$ torsion-free for the sake of simplicity.  In fact, all of the results of Section \ref{prinseries} hold for an arbitrary split connected reductive group $G$ with irreducible root system, and all results throughout hold without the hypothesis that $I_1$ is torsion-free.  In particular, we know the $\cH$-module structure of $\gr_0$ and $\gr_1$ in general.  One of the difficulties in extending the arguments of Section \ref{prinseriesgln} from $\textnormal{GL}_n(F)$ to an arbitrary group $G$ comes from understanding the spaces $\Hom^{\textnormal{cts}}(I_1\cap w^{-1}Bw,C)$, or equivalently, the abelianizations $(I_1\cap w^{-1}Bw)^{\textnormal{ab}}$, as $w$ varies over the Weyl group of $G$.  This seems to be a tricky question in general.  However, we have the following result regarding the higher graded pieces.

\begin{propn*}[Proposition \ref{ssingprop}]
Suppose $G$ is an arbitrary split connected reductive group with irreducible root system.  If the root system of $G$ is not of type $A_1$, then the highest graded piece $\gr_{\textnormal{highest}}$ contains a supersingular $\cH$-module.
\end{propn*}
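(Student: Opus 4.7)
The plan is to describe $\gr_{\textnormal{highest}}$ explicitly in the spirit of Proposition \ref{gr1} and to exhibit a supersingular submodule by an adjunction argument. Since $\Phi$ is irreducible and not of type $A_1$, it has rank at least $2$ and contains a unique highest root $\tilde\alpha$ of height $h := \hgt(\tilde\alpha) \geq 2$. A standard fact about irreducible root systems is that $\tilde\alpha$ is supported on every simple root, hence the smallest standard Levi containing $\tilde\alpha$ in its root system is $M_{\tilde\alpha} = G$, and $\gr_{\textnormal{highest}} = \gr_h$ collects precisely the contributions attached to the single root $\tilde\alpha$.

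Using the Mackey/Shapiro isomorphism and the canonical root-subgroup decomposition of $(I_1 \cap w^{-1}Bw)^{\textnormal{ab}}$ (restricted to those $w \in W_0$ with $w^{-1}\tilde\alpha \in \Phi^+$), one writes $\gr_h$ as a direct sum of pieces each isomorphic to $\Hom^{\textnormal{cts}}(U_{\tilde\alpha} \cap I_1, C) \cong C^f$. The action of $\cH$ on these pieces is computable via Lemmas \ref{mainlemma} and \ref{Tomega}, and is relatively tractable because $U_{\tilde\alpha}$ is central in the unipotent radical $U$, as $\tilde\alpha + \gamma$ is never a root for $\gamma \in \Phi^+$. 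Let $\fn \subseteq \gr_h$ be the cyclic $\cH$-submodule generated by the $w = 1$ summand.

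To argue that $\fn$ contains a supersingular simple submodule, I would use Abe's classification of simple $\cH$-modules (\cite{abe:inductions}): every simple $\cH$-module is parabolically induced from a supersingular simple module of a Levi, and the induction datum can be detected by the right adjoint $R_{\cH_M}^{\cH}$. The contributions comprising $\fn$ arise from the root subgroup $U_{\tilde\alpha}$, which is not contained in any proper standard Levi $M$; a direct calculation mirroring the proof of Proposition \ref{rightadjbeta} then shows that $R_{\cH_M}^{\cH}(\fn) = 0$ for every proper $M$. By left exactness of the right adjoint, any simple $\cH$-submodule of $\fn$ likewise has vanishing $R_{\cH_M}^{\cH}$-image and must therefore be supersingular.

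The main obstacle is carrying out the computation of $R_{\cH_M}^{\cH}(\fn)$ rigorously without access to the $\textnormal{GL}_n$-specific machinery of Section \ref{prinseriesgln}; in particular, the explicit description of $(I_1 \cap w^{-1}Bw)^{\textnormal{ab}}$ for arbitrary $w \in W_0$ is not available in the general setting. However, by focusing only on the $\tilde\alpha$-contributions in the top graded piece, the computation should reduce to purely combinatorial root-system data (the structure of $\textnormal{stab}_{W_0}(\tilde\alpha)$ and the centrality of $U_{\tilde\alpha}$ in $U$), making the restricted analogue of Proposition \ref{rightadjbeta} for the single root $\tilde\alpha$ plausible for arbitrary $G$.
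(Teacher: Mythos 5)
Your proposal has a genuine gap that makes the core argument unworkable, and in fact it does not follow the paper's much more direct route.

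The problem begins with your description of the highest graded piece. The index set for $\gr_{\textnormal{highest}}$ is not $\{w \in W_0 : w^{-1}\tilde\alpha \in \Phi^+\}$; it is $\cW_{\tilde\alpha}'$, the set of $w$ for which $I_1 \cap U_{w^{-1}(\tilde\alpha)}$ survives in the abelianization of $I_1 \cap w^{-1}Bw$. For $\hgt(\tilde\alpha) \geq 2$ (i.e.\ $\Phi \neq A_1$) these two sets are genuinely different. In particular $w=1$ is \emph{not} in $\cW_{\tilde\alpha}'$: by the paper's description $(I_1\cap B)^{\textnormal{ab}} = T_1 \oplus \bigoplus_{\alpha\in\Pi}u_\alpha(\fo/\fp)$, the root subgroup $u_{\tilde\alpha}(\fo)$ is killed, so the ``$w=1$ summand'' you want to generate $\fn$ from does not exist. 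Conversely, $w_\circ$ (for which $w_\circ^{-1}\tilde\alpha \in \Phi^-$, failing your criterion) is precisely the element one should use, since $w_\circ \in \cW_\beta'$ for every $\beta \in \Phi^+$. Beyond this, you acknowledge that the computation of $R_{\cH_M}^{\cH}(\fn)$ for general $G$ cannot be carried out with the available tools, so even granting the setup the argument would stall exactly where you flag it.

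The paper's proof avoids all of this. It fixes the single Weyl element $w_\circ$ and the highest root $\alpha_0$, and observes from Lemmas \ref{TsalphaGr} and \ref{TomegaGr} that the one-dimensional span of $\bar{\eta}_{w_\circ^{-1}(\alpha_0),r}^{w_\circ}$ is $\cH_\aff$-stable: because $\Phi$ is not of type $A_1$, for each simple affine root $(\alpha,\ell)$ one has $w_\circ(\alpha) \notin \Phi^+\smallsetminus\{\alpha_0\}$ and $w_\circ(\alpha) \neq -\alpha_0$, so the ``off-diagonal'' term in Lemma \ref{TsalphaGr} vanishes. One then checks directly that this $\cH_\aff$-character is neither a twist of the trivial nor of the sign character (the action of $\T_{\widehat{s_{(-\alpha_0,1)}}}$ is $0$, ruling out the sign; and assuming a twist of the trivial forces the $\T_{\widehat{s_\alpha}}$-eigenvalues to be $-1$ for $\alpha \in \Pi$, a contradiction). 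By the characterization recalled in the paper, any simple subquotient of the $\cH$-module generated by this vector restricted to $\cH_\aff$ contains this supersingular character, giving the claim. This is a purely ``character of $\cH_\aff$'' argument, making no appeal to right adjoints, Abe's classification, or an explicit basis of $\gr_{\textnormal{highest}}$, and so works uniformly for any irreducible root system of rank $\geq 2$.
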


This proposition, along with the determination of $\gr_0$ and $\gr_1$ for general $G$, shows that $\textnormal{H}^1(I_1,\ind_B^G(\chi))$ contains no supersingular subquotients if and only if $F = \qp$ and the root system of $G$ is of type $A_1$.  This parallels the following expectation (for groups with irreducible root systems): there exists an (underived) equivalence between the category of smooth $G$-representations generated by their $I_1$-invariant vectors and the category of $\cH$-modules if and only if $F = \qp$ and the root system of $G$ is of type $A_1$.

\bigskip

\noindent \textbf{Acknowledgements}.  This article was written in response to a question of Christophe Breuil.  I would like to thank him and Florian Herzig for several useful conversations.  I would also like to thank the anonymous referee(s) for a thorough reading and helpful comments.  During the preparation of this article, funding was provided by NSF grant DMS-1400779 and an EPDI fellowship.

\bigskip

\section{Notation and Preliminaries}\label{notation}

\subsection{Basic Notation}

Let $p\geq 5$ denote a fixed prime number.  Let $F$ be a finite extension of $\qp$, with ring of integers $\fo$, maximal ideal $\fp$, uniformizer $\varpi$ and residue field $k_F$.  We denote by $q = p^f$ the order of the residue field $k_F$.  Given an element $x\in k_F$, we denote by $[x]\in \fo$ its Teichm\"uller lift.  Conversely, if $y\in \fo$, we denote by $\overline{y}\in k_F$ its image in the residue field.

If $\bH$ is an algebraic group over $F$, we denote by $H$ the group $\bH(F)$ of $F$-points.

Let $\bG$ denote a split, connected, reductive group over $F$.  When $\bG = \bG\bL_n$, we relax the condition $p\geq 5$ to $p\geq 3$.  We let $\bZ$ denote the connected center of $\bG$.  Fix a split maximal torus $\bT$, and let 
$$\langle-,-\rangle: X^*(\bT)\times X_*(\bT)\longrightarrow \bbZ$$ 
denote the natural perfect pairing between characters and cocharacters.  We define a homomorphism $\nu:T\longrightarrow X_*(\bT)$ by the condition
$$\langle\chi,\nu(t)\rangle = -\textnormal{val}(\chi(t))$$
for all $\chi\in X^*(\bT)$ and $t\in T$, and where $\textnormal{val}:F^\times\longrightarrow \bbZ$ is the normalized valuation.  Given $\lambda\in X_*(\bT)$, we have $\nu(\lambda(\varpi^{-1})) = \lambda$.

In the standard apartment corresponding to $\bT$ of the semisimple Bruhat--Tits building of $\bG$, we fix a chamber $\cC$ and a hyperspecial vertex $x_0$ such that $x_0\in \overline{\cC}$.  We let $\cP_{x_0}$ (resp. $I$) denote the parahoric subgroup corresponding to $x_0$ (resp. $\cC$).  We also define $I_1$ to be the pro-$p$-Sylow subgroup of $I$.  The group $I$ (resp. $I_1$) is the \emph{Iwahori subgroup} (resp. \emph{pro-$p$-Iwahori subgroup}) of $G$.  (We do \emph{not} assume in the body of the paper that $I_1$ is torsion-free.)  We have $\ker(\nu) = T\cap \cP_{x_0} = T \cap I =: T_0$, which is equal to the maximal compact subgroup of $T$.  Furthermore, we set $T_1:= T\cap I_1$; this is the maximal pro-$p$ subgroup of $T_0$ and $T(k_F) := T_0/T_1$ identifies with the group of $k_F$-points of $\bT$.

Denote the root system of $\bG$ with respect to $\bT$ by $\Phi \subset X^*(\bT)$.  We will eventually assume that $\Phi$ is irreducible.  For a root $\alpha\in \Phi$, we let $\bU_\alpha$ denote the associated root subgroup, and $\alpha^\vee\in X_*(\bT)$ the associated coroot.  We fix a set of simple roots $\Pi$, and let $\Phi = \Phi^+ \sqcup \Phi^-$ be the decomposition defined by $\Pi$.  We let $\textnormal{ht}:\Phi \longrightarrow \bbZ$ denote the height function on $\Phi$ with respect to $\Pi$.

Let $\bB$ denote the Borel subgroup corresponding to $\Pi$ and $\bU$ its unipotent radical, so that $\bB = \bT\ltimes\bU$.  We have $\bU = \langle \bU_\alpha : \alpha\in \Phi^+\rangle$.   A standard parabolic subgroup $\bP = \bM\ltimes \bN$ is any parabolic subgroup containing $\bB$.  It will be tacitly assumed that all Levi subgroups $\bM$ appearing are standard; that is, they are Levi factors of standard parabolic subgroups and contain $\bT$.  Given a standard Levi subgroup $\bM$, we let $\Pi_{\bM}$ (resp. $\Phi_{\bM}$, resp. $\Phi_{\bM}^+$) denote the simple roots (resp. root system, resp. positive roots) defined by $\bM$.  Reciprocally, given a subset $J\subset \Pi$, we let $\bM_J$ denote the standard Levi subgroup it defines.  This sets up a bijection between subsets of $\Pi$ and standard Levi subgroups.  Given a positive root $\beta\in \Phi^+$, we will denote by $\bM_\beta$ the smallest standard Levi subgroup containing $\bU_{\beta}$.  In particular, if $\beta\in \Pi$, we have $\bM_\beta = \bM_{\{\beta\}}$.

\subsection{Weyl groups}

Let $W_0 := N_{\bG}(\bT)/\bT = N_G(T)/T$ denote the Weyl group of $\bG$.  For $\alpha\in \Phi$, we let $s_\alpha\in W_0$ denote the corresponding reflection (and note that $s_{-\alpha} = s_{\alpha}$).  Then the Coxeter group $W_0$ is generated by $\{s_\alpha\}_{\alpha \in \Pi}$.  We let $\boldl:W_0\longrightarrow \bbZ_{\geq 0}$ denote the length function with respect to the set of simple reflections $\{s_\alpha\}_{\alpha\in \Pi}$, and let $\leq$ denote the Bruhat order.  For $\alpha\in \Phi$, the reflection $s_\alpha$ acts on $\chi\in X^*(\bT)$ (resp. $\lambda\in X_*(\bT)$) by the formula
$$s_\alpha(\chi)  =  \chi - \langle\chi,\alpha^\vee\rangle\alpha\qquad(\textnormal{resp.}~s_{\alpha}(\lambda)  =  \lambda - \langle\alpha,\lambda\rangle\alpha^\vee).$$

For a standard Levi subgroup $\bM$, we let $W_{\bM,0}\subset W_0$ denote the corresponding Weyl group, generated by $\{s_\alpha\}_{\alpha\in \Pi_{\bM}}$.  We set 
$$W_0^{\bM} := \left\{w\in W_0: \boldl(ws_\alpha) > \boldl(w)~\textnormal{for all}~\alpha\in \Pi_{\bM}\right\},\qquad {}^{\bM}W_0 := (W_0^{\bM})^{-1}.$$  
Every element $w$ of $W_0$ can be written as $w = vu$ for unique $v\in W_0^{\bM}, u\in W_{\bM,0}$, satisfying $\boldl(w) = \boldl(v) + \boldl(u)$ (see \cite[Section 1.10]{humphreys:book}).  We will denote the longest element of $W_{\bM,0}$ by $w_{\bM,\circ}$, and write $w_\circ$ for $w_{\bG,\circ}$.

Next, we set 
$$\Lambda := T/T_0,\qquad  W := N_G(T)/T_0.$$
The homomorphism $\nu:T\longrightarrow X_*(\bT)$ factors through $\Lambda$, and identifies $\Lambda$ with $X_*(\bT)$.  These groups fit into an exact sequence
$$1\longrightarrow \Lambda \longrightarrow W \longrightarrow W_0 \longrightarrow 1,$$
and the group $W$ acts on the standard apartment $X_*(\bT)/X_*(\bZ)\otimes_{\bbZ}\bbR$ (see \cite[Section I.1]{ss:reptheorysheaves}).  Since $x_0$ is hyperspecial we have $W_0 \cong (N_G(T)\cap \cP_{x_0})/(T\cap \cP_{x_0})$, which gives a section to the surjection $W\longtwoheadrightarrow W_0$ and thus a decomposition
$$W\cong W_0\ltimes \Lambda.$$
 We will always view $W_0$ as the subgroup of $W$ fixing $x_0$ via this section.  The length function $\boldl$ on $W_0$ extends to $W$ (see \cite[Corollary 5.10]{vigneras:hecke1}).

For any standard Levi subgroup $\bM$, we let $W_{\bM}$ denote the subgroup of $W$ given by
$$W_{\bM} := W_{\bM,0}\ltimes \Lambda.$$
Note that, while the restriction of $\boldl$ from $W_0$ to $W_{\bM,0}$ agrees with the length function on $W_{\bM,0}$, the restriction of (the extension of) $\boldl$ from $W$ to $W_{\bM}$ \emph{is not} equal to the extension of $\boldl|_{W_{\bM,0}}$ to $W_{\bM}$ in general.

The set of affine roots is defined as $\Phi\times \bbZ$, with the element $(\alpha,\ell)$ taking the value $\alpha(\lambda) + \ell$ on $\lambda\in X_*(\bT)/X_*(\bZ)\otimes_{\bbZ}\bbR$.  We view $\Phi$ as a subset of $\Phi\times \bbZ$ via the identification $\Phi\cong \Phi\times \{0\}$.  \textbf{We assume that $x_0$ and $\cC$ are chosen so that every element of $\Phi$ takes the value $0$ on $x_0$ and every element of $\Phi^+$ is positive on $\cC$.}  We let 
$$\Pi_{\aff}:= \Pi \sqcup \bigsqcup_{i = 1}^d\{(-\alpha_{0}^{(i)}, 1)\}$$ 
denote the set of simple affine roots, where the elements $\{\alpha_{0}^{(i)}\}_{i = 1}^d$ are the highest roots of the irreducible components of $\Phi$.

We will use boldface Greek letters to denote affine roots.  Given an affine root $\fat{\alpha} = (\alpha,\ell) \in \Phi\times\bbZ$, we let $s_{\fat{\alpha}}\in W$ denote the reflection in the affine hyperplane $\{\lambda\in X_*(\bT)/X_*(\bZ)\otimes_{\bbZ}\bbR: \alpha(\lambda) + \ell = 0\}$.   We define the affine Weyl group $W_\aff\subset W$ to be the subgroup generated by the set $\{s_{\fat{\alpha}}\}_{\fat{\alpha}\in \Pi_\aff}$.  The group $W_\aff$ is a Coxeter group (with respect to the generators $\{s_{\fat{\alpha}}\}_{\fat{\alpha}\in \Pi_\aff}$), and the restriction of $\boldl$ from $W$ to $W_\aff$ agrees with the length function of $W_\aff$ as a Coxeter group.  We also define $\Omega$ as the subgroup of elements of $W$ stabilizing $\cC$; equivalently, $\Omega$ is the subgroup of length 0 elements of $W$.  It is a finitely generated abelian group.  This gives the decomposition
$$W\cong W_\aff\rtimes\Omega.$$ 
The group $W$ acts on the affine roots, and the subgroup $\Omega$ stabilizes the subset $\Pi_\aff$.  We have a similar construction for the group $W_{\bM}$.

We now set 
$$\tW := N_G(T)/T_1.$$
Given any subset $X$ of $W$, we let $\widetilde{X}$ denote its preimage in $\tW$ under the natural projection $\tW\longtwoheadrightarrow W$, so that $\widetilde{X}$ is an extension of $X$ by $T(k_F)$.  The length function $\boldl$ on $W$ inflates to $\tW$ via the projection, and similarly the homomorphism $\nu$ on $\Lambda$ inflates to $\widetilde{\Lambda}$.  For typographical reasons we write $\widetilde{X}_\square$ as opposed to $\widetilde{X_\square}$ if the symbol $X$ has some decoration $\square$.  Given some element $w\in W$ we often let $\widehat{w}\in \tW$ denote a specified choice of lift.  Furthermore, given $w\in \tW$, we let $\bar{w}$ denote the image of $w$ in $W_0$ via the projections $\tW\longtwoheadrightarrow W \longtwoheadrightarrow W_0$.  We will denote by $\overline{\Omega}$ the image in $W_0$ of the group $\Omega$.

\subsection{Lifts of Weyl group elements}
Let $\bG_{x_0}$ denote the Bruhat--Tits $\fo$-group scheme with generic fiber $\bG$ associated to the point $x_0$.  Since the group $\bG$ is split, we have a Chevalley system for $\bG$.  In particular, this means that for every $\alpha\in \Phi$, we have a homomorphism of $\fo$-group schemes (cf. \cite[Section 3.2]{bruhattits2})
$$\varphi_{\alpha}:\bS\bL_{2/\fo}\longrightarrow \bG_{x_0}.$$
We normalize $\varphi_\alpha$ as in \cite[Section II.1.3]{jantzen}.  We define $u_\alpha$ to be the homomorphism
\begin{eqnarray*}
u_{\alpha}:\bG_{\textnormal{a}/\fo} & \longrightarrow & \bG_{x_0}\\
x & \longmapsto & \varphi_{\alpha}\left(\begin{pmatrix}1 & x \\ 0 & 1\end{pmatrix}\right).
\end{eqnarray*}

We now define two sets of lifts of affine reflections.  Given a root $\alpha\in \Phi$, define 
$$\fs_{\alpha} := \varphi_\alpha\left(\begin{pmatrix}  0 & 1 \\ -1 & 0\end{pmatrix}\right)\in N_G(T).$$
(These elements are denoted $n_\alpha$ in \cite{springer}.)  We have 
$$\fs_\alpha^2 = \alpha^\vee(-1)\qquad\textnormal{and}\qquad\fs_\alpha^{-1} = \fs_{-\alpha} = \alpha^\vee(-1)\fs_\alpha.$$

On the other hand, given an affine root $\fat{\alpha} = (\alpha,\ell)\in \Phi\times\bbZ$, define
$$\varphi_{\fat{\alpha}}\left(\begin{pmatrix}a & b \\ c & d\end{pmatrix}\right) := \varphi_{\alpha}\left(\begin{pmatrix}a & \varpi^{\ell}b \\ \varpi^{-\ell}c & d\end{pmatrix}\right),$$
and 
$$u_{\fat{\alpha}}(x) := u_\alpha(\varpi^{\ell}x).$$  
Then, for every $\fat{\alpha} = (\alpha,\ell)\in \Pi_\aff$, we define
$$\salpha := \varphi_{\fat{\alpha}}\left(\begin{pmatrix}0 & 1 \\ -1 & 0\end{pmatrix}\right) \in N_G(T).$$
When $\fat{\alpha} = (\alpha,0)\in \Pi_\aff$, we simply write $\widehat{s_\alpha}$.  Note that
$$\salpha =  \varphi_{\alpha}\left(\begin{pmatrix}0 & \varpi^{\ell} \\ -\varpi^{-\ell} & 0\end{pmatrix}\right) = \alpha^\vee(\varpi^\ell)\fs_{\alpha} = \fs_\alpha\alpha^\vee(\varpi^{-\ell}).$$
In particular, if $\alpha\in \Pi$, then $\widehat{s_\alpha} = \fs_\alpha$.

Given $w\in W_0$ with reduced expression $w = s_{\alpha_1}\cdots s_{\alpha_k}$, $\alpha_i\in \Pi$, we define 
\begin{equation}\label{liftredexp}
\widehat{w} := \widehat{s_{\alpha_1}}\cdots\widehat{s_{\alpha_k}} = \fs_{\alpha_1}\cdots \fs_{\alpha_k} \in N_G(T),
\end{equation}
which is a lift of $w$.  By \cite[Propositions 8.3.3 and 9.3.2]{springer}, this expression is well-defined.  Note that if $\alpha\in \Phi\smallsetminus\Pi$, then $\widehat{s_\alpha}$ (computed using \eqref{liftredexp}) is not necessarily equal to $\fs_{\alpha}$.

\cite[Propositions 8.3.3 and 9.3.2 and Lemma 8.3.2(iii)]{springer} imply that, if $w\in W_0$ and $\alpha\in \Pi$, then 
\begin{equation}\label{bruhateqs}
\begin{aligned}
w(\alpha) \in \Phi^+   & \Longleftrightarrow~  \boldl(ws_\alpha) > \boldl(w)~  \Longleftrightarrow~  \widehat{ws_\alpha} = \widehat{w}\widehat{s_\alpha},\\
w(\alpha) \in \Phi^-  & \Longleftrightarrow~  \boldl(ws_\alpha) < \boldl(w)~  \Longleftrightarrow~  \widehat{ws_\alpha} = \widehat{w}\widehat{s_\alpha}^{-1}.
\end{aligned}
\end{equation}

\subsection{Structure constants}

Let us fix once and for all a total order on $\Phi$.  For two roots $\alpha,\beta\in \Phi$ with $\alpha\neq \pm \beta$, we have
\begin{equation}
\label{comm}
[u_\alpha(x),u_{\beta}(y)] := u_\alpha(x)u_\beta(y)u_{\alpha}(x)^{-1}u_{\beta}(y)^{-1} = \prod_{\sub{i,j > 0}{i\alpha + j\beta\in \Phi}}u_{i\alpha + j\beta}(c_{\alpha,\beta;i,j}x^iy^j),
\end{equation}
where $c_{\alpha,\beta;i,j}$ are some constants, and where the product is taken with respect to the fixed total order on $\Phi$.  We also define constants $d_{\alpha,\beta}$ by
$$\fs_\alpha u_{\beta}(x)\fs_\alpha^{-1} = u_{s_\alpha(\beta)}(d_{\alpha,\beta}x).$$
Since the root morphisms $u_\alpha$ come from a Chevalley system and we assume $p\geq 5$ (resp. $p\geq 3$ when $\bG = \bG\bL_n$), we have $c_{\alpha,\beta;i,j}, d_{\alpha,\beta} \in \fo^\times$ for all choices of $\alpha,\beta\in \Phi$, $i,j > 0$ for which the constants are nonzero.  More precisely, the constants satisfy $c_{\alpha,\beta; i,j}\in \bbZ\cap \fo^\times$ and $d_{\alpha,\beta} = \pm 1$ (\cite[Section 3.2]{bruhattits2}).

If $w= s_{\alpha_1}\cdots s_{\alpha_k}\in W_0$ is a reduced expression and $\beta\in \Phi$, we define
\begin{equation}\label{defofstrcst}
d_{w,\beta}:= d_{\alpha_1,s_{\alpha_2}\cdots s_{\alpha_k}(\beta)}\cdots d_{\alpha_{k - 1},s_{\alpha_k}(\beta)}d_{\alpha_{k},\beta}
\end{equation}
(note that this expression is well-defined).  In particular $d_{w,\beta}= \pm 1$ and $d_{w,\beta} = d_{w,-\beta}$ (\cite[Lemma 9.2.2(ii)]{springer}).  By definition, the elements $d_{w,\beta}$ satisfy
$$\widehat{w}u_\beta(x)\widehat{w}^{-1} = u_{w(\beta)}(d_{w,\beta}x).$$
Further, \cite[Lemma 9.2.2(iii)]{springer} implies
\begin{equation}\label{conjlift}
\widehat{w}\fs_{\beta}\widehat{w}^{-1} = w(\beta^\vee)(d_{w,\beta})\fs_{w(\beta)}.
\end{equation}

\subsection{Miscallany}

\subsubsection{} We record a decomposition for future use.  Given $x\in F^\times$, we have the following equality in $\textnormal{SL}_2(F)$:
$$\begin{pmatrix}1 & 0 \\ x & 1\end{pmatrix} = \begin{pmatrix}1 & x^{-1} \\ 0 & 1\end{pmatrix}\begin{pmatrix}x^{-1} & 0 \\ 0 & x\end{pmatrix}\begin{pmatrix}0 & -1 \\ 1 & 0\end{pmatrix}\begin{pmatrix}1 & x^{-1}\\ 0 & 1\end{pmatrix}.$$
Applying the homomorphism $\varphi_{\fat{\alpha}}$ for $\fat{\alpha} = (\alpha,\ell)\in \Pi_\aff$ gives
\begin{equation}\label{unip}
u_{-\fat{\alpha}}(x) = u_{\fat{\alpha}}(x^{-1})\alpha^\vee(x^{-1})\salpha^{-1}u_{\fat{\alpha}}(x^{-1}).
\end{equation}

\subsubsection{} We fix for the remainder of the article an algebraically closed field $C$ of characteristic $p$.  This will serve as the field of coefficients for all representations and modules appearing.  We fix once and for all an embedding $k_F\longhookrightarrow C$, and view $k_F$ as a subfield of $C$.  To simplify notation, we will identify the groups of square roots of unity $\mu_2(F), \mu_2(k_F),$ and $\mu_2(C)$.

\subsubsection{}\label{miscchar} Given $\alpha\in \Phi$, we define a smooth character $\overline{\alpha}:T \longrightarrow k_F^\times \longhookrightarrow C^\times$ by
$$t\longmapsto \overline{\varpi^{-\textnormal{val}(\alpha(t))}\alpha(t)}.$$
In particular, the character $\overline{\alpha}$ restricted to $T_0$ is given by
$$t_0\longmapsto\overline{\alpha(t_0)},$$
and therefore does not depend on the choice of uniformizer.

\subsubsection{} If $A$ is some set, $a,b\in A$, and $A'\subset A$, we define
\begin{align*}
\mathbf{1}_{A'}(a)  :=  & \begin{cases}1 & \textnormal{if}~a\in A',\\ 0 & \textnormal{if}~a\not\in A',\end{cases}\\
\delta_{a,b} := \mathbf{1}_{\{b\}}(a)  = & \begin{cases}1 & \textnormal{if}~a = b,\\ 0 & \textnormal{if}~a\neq b.\end{cases}
\end{align*}

\subsubsection{} We deviate from the above notation in only one instance:  if $\chi,\chi':F^\times\longrightarrow C^\times$ are two smooth characters, we use the notation $\delta_{\chi,\chi'}$ to denote the function
$$\delta_{\chi|_{\fo^\times},\chi'|_{\fo^\times}} = \begin{cases} 1 & \textnormal{if}~\chi|_{\fo^\times} = \chi'|_{\fo^\times}, \\ 0 & \textnormal{if}~\chi|_{\fo^\times} \neq \chi'|_{\fo^\times}. \end{cases}$$

\bigskip

\section{Hecke algebras}\label{heckesect}

We review some facts related to pro-$p$-Iwahori--Hecke algebras.

\subsection{Definitions}  We let $\cH$ denote the pro-$p$-Iwahori--Hecke algebra of $G$ with respect to $I_1$ over $C$:
$$\cH:= \End_{G}\left(\cind_{I_1}^G(C)\right),$$
where $C$ denotes the trivial $I_1$-module over $C$ (see \cite[Sections 4.1 and 4.2]{vigneras:hecke1} for details).  Similarly, for any standard Levi subgroup $\bM$, let $\cH_M$ denote the analogously defined pro-$p$-Iwahori--Hecke algebra of $M$ with respect to $I_1\cap M$ (which is \emph{not} a subalgebra of $\cH$ in general).

Using the adjunction isomorphism
$$\cH\cong \Hom_{I_1}\left(C,\cind_{I_1}^G(C)\right) = \cind_{I_1}^G(C)^{I_1},$$
we view $\cH$ as the convolution algebra of compactly supported, $C$-valued, $I_1$-biinvariant functions on $G$.  Given $g\in G$, we let $\T_g$ denote the characteristic function of $I_1gI_1$.  If $w\in \tW$ and $\dot{w}\in N_G(T)$ is a lift of $w$, then the double coset $I_1\dot{w}I_1$ does not depend on the choice of lift, and we will therefore write $\T_w$ for $\T_{\dot{w}}$.  By the Bruhat decomposition (cf. \cite[Proposition 3.35]{vigneras:hecke1}), the set $\{\T_w\}_{w\in \tW}$ then gives a basis of $\cH$.

The elements of $\cH$ satisfy braid relations and quadratic relations:
\begin{enumerate}[$\bullet$]
\item if $w,w'\in \tW$ satisfy $\boldl(w) + \boldl(w') = \boldl(ww')$, then
$$\T_{w}\T_{w'} = \T_{ww'};$$
\item if $\fat{\alpha} = (\alpha,\ell)\in \Pi_\aff$, then
$$\T_{\salpha}^2 = c_{\fat{\alpha}}\T_{\salpha} = \T_{\salpha}c_{\fat{\alpha}},$$
where
$$c_{\fat{\alpha}}:= \sum_{x\in k_F^\times}\T_{\alpha^\vee([x])}.$$
\end{enumerate}
Since $\tW$ is generated by $\{\salpha\}_{\fat{\alpha}\in \Pi_\aff}$ and $\widetilde{\Omega}$, the braid relations show that $\cH$ is generated by $\{\T_{\salpha}\}_{\fat{\alpha}\in \Pi_\aff}$ and $\{\T_{\omega}\}_{\omega\in \widetilde{\Omega}}$.

As in \cite[Lemma 4.12]{vigneras:hecke1}, we define 
$$\T_{\salpha}^* := \T_{\salpha} - c_{\fat{\alpha}}.$$
If $w\in \tW$ has reduced expression $w = \widehat{s_{\fat{\alpha}_1}}\cdots \widehat{s_{\fat{\alpha}_k}}\omega$, where $\fat{\alpha}_i\in \Pi_\aff$ and $\omega\in \widetilde{\Omega}$, we define
$$\T_w^* := \T_{\widehat{s_{\fat{\alpha}_1}}}^*\cdots \T_{\widehat{s_{\fat{\alpha}_k}}}^*\T_{\omega}.$$
(This expression is well-defined, cf. \cite[Proposition 4.13(2)]{vigneras:hecke1}.)

Finally, we define $\cH_\aff$ to be the $C$-vector subspace of $\cH$ which is spanned by $\{\T_w\}_{w\in \tW_\aff}$.  By the braid and quadratic relations, this forms a subalgebra of $\cH$, called the \emph{affine pro-$p$-Iwahori--Hecke algebra}.  The algebra $\cH_\aff$ is generated by $\{\T_{\salpha}\}_{\fat{\alpha}\in \Pi_\aff}$ and $\{\T_{t_0}\}_{t_0\in T(k_F)}$.  If $\Phi = \Phi^{(1)}\sqcup \ldots \sqcup \Phi^{(d)}$ is the decomposition of $\Phi$ into irreducible components, we get a corresponding decomposition of simple affine roots $\Pi_\aff = \Pi_\aff^{(1)}\sqcup \ldots \sqcup \Pi_\aff^{(d)}$.  For $1\leq i \leq d$, the subalgebras of $\cH_\aff$ generated by $\{\T_{\salpha}\}_{\fat{\alpha}\in \Pi_\aff^{(i)}}$ and $\{\T_{t_0}\}_{t\in T(k_F)}$ are called the \emph{irreducible components of $\cH_\aff$}.

\subsection{Induction functors and their adjoints}

We now recall the functor of parabolic induction for Hecke modules, along with its left and right adjoints.  All $\cH$-modules we consider will be right modules, unless otherwise indicated.  For more information, see \cite{abe:inductions} and \cite{olliviervigneras}.

Let $\bM$ denote a standard Levi subgroup, and $\cH_M$ the associated Hecke algebra with respect to the pro-$p$-Iwahori subgroup $I_1\cap M$.  This space has a basis given by $\{\T_{w}^M\}_{w\in \tW_{\bM}}$, where $\T_w^M$ denotes the characteristic function of the double coset $(I_1\cap M)w(I_1\cap M)$.  This algebra again satisfies braid relations and quadratic relations, \emph{relative to the length function on $\tW_{\bM}$} (which is different than the restriction to $\tW_{\bM}$ of the length function $\boldl$ on $\tW$).

Let $w\in \tW_{\bM}$, and write $w = \lambda w'$ with $\lambda\in \widetilde{\Lambda}, w'\in \tW_{\bM,0}$.  Recall than the element $w$ is \emph{$M$-positive} (resp. $M$-negative) if 
$$\langle\alpha,\nu(\lambda)\rangle\leq 0~\textnormal{for all}~\alpha\in \Phi^+\smallsetminus\Phi_{\bM}^+\qquad(\textnormal{resp.}~ \langle\alpha,\nu(\lambda)\rangle \geq 0~\textnormal{for all}~\alpha\in \Phi^+\smallsetminus\Phi_{\bM}^+).$$
The $C$-vector space spanned by all $\T_{w}^M$ for $M$-positive $w$ (resp. $M$-negative $w$) is actually a subalgebra of $\cH_M$, which we denote $\cH_{M}^+$ and call the \emph{positive subalgebra} (resp. $\cH_M^-$, called the \emph{negative subalgebra}).  Furthermore, we have injective algebra homomorphisms
\begin{equation}\label{possubalg}
\begin{aligned}
\theta: \cH_M^+ & \longhookrightarrow  \cH \\
\T_{w}^M & \longmapsto  \T_w,
\end{aligned}
\end{equation}
\begin{equation}\label{negsubalg}
\begin{aligned}
\theta^*: \cH_M^- & \longhookrightarrow  \cH\\
\T_{w}^{M,*} & \longmapsto  \T_w^*.
\end{aligned}
\end{equation}

\begin{defn}
Let $\fn$ denote a right $\cH_M$-module.  We define the \emph{parabolic induction of $\fn$ from $\cH_M$ to $\cH$} to be the right $\cH$-module
$$\ind_{\cH_M}^{\cH}(\fn) := \fn\otimes_{\cH_M^+}\cH,$$
where we view $\cH_M^+$ as a subalgebra of $\cH$ via the morphism \eqref{possubalg}.  
\end{defn}

The functor $\ind_{\cH_M}^\cH$ is a faithful and exact functor from the category of right $\cH_M$-modules to the category of right $\cH$-modules.  Furthermore, it has both a left and right adjoint, which we denote by $L_{\cH_M}^{\cH}$ and $R_{\cH_M}^{\cH}$, respectively.  They are defined as follows.

Let $\bM'$ denote the standard Levi subgroup $w_\circ\bM w_\circ^{-1}$, corresponding to the subset of simple roots $\Pi_{\bM'} = -w_\circ(\Pi_{\bM})$.  Let $\fm$ denote a right $\cH$-module.  As $C$-vector spaces, we have
$$L_{\cH_M}^{\cH}(\fm) = \fm\otimes_{\cH_{M'}^-}\cH_{M'},$$
where we view $\cH_{M'}^-$ as a subalgebra of $\cH$ via the morphism \eqref{negsubalg}.  The right action of $\T_{w}^M\in \cH_M$ on the vector space above is given by the right action of $\T_{\widehat{w_{\bM,\circ}w_\circ}^{-1}w\widehat{w_{\bM,\circ}w_\circ}}^{M'}\in \cH_{M'}$ (recall that $w_{\bM,\circ}$ is the longest element of $W_{\bM,0}$).  If we let $\lambda^-\in \widetilde{\Lambda}$ denote an element which is central in $\tW_{\bM'}$ and which satisfies $\langle\alpha,\nu(\lambda^-)\rangle > 0$ for all $\alpha\in \Phi^+ \smallsetminus\Phi_{\bM'}^+$, then the algebra $\cH_{M'}$ is a localization of $\cH_{M'}^-$ at the element $\T_{\lambda^-}^{M',*}$.  Therefore, we have an isomorphism of $C$-vector spaces
$$L_{\cH_M}^{\cH}(\fm) = \fm[(\T_{\lambda^-}^*)^{-1}].$$
In particular, $L_{\cH_M}^{\cH}$ is exact.

If $\fm$ is a right $\cH$-module, the right adjoint of parabolic induction is given by
$$R_{\cH_M}^{\cH}(\fm) = \Hom_{\cH_M^+}(\cH_M,\fm),$$
with the right action of $\cH_M$ being the evident one, and where we view $\fm$ as an $\cH_M^+$-module via \eqref{possubalg}.  If we let $\lambda^+\in \widetilde{\Lambda}$ denote an element which is central in $\tW_{\bM}$ and which satisfies $\langle\alpha,\nu(\lambda^+)\rangle < 0$ for all $\alpha\in \Phi^+\smallsetminus\Phi_{\bM}^+$, then the algebra $\cH_{M}$ is the localization of $\cH_{M}^+$ at the element $\T_{\lambda^+}^M$.  Therefore, we have an isomorphism of $C$-vector spaces
\begin{eqnarray*}
R_{\cH_M}^{\cH}(\fm) & \stackrel{\sim}{\longrightarrow} & \varprojlim_{v\mapsto v\cdot\T_{\lambda^+}}\fm\\
f & \longmapsto & \left(f((\T_{\lambda^+}^M)^{-n})\right)_{n\geq0}
\end{eqnarray*}
In particular, if $\fm$ is finite-dimensional over $C$, then the tower $(\fm)_{n\geq 0}$ satisfies the Mittag-Leffler condition, and we obtain $\varprojlim^1_{v\mapsto v\cdot\T_{\lambda^+}}\fm = 0$.  (Here $\varprojlim^1$ denotes the first derived functor of the inverse limit.)  Therefore, the functor $R_{\cH_M}^{\cH}$ is exact on the category of finite-dimensional right $\cH$-modules.  Moreover, if $\fm$ is finite-dimensional, then we have an isomorphism of $C$-vector spaces
\begin{eqnarray*}
R_{\cH_M}^{\cH}(\fm) & \stackrel{\sim}{\longrightarrow} & \bigcap_{n\geq 0}\fm\cdot\T_{\lambda^+}^n \subset \fm\\
f & \longmapsto & f(1).
\end{eqnarray*}

\subsection{Supersingular $\cH$-modules}

We first recall two characters of $\cH$.  

\begin{defn}
\begin{enumerate}[(a)]
\item The \emph{trivial and sign characters} $\chi_{\textnormal{triv}},\chi_{\textnormal{sign}}:\cH\longrightarrow C$ are defined by
$$\chi_{\textnormal{triv}}(\T_{w}) = q^{\boldl(w)} \qquad \textnormal{and}\qquad \chi_{\textnormal{sign}}(\T_{w}) = (-1)^{\boldl(w)},$$
respectively (using the convention that $0^0 = 1$).  
\item Given any subalgebra $\cA$ of $\cH$ (e.g., $\cH_\aff$, etc.), we define the \emph{trivial and sign characters of $\cA$} to be the restrictions of $\chi_{\textnormal{triv}}$ and $\chi_{\textnormal{sign}}$ to $\cA$.   
\item Given a character $\xi:T_0 \longrightarrow C^\times$ which satisfies $\xi\circ\alpha^\vee([x]) = 1$ for every $\alpha\in \Pi$ and every $x\in k_F^\times$, we define the \emph{twists of $\chi_{\textnormal{triv}}$ and $\chi_{\textnormal{sign}}$ by $\xi$} to be the characters of $\cH_\aff$ defined by
$$\begin{cases}(\xi\otimes\chi_{\textnormal{triv}})(\T_{t_0}) = \xi(t_0)^{-1} & \textnormal{if}~t_0\in T(k_F), \\ (\xi\otimes\chi_{\textnormal{triv}})(\T_{\salpha}) = 0 & \textnormal{if}~\fat{\alpha}\in \Pi_\aff,\end{cases}$$
$$\begin{cases}(\xi\otimes\chi_{\textnormal{sign}})(\T_{t_0}) = \xi(t_0)^{-1} & \textnormal{if}~t_0\in T(k_F), \\ (\xi\otimes\chi_{\textnormal{sign}})(\T_{\salpha}) = -1 & \textnormal{if}~\fat{\alpha}\in \Pi_\aff.\end{cases}$$
\end{enumerate}
\end{defn}

\begin{defn}
A simple right $\cH$-module is said to be \emph{supersingular} if it is not isomorphic to a subquotient of $\ind_{\cH_M}^{\cH}(\fn)$ for any simple right $\cH_M$-module $\fn$ and any proper Levi $\bM$.  A finite-dimensional right $\cH$-module is said to be \emph{supersingular} if every simple subquotient is supersingular.  
\end{defn}

\begin{rmk}
The definition given here is equivalent to the definition of supersingularity given in \cite{vigneras:hecke3}.  Both definitions are equivalent to the following.  Recall that a supersingular character of $\cH_\aff$ is one whose restriction to each irreducible component of $\cH_\aff$ is different from a twist of the trivial or sign character.  Then a simple right $\cH$-module $\fm$ is supersingular if and only if the restriction of $\fm$ to $\cH_\aff$ contains a supersingular character (\emph{op. cit.}, Corollary 6.13(2) and Theorem 6.15).  
\end{rmk}

\begin{rmk}
We have yet another characterization of supersingular modules.  Suppose $\fm$ is a simple right $\cH$-module.  Then $\fm$ is supersingular if and only if $L_{\cH_M}^{\cH}(\fm) = 0$ and $R_{\cH_M}^{\cH}(\fm) = 0$ for all proper standard Levi subgroups $\bM\subsetneq \bG$ (\cite[Proposition 5.18 and Theorem 5.20]{abe:inductions}).  Since the functors $L_{\cH_M}^{\cH}$ and $R_{\cH_M}^{\cH}$ are exact on the category of finite-dimensional $\cH$-modules, we see that a finite-dimensional $\cH$-module $\fm$ is supersingular if and only if $L_{\cH_M}^{\cH}(\fm) = 0$ and $R_{\cH_M}^{\cH}(\fm) = 0$ for all proper standard Levi subgroups $\bM\subsetneq \bG$.  
\end{rmk}

\subsection{$G$-representations}

We briefly recall parabolic induction for smooth representations.

Let $\bP = \bM\ltimes\bN$ denote a standard parabolic subgroup, and let $\sigma$ be a smooth $M$-representation.  The parabolic induction $\ind_P^G(\sigma)$ of $\sigma$ is defined as the space of locally constant functions $\mathsf{f}:G\longrightarrow \sigma$ which satisfy $\mathsf{f}(pg) = p.\mathsf{f}(g)$ for $p\in P, g\in G$ (we view $\sigma$ as a representation of $P$ via the projection $P \longtwoheadrightarrow M$).  The group $G$ acts on $\mathsf{f}\in \ind_P^G(\sigma)$ by $(g.\mathsf{f})(h) = \mathsf{f}(hg)$, giving a smooth $G$-representation.

The space $\sigma^{I_1\cap M} = \textnormal{H}^0(I_1\cap M, \sigma)$ naturally has a right action of $\cH_M$, and the space $\ind_P^G(\sigma)^{I_1} = \textnormal{H}^0(I_1,\ind_P^G(\sigma))$ naturally has a right action of $\cH$ (see \cite[Proof of Lemma 4.5]{olliviervigneras}, or the following section).  They are related by \cite[Proposition 4.4]{olliviervigneras}: we have an isomorphism of $\cH$-modules
\begin{equation}\label{h0}
\textnormal{H}^0\big(I_1,\ind_P^G(\sigma)\big)  \cong  \ind_{\cH_M}^{\cH}\left(\textnormal{H}^0(I_1\cap M, \sigma)\right).
\end{equation}
Our goal will be to understand the structure of the higher cohomology groups when $\bP = \bB$.

Two final points of notation: if $\chi:T\longrightarrow C^\times$ is a smooth character of $T$, then $\chi^{I_1\cap T} = \chi^{T_1} = \chi$ inherits the structure of a right $\cH_T$-module.  If $\lambda\in \widetilde{\Lambda}$ and $v\in \chi$, we have
$$v\cdot \T_{\lambda}^T = \chi(\lambda)^{-1}v.$$
We will use the letter $\chi$ to refer to both the character of $T$ and the $\cH_T$-module described above.  The meaning should be clear from context.  Finally, if $\chi:T\longrightarrow C^\times$ is a smooth character of $T$ and $\alpha\in \Phi$, we let $\chi^{s_\alpha}:T\longrightarrow C^\times$ denote the character defined by $\chi^{s_\alpha}(t) := \chi(\widehat{s_\alpha}t\widehat{s_\alpha}^{-1})$.

\subsection{Hecke action on cohomology}

We now discuss Hecke actions on (continuous) group cohomology.  For a reference, see \cite{lee:heckeaction}, \cite{kugaparrysah}, and \cite{rhiewhaples}.

Let $g\in G$, and let us write
\begin{equation}\label{xiforcoh}
I_1gI_1 = \bigsqcup_{x\in \cX} I_1g_x
\end{equation}
where $\cX$ is some finite index set.  Given $h\in I_1$, we have $g_xh\in I_1gI_1$, so we may write
$$g_xh = \xi_x(h)g_{x(h)}$$
for some $\xi_x(h)\in I_1$ and $x(h)\in \cX$ (we suppress the dependence on $g$ from notation).  If $h, h'\in I_1$, we have the cocycle conditions
\begin{equation}\label{cocycle}
x(hh') = x(h)(h'),\qquad\textnormal{and}\qquad \xi_x(hh') = \xi_x(h)\xi_{x(h)}(h').
\end{equation}

Suppose now that we have a smooth $G$-representation $\pi$ and an inhomogeneous cocycle $f\in \textnormal{Z}^i(I_1,\pi)$.  We define another cocycle $f\cdot\T_g$ by
\begin{equation}\label{actoncocyc}
(f\cdot\T_g)(h_1,\ldots, h_i) = \sum_{x\in \cX}g_x^{-1}.f\left(\xi_x(h_1),~ \xi_{x(h_1)}(h_2),~ \xi_{x(h_1h_2)}(h_3),~ \ldots~,~ \xi_{x(h_1\cdots h_{i - 1})}(h_i)\right).  
\end{equation}
In particular, if $g$ normalizes $I_1$, this reduces to
\begin{equation}\label{actoncocycsimple}
(f\cdot\T_g)(h_1,\ldots, h_i) = g^{-1}.f\left(gh_1g^{-1},~ gh_2g^{-1},~ \ldots~,~ gh_ig^{-1}\right).
\end{equation}
By passing to cohomology, equation \eqref{actoncocyc} gives an well-defined action of $\cH$ on $\textnormal{H}^i(I_1,\pi)$.

\begin{rmk}
This definition agrees with the ``functorial'' definition 
$$\T_g = \textnormal{cor}^{I_1}_{I_1\cap g^{-1}I_1g} \circ g^{-1}_* \circ \textnormal{res}^{I_1}_{I_1\cap gI_1g^{-1}},$$ 
where the rightmost map is the restriction from $\textnormal{H}^i(I_1,\pi)$ to $\textnormal{H}^i(I_1\cap gI_1g^{-1},\pi)$, the middle map is the conjugation by $g^{-1}$, and the leftmost map is the corestriction from $\textnormal{H}^i(I_1\cap g^{-1}I_1g,\pi)$ to $\textnormal{H}^i(I_1,\pi)$.  
\end{rmk}

\subsection{Calculations for Hecke action}\label{calcofxi}

In this subsection we calculate the maps $\xi_x(h)$ and $x(h)$ which appear in the action of $\cH$ on cohomology.  Since $\cH$ is generated by $\{\T_{\salpha}\}_{\fat{\alpha}\in \Pi_\aff}$ and $\{\T_\omega\}_{\omega\in \widetilde{\Omega}}$, and the action of $\T_{\omega}$ is given by equation \eqref{actoncocycsimple}, it suffices to understand $\xi_x(h)$ and $x(h)$ when $g = \salpha$.

Taking $g = \salpha$ in the decomposition \eqref{xiforcoh} and using the Iwahori decomposition of $I_1$ gives
$$I_1\widehat{s_{\fat{\alpha}}}I_1 = \bigsqcup_{x\in k_F}I_1\widehat{s_{\fat{\alpha}}}u_{\fat{\alpha}}([x]).$$
Therefore, our index set $\cX$ is $k_F$, and for $x\in k_F$, we have $g_x = \salpha u_{\fat{\alpha}}([x])$.  Since $I_1$ is generated by $T_1$ and $u_{\fat{\beta}}(\fo)$ for all $\fat{\beta} \in (\Phi^+\times\{0\}) \sqcup (\Phi^-\times\{1\})$, by equations \eqref{cocycle} it suffices to understand $\xi_x(t), \xi_x(u_{\fat{\beta}}(y)), x(t)$, and $x(u_{\fat{\beta}}(y))$, where $t\in T_1, \fat{\beta}\in (\Phi^+\times\{0\}) \sqcup (\Phi^-\times\{1\})$ and $y\in \fo$.

\begin{lemma}
\label{conj}
Let $\fat{\alpha} = (\alpha,\ell)\in  \Pi_\aff$, $\fat{\beta} = (\beta,m)\in (\Phi^+\times\{0\})\sqcup (\Phi^-\times\{1\})$ and $x\in k_F, y\in \fo$.  We have:
\begin{enumerate}[(a)]
\item if $\fat{\alpha} = \fat{\beta}$, then
\begin{eqnarray*}
x(u_{\fat{\alpha}}(y)) & = & x + \overline{y},\\
\xi_x(u_{\fat{\alpha}}(y)) & = & u_{-\fat{\alpha}}([x + \overline{y}] - [x] - y);
\end{eqnarray*}\label{conja}
\item if $\fat{\alpha} \neq \fat{\beta}$ and $\alpha\neq -\beta$, then
\begin{eqnarray*}
x(u_{\fat{\beta}}(y)) & = & x,\\
\xi_x(u_{\fat{\beta}}(y)) & = & \left(\prod_{\sub{i,j > 0}{i\alpha + j\beta\in \Phi}}u_{s_\alpha(i\alpha + j\beta)}\left(d_{\alpha,i\alpha + j\beta}c_{\alpha,\beta;i,j}\varpi^{-i\ell + jm - j\ell\langle \beta,\alpha^\vee\rangle}[x]^iy^j\right)\right)\\
& &  \qquad \cdot ~u_{s_\alpha(\beta)}\left(d_{\alpha,\beta}\varpi^{m - \ell\langle\beta,\alpha^\vee\rangle}y\right);
\end{eqnarray*}\label{conjb}
\item if $\fat{\alpha}\neq\fat{\beta}$ and $\alpha = -\beta$, then
\begin{eqnarray*}
x(u_{\fat{\beta}}(y)) & = & x,\\
\xi_x(u_{\fat{\beta}}(y)) & = & u_{\fat{\alpha}}(-\varpi y\nu^{-1})\alpha^\vee(\nu^{-1})u_{\fat{\beta}}([x]^2y\nu^{-1})\\
 & = & u_{\fat{\beta}}([x]^2y\nu'^{-1})\alpha^\vee(\nu')u_{\fat{\alpha}}(-\varpi y\nu'^{-1}),
\end{eqnarray*}
where $\nu := 1 + \varpi[x]y$ and $\nu' := 1 - \varpi[x]y$; \label{conjc}
\item for $t\in T_1$, we have
\begin{eqnarray*}
x(t) & = & x,\\ 
\xi_x(t) & = & t^{s_\alpha}u_{-\fat{\alpha}}\left((1 - \alpha(t)^{-1})[x]\right),
\end{eqnarray*}
where $t^{s_\alpha} := \salpha t \salpha^{-1}$.  \label{conjd}
\end{enumerate}
\end{lemma}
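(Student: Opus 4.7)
The plan is to verify each of the four parts by an explicit computation showing that the product $\salpha u_{\fat{\alpha}}([x])h$ equals $\xi_x(h)\cdot\salpha u_{\fat{\alpha}}([x(h)])$ for the claimed values, and that the claimed value of $\xi_x(h)$ lies in $I_1$. Since the cosets $I_1\salpha u_{\fat{\alpha}}([x'])$ partition $I_1\salpha I_1$ as $x'$ ranges over $k_F$, the pair $(x(h),\xi_x(h))$ is uniquely determined by these two requirements. The main manipulation throughout is the conjugation identity
\[
\salpha\, u_\gamma(z)\,\salpha^{-1} \;=\; u_{s_\alpha(\gamma)}\!\left(\varpi^{-\ell\langle\gamma,\alpha^\vee\rangle}\, d_{\alpha,\gamma}\, z\right),
\]
which follows from $\salpha=\alpha^\vee(\varpi^\ell)\fs_\alpha$ together with $\langle s_\alpha(\gamma),\alpha^\vee\rangle = -\langle\gamma,\alpha^\vee\rangle$. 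Specializing to $\gamma = \pm\alpha$ (where a direct rank-one check gives $d_{\alpha,\pm\alpha} = -1$) yields $\salpha u_{\fat{\alpha}}(w)\salpha^{-1} = u_{-\fat{\alpha}}(-w)$, which is used repeatedly below.

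For part (a), commutativity of $U_\alpha$ gives $u_{\fat{\alpha}}([x])u_{\fat{\alpha}}(y) = u_{\fat{\alpha}}([x]+y)$. Writing $[x]+y = [x+\overline{y}] + ([x]+y - [x+\overline{y}])$, with the second summand in $\fp$, identifies $x(h) = x+\overline{y}$, and conjugating the residual factor by $\salpha$ lands it in $u_{-\fat{\alpha}}(\fp)\subset I_1$. For part (d), the commutation $u_{\fat{\alpha}}([x])\,t = t\,u_{\fat{\alpha}}(\alpha(t)^{-1}[x])$ rearranges
\[
u_{\fat{\alpha}}([x])\,t\,u_{\fat{\alpha}}(-[x]) \;=\; t\,u_{\fat{\alpha}}\!\left(-(1-\alpha(t)^{-1})[x]\right),
\]
and conjugation by $\salpha$ sends $t$ to $t^{s_\alpha}\in T_1$ and the remaining factor to $u_{-\fat{\alpha}}((1-\alpha(t)^{-1})[x])$. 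Since $t\in T_1$ one has $1-\alpha(t)^{-1}\in\fp$, so this factor lies in $I_1$, whence $x(h) = x$.

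For part (b), with $\alpha\neq\pm\beta$, the strategy is to expand $u_{\fat{\alpha}}([x])u_{\fat{\beta}}(y)u_{\fat{\alpha}}(-[x])$ via the Chevalley commutator relation \eqref{comm} into a product of $u_{\fat{\beta}}(y)$ with the terms $u_{i\alpha+j\beta}\!\left(c_{\alpha,\beta;i,j}\varpi^{i\ell+jm}[x]^i y^j\right)$, and then to conjugate everything by $\salpha$. The valuation bookkeeping amounts to the identity $\langle i\alpha+j\beta,\alpha^\vee\rangle = 2i+j\langle\beta,\alpha^\vee\rangle$, which combines with the initial factor $\varpi^{i\ell+jm}$ to produce the stated exponent $-i\ell+jm-j\ell\langle\beta,\alpha^\vee\rangle$. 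That the resulting element lies in $I_1$, and hence $x(h) = x$, then follows from uniqueness of the Bruhat decomposition. This is the principal bookkeeping obstacle, since one has to simultaneously track Chevalley structure constants, the $\varpi$-valuations produced by $\salpha$-conjugation, and the action of $s_\alpha$ on the affine roots $(i\alpha+j\beta,\,i\ell+jm)$.

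Part (c) reduces to a rank-one computation via $\varphi_\alpha$. Setting $a = \varpi^\ell[x]$ and $b = \varpi^m y$, and using that $\ell+m=1$ in this case (so $ab = \varpi[x]y$ and $\nu = 1+ab$), the $SL_2$-identity
\[
u_\alpha(a)u_{-\alpha}(b) \;=\; u_{-\alpha}(b/\nu)\,\alpha^\vee(\nu)\,u_\alpha(a/\nu),
\]
equivalent to the decomposition \eqref{unip}, rewrites $u_{\fat{\alpha}}([x])u_{\fat{\beta}}(y)u_{\fat{\alpha}}(-[x])$ as $u_{\fat{\beta}}(y/\nu)\,\alpha^\vee(\nu)\,u_{\fat{\alpha}}(-\varpi[x]^2y/\nu)$ once the trailing $u_{\fat{\alpha}}(-[x])$ is absorbed. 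Conjugating by $\salpha$ and using $\salpha\alpha^\vee(\nu)\salpha^{-1} = \alpha^\vee(\nu^{-1})$ produces the first form of $\xi_x(u_{\fat{\beta}}(y))$. The second form arises analogously from the opposite-order $SL_2$-Bruhat identity $u_{-\alpha}(b)u_\alpha(a) = u_\alpha(a/\nu')\alpha^\vee(1/\nu')u_{-\alpha}(b/\nu')$ with $\nu' = 1-ab$, applied to rearrange $u_{-\alpha}(b)u_\alpha(-a)$ inside the same product; equality of the two forms can alternatively be checked directly in $SL_2$ since both realize the same $2\times 2$ matrix with determinant $\nu'\nu+\varpi^2[x]^2y^2=1$.
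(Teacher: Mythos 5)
The paper does not prove this lemma itself but refers to Lemma 4.1 of the author's companion paper (the citation \cite{koziol:h1triv}), so there is no in-text argument to compare against. Your direct verification --- compute $g_x h g_x^{-1} = \salpha u_{\fat{\alpha}}([x])\,h\,u_{\fat{\alpha}}(-[x])\salpha^{-1}$ explicitly, show it lands in $I_1$, and read off $\xi_x(h)$ and $x(h)$ --- is exactly the natural approach, and it is the one used in the paper's proof of the sibling Lemma \ref{conj2}. Your master conjugation identity $\salpha u_\gamma(z)\salpha^{-1} = u_{s_\alpha(\gamma)}(\varpi^{-\ell\langle\gamma,\alpha^\vee\rangle}d_{\alpha,\gamma}z)$, the specialization $\salpha u_{\fat{\alpha}}(w)\salpha^{-1}=u_{-\fat{\alpha}}(-w)$, and the computations for (a), (b), (c), (d) all check out.

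Two small points worth tightening. First, in (b) you write that the $I_1$-membership of $\xi_x$ ``follows from uniqueness of the Bruhat decomposition.'' The logic should run the other way: uniqueness of $x'$ in $I_1\salpha I_1 = \bigsqcup_{x'} I_1 \salpha u_{\fat{\alpha}}([x'])$ tells you that \emph{if} $Q := g_x h g_x^{-1} \in I_1$ then $x(h)=x$ and $\xi_x(h)=Q$; but the membership $Q\in I_1$ itself has to be checked by hand from the $\varpi$-valuations in the computed formula (for each of the two possibilities $\fat{\alpha}=(\alpha,0)$ with $\alpha\in\Pi$, and $\fat{\alpha}=(-\alpha_0,1)$), using that $\fat{\beta}$ ranges over $(\Phi^+\times\{0\})\sqcup(\Phi^-\times\{1\})$ so that all the $i\fat{\alpha}+j\fat{\beta}$ are again of that form. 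This is routine but it is the content, not a consequence of uniqueness. Second, in (c) the opposite-order $SL_2$ identity as you state it should read $u_{-\alpha}(b)u_\alpha(a) = u_\alpha(a/\nu)\alpha^\vee(\nu^{-1})u_{-\alpha}(b/\nu)$ with $\nu=1+ab$; your $\nu'=1-ab$ is what comes out when you apply it (correctly) to the pair $(-a,b)$ inside $u_{-\alpha}(b)u_\alpha(-a)$, so the end result is right, but the displayed identity has a sign slip.
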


\begin{proof}
See Lemma 4.1 of \cite{koziol:h1triv}.
\end{proof}

We record one more lemma which will be used later.

\begin{lemma}
\label{conj2}
Let $\fat{\alpha} = (\alpha,\ell)\in  \Pi_\aff$, $\fat{\beta} = (\beta,m)\in (\Phi^+\times\{0\})\sqcup (\Phi^-\times\{1\})$ and $x\in k_F^\times, y\in \fo$.  We have:
\begin{enumerate}[(a)]
\item if $\fat{\alpha} \neq \fat{\beta}$ and $\alpha\neq -\beta$, then
\begin{eqnarray*}
u_{\fat{\alpha}}([x]^{-1})\xi_x(u_{\fat{\beta}}(y))u_{\fat{\alpha}}(-[x]^{-1}) & = & \left(\prod_{\sub{i,j > 0}{-i\alpha + j\beta\in \Phi}} u_{-i\alpha + j\beta}\left(c_{-\alpha,\beta;i,j}(-1)^{j\langle\beta,\alpha^\vee\rangle}\varpi^{-i\ell + jm}[x]^{i - j\langle\beta,\alpha^\vee\rangle}y^j\right)\right)\\
& & \qquad\cdot~u_{\beta}\left(\varpi^m(-[x]^{-1})^{\langle\beta,\alpha^\vee\rangle}y\right);
\end{eqnarray*}\label{conj2a}
\item if $\fat{\alpha}\neq\fat{\beta}$ and $\alpha = -\beta$, then
$$u_{\fat{\alpha}}([x]^{-1})\xi_x(u_{\fat{\beta}}(y))u_{\fat{\alpha}}(-[x]^{-1})  =  u_{\fat{\beta}}([x]^2y);$$
\label{conj2b}
\item for $t\in T_1$, we have
$$u_{\fat{\alpha}}([x]^{-1})\xi_x(t)u_{\fat{\alpha}}(-[x]^{-1})  =  tu_{-\fat{\alpha}}\left((\alpha(t) - 1)[x]\right).$$
\label{conj2c}
\end{enumerate}
\end{lemma}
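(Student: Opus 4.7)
The plan is to observe that in each of the three cases, the element $v$ being conjugated (either $u_{\fat{\beta}}(y)$ in cases (a), (b), or $t$ in case (c)) satisfies $x(v) = x$ by Lemma \ref{conj}, so by the defining equation of $\xi_x$ we have $\xi_x(v) = g_x v g_x^{-1}$ with $g_x = \salpha u_{\fat{\alpha}}([x])$. Hence
$$u_{\fat{\alpha}}([x]^{-1})\, \xi_x(v)\, u_{\fat{\alpha}}(-[x]^{-1}) = h\, v\, h^{-1},\qquad\text{where}\qquad h := u_{\fat{\alpha}}([x]^{-1})\, \salpha\, u_{\fat{\alpha}}([x]).$$
A direct $\textnormal{SL}_2$ computation inside the image of $\varphi_{\fat{\alpha}}$ (equivalently, a rearrangement of \eqref{unip}) establishes the factorization
$$h = \alpha^\vee(-[x]^{-1})\, u_{-\fat{\alpha}}([x]^{-1});$$
this is the one nontrivial identity underlying all three cases.

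Granted this, the conjugation $h v h^{-1}$ splits into two steps: first conjugating $v$ by $u_{-\fat{\alpha}}([x]^{-1})$, then by the torus element $\alpha^\vee(-[x]^{-1})$. For case (c), the torus-unipotent identity $u_{-\fat{\alpha}}(z)\, t = t\, u_{-\fat{\alpha}}(\alpha(t) z)$ yields $u_{-\fat{\alpha}}([x]^{-1})\, t\, u_{-\fat{\alpha}}(-[x]^{-1}) = t\, u_{-\fat{\alpha}}((\alpha(t) - 1)[x]^{-1})$, and the subsequent torus conjugation scales the $u_{-\fat{\alpha}}$-argument by $(-[x]^{-1})^{-2} = [x]^2$, producing the claimed $t\, u_{-\fat{\alpha}}((\alpha(t) - 1)[x])$. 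For case (b), since $\beta = -\alpha$, both $u_{-\fat{\alpha}}$ and $u_{\fat{\beta}}$ lie in the root subgroup $U_{-\alpha} = U_{\beta}$ and therefore commute, so only the torus conjugation contributes, scaling by $(-[x]^{-1})^{\langle\beta,\alpha^\vee\rangle} = [x]^2$. For case (a), I would apply \eqref{comm} to the pair $(-\alpha, \beta)$ to expand
$$u_{-\fat{\alpha}}([x]^{-1})\, u_{\fat{\beta}}(y)\, u_{-\fat{\alpha}}(-[x]^{-1}) = \left(\prod_{\sub{i,j > 0}{-i\alpha + j\beta \in \Phi}}u_{-i\alpha + j\beta}\bigl(c_{-\alpha,\beta;i,j}\varpi^{-i\ell + jm}[x]^{-i} y^j\bigr)\right) u_{\fat{\beta}}(y),$$
and then apply the torus conjugation, which scales each $u_{-i\alpha + j\beta}$-factor by $(-[x]^{-1})^{-2i + j\langle\beta,\alpha^\vee\rangle}$ (using $\langle -i\alpha + j\beta, \alpha^\vee\rangle = -2i + j\langle\beta,\alpha^\vee\rangle$) and the trailing factor by $(-[x]^{-1})^{\langle\beta,\alpha^\vee\rangle}$.

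The main obstacle is the factorization of $h$ in the first step; once this is in hand, everything else is a bookkeeping exercise in two standard commutation relations. In case (a) some care is required to track the sign $(-1)^{j\langle\beta,\alpha^\vee\rangle}$ coming from $(-[x]^{-1})^{j\langle\beta,\alpha^\vee\rangle} = (-1)^{j\langle\beta,\alpha^\vee\rangle}[x]^{-j\langle\beta,\alpha^\vee\rangle}$ (using that $(-1)^{-2i} = 1$), and to verify that the combined power of $[x]$ is $-i + 2i - j\langle\beta,\alpha^\vee\rangle = i - j\langle\beta,\alpha^\vee\rangle$ as claimed.
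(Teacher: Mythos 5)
Your proof is correct and follows essentially the same route as the paper: write $\xi_x(v) = g_x v g_x^{-1}$, then use equation \eqref{unip} to rewrite $u_{\fat{\alpha}}([x]^{-1})\,\salpha\, u_{\fat{\alpha}}([x])$ as a product of a unipotent and a torus element (the paper writes it in the equivalent form $u_{-\fat{\alpha}}([x])\,\alpha^\vee(-[x]^{-1})$), after which the three cases reduce to the stated commutation relations. The bookkeeping in case (a) checks out.
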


\begin{proof}
Suppose that $h\in I_1$ is such that $x(h) = x$, so that 
$$\xi_x(h) = \salpha u_{\fat{\alpha}}([x]) h u_{\fat{\alpha}}(-[x])\salpha^{-1}.$$  
By equation \eqref{unip} we have $u_{\fat{\alpha}}([x]^{-1})\salpha u_{\fat{\alpha}}([x]) = u_{-\fat{\alpha}}([x])\alpha^\vee(-[x]^{-1})$, and therefore
$$u_{\fat{\alpha}}([x]^{-1})\xi_x(h)u_{\fat{\alpha}}(-[x]^{-1}) = u_{-\fat{\alpha}}([x])\alpha^\vee(-[x]^{-1})h\alpha^\vee(-[x])u_{-\fat{\alpha}}(-[x]).$$
The result is then a straightforward computation (using \eqref{comm} for part (a)).  
%\begin{enumerate}[(a)]
%\item  Using the above equation, we have 
%\begin{eqnarray*}
%u_{\fat{\alpha}}([x]^{-1})\xi_x(u_{\fat{\beta}}(y))u_{\fat{\alpha}}(-[x]^{-1}) & = & u_{-\alpha}(\varpi^{-\ell}[x])\alpha^\vee(-[x]^{-1})u_{\beta}(\varpi^my)\alpha^\vee(-[x])u_{-\alpha}(-\varpi^{-\ell}[x])\\
%& = & u_{-\alpha}(\varpi^{-\ell}[x])u_{\beta}\left(\varpi^m(-[x]^{-1})^{\langle\beta,\alpha^\vee\rangle}y\right)u_{-\alpha}(-\varpi^{-\ell}[x])\\
%& = & \left(\prod_{\sub{i,j > 0}{-i\alpha + j\beta\in \Phi}} u_{-i\alpha + j\beta}\left(c_{-\alpha,\beta;i,j}(-1)^{j\langle\beta,\alpha^\vee\rangle}\varpi^{-i\ell + jm}[x]^{i - j\langle\beta,\alpha^\vee\rangle}y^j\right)\right)\\
%& & \cdot~u_{\beta}\left(\varpi^m(-[x]^{-1})^{\langle\beta,\alpha^\vee\rangle}y\right).
%\end{eqnarray*}
%
%\item In this case, we have $\fat{\beta} = (-\alpha, 1 - \ell)$.  Again using the above equation, we obtain
%\begin{eqnarray*}
%u_{\fat{\alpha}}([x]^{-1})\xi_x(u_{\fat{\beta}}(y))u_{\fat{\alpha}}(-[x]^{-1})  & = & u_{-\fat{\alpha}}([x])\alpha^\vee(-[x]^{-1})u_{-\fat{\alpha}}(\varpi y)\alpha^\vee(-[x])u_{-\fat{\alpha}}(-[x])\\
% & = & u_{-\fat{\alpha}}(\varpi[x]^2y).
%\end{eqnarray*}
%
%\item Once again applying the above equation, we get
%\begin{eqnarray*}
%u_{\fat{\alpha}}([x]^{-1})\xi_x(t)u_{\fat{\alpha}}(-[x]^{-1}) & = & u_{-\fat{\alpha}}([x])\alpha^\vee(-[x]^{-1})t\alpha^\vee(-[x])u_{-\fat{\alpha}}(-[x])\\
% & = & u_{-\fat{\alpha}}([x])tu_{-\fat{\alpha}}(-[x])\\
% & = & tu_{-\fat{\alpha}}\left((\alpha(t) - 1)[x]\right).
%\end{eqnarray*}
%\end{enumerate}
\end{proof}

\bigskip

\section{Principal series}\label{prinseries}

\textbf{We assume henceforth that $\Phi$ is irreducible.}  We let $\chi:T\longrightarrow C^\times$ denote a smooth character of $T$, and consider the $\cH$-action on $\textnormal{H}^1(I_1,\ind_B^G(\chi))$.

\subsection{Hecke action}
First, note that the Mackey formula gives an $I_1$-equivariant isomorphism
\begin{eqnarray*}
\ind_B^G(\chi)|_{I_1} & \stackrel{\sim}{\longrightarrow} & \bigoplus_{w\in W_0} \ind_{I_1\cap w^{-1}Bw}^{I_1}(C)\\
\mathsf{f} & \longmapsto & (\mathsf{f}'_w)_{w\in W_0},
\end{eqnarray*}
where $\mathsf{f}'_w(j) := \mathsf{f}(\widehat{w}j)$ for $j\in I_1$, and where $C$ denotes the trivial $I_1\cap w^{-1}Bw$-representation.  By \cite[Proposition 10]{serre:galoiscoh}, the map
\begin{eqnarray*}
 \ind_{I_1\cap w^{-1}Bw}^{I_1}(C) & \longrightarrow & C\\
 \mathsf{f}' & \longmapsto & \mathsf{f}'(1)
\end{eqnarray*}
induces an isomorphism
$$\textnormal{H}^i\big(I_1, \ind_{I_1\cap w^{-1}Bw}^{I_1}(C)\big) \stackrel{\sim}{\longrightarrow} \textnormal{H}^i\big(I_1\cap w^{-1}Bw,C\big).$$

Combining these facts, we obtain an isomorphism
\begin{equation}\label{shapiro}
\begin{aligned}
\textnormal{H}^1\big(I_1,\ind_B^G(\chi)\big)~ & \stackrel{\sim}{\longrightarrow}~  \bigoplus_{w\in W_0} \Hom^{\textnormal{cts}}(I_1\cap w^{-1}Bw, C) \\
{} [f]~ & \longmapsto~  (\psi_w)_{w\in W_0},
\end{aligned}
\end{equation}
where $\psi_w: I_1\cap w^{-1}Bw \longrightarrow C$ is the homomorphism defined by $\psi_w(h) = f(h)(\widehat{w})$ for $h\in I_1\cap w^{-1}Bw$.  In particular, for $w$ and $h$ fixed, the expression $f(h)(\widehat{w})$ depends only on the cohomology class of $f$.  Conversely, given any collection of homomorphisms $(\psi_w')_{w\in W_0}\in \bigoplus_{w\in W_0} \Hom(I_1\cap w^{-1}Bw, C)$, there exists a cocycle $f'\in \textnormal{Z}^1(I_1,\ind_B^G(\chi))$, unique up to coboundary, such that $f'(h)(\widehat{w}) = \psi_w'(h)$ for $h\in I_1\cap w^{-1}Bw$.

We will use the isomorphism above to describe the action of $\cH$ on $\textnormal{H}^1(I_1,\ind_B^G(\chi))$.

\begin{lemma}
\label{mainlemma}
Let $[f]\in \textnormal{H}^1(I_1,\ind_B^G(\chi))$ be associated to $(\psi_w)_{w\in W_0}\in \bigoplus_{w\in W_0}\Hom^{\textnormal{cts}}(I_1\cap w^{-1}Bw,C)$ via \eqref{shapiro}, and let $\fat{\alpha} = (\alpha,\ell)\in \Pi_\aff$, $w\in W_0$ and $h\in I_1\cap w^{-1}Bw$.  We then have
\begin{equation}\label{PSaction}
 (f\cdot \T_{\salpha})(h)(\widehat{w}) = \begin{cases} \zeta\sum_{x\in k_F} \psi_{ws_\alpha}\left(\xi_x(h)\right) & \textnormal{if}~w(\alpha) \in \Phi^+, \\ 
\zeta\cdot \psi_{ws_\alpha}\left(\xi_0(h)\right) & \textnormal{if}~w(\alpha)\in \Phi^-. \\ \quad + \sum_{x\in k_F^\times}\chi\circ w(\alpha^\vee)(-[x])\cdot \psi_w\left(u_{\fat{\alpha}}([x]^{-1})\xi_x(h)u_{\fat{\alpha}}(-[x]^{-1})\right) & \end{cases} 
\end{equation}
Here $\zeta := \chi(\widehat{w}\salpha^{-1}\widehat{ws_\alpha}^{-1})$.  
\end{lemma}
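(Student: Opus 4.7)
The plan is to unfold the definition \eqref{actoncocyc} of the Hecke action on cocycles and systematically rewrite the resulting sum. From the Iwahori decomposition $I_1\salpha I_1 = \bigsqcup_{x\in k_F}I_1\salpha u_{\fat{\alpha}}([x])$, formula \eqref{actoncocyc} immediately yields
$$(f\cdot \T_{\salpha})(h)(\widehat{w}) = \sum_{x\in k_F}f(\xi_x(h))\big(\widehat{w}\cdot u_{\fat{\alpha}}(-[x])\salpha^{-1}\big),$$
so the task reduces to rewriting each argument $\widehat{w}u_{\fat{\alpha}}(-[x])\salpha^{-1}$ in the form $b\cdot \widehat{w'}\cdot c$ with $b\in B$, $w'\in W_0$, and $c\in I_1$, so that $B$-equivariance of $f(\xi_x(h))\in\ind_B^G(\chi)$ together with the 1-cocycle relation converts each summand into a value of $\psi_{w'}$ at an appropriate element.

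When $w(\alpha)\in\Phi^+$, conjugation gives $\widehat{w}u_{\fat{\alpha}}(-[x])\widehat{w}^{-1} = u_{w(\alpha)}(\cdot)\in U\subseteq B$, which is absorbed by $\chi|_U=1$, leaving $f(\xi_x(h))(\widehat{w}\salpha^{-1})$. Writing $\widehat{w}\salpha^{-1} = \zeta_0\widehat{ws_\alpha}$ with $\zeta_0\in T$ produces $\zeta\cdot f(\xi_x(h))(\widehat{ws_\alpha})$. A case-by-case inspection of the formulas in Lemma \ref{conj}, extended from generators to all of $I_1\cap w^{-1}Bw$ via the cocycle relations \eqref{cocycle} for $\xi_x$, confirms $\xi_x(h)\in I_1\cap(ws_\alpha)^{-1}B(ws_\alpha)$ in this regime, so $f(\xi_x(h))(\widehat{ws_\alpha}) = \psi_{ws_\alpha}(\xi_x(h))$ and summing over $x$ yields the first branch of \eqref{PSaction}.

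When $w(\alpha)\in\Phi^-$, the $x=0$ term is handled identically (only generators of types (b)--(d) of Lemma \ref{conj} can occur here, and for these $\xi_0(h)=\salpha h\salpha^{-1}\in(ws_\alpha)^{-1}B(ws_\alpha)$), yielding $\zeta\cdot\psi_{ws_\alpha}(\xi_0(h))$. For $x\neq 0$, inverting the $\mathrm{SL}_2$-identity \eqref{unip} gives $u_{\fat{\alpha}}(-[x])\salpha^{-1} = \alpha^\vee(-[x])u_{-\fat{\alpha}}(-[x])u_{\fat{\alpha}}([x]^{-1})$, and conjugating the first two factors past $\widehat{w}$ produces
$$\widehat{w}u_{\fat{\alpha}}(-[x])\salpha^{-1} = w(\alpha^\vee)(-[x])\cdot u_{-w(\alpha)}(\cdot)\cdot\widehat{w}u_{\fat{\alpha}}([x]^{-1}).$$
Since $-w(\alpha)\in\Phi^+$ in this regime, both prefactors lie in $B$, and $B$-equivariance reduces the summand to $\chi\circ w(\alpha^\vee)(-[x])\cdot f(\xi_x(h))(\widehat{w}u_{\fat{\alpha}}([x]^{-1}))$.

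The final step is to identify $f(\xi_x(h))(\widehat{w}u_{\fat{\alpha}}([x]^{-1}))$ with $\psi_w\big(u_{\fat{\alpha}}([x]^{-1})\xi_x(h)u_{\fat{\alpha}}(-[x]^{-1})\big)$. Setting $c:=u_{\fat{\alpha}}([x]^{-1})$ and $j:=\xi_x(h)$, two applications of the cocycle relation $f(ab)=f(a)+a.f(b)$ together with $f(1)=0$ combine into
$$f(j)(\widehat{w}c) = \psi_w(cjc^{-1}) + \big(\chi(b_0)-1\big)f(c)(\widehat{w}),\qquad b_0 := \widehat{w}cjc^{-1}\widehat{w}^{-1},$$
once one checks via Lemma \ref{conj2} that $cjc^{-1}\in I_1\cap w^{-1}Bw$ (which holds because $w(-i\alpha+j\beta)$ is a positive combination of positive roots and $w(-\alpha)\in\Phi^+$). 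For generator types (a) and (b) of Lemma \ref{conj2} the conjugate $b_0$ lies in $U$, so $\chi(b_0)=1$; for type (c) (torus generators $t\in T_1$) one has $b_0 = {}^wt\cdot u_{-w(\alpha)}(\cdot)$ with ${}^wt\in T_1$, and $\chi({}^wt)=1$ because $T_1$ is pro-$p$ while $C$ has characteristic $p$, so every smooth character $T_1\to C^\times$ is trivial. The boundary term therefore vanishes uniformly, and summing over $x\in k_F^\times$ gives the second branch of \eqref{PSaction}. The main obstacle is precisely this cancellation of the cocycle boundary term via the pro-$p$-ness of $T_1$, together with the combinatorial bookkeeping needed to verify the subgroup-containment conditions for $\xi_x(h)$ and its conjugates using the explicit formulas in Lemmas \ref{conj} and \ref{conj2}.
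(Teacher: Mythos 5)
Your proof is correct and follows essentially the same approach as the paper's: unfold the Hecke action on cocycles, handle the $w(\alpha)\in\Phi^+$ case by pulling out $U$-factors, and for $w(\alpha)\in\Phi^-$ apply the $\mathrm{SL}_2$-identity \eqref{unip}, extract the $B$-factor, and use the cocycle relation to transfer the remaining $u_{\fat{\alpha}}([x]^{-1})$ from the argument $\widehat{w}u_{\fat{\alpha}}([x]^{-1})$ onto $\xi_x(h)$ as a conjugation. Your explicit organization of the boundary term as $(\chi(b_0)-1)f(c)(\widehat{w})$, vanishing because $\chi$ is trivial on the pro-$p$ group $T_1$, makes precise the cancellation that the paper's proof only records as ``the first and third terms cancel.''
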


\begin{proof}
By equation \eqref{actoncocyc}, we have
$$(f\cdot \T_{\salpha})(h) = \sum_{x\in k_F}u_{\fat{\alpha}}(-[x])\salpha^{-1}.f\left(\xi_x(h)\right).$$

Suppose first that $w(\alpha)\in \Phi^+$.  We have
$$(f\cdot \T_{\salpha})(h)(\widehat{w})  =  \left(\sum_{x\in k_F} u_{\fat{\alpha}}(-[x])\salpha^{-1}.f\left(\xi_x(h)\right)\right)\left(\widehat{w}\right) =  \sum_{x\in k_F} f\left(\xi_x(h)\right)\left(\widehat{w}u_{\fat{\alpha}}(-[x])\salpha^{-1}\right).
$$
Since $\widehat{w}u_{\fat{\alpha}}(-[x])\widehat{w}^{-1} \in U$ and $f\left(\xi_x(h)\right) \in \ind_B^G(\chi)$, the above expression is equal to 
$$\sum_{x\in k_F} f\left(\xi_x(h)\right)\left(\widehat{w}\salpha^{-1}\right) =  \zeta\sum_{x\in k_F} f\left(\xi_x(h)\right)\left(\widehat{ws_\alpha}\right) =  \zeta\sum_{x\in k_F} \psi_{ws_\alpha}\left(\xi_x(h)\right),$$
with $\zeta$ as in the statement of the lemma.

Assume now that $w(\alpha)\in \Phi^-$.  We then have
\begin{eqnarray*}
(f\cdot \T_{\salpha})(h)(\widehat{w}) &  = &  \left(\sum_{x\in k_F} u_{\fat{\alpha}}(-[x])\salpha^{-1}.f\left(\xi_x(h)\right)\right)\left(\widehat{w}\right)\\
 & = &  \sum_{x\in k_F} f\left(\xi_x(h)\right)\left(\widehat{w}u_{\fat{\alpha}}(-[x])\salpha^{-1}\right)~ =~  \sum_{x\in k_F} f\left(\xi_x(h)\right)\left(\widehat{w}\salpha^{-1}u_{-\fat{\alpha}}([x])\right).
\end{eqnarray*}
By applying equation \eqref{unip} to the $x\neq 0$ terms, this expression becomes
\begin{equation}\label{mainlemI}
f\left(\xi_0(h)\right)(\widehat{w}\salpha^{-1}) + \sum_{x\in k_F^\times} f\left(\xi_x(h)\right)\left(\widehat{w}\salpha^{-1}u_{\fat{\alpha}}([x]^{-1})\alpha^\vee([x]^{-1})\salpha^{-1}u_{\fat{\alpha}}([x]^{-1})\right).
\end{equation}
Since $w(\alpha)\in \Phi^-$, we have $\widehat{w}\salpha^{-1}u_{\fat{\alpha}}([x]^{-1})\salpha\widehat{w}^{-1}\in U$.  Using that $f\left(\xi_x(h)\right) \in \ind_B^G(\chi)$ and the identity $\widehat{w}\salpha^{-1}\alpha^\vee([x]^{-1})\salpha^{-1}\widehat{w}^{-1} = \widehat{w}\alpha^\vee(-[x])\widehat{w}^{-1} = w(\alpha^\vee)(-[x])$, we obtain
\begin{eqnarray}
f\left(\xi_x(h)\right)\left(\widehat{w}\salpha^{-1}u_{\fat{\alpha}}([x]^{-1})\alpha^\vee([x]^{-1})\salpha^{-1}u_{\fat{\alpha}}([x]^{-1})\right) & = & f\left(\xi_x(h)\right)\left(\widehat{w}\salpha^{-1}\alpha^\vee([x]^{-1})\salpha^{-1}u_{\fat{\alpha}}([x]^{-1})\right) \notag \\ 
 & = & \chi\circ w(\alpha^\vee)(-[x]) \cdot f\left(\xi_x(h)\right)\left(\widehat{w}u_{\fat{\alpha}}([x]^{-1})\right). \label{mainlemII}
\end{eqnarray}
We now use twice the fact that $f$ is an $\ind_B^G(\chi)$-valued 1-cocycle to get 
\begin{eqnarray*}
f\left(\xi_x(h)\right)\left(\widehat{w}u_{\fat{\alpha}}([x]^{-1})\right) & = & \Big[u_{\fat{\alpha}}([x]^{-1}).f\left(\xi_x(h)\right)\Big]\left(\widehat{w}\right)\\
 & = & \Big[f\left(u_{\fat{\alpha}}([x]^{-1})\xi_x(h)\right) - f\left(u_{\fat{\alpha}}([x]^{-1})\right)\Big]\left(\widehat{w}\right)\\
 & = & \Big[u_{\fat{\alpha}}([x]^{-1})\xi_x(h)u_{\fat{\alpha}}(-[x]^{-1}).f\left(u_{\fat{\alpha}}([x]^{-1})\right)\\
 & &  + f\left(u_{\fat{\alpha}}([x]^{-1})\xi_x(h)u_{\fat{\alpha}}(-[x]^{-1})\right) - f\left(u_{\fat{\alpha}}([x]^{-1})\right)\Big]\left(\widehat{w}\right).
\end{eqnarray*}
Lemma \ref{conj2} and equation \eqref{cocycle} show that if $x\in k_F^\times$ and $h\in I_1 \cap w^{-1}Bw$, then $u_{\fat{\alpha}}([x]^{-1})\xi_x(h)u_{\fat{\alpha}}(-[x]^{-1}) \in I_1\cap w^{-1}Bw$ as well.  This implies that the first and third terms above cancel, which gives
\begin{equation}\label{mainlemIII}
f\left(\xi_x(h)\right)\left(\widehat{w}u_{\fat{\alpha}}([x]^{-1})\right) = f\left(u_{\fat{\alpha}}([x]^{-1})\xi_x(h)u_{\fat{\alpha}}(-[x]^{-1})\right)\left(\widehat{w}\right).
\end{equation}
Combining equations \eqref{mainlemI}, \eqref{mainlemII}, and \eqref{mainlemIII}, and using the definition of the tuple $(\psi_w)_{w\in W_0}$, we finally obtain
$$(f\cdot \T_{\salpha})(h)(\widehat{w})   =  \zeta\cdot \psi_{ws_\alpha}\left(\xi_0(h)\right)  + \sum_{x\in k_F^\times}\chi\circ w(\alpha^\vee)(-[x])\cdot \psi_w\left(u_{\fat{\alpha}}([x]^{-1})\xi_x(h)u_{\fat{\alpha}}(-[x]^{-1})\right).$$

%\begin{eqnarray*}
%(f\cdot \T_{\salpha})(h)(\widehat{w})  & = & \zeta\cdot \psi_{ws_\alpha}\left(\xi_0(h)\right)\\
%  & &  + \sum_{x\in k_F^\times}\chi\circ w(\alpha^\vee)(-[x])\cdot \psi_w\left(u_{\fat{\alpha}}([x]^{-1})\xi_x(h)u_{\fat{\alpha}}(-[x]^{-1})\right).
%\end{eqnarray*}
\end{proof}

We now specialize the choice of $h$ in the statement of Lemma \ref{mainlemma}.

\begin{cor}
\label{Tsalphatorus}
Let $[f]\in \textnormal{H}^1(I_1,\ind_B^G(\chi))$ be associated to $(\psi_w)_{w\in W_0}\in \bigoplus_{w\in W_0}\Hom^{\textnormal{cts}}(I_1\cap w^{-1}Bw,C)$ via \eqref{shapiro}, and let $\fat{\alpha} = (\alpha,\ell)\in \Pi_\aff$, $w\in W_0$ and $t\in T_1$.  We then have
\begin{equation*}
(f\cdot \T_{\salpha})(t)(\widehat{w}) = \begin{cases}0 & \textnormal{if}~w(\alpha) \in \Phi^+, \\ 
\zeta\cdot\psi_{ws_\alpha}\left(t^{s_\alpha}\right) - \delta_{\chi\circ w(\alpha^\vee),1}\cdot \psi_w(t)  & \textnormal{if}~w(\alpha) \in \Phi^-. \\ \quad  + \sum_{x\in k_F^\times}\chi\circ w(\alpha^\vee)([x])\cdot \psi_w\left(u_{-\fat{\alpha}}((1 - \alpha(t))[x])\right) & \end{cases}
\end{equation*}
Here $\zeta := \chi(\widehat{w}\salpha^{-1}\widehat{ws_\alpha}^{-1})$.  
\end{cor}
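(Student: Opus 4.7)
The strategy is to specialize Lemma \ref{mainlemma} to the case $h = t \in T_1$, substitute the explicit formulas from Lemma \ref{conj}(d) and Lemma \ref{conj2}(c), and then clean up each summation over $k_F$ by a change of variables. The two key inputs are $\xi_x(t) = t^{s_\alpha} u_{-\fat{\alpha}}\bigl((1-\alpha(t)^{-1})[x]\bigr)$ from Lemma \ref{conj}(d), and the conjugate $u_{\fat{\alpha}}([x]^{-1})\xi_x(t)u_{\fat{\alpha}}(-[x]^{-1}) = t\, u_{-\fat{\alpha}}\bigl((\alpha(t)-1)[x]\bigr)$ from Lemma \ref{conj2}(c).

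In the case $w(\alpha)\in \Phi^+$, Lemma \ref{mainlemma} expresses $(f\cdot \T_{\salpha})(t)(\widehat{w})$ as $\zeta\sum_{x\in k_F}\psi_{ws_\alpha}\bigl(\xi_x(t)\bigr)$. Splitting the product inside $\psi_{ws_\alpha}$ via the homomorphism property (using that the image of $u_{-\fat{\alpha}}$ is an additive one-parameter subgroup) produces a torus contribution $q\cdot \psi_{ws_\alpha}(t^{s_\alpha})$, which vanishes in characteristic $p$, plus a unipotent contribution $\psi_{ws_\alpha}\bigl(u_{-\fat{\alpha}}\bigl((1-\alpha(t)^{-1})\sum_{x\in k_F}[x]\bigr)\bigr)$. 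The latter also vanishes, because $\{[x]:x\in k_F^\times\}$ is precisely the set of $(q-1)$-th roots of unity in $\fo$, and $\sum_{x\in k_F^\times}[x] = 0$ since $q-1\geq 2$.

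In the case $w(\alpha)\in \Phi^-$, the $x=0$ term of Lemma \ref{mainlemma} already yields $\zeta\cdot\psi_{ws_\alpha}(t^{s_\alpha})$. For each $x\in k_F^\times$, applying Lemma \ref{conj2}(c) and expanding $\psi_w$ as a homomorphism splits the summand into a torus piece $\psi_w(t)$ and a unipotent piece $\psi_w\bigl(u_{-\fat{\alpha}}((\alpha(t)-1)[x])\bigr)$. I would then perform the change of variables $y=-x$ on $k_F^\times$, using the identity $[-y]=-[y]$ (valid since $p\geq 3$): this simultaneously converts $\chi\circ w(\alpha^\vee)(-[x])$ into $\chi\circ w(\alpha^\vee)([y])$ and $(\alpha(t)-1)[x]$ into $(1-\alpha(t))[y]$. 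The unipotent piece then appears in exactly the form stated in the corollary. The torus piece factors out the character sum $\sum_{y\in k_F^\times}\chi\circ w(\alpha^\vee)([y])$, which by the standard orthogonality for characters of $k_F^\times$ (together with the fact that $\chi$ is smooth, so its restriction to the pro-$p$ group $1+\fp$ is trivial) equals $-\delta_{\chi\circ w(\alpha^\vee),1}$. Assembling the three contributions yields the claimed identity.

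The main obstacle is bookkeeping: one must keep the substitution $y = -x$ synchronized across the character $\chi\circ w(\alpha^\vee)$ and the unipotent argument, or else spurious factors of $\chi\circ w(\alpha^\vee)(-1)$ (which is $\pm 1$ and not obviously trivial) will appear. Once this is handled, the proof reduces to a direct computation.
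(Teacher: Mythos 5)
Your proposal is correct and follows essentially the same route as the paper: specialize Lemma \ref{mainlemma} to $h = t$, substitute Lemmas \ref{conj}\eqref{conjd} and \ref{conj2}\eqref{conj2c}, split by the homomorphism property of $\psi_w$, and then kill/simplify the sums over $k_F$. The only difference is cosmetic: the paper's proof stops after the split into torus and unipotent pieces and leaves the final change of variables $y=-x$ (together with the evaluations $\sum_{x\in k_F}[x]=0$ and $\sum_{y\in k_F^\times}\chi\circ w(\alpha^\vee)([y])=-\delta_{\chi\circ w(\alpha^\vee),1}$) implicit, while you spell them out.
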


\begin{proof}
If $w(\alpha)\in \Phi^+$, then we have
\begin{eqnarray*}
(f\cdot \T_{\salpha})(t)(\widehat{w}) & \stackrel{\eqref{PSaction}, \textnormal{Lem.}~\ref{conj}\eqref{conjd}}{=} &  \zeta\sum_{x\in k_F} \psi_{ws_\alpha}\left(t^{s_\alpha}u_{-\fat{\alpha}}((1 - \alpha(t)^{-1})[x])\right)\\
 & = & \zeta \sum_{x\in k_F} \Big(\psi_{ws_\alpha}(t^{s_\alpha}) + \psi_{ws_\alpha}\left(u_{-\fat{\alpha}}((1 - \alpha(t)^{-1})[x])\right)\Big)\\
 & = & \zeta\cdot\psi_{ws_\alpha}\left(u_{-\fat{\alpha}}\left((1 - \alpha(t)^{-1})\sum_{x\in k_F}[x]\right)\right)\\
 & = & 0,
\end{eqnarray*}
where the last line follows from the equality $\sum_{x\in k_F}[x] = 0$ (recall that $p\geq 3$).  On the other hand, if $w(\alpha)\in \Phi^-$, then
\begin{eqnarray*}
(f\cdot \T_{\salpha})(t)(\widehat{w}) & \stackrel{\sub{\eqref{PSaction},\textnormal{Lems.}}{\ref{conj}\eqref{conjd}, \ref{conj2}\eqref{conj2c}}}{=} & \zeta\cdot\psi_{ws_\alpha}(t^{s_\alpha}) + \sum_{x\in k_F^\times}\chi\circ w(\alpha^\vee)(-[x])\cdot \psi_w\big(tu_{-\fat{\alpha}}\left((\alpha(t) - 1)[x]\right)\big)\\
 & = & \zeta\cdot\psi_{ws_\alpha}(t^{s_\alpha}) + \left(\sum_{x\in k_F^\times}\chi\circ w(\alpha^\vee)(-[x]) \right) \psi_w(t)\\
 & & \qquad\qquad + \sum_{x\in k_F^\times}\chi\circ w(\alpha^\vee)(-[x])\cdot \psi_w\big(u_{-\fat{\alpha}}(\alpha(t) - 1)[x])\big).
\end{eqnarray*}
\end{proof}

\begin{lemma}
\label{Tomega}
Let $[f]\in \textnormal{H}^1(I_1,\ind_B^G(\chi))$ be associated to $(\psi_w)_{w\in W_0}\in \bigoplus_{w\in W_0}\Hom^{\textnormal{cts}}(I_1\cap w^{-1}Bw,C)$ via \eqref{shapiro}, and let $w\in W_0$ and $h\in I_1\cap w^{-1}Bw$.  Let $\omega\in \widetilde{\Omega}$.  We then have
$$(f\cdot \T_\omega)(h)(\widehat{w}) = \zeta\cdot\psi_{w\bar{\omega}^{-1}}\left(\omega h\omega^{-1}\right),$$
where $\zeta := \chi\big(\widehat{w\bar{\omega}^{-1}}\omega\widehat{w}^{-1}\big)^{-1}$.  In particular, when $\omega = t_0\in T_0$, we have 
$$(f\cdot \T_{t_0})(h)(\widehat{w}) = \chi\left(\widehat{w}t_0\widehat{w}^{-1}\right)^{-1}\cdot\psi_{w}\left(t_0 ht_0^{-1}\right).$$
\end{lemma}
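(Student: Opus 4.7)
The plan is to exploit formula \eqref{actoncocycsimple}, which applies here because every element of $\widetilde\Omega$ has length $0$ and therefore normalizes both $I$ and its pro-$p$-Sylow subgroup $I_1$. Applying \eqref{actoncocycsimple} with $g = \omega$ gives $(f\cdot \T_\omega)(h) = \omega^{-1}.f(\omega h\omega^{-1})$, and evaluating the right-hand side at $\widehat{w}$ yields
$$(f\cdot \T_\omega)(h)(\widehat{w}) = f(\omega h\omega^{-1})\bigl(\widehat{w}\omega^{-1}\bigr).$$

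The key algebraic step is to rewrite $\widehat{w}\omega^{-1}$ in the form $t^{-1}\cdot \widehat{w\bar\omega^{-1}}$ for a suitable $t\in T$. Setting $t := \widehat{w\bar\omega^{-1}}\omega\widehat{w}^{-1}$, its image under the projection $N_G(T)\longtwoheadrightarrow W_0$ is $(w\bar\omega^{-1})\bar\omega w^{-1} = 1$, so $t\in T$ and $\widehat{w}\omega^{-1} = t^{-1}\widehat{w\bar\omega^{-1}}$ by construction. A parallel check shows that $\omega h\omega^{-1}$ lies in the domain of $\psi_{w\bar\omega^{-1}}$, namely $I_1\cap (w\bar\omega^{-1})^{-1}B(w\bar\omega^{-1})$: writing $h = \widehat{w}^{-1}b\widehat{w}$ with $b\in B$, one computes
$$\omega h\omega^{-1} = \widehat{w\bar\omega^{-1}}^{-1}(tbt^{-1})\widehat{w\bar\omega^{-1}},$$
and $tbt^{-1}\in B$ since $t\in T$ normalizes $B$; combined with the fact that $\omega$ normalizes $I_1$, this confirms the claim.

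The proof then concludes by invoking the defining $B$-equivariance of elements of $\ind_B^G(\chi)$:
$$f(\omega h\omega^{-1})\bigl(t^{-1}\widehat{w\bar\omega^{-1}}\bigr) = \chi(t)^{-1}\,f(\omega h\omega^{-1})\bigl(\widehat{w\bar\omega^{-1}}\bigr) = \chi(t)^{-1}\,\psi_{w\bar\omega^{-1}}(\omega h\omega^{-1}),$$
which is precisely the formula of the statement with $\zeta = \chi(t)^{-1} = \chi(\widehat{w\bar\omega^{-1}}\omega\widehat{w}^{-1})^{-1}$. The specialization $\omega = t_0\in T_0$ then drops out immediately: since $\bar\omega = 1$ we have $\widehat{w\bar\omega^{-1}} = \widehat{w}$ and $t = \widehat{w}t_0\widehat{w}^{-1}$, giving the stated corollary. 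The only genuine obstacle is the bookkeeping for the interplay between $W_0$-elements, their chosen lifts in $N_G(T)$, and the quotient $\widetilde{W}$; everything hinges on the single observation that $\widehat{w\bar\omega^{-1}}\omega\widehat{w}^{-1}$ has trivial image in $W_0$ and hence lies in $T$.
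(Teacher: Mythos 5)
Your proof is correct and follows essentially the same route as the paper's: apply \eqref{actoncocycsimple} (valid since $\omega\in\widetilde\Omega$ normalizes $I_1$), pull $\widehat{w}\omega^{-1}$ apart as $t^{-1}\widehat{w\bar\omega^{-1}}$ with $t = \widehat{w\bar\omega^{-1}}\omega\widehat{w}^{-1}\in T$, and use the $B$-equivariance of $\ind_B^G(\chi)$ to extract $\zeta = \chi(t)^{-1}$. The paper states these same steps more tersely; your additional checks (that $t\in T$ because it has trivial image in $W_0$, and that $\omega h\omega^{-1}$ lands in $I_1\cap\bar\omega w^{-1}Bw\bar\omega^{-1}$ so that $\psi_{w\bar\omega^{-1}}$ makes sense) are exactly the bookkeeping the paper leaves implicit.
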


\begin{proof}
Since $\omega$ normalizes $I_1$, equation \eqref{actoncocycsimple} gives
\begin{eqnarray*}
(f\cdot \T_\omega)(h)(\widehat{w}) & = & \left(\omega^{-1}.f(\omega h \omega^{-1})\right)(\widehat{w})\\
 & = & f(\omega h \omega^{-1})(\widehat{w}\omega^{-1})\\
 & = & \zeta\cdot f(\omega h \omega^{-1})\left(\widehat{w\bar{\omega}^{-1}}\right)\\
 & = & \zeta\cdot \psi_{w\bar{\omega}^{-1}}(\omega h \omega^{-1}).
\end{eqnarray*}
\end{proof}

\subsection{Height filtration}\label{hgtfilt}
We now define a certain filtration $\fil_\bullet$ on the vector space $\bigoplus_{w\in W_0}\Hom^{\textnormal{cts}}(I_1\cap w^{-1}Bw,C)\cong\textnormal{H}^1(I_1,\ind_B^G(\chi))$.

Let $h$ denote the Coxeter number of $\Phi$ (so that the height of the unique highest root of $\Phi$ is $h - 1$), and for $0\leq a \leq h - 1$ and $w\in W_0$, we define 
$$V_{w,a} := \left\langle U_{w^{-1}(\beta)}: \beta \in \Phi^+~\textnormal{with}~\textnormal{ht}(\beta) > a\right\rangle.$$ 
Since $\{\beta \in \Phi^+:\textnormal{ht}(\beta) > a\}$ is a closed subset of $\Phi^+$, $V_{w,a}$ is a subgroup of $w^{-1}Uw$ and multiplication induces a bijection $\prod_{\textnormal{ht}(\beta) > a}U_{w^{-1}(\beta)} \stackrel{\sim}{\longrightarrow} V_{w,a}$ (see \cite[Section II.1.7]{jantzen}).  We set $\fil_a := 0$ for $a < 0$, $\fil_a := \textnormal{H}^1(I_1,\ind_B^G(\chi))$ for $a > h - 1$, and for $0\leq a \leq h - 1$, we define
$$\fil_a := \Bigg\{(\psi_w)_{w\in W_0}\in \bigoplus_{w\in W_0}\Hom^{\textnormal{cts}}(I_1\cap w^{-1}Bw,C) : ~~ \psi_w\left(I_1 \cap V_{w,a}\right) = \{0\}~\textnormal{for all}~w\in W_0\Bigg\}.$$
In particular, $\fil_0$ consists of all homomorphisms supported only on $T_1$, and $\fil_{h - 1} = \textnormal{H}^1(I_1,\ind_B^G(\chi))$.

\begin{propn}
\label{filt}
The filtration $\fil_\bullet$ is $\cH$-stable.  
\end{propn}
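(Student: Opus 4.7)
The plan is to verify $\cH$-stability on a generating set. Since $\cH$ is generated by $\{\T_\omega\}_{\omega\in \widetilde{\Omega}}$ together with $\{\T_{\salpha}\}_{\fat{\alpha}\in \Pi_\aff}$, it suffices to check each family preserves $\fil_a$, which I will do via Lemmas \ref{Tomega} and \ref{mainlemma} respectively.

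For $\T_\omega$: write $\omega$ as a product of a lift of a cocharacter and a lift of $\bar\omega\in W_0$, so that conjugation by $\omega$ acts on each root subgroup $U_\gamma$ by $\omega U_\gamma\omega^{-1} = U_{\bar\omega(\gamma)}$. It follows that $\omega V_{w,a}\omega^{-1} = V_{w\bar\omega^{-1},a}$, and since $\omega$ normalizes $I_1$, for $h\in I_1\cap V_{w,a}$ we obtain $\omega h\omega^{-1}\in I_1\cap V_{w\bar\omega^{-1},a}$. The formula of Lemma \ref{Tomega} then gives $(f\cdot \T_\omega)(h)(\widehat{w}) = \zeta\cdot\psi_{w\bar\omega^{-1}}(\omega h\omega^{-1}) = 0$ whenever $(\psi_w)\in \fil_a$.

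For $\T_{\salpha}$: by Lemma \ref{mainlemma}, it is enough to show, for $h\in I_1\cap V_{w,a}$, that
\begin{enumerate}[(A)]
\item if $w(\alpha)\in \Phi^+$, then $\xi_x(h)\in V_{ws_\alpha,a}$ for every $x\in k_F$;
\item if $w(\alpha)\in \Phi^-$, then $\xi_0(h)\in V_{ws_\alpha,a}$, and $u_{\fat{\alpha}}([x]^{-1})\xi_x(h)u_{\fat{\alpha}}(-[x]^{-1})\in V_{w,a}$ for every $x\in k_F^\times$.
\end{enumerate}
Any such $h$ decomposes as a product $h = \prod_i u_{\fat{\beta}_i}(y_i)$ with $\beta_i = w^{-1}(\beta_i')$ for some $\beta_i' \in \Phi^+$ of height $> a$. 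Using the cocycle identity \eqref{cocycle}, both claims reduce to the single-factor case $h = u_{\fat{\beta}}(y)$; the $x$-shifts coming from sub-case (a) of Lemma \ref{conj} are harmless, since the target groups $V_{\cdot,a}$ do not depend on $x$. For a single factor, Lemmas \ref{conj} and \ref{conj2} exhibit $\xi_x(u_{\fat{\beta}}(y))$ and $u_{\fat{\alpha}}([x]^{-1})\xi_x(u_{\fat{\beta}}(y))u_{\fat{\alpha}}(-[x]^{-1})$ as products of root group elements whose roots are $\pm\alpha$, $s_\alpha(\beta)$, $s_\alpha(k\alpha+\ell\beta)$ (case A) or $\beta$, $-k\alpha+\ell\beta$ (case B), with $k,\ell\geq 1$. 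Applying $ws_\alpha$ in case (A) or $w$ in case (B), the images are either $\beta'$ (height $>a$) or $\pm kw(\alpha) + \ell\beta'$; the latter lies in $\Phi^+$ with height $k\cdot\hgt(\pm w(\alpha)) + \ell\cdot\hgt(\beta') > a$ because of the sign condition on $w(\alpha)$.

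The primary obstacle is the combinatorial bookkeeping across the sub-cases of Lemmas \ref{conj} and \ref{conj2}. In particular, one must verify that sub-case (c) of Lemma \ref{conj} cannot occur in case (A), because $-\alpha = w^{-1}(\beta')$ with $\beta'\in \Phi^+$ would force $w(\alpha)\in \Phi^-$; and one must separately handle sub-case (a), where $\fat{\alpha} = \fat{\beta}$ forces the resulting root to be $-\alpha$, with $ws_\alpha(-\alpha) = w(\alpha)$ of height $> a$ by the hypothesis $h\in V_{w,a}$. Once these small cases are dispatched, the uniform height estimate closes the argument.
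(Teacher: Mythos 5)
Your proof is correct and follows essentially the same approach as the paper's: reduce to the generators $\{\T_{\salpha}\}_{\fat\alpha\in\Pi_\aff}$ and $\{\T_\omega\}_{\omega\in\widetilde\Omega}$, handle $\T_\omega$ via Lemma \ref{Tomega} and the fact that conjugation by $\omega$ permutes root subgroups, and handle $\T_{\salpha}$ by combining Lemma \ref{mainlemma} with the cocycle relation \eqref{cocycle} to reduce to a single factor $u_{\fat\beta}(y)$, then applying Lemmas \ref{conj} and \ref{conj2} and reading off that the resulting roots, after applying $ws_\alpha$ or $w$, all have height $> a$. You also correctly identify the small case-splits (that sub-case (c) of Lemma \ref{conj} cannot occur when $w(\alpha)\in\Phi^+$, and that sub-case (a) yields $-\alpha$ mapping to $w(\alpha)$ of height $>a$), which the paper handles by the same case distinction.
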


\begin{proof}
Let $0\leq a \leq h - 1$, let $[f]\in \fil_a$ be associated to $(\psi_w)_{w\in W_0}$ via \eqref{shapiro}, and fix $w\in W_0$.

Suppose first that $\fat{\alpha} = (\alpha,\ell)\in \Pi_{\aff}$, let $\beta\in \Phi^+$ with $\textnormal{ht}(\beta) > a$ and $y\in \fp^{\mathbf{1}_{\Phi^-}(w^{-1}(\beta))}$ (so that $u_{w^{-1}(\beta)}(y)\in I_1\cap V_{w,a}$).  We examine the functions $\xi_x(u_{w^{-1}(\beta)}(y))$ associated to $\salpha$ appearing in equation \eqref{PSaction}.  
\begin{enumerate}[$\bullet$]
\item If $w(\alpha) \in \Phi^+$, then by applying Lemma \ref{conj}\eqref{conja} (when $w(\alpha) = \beta$) or Lemma \ref{conj}\eqref{conjb} (when $w(\alpha)\neq \beta$), we see that $\xi_x(u_{w^{-1}(\beta)}(y))$ is a product elements of the form $u_{s_\alpha w^{-1}(\gamma)}(y')$, with $\hgt(\gamma) > a$.  Therefore, we have
$$\xi_x\left(u_{w^{-1}(\beta)}(y)\right) \in I_1 \cap V_{ws_\alpha,a}.$$
\item If $w(\alpha) \in \Phi^-$, then by applying Lemma \ref{conj}\eqref{conjc} (when $w(\alpha) = -\beta$) or Lemma \ref{conj}\eqref{conjb} (when $w(\alpha) \neq -\beta$), we see that 
$$\xi_0\left(u_{w^{-1}(\beta)}(y)\right) = u_{s_\alpha w^{-1}(\beta)}(y') \in I_1 \cap V_{ws_\alpha,a}.$$ 
Likewise, by applying Lemma \ref{conj2}\eqref{conj2b} (when $w(\alpha) = -\beta$) or Lemma \ref{conj2}\eqref{conj2a} (when $w(\alpha) \neq -\beta$), we see that $u_{\fat{\alpha}}([x]^{-1})\xi_x(u_{w^{-1}(\beta)}(y))u_{\fat{\alpha}}(-[x]^{-1})$ is a product elements of the form $u_{w^{-1}(\gamma)}(y')$, with $\hgt(\gamma) > a$.  Therefore, we have
$$u_{\fat{\alpha}}([x]^{-1})\xi_x\left(u_{w^{-1}(\beta)}(y)\right)u_{\fat{\alpha}}(-[x]^{-1}) \in I_1 \cap V_{w,a}.$$
\end{enumerate}

\vspace{10pt}

\noindent These two points and equation \eqref{cocycle} show that if $h\in I_1 \cap V_{w,a}$, then 
$$\xi_x(h)\in I_1 \cap V_{ws_\alpha,a} \qquad \textnormal{and} \qquad u_{\fat{\alpha}}([x]^{-1})\xi_x\left(h\right)u_{\fat{\alpha}}(-[x]^{-1}) \in I_1 \cap V_{w,a}.$$
Equation \eqref{PSaction} then implies $[f]\cdot \T_{\salpha} \in \fil_a$.

Now let $\omega \in \widetilde{\Omega}$.  Then, for $h\in I_1\cap V_{w,a}$ and with notation as in Lemma \ref{Tomega}, we have
$$(f\cdot \T_\omega)(h)(\widehat{w}) = \zeta\cdot\psi_{w\bar{\omega}^{-1}}(\omega h\omega^{-1}) \in \zeta\cdot\psi_{w\bar{\omega}^{-1}}(I_1 \cap V_{w\bo^{-1},a}) = \{0\}.$$
Thus, we obtain $[f]\cdot\T_{\omega}\in \fil_a$.  
\end{proof}

For $a\in \bbZ$, define
$$\gr_a := \fil_a\big/\fil_{a - 1},$$
the associated graded module.  Given $f\in \fil_a$, we denote by $\bar{f}$ the image of $f$ in $\gr_a$.  The next step will be to determine some of the graded pieces $\gr_a$.

\subsection{Zeroth graded piece}
\begin{propn}\label{gr0}
Let $d := \dim_C(\Hom^{\textnormal{cts}}(T_1,C)) = \textnormal{rk}(\bG)([F:\qp] + \mathbf{1}_F(\zeta_p))$.  Then 
$$\gr_0 = \fil_0 \cong \textnormal{H}^0\big(I_1,\ind_B^G(\chi)\big)^{\oplus d} \cong \ind_{\cH_T}^{\cH}(\chi)^{\oplus d}.$$
\end{propn}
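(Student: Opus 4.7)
The plan is to identify $\fil_0$ with $d$ copies of $\textnormal{H}^0(I_1,\ind_B^G(\chi))$, which is isomorphic to $\ind_{\cH_T}^{\cH}(\chi)$ by \eqref{h0} (noting that a smooth $C^\times$-valued character is automatically trivial on the pro-$p$ group $T_1$, so $\chi^{T_1} = \chi$).

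First, I would unwind $\fil_0$ as a $C$-vector space. Because every $\beta\in \Phi^+$ has positive height, $V_{w,0} = w^{-1}Uw$, so the condition $\psi_w(I_1\cap V_{w,0}) = 0$ says $\psi_w$ vanishes on the unipotent radical $I_1\cap w^{-1}Uw$ of $I_1\cap w^{-1}Bw$. Using the Iwahori decomposition $I_1\cap w^{-1}Bw = T_1\cdot(I_1\cap w^{-1}Uw)$, restriction to $T_1$ gives a linear isomorphism $\fil_0 \stackrel{\sim}{\longrightarrow} \bigoplus_{w\in W_0}\Hom^{\textnormal{cts}}(T_1,C)$, so $\dim_C\fil_0 = |W_0|\cdot d$, which matches $\dim_C\ind_{\cH_T}^{\cH}(\chi)^{\oplus d}$.

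Next I would construct an explicit map. Fix a basis $\phi_1,\ldots,\phi_d$ of $\Hom^{\textnormal{cts}}(T_1,C)$. For each $w\in W_0$, conjugation by $\widehat{w}$ is an automorphism of $T_1$, so the formula $\phi_{j,w}(tu) := \phi_j(\widehat{w}t\widehat{w}^{-1})$ (for $t\in T_1$, $u\in I_1\cap w^{-1}Uw$) defines a continuous homomorphism $I_1\cap w^{-1}Bw\to C$ vanishing on the unipotent part. Given $\mathsf{f}_j\in \textnormal{H}^0(I_1,\ind_B^G(\chi))$ corresponding under \eqref{shapiro} to constants $(c_{j,w})_{w\in W_0}$ with $c_{j,w} = \mathsf{f}_j(\widehat{w})$, I would set
$$\Psi:\bigoplus_{j=1}^d \textnormal{H}^0(I_1,\ind_B^G(\chi))\longrightarrow \fil_0,\qquad (\mathsf{f}_j)_{j=1}^d\longmapsto \Bigg(\sum_j c_{j,w}\phi_{j,w}\Bigg)_{w\in W_0}.$$
Since $\widehat{w}$-conjugation permutes bases of $\Hom^{\textnormal{cts}}(T_1,C)$, the map $\Psi$ is a $C$-linear bijection.

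The main step is to verify that $\Psi$ is $\cH$-equivariant on the generators $\{\T_{\salpha}\}_{\fat{\alpha}\in\Pi_\aff}$ and $\{\T_\omega\}_{\omega\in\widetilde{\Omega}}$. For arguments in the unipotent part of $I_1\cap w^{-1}Bw$, Proposition \ref{filt} guarantees that both sides vanish when evaluated at $\widehat{w}$. For $t\in T_1$, I would apply Corollary \ref{Tsalphatorus}: the case $w(\alpha)\in\Phi^+$ gives zero, and when $w(\alpha)\in\Phi^-$ the sum over $k_F^\times$ vanishes because its arguments lie in the unipotent part where each $\phi_{j,w}$ is trivial. This leaves
$$\zeta\sum_j c_{j,ws_\alpha}\phi_{j,ws_\alpha}(t^{s_\alpha}) - \delta_{\chi\circ w(\alpha^\vee),1}\sum_j c_{j,w}\phi_{j,w}(t).$$
A direct computation using \eqref{bruhateqs} in each Bruhat case (and the fact that $T$ is abelian, so conjugation by $\alpha^\vee(-1)$ on $T$ is trivial) shows $\phi_{j,ws_\alpha}(t^{s_\alpha}) = \phi_{j,w}(t)$, so the expression collapses to $\sum_j c'_{j,w}\phi_{j,w}(t)$, where $(c'_{j,w})_w$ is precisely the $\T_{\salpha}$-translate of $(c_{j,w})_w$ in $\textnormal{H}^0(I_1,\ind_B^G(\chi))$. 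An analogous computation for $\T_\omega$ via Lemma \ref{Tomega} uses that $\widehat{w\bar\omega^{-1}}\omega$ and $\widehat{w}$ induce the same conjugation on $T$, so $\phi_{j,w\bar\omega^{-1}}(\omega t\omega^{-1}) = \phi_{j,w}(t)$. The main potential obstacle is the bookkeeping of the various lifts $\widehat{w}$ and the twisting characters that appear; this is resolved uniformly by the observation that conjugation on the abelian group $T$ depends only on the image in $W_0$, and hence the twists by $\widehat{w}$ built into the definition of $\phi_{j,w}$ exactly absorb the corresponding twists appearing in Corollary \ref{Tsalphatorus} and Lemma \ref{Tomega}.
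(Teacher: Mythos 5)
Your proposal is correct and takes essentially the same approach as the paper: the paper also uses Corollary \ref{Tsalphatorus} and Lemma \ref{Tomega} to match the Hecke action on elements supported on $T_1$ against the Hecke action on $\ind_B^G(\chi)^{I_1}$, with your map $\Psi$ corresponding to the paper's assignment $\theta^w \longmapsto \mathsf{f}_w$ (taken for each $\theta$ in a basis of $\Hom^{\textnormal{cts}}(T_1,C)$). The key reduction you identify — that conjugation on the abelian group $T$ depends only on the image in $W_0$, so the twists by the various lifts $\widehat{w}$, $\salpha$, $\omega$ cancel — is exactly what underlies the paper's verification that the two sets of formulas agree.
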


\begin{proof}
A basis for $\ind_B^G(\chi)^{I_1}$ is given by $\{\mathsf{f}_{w}\}_{w\in W_0}$, where $\mathsf{f}_{w}(b\widehat{w}'j) = \chi(b)\delta_{w,w'}$ for $b\in B, j\in I_1$, and $w'\in W_0$.  Let $\fat{\alpha} = (\alpha, \ell)\in \Pi_{\aff}$.  By the same calculation as in Lemma \ref{mainlemma}, we obtain
$$(\mathsf{f}_{w}\cdot\T_{\salpha})(\widehat{w}') = \begin{cases} 0 & \textnormal{if}~w'(\alpha)\in \Phi^+,\\ \zeta\cdot \mathsf{f}_{w}(\widehat{w's_\alpha}) + \left(\sum_{x\in k_F^\times}\chi\circ w'(\alpha^\vee)([x])\right)\mathsf{f}_{w}(\widehat{w}') & \textnormal{if}~w'(\alpha)\in \Phi^-,\end{cases}$$
where $\zeta = \chi(\widehat{w's_\alpha}\salpha\widehat{w}'^{-1})^{-1}$.  In particular, this gives
$$\mathsf{f}_{w}\cdot\T_{\salpha} = \begin{cases} \zeta'\cdot \mathsf{f}_{ws_\alpha} & \textnormal{if}~ w(\alpha) \in \Phi^+,\\ -\delta_{\chi\circ w(\alpha^\vee),1}\mathsf{f}_{w} & \textnormal{if}~w(\alpha)\in \Phi^-,\end{cases}$$
where $\zeta' = \chi(\widehat{ws_\alpha}\salpha^{-1}\widehat{w}^{-1})$.  Likewise, if $\omega \in \widetilde{\Omega}$, then the same computation as in Lemma \ref{Tomega} gives
$$(\mathsf{f}_{w}\cdot\T_{\omega})(\widehat{w}') = \mathsf{f}_{w}(\widehat{w}'\omega^{-1}) = \xi\cdot \mathsf{f}_{w}\left(\widehat{w'\bar{\omega}^{-1}}\right),$$
where $\xi = \chi\big(\widehat{w'\bar{\omega}^{-1}}\omega\widehat{w}'^{-1}\big)^{-1}$.  Thus,
$$\mathsf{f}_{w}\cdot\T_{\omega} = \xi'\cdot \mathsf{f}_{w\bar{\omega}},$$
where $\xi' = \chi(\widehat{w\bar\omega}\omega^{-1}\widehat{w}^{-1})$.

Now let $\theta\in \Hom^{\textnormal{cts}}(T_1,C)$, and for $w\in W_0$, let $\theta^{w}\in \Hom^{\textnormal{cts}}(I_1\cap w^{-1}Bw,C)$ denote the homomorphism defined by 
$$\theta^{w}(t) = \theta(\widehat{w}t\widehat{w}^{-1})\qquad\textnormal{if}~t\in T_1,\qquad \theta^{w}(u) = 0\qquad\textnormal{if}~u \in I_1\cap w^{-1}Uw.$$
By abuse of notation, we will also use $\theta^w$ to denote the element of $\textnormal{H}^1(I_1,\ind_B^G(\chi))$ associated to the tuple in $\bigoplus_{w'\in W_0} \Hom^{\textnormal{cts}}(I_1\cap w'^{-1}Bw',C)$ which is 0 in coordinate $w'\neq w$, and equal to the homomorphism $\theta^w$ in coordinate $w$.  Thus, for $t\in T_1$, Corollary \ref{Tsalphatorus} gives
$$(\theta^w\cdot \T_{\salpha})(t)(\widehat{w}') = \begin{cases}0 & \textnormal{if}~w'(\alpha) \in \Phi^+, \\ \zeta\delta_{w,w's_{\alpha}}\cdot\theta^w\left(t^{s_\alpha}\right) - \delta_{\chi\circ w'(\alpha^\vee),1}\delta_{w,w'}\cdot \theta^w(t)  & \textnormal{if}~ w'(\alpha)\in \Phi^-\end{cases}$$
(with $\zeta$ as above).  This gives
$$\theta^w\cdot \T_{\salpha} = \begin{cases}\zeta'\cdot\theta^{ws_\alpha} & \textnormal{if}~w(\alpha) \in \Phi^+, \\  -\delta_{\chi\circ w(\alpha^\vee),1}\theta^{w}  & \textnormal{if}~ w(\alpha)\in \Phi^-.\end{cases}$$
Likewise, if $\omega\in \widetilde{\Omega}$, then
$$(\theta^w\cdot \T_\omega)(t)(\widehat{w}') = \xi\delta_{w,w'\bo^{-1}}\cdot\theta^w(\bar{\omega} t\bar{\omega}^{-1}),$$
which implies
$$\theta^w\cdot \T_\omega = \xi'\cdot \theta^{w\bar{\omega}}.$$

Hence, we see that subspace spanned by $\{\theta^{w}\}_{w\in W_0}$ is $\cH$-stable, and isomorphic to $\ind_B^G(\chi)^{I_1}$ via $\theta^w\longmapsto \mathsf{f}_w$.  The formula for $d$ follows from the isomorphisms $\Hom^{\textnormal{cts}}(T_1,C) \cong \Hom^{\textnormal{cts}}(T_1/T_1^p,C)$ and $T_1/T_1^p \cong ((1 + \fp)/(1 + \fp)^p)^{\oplus \textnormal{rk}(\bG)}$, along with \cite[Ch. II, Proposition 5.7(i)]{neukirch:ant}.  
\end{proof}

\subsection{Basis for higher graded pieces}
In order to understand the higher graded pieces of $\textnormal{H}^1(I_1,\ind_B^G(\chi))$, it will be necessary to understand homomorphisms $I_1\cap w^{-1}Bw\longrightarrow C$, or equivalently, the abelianizations $(I_1\cap w^{-1}Bw)^{\textnormal{ab}}$.  These seem to be tricky to classify in general; for example, one can show, using \eqref{comm} and our assumption on $p$, that
$$(I_1\cap B)^{\textnormal{ab}} = T_1\oplus\bigoplus_{\alpha\in \Pi}u_{\alpha}(\fo/\fp)$$
and 
$$(I_1\cap w_\circ^{-1}Bw_\circ)^{\textnormal{ab}} = T_1\oplus\bigoplus_{\beta\in \Phi^-}u_{\beta}(\fp/\fp^2).$$
In order to handle this subtlety, we introduce a certain subset of $W_0$.

Let $\beta\in \Phi^+$.  We define a subset $\cW_\beta'$ of $W_0$ by
$$\cW_\beta' := \left\{w\in W_0: I_1 \cap U_{w^{-1}(\beta)}\not\subset [I_1\cap w^{-1}Bw, I_1\cap w^{-1}Bw]\right\}.$$
That is, $w\in \cW_\beta'$ if and only if the image of the root subgroup $I_1 \cap U_{w^{-1}(\beta)}$ is nontrivial in the abelianization $(I_1\cap w^{-1}Bw)^{\textnormal{ab}}$.  Some straightforward properties of $\cW_\beta'$:
\begin{enumerate}[$\bullet$]
\item The commutator subgroup $[I_1\cap w_\circ^{-1}Bw_\circ, I_1\cap w_\circ^{-1}Bw_\circ]$ is equal to the subgroup of $I_1\cap w_\circ^{-1}Bw_\circ$ generated by $u_{\beta}(\fp^2)$ for every $\beta\in \Phi^-$.  Therefore, $w_\circ\in \cW_\beta'$ for every $\beta\in \Phi^+$.  In particular, $\cW_\beta'$ is nonempty.  
\item If $\beta\in \Pi$, then $\cW_\beta' = W_0$ (this follows from equation \eqref{comm}).
\item If $w\in \cW_\beta'$ and $\omega\in \widetilde{\Omega}$, then using the fact that $\omega$ normalizes $I_1$ we get
\begin{eqnarray*}
I_1 \cap U_{\bo^{-1}w^{-1}(\beta)} & = & \omega^{-1}(I_1\cap U_{w^{-1}(\beta)})\omega \\ 
 & \not\subset & \omega^{-1}[I_1\cap w^{-1}Bw, I_1\cap w^{-1}Bw]\omega \\
 & = & [I_1\cap \bo^{-1}w^{-1}Bw\bo, I_1\cap \bo^{-1}w^{-1}Bw\bo].
\end{eqnarray*}
Therefore, $\overline{\Omega}$ acts on the right on $\cW_\beta'$, and $\cW_\beta'$ is a union of $\overline{\Omega}$-cosets.  
\end{enumerate}

Now let $\beta\in \Phi^+$ and $w\in \cW_\beta'$.  For $0\leq r \leq f - 1$, we define $\eta_{w^{-1}(\beta),r}^w \in \Hom(I_1\cap w^{-1}Bw,C)$ to be the homomorphism with support in $I_1\cap U_{w^{-1}(\beta)}$, and which satisfies
$$\eta_{w^{-1}(\beta),r}^w\left(u_{w^{-1}(\beta)}\left(y\right)\right) = \overline{\varpi^{-\mathbf{1}_{\Phi^-}(w^{-1}(\beta))}y}^{~p^r},$$
where $y\in \fp^{\mathbf{1}_{\Phi^-}(w^{-1}(\beta))}$ (note that $u_{w^{-1}(\beta)}(y)\in I_1\cap U_{w^{-1}(\beta)}$).  In particular, we have $\eta^w_{w^{-1}(\beta),r}\in \fil_{\hgt(\beta)}$.  Using these homomorphisms, we get an isomorphism of $C$-vector spaces
\begin{eqnarray*}
\textnormal{H}^1\big(I_1,\ind_B^G(\chi)\big) & \cong & \bigoplus_{w\in W_0}\Hom^{\textnormal{cts}}(I_1\cap w^{-1}Bw,C)\\
 &  = & \bigoplus_{w\in W_0}\Hom^{\textnormal{cts}}(T_1,C)\oplus\bigoplus_{\beta\in \Phi^+}\bigoplus_{w\in \cW_\beta'}\bigoplus_{r = 0}^{f - 1}C\eta^w_{w^{-1}(\beta),r}.
 \end{eqnarray*}
In what follows, we will abuse notation and use $\eta_{w^{-1}(\beta),r}^w$ to denote the element in $\textnormal{H}^1(I_1,\ind_B^G(\chi))$ associated to the tuple in $\bigoplus_{w'\in W_0}\Hom^{\textnormal{cts}}(I_1\cap w'^{-1}Bw',C)$ which is $0$ in coordinate $w'\neq w$ and the homomorphism $\eta_{w^{-1}(\beta),r}^w$ in coordinate $w$.

The next task will be to compute the action of $\cH$ on the elements $\eta^w_{w^{-1}(\beta),r}$ (or, more precisely, their images in the associated graded module).  
%Note that $\eta^w_{w^{-1}(\beta),r}$ is well-defined: fix $w\in W_0$, and set $\Phi_w^+ = \{\beta\in \Phi^+: w\in \cW_\beta'\}$.  If $\beta'\in \Phi_w^+$, then $I_1\cap U_{w^{-1}(\beta')}\not\subset [I_1\cap w^{-1}Bw, I_1\cap w^{-1}Bw]$, which implies $u_{w^{-1}(\beta')}(\fp^{\mathbf{1}_{\Phi^-}(w^{-1}(\beta'))}/\fp^{1 + \mathbf{1}_{\Phi^-}(w^{-1}(\beta'))}) \longhookrightarrow (I_1 \cap w^{-1}Bw)^{\textnormal{ab}}$.  The map
%$$T_1\oplus\bigoplus_{\beta'\in \Phi_w^+}u_{w^{-1}(\beta')}(\fp^{\mathbf{1}_{\Phi^-}(w^{-1}(\beta'))}/\fp^{1 + \mathbf{1}_{\Phi^-}(w^{-1}(\beta'))})\longrightarrow (I_1\cap w^{-1}Bw)^{\textnormal{ab}}$$
%is still injective, since it is $T_0$-equivariant and the eigenvalues on the left-hand side are distinct.  Therefore, it makes sense to define $\eta_{w^{-1}(\beta),r}^w$ as a ``delta function'' on $I_1\cap U_{w^{-1}(\beta)}$.  

\begin{lemma}
\label{TsalphaGr}
Fix $\beta\in \Phi^+$, $w\in \cW_\beta'$, and $\fat{\alpha} = (\alpha,\ell) \in \Pi_\aff$.  If $w(\alpha) = -\beta$ or $w(\alpha)\in \Phi^+\smallsetminus\{\beta\}$, then $ws_\alpha\in \cW_\beta'$.  Consequently, we have the following equality in $\gr_{\textnormal{ht}(\beta)}$:
\begin{flushleft}
$\bar{\eta}_{w^{-1}(\beta),r}^w\cdot\T_{\salpha} = -\mathbf{1}_{\Phi^-}(w(\alpha))\delta_{(\chi\overline{\beta}^{-p^r})\circ w(\alpha^\vee),1}\cdot\bar{\eta}_{w^{-1}(\beta),r}^w$
\end{flushleft}
\begin{flushright}
$ + \zeta\cdot\left(d_{\alpha,s_\alpha w^{-1}(\beta)}\mathbf{1}_{\Phi^+\smallsetminus\{\beta\}}(w(\alpha))  - \delta_{w(\alpha),-\beta}\delta_{F,\qp}\right)\cdot\bar{\eta}_{s_\alpha w^{-1}(\beta),r}^{ws_\alpha}$,
\end{flushright}
where $\zeta := \chi(\widehat{ws_\alpha}\salpha^{-1}\widehat{w}^{-1})$.  
\end{lemma}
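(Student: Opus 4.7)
The statement has two parts: (i) the combinatorial claim $ws_\alpha \in \cW_\beta'$ when $w(\alpha) = -\beta$ or $w(\alpha) \in \Phi^+ \smallsetminus\{\beta\}$, and (ii) the explicit formula for $\bar{\eta}_{w^{-1}(\beta),r}^w \cdot \T_{\salpha}$ in $\gr_{\hgt(\beta)}$. I would prove them in this order.

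For (i), the plan is to use the commutation relations \eqref{comm} (with unit structure constants since $p \geq 5$, or $p \geq 3$ for $\bG\bL_n$) to describe the commutator subgroup $[I_1\cap(ws_\alpha)^{-1}B(ws_\alpha),\,I_1\cap(ws_\alpha)^{-1}B(ws_\alpha)]$ via iterated commutators of the generators $T_1$ and $u_{(ws_\alpha)^{-1}(\delta)}(\cdot)$ for $\delta\in\Phi^+$. The image of $I_1\cap U_{s_\alpha w^{-1}(\beta)}$ in the abelianization is nontrivial precisely when $s_\alpha w^{-1}(\beta)$ does not decompose, at the correct $\varpi$-adic level, as a sum of positive roots of $(ws_\alpha)^{-1}B(ws_\alpha)$. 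When $w(\alpha)=-\beta$ one has $s_\alpha w^{-1}(\beta)=-w^{-1}(\beta)$, and the sign-reversal combined with $w\in\cW_\beta'$ yields the desired non-decomposability. When $w(\alpha)\in\Phi^+\smallsetminus\{\beta\}$, the reflection $s_\alpha$ preserves the set of positive roots other than $w(\alpha)$, setting up a bijection between decompositions of $w^{-1}(\beta)$ relevant to $I_1 \cap w^{-1}Bw$ and decompositions of $s_\alpha w^{-1}(\beta)$ relevant to $I_1\cap(ws_\alpha)^{-1}B(ws_\alpha)$, transferring the condition.

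For (ii), I apply Lemma \ref{mainlemma} to the cocycle $f$ corresponding to the tuple supported at the $w$-coordinate by $\eta_{w^{-1}(\beta),r}^w$ and zero elsewhere. Only $w'\in\{w,\,ws_\alpha\}$ give nonzero $(f\cdot\T_{\salpha})(h)(\widehat{w}')$ via \eqref{PSaction}, since $\psi_{w'}$ must be nonzero. For $w'=ws_\alpha$ and $h = u_{s_\alpha w^{-1}(\beta)}(y)$ (in affine coordinates), Lemma \ref{conj} describes $\xi_x(h)$: case (b) applies when $w(\alpha)\in\Phi^+\smallsetminus\{\beta\}$, and at $x = 0$ the main term $u_{w^{-1}(\beta)}(d_{\alpha,s_\alpha w^{-1}(\beta)}\varpi^{\bullet} y)$ has $\varpi$-exponent cancelling those in the definition of $\eta$, yielding the coefficient $\zeta\cdot d_{\alpha,s_\alpha w^{-1}(\beta)}$; case (a) applies when $w(\alpha)=-\beta$, producing $u_{-\fat\alpha}([x+\overline{y}]-[x]-y)$, and the full sum over $x \in k_F$ reduces via Witt-vector arithmetic to a polynomial sum $\sum_{x\in\ffq}\omega(x,\overline{y})^{p^r}$ with $\omega$ of degree $p-1$ in $x$; this vanishes for $f \geq 2$ by the identity $\sum_{x\in\ffq}x^d=0$ for $0<d<q-1$, and equals $-\overline{y}^{p^r}$ for $f = 1$, producing the $-\zeta\delta_{F,\qp}$ factor. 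The $w'=w$ contribution arises only when $w(\alpha)\in\Phi^-$, handled via the second branch of \eqref{PSaction} and Lemma \ref{conj2}; case (b) applies when $w(\alpha)=-\beta$ (yielding $u_{\fat\beta}([x]^2 y)$) and case (a) when $w(\alpha)\in\Phi^-\smallsetminus\{-\beta\}$, each producing a character sum $\sum_{x\in k_F^\times}(\chi\circ w(\alpha^\vee))(-[x])\cdot x^{N}$ (for the appropriate $N$) that yields the factor $-\delta_{(\chi\overline{\beta}^{-p^r})\circ w(\alpha^\vee),1}$, since this sum equals $-1$ when the character $x\mapsto(\chi\circ w(\alpha^\vee))(-[x])\cdot x^{N}$ of $k_F^\times$ is trivial and $0$ otherwise. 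Throughout, reduction modulo $\fil_{\hgt(\beta)-1}$ eliminates all terms in the product expansions of Lemmas \ref{conj}(b) and \ref{conj2}(a) involving roots of height exceeding $\hgt(\beta)$.

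The main obstacle is part (i): the abelianizations $(I_1\cap w^{-1}Bw)^{\textnormal{ab}}$ depend delicately on $w$, as illustrated by the extreme examples $w=e$ (only simple roots survive, modulo $\fp$) and $w=w_\circ$ (all negative roots survive, modulo $\fp^2$) recorded in the excerpt, and verifying invariance of the non-decomposability condition under $w\to ws_\alpha$ requires careful combinatorial bookkeeping. Part (ii) is mostly mechanical given Lemmas \ref{mainlemma}, \ref{conj}, and \ref{conj2}; the only nontrivial analytic step is the Witt-vector polynomial sum producing the $\delta_{F,\qp}$ factor.
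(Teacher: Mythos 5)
Your plan for part (ii) is essentially on the right track: applying Lemma \ref{mainlemma} with $h$ ranging over the root subgroups $u_{w'^{-1}(\gamma)}(y)$ with $\hgt(\gamma)=\hgt(\beta)$, invoking Lemmas \ref{conj} and \ref{conj2}, truncating modulo $\fil_{\hgt(\beta)-1}$, and evaluating a character sum over $k_F^\times$ is exactly the paper's route. One small stylistic difference: for the $w(\alpha)=-\beta$ contribution the paper does not expand Witt vectors; since $\eta^w$ is a homomorphism, it pushes the $k_F$-sum \emph{inside} $u_{-\fat{\alpha}}$ and uses the fact that $x\mapsto x+\overline{\varpi^{-\ell}y}$ permutes $k_F$ to get $\sum_{x\in k_F}([x+\overline{\varpi^{-\ell}y}]-[x]-\varpi^{-\ell}y)=-q\varpi^{-\ell}y$ on the nose, so the factor $\delta_{F,\qp}$ falls out of $\textnormal{val}(q)=[F:\qp]$ rather than from a power-sum identity. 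Your approach here is correct but noticeably more cumbersome than needed.

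The genuine gap is in part (i). You propose to prove $ws_\alpha\in\cW_\beta'$ by a direct combinatorial analysis of the commutator subgroup of $I_1\cap(ws_\alpha)^{-1}B(ws_\alpha)$, which is precisely the question the paper itself flags as delicate (it is resolved only for $\bG\bL_n$, and only much later, in Proposition \ref{Wbeta'}). In fact your stated reason already contains a slip: when $\alpha\in\Pi$ the simple reflection $s_\alpha$ permutes $\Phi^+\smallsetminus\{\alpha\}$, not $\Phi^+\smallsetminus\{w(\alpha)\}$, and the transport from $w^{-1}Bw$ to $(ws_\alpha)^{-1}B(ws_\alpha)$ moves the affine levels of root subgroups in a way your sketch does not track. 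More importantly, this entire combinatorial analysis is unnecessary and misses the paper's key observation: part (i) is \emph{deduced} from part (ii). Once you compute $(f\cdot\T_{\salpha})(\cdot)(\widehat{ws_\alpha})$ and find that it is a nonzero homomorphism on $I_1\cap U_{s_\alpha w^{-1}(\beta)}$ in the relevant cases, that immediately forces $I_1\cap U_{s_\alpha w^{-1}(\beta)}$ to have nontrivial image in the abelianization (any homomorphism to $C$ factors through it), hence $ws_\alpha\in\cW_\beta'$. So the logical order is reversed: do the Hecke computation first (working formally, without yet knowing $ws_\alpha\in\cW_\beta'$), then read off (i) as a by-product. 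Without this inversion, your proposal leaves (i) unproved and, as you note yourself, attacking it head-on is the hardest part of your plan.
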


\begin{proof}
Since we are considering the image of $\eta_{w^{-1}(\beta),r}^w\cdot\T_{\salpha}$ in $\gr_{\textnormal{ht}(\beta)}$, it suffices to evaluate this function on the elements $u_{w'^{-1}(\gamma)}(y)$ for $w'\in W_0$, $\gamma\in \Phi^+$ with $\textnormal{ht}(\gamma) = \textnormal{ht}(\beta)$, and $y\in \fp^{\mathbf{1}_{\Phi^-}(w'^{-1}(\gamma))}$.  By Lemma \ref{mainlemma}, we have $(\eta_{w^{-1}(\beta),r}^w\cdot\T_{\salpha})(u_{w'^{-1}(\gamma)}(y))(\widehat{w}') = 0$ unless $w' = w$ or $w' = ws_\alpha$, and $\gamma = \beta$.  Since the homomorphism $\eta_{w^{-1}(\beta),r}^w$ is only supported on $I_1\cap U_{w^{-1}(\beta)}$, we get 
\begin{eqnarray*}
(\eta_{w^{-1}(\beta),r}^w\cdot \T_{\salpha})(u_{w^{-1}(\beta)}(y))(\widehat{w}) & \stackrel{\eqref{PSaction}}{=} & \begin{cases} 0 & (\textnormal{I})  \\
\sum_{x\in k_F^\times}\chi\circ w(\alpha^\vee)(-[x]) & (\textnormal{II}) \\ \qquad \cdot\eta_{w^{-1}(\beta),r}^w\left(u_{\fat{\alpha}}([x]^{-1})\xi_x(u_{w^{-1}(\beta)}(y))u_{\fat{\alpha}}(-[x]^{-1})\right) & \end{cases} \\
& \stackrel{\textnormal{Lem.}~\ref{conj2}\eqref{conj2a}}{=}  & \begin{cases} 0 & \\
\sum_{x\in k_F^\times}\chi\circ w(\alpha^\vee)([x])\cdot\eta_{w^{-1}(\beta),r}^w\left(u_{w^{-1}(\beta)}([x]^{-\langle w^{-1}(\beta),\alpha^\vee\rangle}y)\right) & \end{cases} \\
 & =  & \begin{cases} 0 &  \\
-\delta_{(\chi\overline{\beta}^{-p^r})\circ w(\alpha^\vee),1}\cdot\eta_{w^{-1}(\beta),r}^w\left(u_{w^{-1}(\beta)}(y)\right) & \end{cases} 
\end{eqnarray*}
where $w(\alpha)\in \Phi^+$ in case $(\textnormal{I})$ and $w(\alpha)\in \Phi^-$ in case $(\textnormal{II})$.  Similarly,
\begin{flushleft}
$(\eta_{w^{-1}(\beta),r}^w\cdot \T_{\salpha})(u_{s_\alpha w^{-1}(\beta)}(y))(\widehat{ws_\alpha})$
\end{flushleft}

\begin{eqnarray*}
 & \stackrel{\eqref{PSaction}}{=} & \begin{cases}\zeta\sum_{x\in k_F}\eta_{w^{-1}(\beta),r}^w\left(\xi_x(u_{s_\alpha w^{-1}(\beta)}(y))\right) & (\textnormal{I}) \\ \zeta\sum_{x\in k_F}\eta_{w^{-1}(\beta),r}^w\left(\xi_x(u_{s_\alpha w^{-1}(\beta)}(y))\right) & (\textnormal{II}) \\ \zeta\cdot \eta_{w^{-1}(\beta),r}^w\left(\xi_0(u_{s_\alpha w^{-1}(\beta)}(y))\right) & (\textnormal{III}) \\ \zeta\cdot \eta_{w^{-1}(\beta),r}^w\left(\xi_0(u_{s_\alpha w^{-1}(\beta)}(y))\right) & (\textnormal{IV})\end{cases} \\
 & \stackrel{\sub{\textnormal{Lems.}}{\ref{conj}\eqref{conja},\eqref{conjb},\eqref{conjc}}}{=} & \begin{cases}  \zeta\cdot\eta^w_{w^{-1}(\beta),r}\left(u_{-\fat{\alpha}}\left(\sum_{x\in k_F}[x + \overline{\varpi^{-\ell}y}] - [x] - \varpi^{-\ell}y\right)\right)  &  \\ 
\zeta\sum_{x\in k_F} \eta^w_{w^{-1}(\beta),r}\left(u_{w^{-1}(\beta)}(d_{\alpha,s_\alpha w^{-1}(\beta)}\varpi^{\ell\langle w^{-1}(\beta),\alpha^\vee\rangle}y)\right) & \\
\zeta\cdot\eta^w_{w^{-1}(\beta),r}\left(u_{w^{-1}(\beta)}(-\varpi^{2\ell}y)\right) & \\
\zeta\cdot \eta_{w^{-1}(\beta),r}^w\left(u_{w^{-1}(\beta)}(d_{\alpha,s_\alpha w^{-1}(\beta)}\varpi^{\ell\langle w^{-1}(\beta),\alpha^\vee\rangle}y)\right)   & \end{cases}\\
& = & \begin{cases}  \zeta\cdot\eta^w_{w^{-1}(\beta),r}\left(u_{-\alpha}(-q\varpi^{-2\ell}y)\right)  &  \\ 
0 & \\
0 & \\
\zeta d_{\alpha,s_\alpha w^{-1}(\beta)}\cdot \eta_{w^{-1}(\beta),r}^w\left(u_{w^{-1}(\beta)}(\varpi^{\ell\langle w^{-1}(\beta),\alpha^\vee\rangle}y)\right)   & \end{cases}\\
 & = & \begin{cases}  -\zeta\delta_{F,\qp}\cdot\eta^w_{w^{-1}(\beta),r}\left(u_{-\alpha}(p^{1-2\ell}y)\right)  &  \\ 
0 & \\
0 & \\
\zeta d_{\alpha,s_\alpha w^{-1}(\beta)}\cdot \eta_{w^{-1}(\beta),r}^w\left(u_{w^{-1}(\beta)}(\varpi^{\ell\langle w^{-1}(\beta),\alpha^\vee\rangle}y)\right)   & \end{cases} 
\end{eqnarray*}
where $w(\alpha) = -\beta$ in case $(\textnormal{I})$; $w(\alpha) \in \Phi^-\smallsetminus\{-\beta\}$ in case $(\textnormal{II})$; $w(\alpha) = \beta$ in case $(\textnormal{III})$; and $w(\alpha) \in \Phi^+\smallsetminus\{\beta\}$ in case $(\textnormal{IV})$.  Furthermore, $\zeta := \chi(\widehat{ws_\alpha}\salpha^{-1}\widehat{w}^{-1})$, the third equality follows from the fact that $u_{w^{-1}(\beta)}(-\varpi^{2\ell}y)$ is contained in the derived subgroup of $I_1\cap w^{-1}Bw$ when $w(\alpha) = \beta$, and the last equality follows from the fact that $\textnormal{val}(q) = [F:\qp]$.  This calculation shows that the homomorphism $(f\cdot \T_{\salpha})(-)(\widehat{ws_\alpha})\in \Hom^{\textnormal{cts}}(I_1\cap s_\alpha w^{-1}Bws_\alpha,C)$ takes nonzero values if $w(\alpha) = -\beta$ or $w(\alpha)\in \Phi^+\smallsetminus\{\beta\}$, and thus $ws_\alpha\in \cW_\beta'$ in these cases.  This gives the claim.  
\end{proof}

\begin{lemma}\label{TomegaGr}
Let $\beta\in \Phi^+$ and $w\in \cW_\beta'$.   
\begin{enumerate}[(a)]
\item If $t_0 \in T_0$, then 
$$\bar{\eta}^w_{w^{-1}(\beta),r}\cdot \T_{t_0} = (\chi\overline{\beta}^{-p^r})\left(\widehat{w}t_0\widehat{w}^{-1}\right)^{-1}\cdot\bar{\eta}^w_{w^{-1}(\beta),r}.$$
\item If $\omega\in \widetilde{\Omega}$ and $\omega\widehat{\bo}^{-1} = \lambda(\varpi)$ for some $\lambda\in X_*(\bT)$, then
$$\bar{\eta}^w_{w^{-1}(\beta),r}\cdot\T_{\omega} = \chi\left(\widehat{w\bo}\omega^{-1}\widehat{w}^{-1}\right)d_{\bo,\bo^{-1}w^{-1}(\beta)}\cdot\bar{\eta}^{w\bo}_{\bo^{-1}w^{-1}(\beta),r}.$$
\end{enumerate}
\end{lemma}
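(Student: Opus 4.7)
The plan is to apply Lemma~\ref{Tomega} directly and to observe that, modulo $\fil_{\hgt(\beta)-1}$, only one root-subgroup coordinate survives when evaluating $f\cdot\T_\omega$.

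For part~(a), I would take $\omega = t_0\in T_0\subset\widetilde{\Omega}$, which has trivial image $\bo = 1$ in $W_0$, so Lemma~\ref{Tomega} specializes to its final displayed formula. Since $\eta^w_{w^{-1}(\beta),r}$ is supported only in the $w$-coordinate and, within that coordinate, only on $I_1\cap U_{w^{-1}(\beta)}$, and since conjugation by $t_0$ preserves each root subgroup, the result is supported in the same place. Evaluating on $h = u_{w^{-1}(\beta)}(y)$ via $t_0 u_{w^{-1}(\beta)}(y) t_0^{-1} = u_{w^{-1}(\beta)}\bigl(w^{-1}(\beta)(t_0)\,y\bigr)$ and using the definition of $\eta^w_{w^{-1}(\beta),r}$ pulls out the scalar $\overline{w^{-1}(\beta)(t_0)}^{\,p^r} = \overline{\beta}^{\,p^r}(\widehat{w}t_0\widehat{w}^{-1})$, which combined with the $\chi(\widehat{w}t_0\widehat{w}^{-1})^{-1}$ factor from Lemma~\ref{Tomega} yields the stated coefficient.

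For part~(b), I would apply Lemma~\ref{Tomega} to general $\omega$, writing $\omega = \lambda(\varpi)\widehat{\bo}$. Only the coordinate $w' = w\bo$ can contribute, since $\eta^w_{w^{-1}(\beta),r}$ lives in the $w$-coordinate; within that coordinate, conjugation by $\omega$ sends $u_{\gamma'}(y)$ into $U_{\bo(\gamma')}$, which meets the support of $\eta^w_{w^{-1}(\beta),r}$ only when $\gamma' = (w\bo)^{-1}(\beta)$. A direct computation using the decomposition $\omega = \lambda(\varpi)\widehat{\bo}$ and the Chevalley relation $\widehat{\bo}u_\gamma(y)\widehat{\bo}^{-1} = u_{\bo(\gamma)}(d_{\bo,\gamma}y)$ then gives
$$\omega\, u_{(w\bo)^{-1}(\beta)}(y)\, \omega^{-1} = u_{w^{-1}(\beta)}\!\left(\varpi^{\langle w^{-1}(\beta),\lambda\rangle}\, d_{\bo,\bo^{-1}w^{-1}(\beta)}\, y\right).$$
The crucial observation is that, because $\omega$ has length~$0$ and hence normalizes $I_1$, one obtains the valuation identity
$$\langle w^{-1}(\beta),\lambda\rangle = \mathbf{1}_{\Phi^-}(w^{-1}(\beta)) - \mathbf{1}_{\Phi^-}(\bo^{-1}w^{-1}(\beta))$$
for free. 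Substituting this into the defining formula for $\eta^w_{w^{-1}(\beta),r}$, the powers of $\varpi$ cancel and a single factor $d_{\bo,\bo^{-1}w^{-1}(\beta)}$ survives (using that $d = \pm 1$ and $p$ is odd, so $d^{p^r} = d$). Combining with the character factor $\zeta = \chi(\widehat{w\bo}\omega^{-1}\widehat{w}^{-1})$ from Lemma~\ref{Tomega} yields the stated formula.

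The main delicate point will be the $\varpi$-valuation balancing in part~(b): once the length-zero hypothesis on $\omega$ is invoked it is essentially forced, but one must track the renormalization appearing in the definition of $\eta^w_{w^{-1}(\beta),r}$ together with the sign conventions coded in $d_{\bo,\bo^{-1}w^{-1}(\beta)}$ to end up with exactly one factor of $d$ rather than its square or its inverse.
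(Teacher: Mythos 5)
Your proposal is correct and follows essentially the same route as the paper: apply Lemma~\ref{Tomega}, observe that only the $w\bo$ (resp.\ $w$) coordinate and only the root subgroup $U_{\bo^{-1}w^{-1}(\beta)}$ (resp.\ $U_{w^{-1}(\beta)}$) can contribute, compute the conjugation via the Chevalley structure constants, and absorb the $\varpi$-power using the fact that $\omega$ normalizes $I_1$. The valuation identity you isolate and the remark that $d_{\bo,\bo^{-1}w^{-1}(\beta)}^{p^r}=d_{\bo,\bo^{-1}w^{-1}(\beta)}$ for odd $p$ are exactly the two points the paper compresses into ``the last line follows from the fact that $\omega$ normalizes $I_1$,'' so you have filled in precisely what the paper leaves implicit.
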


\begin{proof}  
If $\omega$ is an arbitrary element of $\widetilde{\Omega}$, then $\eta^w_{w^{-1}(\beta),r}\cdot\T_\omega$ is associated to an element of $\Hom^{\textnormal{cts}}(I_1\cap \bo^{-1}w^{-1}Bw\bo,C)$, which is supported on $I_1\cap U_{\bo^{-1}w^{-1}(\beta)}$.  In particular, $\eta^w_{w^{-1}(\beta),r}\cdot\T_\omega$ is a scalar multiple of $\eta^{w\bo}_{\bo^{-1}w^{-1}(\beta),r}$, and it suffices to evaluate $(\eta^w_{w^{-1}(\beta),r}\cdot\T_{\omega})(u_{\bo^{-1}w^{-1}(\beta)}(y))(\widehat{w\bo})$.  

We leave case (a) as a straightforward exercise.  Thus, assume that $\omega\in \widetilde{\Omega}$ is such that $\omega\widehat{\bo}^{-1}$ is of the form $\lambda(\varpi)$ for some $\lambda\in X_*(\bT)$.  Using Lemma \ref{Tomega} and the definition of the structure constants, we get
\begin{eqnarray*}
(\eta^w_{w^{-1}(\beta),r}\cdot\T_{\omega})(u_{\bo^{-1}w^{-1}(\beta)}(y))(\widehat{w\bo}) & = & \chi\left(\widehat{w\bo}\omega^{-1}\widehat{w}^{-1}\right)\cdot\eta^w_{w^{-1}(\beta),r}\left(\omega u_{\bo^{-1}w^{-1}(\beta)}(y)\omega^{-1}\right)\\
& = & \chi\left(\widehat{w\bo}\omega^{-1}\widehat{w}^{-1}\right)\cdot\eta^w_{w^{-1}(\beta),r}\left(\lambda(\varpi)\widehat{\bo} u_{\bo^{-1}w^{-1}(\beta)}(y)\widehat{\bo}^{-1}\lambda(\varpi)^{-1}\right)\\
& = & \chi\left(\widehat{w\bo}\omega^{-1}\widehat{w}^{-1}\right)\cdot\eta^w_{w^{-1}(\beta),r}\left(u_{w^{-1}(\beta)}(d_{\bo,\bo^{-1}w^{-1}(\beta)}\varpi^{\langle w^{-1}(\beta),\lambda\rangle}y)\right)\\
& = & \chi\left(\widehat{w\bo}\omega^{-1}\widehat{w}^{-1}\right)d_{\bo,\bo^{-1}w^{-1}(\beta)}\cdot\eta^w_{w^{-1}(\beta),r}\left(u_{w^{-1}(\beta)}(\varpi^{\langle w^{-1}(\beta),\lambda\rangle}y)\right)\\
& = & \chi\left(\widehat{w\bo}\omega^{-1}\widehat{w}^{-1}\right)d_{\bo,\bo^{-1}w^{-1}(\beta)}\cdot\eta^{w\bo}_{\bo^{-1}w^{-1}(\beta),r}\left(u_{\bo^{-1}w^{-1}(\beta)}(y)\right),
\end{eqnarray*}
where the last line follows from the fact that $\omega$ normalizes $I_1$.  Taking the image in $\gr_{\textnormal{ht}(\beta)}$ gives the claim.
\end{proof}

\begin{cor}\label{Tsalpha*Gr}
Let $\beta\in \Phi^+, w\in \cW_\beta'$, and $\fat{\alpha} = (\alpha,\ell) \in \Pi_\aff$.  We then have the following equality in $\gr_{\hgt(\beta)}$:
\begin{flushleft}
$\bar{\eta}_{w^{-1}(\beta),r}^w\cdot\T_{\salpha}^* = \mathbf{1}_{\Phi^+}(w(\alpha))\delta_{(\chi\overline{\beta}^{-p^r})\circ w(\alpha^\vee),1}\cdot\bar{\eta}_{w^{-1}(\beta),r}^w$
\end{flushleft}
\begin{flushright}
$ + \zeta\cdot\left(d_{\alpha,s_\alpha w^{-1}(\beta)}\mathbf{1}_{\Phi^+\smallsetminus\{\beta\}}(w(\alpha))  - \delta_{w(\alpha),-\beta}\delta_{F,\qp}\right)\bar{\eta}_{s_\alpha w^{-1}(\beta),r}^{ws_\alpha}$,
\end{flushright}
where $\zeta := \chi(\widehat{ws_\alpha}\salpha^{-1}\widehat{w}^{-1})$.  
\end{cor}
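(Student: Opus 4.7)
The strategy is entirely direct: by definition $\T_{\salpha}^* = \T_{\salpha} - c_{\fat{\alpha}}$, where $c_{\fat{\alpha}} = \sum_{x\in k_F^\times}\T_{\alpha^\vee([x])}$, so I expand the action and combine the outputs of the two preceding lemmas. First I would apply Lemma \ref{TsalphaGr} to write down $\bar{\eta}_{w^{-1}(\beta),r}^w\cdot\T_{\salpha}$ verbatim; this already contributes the entire second summand in the target formula (with the stated coefficient $\zeta$), and contributes the term $-\mathbf{1}_{\Phi^-}(w(\alpha))\delta_{(\chi\overline{\beta}^{-p^r})\circ w(\alpha^\vee),1}\cdot\bar{\eta}_{w^{-1}(\beta),r}^w$ to the diagonal piece.

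Next I would compute $\bar{\eta}^w_{w^{-1}(\beta),r}\cdot\T_{\alpha^\vee([x])}$ for $x\in k_F^\times$ by applying Lemma \ref{TomegaGr}(a) with $t_0 = \alpha^\vee([x])\in T_0$. Since $\widehat{w}\alpha^\vee([x])\widehat{w}^{-1} = w(\alpha^\vee)([x])$, this gives
$$\bar{\eta}^w_{w^{-1}(\beta),r}\cdot\T_{\alpha^\vee([x])} = (\chi\overline{\beta}^{-p^r})\bigl(w(\alpha^\vee)([x])\bigr)^{-1}\cdot\bar{\eta}^w_{w^{-1}(\beta),r}.$$
Summing over $x\in k_F^\times$ and invoking the standard orthogonality relation $\sum_{x\in k_F^\times}\psi(x) = -\mathbf{1}_{\psi=1}$ in characteristic $p$ (valid since $|k_F^\times| = q - 1 \equiv -1\bmod p$), the character $\psi: x\mapsto (\chi\overline{\beta}^{-p^r})(w(\alpha^\vee)([x]))^{-1}$ is trivial precisely when $(\chi\overline{\beta}^{-p^r})\circ w(\alpha^\vee)|_{\fo^\times} = 1$, i.e.\ when $\delta_{(\chi\overline{\beta}^{-p^r})\circ w(\alpha^\vee),1} = 1$. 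Hence
$$\bar{\eta}^w_{w^{-1}(\beta),r}\cdot c_{\fat{\alpha}} = -\delta_{(\chi\overline{\beta}^{-p^r})\circ w(\alpha^\vee),1}\cdot\bar{\eta}^w_{w^{-1}(\beta),r}.$$

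Subtracting this from the expression for $\bar{\eta}_{w^{-1}(\beta),r}^w\cdot\T_{\salpha}$, the diagonal coefficient becomes
$$\bigl(1 - \mathbf{1}_{\Phi^-}(w(\alpha))\bigr)\delta_{(\chi\overline{\beta}^{-p^r})\circ w(\alpha^\vee),1} = \mathbf{1}_{\Phi^+}(w(\alpha))\delta_{(\chi\overline{\beta}^{-p^r})\circ w(\alpha^\vee),1},$$
which matches the stated formula exactly; the off-diagonal term is unaffected by subtracting $c_{\fat{\alpha}}$. There is no real obstacle here: the whole argument is a one-line assembly of the two preceding lemmas together with the trivial character-sum identity, and the only minor care required is in verifying that the signs and the $\mathbf{1}_{\Phi^\pm}$ indicators combine correctly to produce $\mathbf{1}_{\Phi^+}$ rather than $-\mathbf{1}_{\Phi^-}$.
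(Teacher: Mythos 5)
Your proof is correct and takes essentially the same route as the paper: the paper's own proof of this corollary is precisely the one-line observation that $\T_{\salpha}^* = \T_{\salpha} - \sum_{x\in k_F^\times}\T_{\alpha^\vee([x])}$ and that one may combine Lemmas \ref{TsalphaGr} and \ref{TomegaGr}, which is exactly what you have spelled out (including the character sum $q-1 \equiv -1 \pmod p$ and the collapse of $1 - \mathbf{1}_{\Phi^-}$ to $\mathbf{1}_{\Phi^+}$).
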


\begin{proof}
Since $\T_{\salpha}^* = \T_{\salpha} - c_{\fat{\alpha}} = \T_{\salpha} - \sum_{x\in k_F^\times} \T_{\alpha^\vee([x])} $, the action of $\T_{\salpha}^*$ is given by combining Lemmas \ref{TsalphaGr} and \ref{TomegaGr}.  
\end{proof}

\subsection{First graded piece}\label{gr1sect}

Fix now $\beta\in \Pi$ and recall that $\bM_\beta$ is the standard Levi subgroup associated to $\{\beta\}$.  We let $0\leq r \leq f - 1$, and define $\fn_{F,\beta,r}$ to be the two-dimensional $\cH_{M_\beta}$-module with $C$-basis $\{v_1, v_2\}$, and $\cH_{M_\beta}$-action given by
\begin{eqnarray*}
 v_1\cdot\T_{\widehat{s_\beta}}^{M_\beta} & = & 0,\\ 
 v_2\cdot\T_{\widehat{s_\beta}}^{M_\beta} & = & -\chi\circ\beta^\vee(-1)\delta_{F,\qp}\cdot v_1 - \delta_{(\chi\overline{\beta}^{-p^r})\circ\beta^\vee,1}\cdot v_2,\\ 
 v_1\cdot\T_{\widehat{s_{\beta^*}}}^{M_\beta} & = & -\delta_{(\chi\overline{\beta}^{-p^r})\circ\beta^\vee,1}\cdot v_1 - \chi\circ\beta^\vee(-\varpi)\delta_{F,\qp}\cdot v_2,\\
 v_2\cdot\T_{\widehat{s_{\beta^*}}}^{M_\beta} & = & 0.
 \end{eqnarray*}
 Here $\beta^*$ denotes the simple affine root $(-\beta,1)$ of $\bM_\beta$, so that $\widehat{s_{\beta^*}} = \widehat{s_\beta}\beta^\vee(-\varpi)$.   Moreover, if $t\in T$ satisfies $\langle\beta,\nu(t)\rangle = 0$, we define 
\begin{eqnarray*}
 v_1\cdot\T_{t}^{M_\beta} & = & (\chi\overline{\beta}^{-p^r})(t)^{-1}\cdot v_1,\\
 v_2\cdot\T_{t}^{M_\beta} & = & (\chi^{s_\beta}\overline{\beta}^{p^r})(t)^{-1}\cdot v_2,
\end{eqnarray*}
while if $t\in T$ satisfies $\langle\beta,\nu(t)\rangle = 1$, we define
\begin{eqnarray*}
 v_1\cdot\T_{t\widehat{s_\beta}}^{M_\beta} & = & -(\chi\overline{\beta}^{-p^r})(t)^{-1}\cdot v_2,\\
 v_2\cdot\T_{t\widehat{s_\beta}}^{M_\beta} & = & -(\chi^{s_\beta}\overline{\beta}^{p^r})(t\beta^\vee(-1))^{-1}\cdot v_1.
\end{eqnarray*}
(Note that the last two sets of equations define the action of $\widetilde{\Omega}_{\bM_{\beta}}$ on $\fn_{F,\beta,r}$.)  We have
$$\fn_{F,\beta,r} \cong \begin{cases}\ind_{\cH_{T}}^{\cH_{M_\beta}}\big(\chi^{s_\beta}\overline{\beta}\big) & (\textnormal{I})\\ \chi^{s_\beta}\overline{\beta}\otimes(\textnormal{nonsplit extension of}~\chi_{\textnormal{triv}}~ \textnormal{by}~ \chi_{\textnormal{sign}}^\star) & (\textnormal{II}) \\ \textnormal{semisimple supersingular module} & (\textnormal{III}) \end{cases}$$
where $F = \qp$ and $(\chi^{s_\beta}\overline{\beta})\circ\beta^\vee$ is a nontrivial character of $\qp^\times$ in case $(\textnormal{I})$; $F = \qp$ and $(\chi^{s_\beta}\overline{\beta})\circ\beta^\vee$ is the trivial character of $\qp^\times$ in case $(\textnormal{II})$; and $F\neq \qp$ in case $(\textnormal{III})$.  Here $\chi_{\textnormal{sign}}^\star$ is the character of $\cH_{M_\beta}$ given by the $\cH_{M_\beta}$-action on the $(I_1\cap M_\beta)$-invariants of the Steinberg representation of $M_\beta$ (note: $\chi_{\textnormal{sign}}^\star$ is a twist of $\chi_{\textnormal{sign}}$, and the two characters agree on $\cH_{M_\beta,\aff}$).

\begin{propn}\label{gr1}
We have
$$\gr_1 \cong \bigoplus_{\beta\in \Pi}\bigoplus_{r = 0}^{f - 1} \ind_{\cH_{M_{\beta}}}^{\cH}\left(\fn_{F,\beta,r}\right).$$
\end{propn}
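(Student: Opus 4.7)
The plan is to cut $\gr_1$ into $\cH$-stable subspaces, one per pair $(\beta,r)$ with $\beta\in\Pi$ and $0\leq r\leq f-1$, and identify each with a parabolic induction from $\cH_{M_\beta}$.

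First, recall that positive roots of height one are exactly the simple roots, and that for $\beta\in\Pi$ one has $\cW'_\beta=W_0$. The basis description of the preceding subsection then yields a vector-space decomposition
$$\gr_1 \;=\; \bigoplus_{\beta\in\Pi}\bigoplus_{r=0}^{f-1} E_{\beta,r},\qquad E_{\beta,r} \;:=\; \bigoplus_{w\in W_0} C\cdot\bar\eta^w_{w^{-1}(\beta),r}.$$
Inspection of Lemmas \ref{TsalphaGr} and \ref{TomegaGr} shows that the action of each generator $\T_{\widehat{s_{\fat{\alpha}}}}$ (for $\fat{\alpha}\in\Pi_\aff$) and $\T_\omega$ (for $\omega\in\widetilde{\Omega}$) on any basis vector $\bar\eta^w_{w^{-1}(\beta),r}$ lands in $E_{\beta,r}$: the pair $(\beta,r)$ is preserved by the formulas. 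Hence the above is a decomposition of $\cH$-modules, and we reduce to identifying each $E_{\beta,r}$ with the claimed induction.

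Second, fix $\beta\in\Pi$ and $r$, and build the $\cH$-equivariant isomorphism via Frobenius reciprocity. Define a $C$-linear map
$$\phi_{\beta,r}: \fn_{F,\beta,r}\longrightarrow E_{\beta,r},\qquad v_1\longmapsto \bar\eta^e_{\beta,r},\quad v_2\longmapsto \bar\eta^{s_\beta}_{-\beta,r}.$$
Granting that $\phi_{\beta,r}$ is right $\cH_{M_\beta}^+$-equivariant (where $\cH_{M_\beta}^+\hookrightarrow\cH$ via $\theta$), the universal property of $-\otimes_{\cH_{M_\beta}^+}\cH$ extends it to an $\cH$-module homomorphism $\tilde\phi_{\beta,r}:\ind_{\cH_{M_\beta}}^\cH(\fn_{F,\beta,r})\to E_{\beta,r}$. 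Surjectivity of $\tilde\phi_{\beta,r}$ follows by iterating Lemma \ref{TsalphaGr}: each transition from $w$ to $ws_\alpha$ with $w(\alpha)\in\Phi^+\smallsetminus\{\beta\}$ contributes the new basis vector $\bar\eta^{ws_\alpha}_{s_\alpha w^{-1}(\beta),r}$ with a leading coefficient of the form $\zeta\cdot d_{\alpha,s_\alpha w^{-1}(\beta)}$ (a unit under our running hypothesis on $p$), so every basis vector of $E_{\beta,r}$ is reached. A dimension count then forces $\tilde\phi_{\beta,r}$ to be an isomorphism: $\dim_C E_{\beta,r}=|W_0|$, while $\ind_{\cH_{M_\beta}}^\cH(\fn_{F,\beta,r})$ has dimension $\dim_C\fn_{F,\beta,r}\cdot[W_0:W_{M_\beta,0}]=2\cdot(|W_0|/2)=|W_0|$.

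The main obstacle is the $\cH_{M_\beta}^+$-equivariance of $\phi_{\beta,r}$. The positive subalgebra is generated by $\T^{M_\beta}_{\widehat{s_\beta}}$, by $\T^{M_\beta}_{t_0}$ for $t_0\in T(k_F)$, and by Bernstein-type elements $\T^{M_\beta}_\lambda$ for $\lambda$ in a monoid of $M_\beta$-positive cocharacters. Equivariance on $T(k_F)$ is immediate from Lemma \ref{TomegaGr}(a), using $\overline{-\beta}=\overline\beta^{-1}$ to reconcile the twist of $v_2$ by $\chi^{s_\beta}\overline\beta^{p^r}$ in the definition of $\fn_{F,\beta,r}$. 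Equivariance on $\T^{M_\beta}_{\widehat{s_\beta}}$ follows from evaluating Lemma \ref{TsalphaGr} with $\fat{\alpha}=\beta$ and $w\in\{e,s_\beta\}$; the scalar $\zeta$ simplifies to $\chi\circ\beta^\vee(-1)$ via $\fs_\beta^2=\beta^\vee(-1)$, matching the formulas defining $v_i\cdot\T^{M_\beta}_{\widehat{s_\beta}}$. The delicate piece is checking equivariance on positive Bernstein elements: since such $\lambda$ can be expressed as products involving elements of $\widetilde{\Omega}_{M_\beta}$ of the form $t\widehat{s_\beta}$ with $\langle\beta,\nu(t)\rangle=1$, this reduces via Lemma \ref{TomegaGr}(b) to a compatibility between the structure constants $d_{\bar\omega,\bar\omega^{-1}w^{-1}(\beta)}$ and the action formulas $v_i\cdot\T^{M_\beta}_{t\widehat{s_\beta}}$ in the definition of $\fn_{F,\beta,r}$, which can be read off from \eqref{conjlift}.
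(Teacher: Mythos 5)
Your overall strategy -- vector-space decomposition into the $\cH$-stable pieces $E_{\beta,r}$, Frobenius reciprocity from a two-vector map $\phi_{\beta,r}$, then surjectivity plus a dimension count -- is a legitimate reorganization of the paper's argument, and your checks on $\T_{\widehat{s_\beta}}^{M_\beta}$ and on $T(k_F)$ (using $\overline{-\beta}=\overline\beta^{-1}$ and $\fs_\beta^2=\beta^\vee(-1)$) are correct. The surjectivity and dimension count are also sound: $\ind_{\cH_{M_\beta}}^\cH(\fn_{F,\beta,r})$ has dimension $2\cdot|{}^{\bM_\beta}W_0|=|W_0|$, and every $w\in W_0$ is reached from $e$ or $s_\beta$ through steps $w\mapsto ws_\alpha$ with $w(\alpha)\in\Phi^+\smallsetminus\{\beta\}$ (as in the proof of Theorem \ref{mbetaind}). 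Note that your $\tilde\phi_{\beta,r}$ must coincide with the paper's explicit map $\ff$, whose formula carries the structure constants $d_{w,w^{-1}(\beta)}$; these $\pm 1$ factors are hidden in your construction but are exactly what makes the equivariance work.

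However, the step you yourself flag as delicate -- equivariance on positive Bernstein elements -- contains a genuine error. You propose to reduce it to Lemma \ref{TomegaGr}(b), applied to elements of $\widetilde\Omega_{\bM_\beta}$ of the form $t\widehat{s_\beta}$ with $\langle\beta,\nu(t)\rangle = 1$. But Lemma \ref{TomegaGr}(b) is stated for $\omega\in\widetilde\Omega$, the length-zero elements of $\tW$ with respect to $\boldl$. An element $t\widehat{s_\beta}\in\widetilde\Omega_{\bM_\beta}$ has length zero only for the length function of $\tW_{\bM_\beta}$; when $\bM_\beta$ is a proper Levi (i.e.\ once the rank is at least two), such an element generically has $\boldl(t\widehat{s_\beta})>0$ and is \emph{not} in $\widetilde\Omega$. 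So Lemma \ref{TomegaGr}(b) simply does not apply, and the action of $\T_{t\widehat{s_\beta}}$ (or more generally $\theta(\T_\lambda^{M_\beta})=\T_\lambda$ for $\lambda$ a positive cocharacter) on $E_{\beta,r}$ is not given by a single length-zero twist. Computing it requires decomposing $\T_\lambda$ in $\cH$ in terms of $\T_{\widehat{s_\alpha}}$ ($\alpha\in\Pi$), $\T_{\widehat{s_{\fat\alpha}}}$ for $\fat\alpha=(-\alpha_0,1)$, and $\T_\omega$ ($\omega\in\widetilde\Omega$). This is precisely where the alcove-walk Bernstein basis enters (equations \eqref{alcovewalk} and \eqref{bernsteincomp}) and where the cocycle identities \eqref{strcstcocycle}--\eqref{strcstcocycle2.5} for the $d_{w,\gamma}$'s become essential; that material is the real content of the Appendix. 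In short: your route is viable in principle, but the one step you label "the delicate piece" is also the hard one, and the reduction you give for it is based on a misapplied lemma. You would need to carry out the Bernstein-basis computation (or an equivalent) to close this gap; there is no shortcut around it.
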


\begin{proof}
Since $\cW_\beta' = W_0$ for $\beta\in \Pi$, we have an isomorphism of $C$-vector spaces
$$\gr_1 = \bigoplus_{\beta\in \Pi}\bigoplus_{r = 0}^{f - 1}\bigoplus_{w\in \cW_\beta'}C\bar{\eta}^w_{w^{-1}(\beta),r}  = \bigoplus_{\beta\in \Pi}\bigoplus_{r = 0}^{f - 1}\bigoplus_{w\in W_0}C\bar{\eta}^w_{w^{-1}(\beta),r}.$$
By Lemmas \ref{TsalphaGr} and \ref{TomegaGr}, the space $\bigoplus_{w\in W_0}C\bar{\eta}^w_{w^{-1}(\beta),r}$ is stable by the action of $\cH$, and therefore it suffices to prove $\bigoplus_{w\in W_0}C\bar{\eta}^w_{w^{-1}(\beta),r}\cong \ind_{\cH_{M_{\beta}}}^{\cH}\left(\fn_{F,\beta,r}\right)$ as $\cH$-modules.  For the proof of this, see Appendix \ref{app}.
\end{proof}

\subsection{Higher graded pieces}

In general, the higher graded pieces of $\textnormal{H}^1(I_1,\ind_B^G(\chi))$ seem difficult to understand, due our lack of understanding of the sets $\cW_\beta'$.  In the next section, we will attempt to give some more information in the case $\bG = \bG\bL_n$.  In general, we have the following.

\begin{propn}\label{ssingprop}
If $h > 2$ (that is, if $\Phi$ is not of type $A_1$), then $\gr_{h - 1}$ contains a supersingular $\cH$-module.  
\end{propn}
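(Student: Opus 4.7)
The plan is to exhibit a supersingular character of $\cH_{\aff}$ appearing in the restriction of $\gr_{h-1}$, and to invoke the characterization of supersingular $\cH$-modules by their restriction to $\cH_{\aff}$. Since $\Phi$ is irreducible with $h > 2$, the highest root $\beta_0$ is the unique positive root of height $h-1$ and $\beta_0 \notin \Pi$. In particular $w_\circ$ sends each $\alpha \in \Pi$ to an element of $-\Pi \smallsetminus \{-\beta_0\}$, and $w_\circ(-\beta_0) = \beta_0 \neq -\beta_0$ with $\beta_0 \notin \Phi^+ \smallsetminus \{\beta_0\}$. Since $w_\circ \in \cW_{\beta_0}'$ always, the elements $v_r := \bar{\eta}^{w_\circ}_{-\beta_0, r} \in \gr_{h-1}$ are defined for $r = 0, \ldots, f-1$.

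First, I would apply Lemma \ref{TsalphaGr} with $w = w_\circ$ and $\beta = \beta_0$. For each $\fat{\alpha} = (\alpha,0)$ with $\alpha \in \Pi$, both $\mathbf{1}_{\Phi^+ \smallsetminus \{\beta_0\}}(w_\circ(\alpha))$ and $\delta_{w_\circ(\alpha), -\beta_0}$ vanish, so the ``$ws_\alpha$-contribution'' is zero and $v_r \cdot \T_{\widehat{s_\alpha}}$ is a scalar in $\{0, -1\}$ times $v_r$. For $\fat{\alpha} = (-\beta_0,1)$, the value $w_\circ(-\beta_0) = \beta_0$ is positive but not in $\Phi^+ \smallsetminus \{\beta_0\}$, so every term in the lemma vanishes, giving $v_r \cdot \T_{\widehat{s_{(-\beta_0, 1)}}} = 0$. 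Combined with Lemma \ref{TomegaGr}(a), $v_r$ spans a one-dimensional $\cH_{\aff}$-eigenline and thus defines a character $\psi_r : \cH_{\aff} \to C$ with $\psi_r(\T_{\widehat{s_{(-\beta_0, 1)}}}) = 0$, $\psi_r(\T_{\widehat{s_\alpha}}) \in \{0, -1\}$ for $\alpha \in \Pi$, and $\psi_r(\T_{t_0}) = \xi_r(t_0)^{-1}$ where $\xi_r := \chi^{w_\circ} \overline{\beta_0}^{p^r}$ (restricted to $T_0$).

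To finish, I would show that $\psi_r$ is supersingular for a suitable $r$. Since $\psi_r(\T_{\widehat{s_{(-\beta_0, 1)}}}) = 0 \neq -1$, $\psi_r$ cannot be a twist of the sign character. For $\psi_r$ to be a twist of the trivial character, $\xi_r$ must satisfy the twist condition ``$\xi_r \circ \alpha^\vee \equiv 1$ on $k_F^\times$ for every $\alpha \in \Pi$.'' Because $\beta_0$ is dominant with full support, there exists $\alpha \in \Pi$ with $\langle \beta_0, \alpha^\vee \rangle \neq 0$; for such $\alpha$ the condition is an equality of characters of $k_F^\times$ whose right-hand side $y \mapsto y^{p^r \langle \beta_0, \alpha^\vee \rangle}$ depends on $r$ via a Frobenius twist. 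The $f$ exponents $p^r \langle \beta_0, \alpha^\vee \rangle \pmod{p^f - 1}$ for $r = 0, \ldots, f-1$ are pairwise distinct, so the twist condition can hold for at most one $r$; for any of the remaining values, $\psi_r$ is a supersingular character of $\cH_{\aff}$. The simple $\cH$-subquotient of $\gr_{h-1}$ whose $\cH_{\aff}$-restriction contains $\psi_r$ is then the desired supersingular $\cH$-module.

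The main obstacle is the edge case $f = 1$ (i.e., $F = \qp$), where only a single $r$ is available and the counting argument above does not apply. There one would need either to verify directly that the $|\Pi|$ simultaneous equalities of characters of $\ffp^\times$ implicit in the twist condition cannot hold for any $\chi$ (a combinatorial check involving the marks $\langle \beta_0, \alpha^\vee \rangle$ of the highest root together with the sign factors arising from $\overline{-\beta_0}$), or else pass to a different eigenvector in $\gr_{h-1}$ — for instance an $\overline{\Omega}$-translate $\bar{\eta}^{w_\circ \bo}_{\bo^{-1}(-\beta_0), 0}$ of $v_0$ — whose associated $\cH_{\aff}$-character evades the twist condition.
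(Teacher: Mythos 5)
You correctly identify the key object --- the line spanned by $v_r := \bar{\eta}^{w_\circ}_{w_\circ^{-1}(\beta_0),r}$ --- compute the resulting $\cH_{\aff}$-character via Lemmas \ref{TsalphaGr} and \ref{TomegaGr}, and observe that $\T_{\widehat{s_{(-\beta_0,1)}}}$ acts by $0$, ruling out a twist of the sign character. Up to this point you are doing exactly what the paper does.

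The gap is in the last step, ruling out a twist of the trivial character, and you flag it yourself: when $f = 1$ (i.e.\ $F = \qp$) the counting-over-$r$ argument has nothing to count. But the issue is deeper than an uncovered edge case, because your proposed first fix --- ``verify directly that the twist condition cannot hold for any $\chi$'' --- is aiming at the wrong target. The twist condition $\xi_r \circ \alpha^\vee([x]) = 1$ \emph{can} hold for suitable $\chi$ (even for every $\alpha \in \Pi$ simultaneously); the correct observation, which is the crux of the paper's proof, is that the twist condition and the $\delta$-factor in Lemma \ref{TsalphaGr} are the \emph{same} condition. Since the coefficient field has characteristic $p$, a smooth character of $\fo^\times$ is trivial iff it is trivial on $[k_F^\times]$, so $\delta_{(\chi\overline{\beta_0}^{-p^r})\circ w_\circ(\alpha^\vee),1} = 1$ exactly when $\xi_r \circ \alpha^\vee([x]) = 1$ for all $x \in k_F^\times$. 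Consequently, \emph{if} $\xi_r$ satisfied the twist condition, then $v_r \cdot \T_{\widehat{s_\alpha}} = -v_r$ for every $\alpha \in \Pi$, whereas a twist $\xi_r \otimes \chi_{\textnormal{triv}}$ of the trivial character would require $\T_{\widehat{s_\alpha}}$ to act by $0$. This contradiction works for every $r$ and every $F$ at once, with no counting and no dependence on the marks $\langle\beta_0,\alpha^\vee\rangle$. Your argument as written is therefore incomplete: it establishes the result only when $f \geq 2$, and the sketched repairs for $f = 1$ do not go through as stated (the first because the twist condition need not fail, the second because an $\overline{\Omega}$-translate of $v_r$ runs into the identical obstruction).

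A minor remark: the ``sign factors arising from $\overline{-\beta_0}$'' you invoke do not actually appear; unwinding Lemma \ref{TomegaGr}(a) gives $\xi_r = \chi^{w_\circ}\overline{\beta_0}^{\,p^r}$ cleanly, with the sign absorbed by $w_\circ(\alpha_0) = -\alpha_0$.
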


\begin{proof}
Let $\alpha_{0}\in \Phi^+$ denote the highest root of $\Phi$ (so that $\textnormal{ht}(\alpha_{0}) = h - 1$).  Then $w_\circ\in \cW_{\alpha_0}'$ and $w_\circ(\alpha_{0}) = -\alpha_{0}$.  If $\fat{\alpha} = (\alpha,\ell)\in \Pi_{\aff},$ then $\alpha\in \Pi$ or $\alpha = -\alpha_{0}$, and since $\Phi$ is not of type $A_1$, we have $w_\circ(\alpha)\not\in\Phi^+\smallsetminus\{\alpha_{0}\}$ and $w_\circ(\alpha) \neq - \alpha_0$.  Therefore by Lemma \ref{TsalphaGr}, we have 
\begin{equation}
\label{ssingeqn}
\bar{\eta}_{w_{\circ}^{-1}(\alpha_0),r}^{w_{\circ}}\cdot\T_{\salpha} = -\mathbf{1}_{\Phi^+}(\alpha)\delta_{(\chi\overline{\alpha_0}^{-p^r})\circ w_\circ(\alpha^\vee),1}\cdot \bar{\eta}_{w_{\circ}^{-1}(\alpha_0),r}^{w_{\circ}},
\end{equation}
and using Lemma \ref{TomegaGr} shows that the one-dimensional vector space spanned by $\bar{\eta}_{w_{\circ}^{-1}(\alpha_0),r}^{w_{\circ}}$ is $\cH_\aff$-stable.  We claim this $\cH_\aff$-character is supersingular.  To verify this, it suffices to check that it is not equal to a twist of the trivial character (note that equation \eqref{ssingeqn} gives $\bar{\eta}_{w_{\circ}^{-1}(\alpha_0),r}^{w_{\circ}}\cdot\T_{\widehat{s_{(-\alpha_0,1)}}} = 0$, so the $\cH_\aff$-character cannot be a twist of the sign character).  If this were the case, then the character of $\T_{t_0}$ on $\bar{\eta}_{w_{\circ}^{-1}(\alpha_0),r}^{w_{\circ}}$, given by $\T_{t_0}\longmapsto (\chi\overline{\alpha_0}^{-p^r})(\widehat{w_\circ}t_0\widehat{w_\circ}^{-1})^{-1}$, would satisfy $(\chi\overline{\alpha_0}^{-p^r})\circ w_\circ(\alpha^\vee)([x]) = 1$ for all $\alpha\in \Pi$ and $x\in k_F^\times$.  However, equation \eqref{ssingeqn} would then imply $\bar{\eta}_{w_{\circ}^{-1}(\alpha_0),r}^{w_{\circ}}\cdot\T_{\widehat{s_\alpha}} = -\bar{\eta}_{w_{\circ}^{-1}(\alpha_0),r}^{w_{\circ}}$ for $\alpha\in \Pi$, which contradicts that $\bar{\eta}_{w_{\circ}^{-1}(\alpha_0),r}^{w_{\circ}}$ is a twist of the trivial character.  
\end{proof}

\begin{cor}
The $\cH$-module $\textnormal{H}^1(I_1,\ind_B^G(\chi))$ contains no supersingular subquotients if and only if $F = \qp$ and $\Phi$ is of type $A_1$.  
\end{cor}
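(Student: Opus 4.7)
The plan is to combine the height filtration $\fil_\bullet$ with the explicit structure of the first two graded pieces and with Proposition \ref{ssingprop}. Since $\fil_\bullet$ is a finite filtration of $\cH$-modules (Proposition \ref{filt}), any simple subquotient of $\textnormal{H}^1(I_1,\ind_B^G(\chi))$ is a simple subquotient of some $\gr_a$. Therefore it suffices to decide, for each $a$, whether $\gr_a$ admits a supersingular simple subquotient, and this reduces the question to an analysis of $\gr_0$, $\gr_1$, and (when present) higher graded pieces.

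For the ``only if'' direction, I would argue by cases. If $\Phi$ is not of type $A_1$, then $h-1 \geq 2$ and Proposition \ref{ssingprop} produces a supersingular one-dimensional $\cH_\aff$-submodule of $\gr_{h-1}$; since supersingularity of a simple $\cH$-module is detected on its restriction to $\cH_\aff$, this yields a supersingular simple subquotient of $\gr_{h-1}$, and hence of $\textnormal{H}^1(I_1,\ind_B^G(\chi))$. If instead $\Phi$ is of type $A_1$ but $F \neq \qp$, then $h-1 = 1$, the unique positive root $\beta$ is simple, and Proposition \ref{gr1} gives
\[
\gr_1 \cong \bigoplus_{r=0}^{f-1} \fn_{F,\beta,r}
\]
(since $\bM_\beta = \bG$ in type $A_1$). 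As $F \neq \qp$, each $\fn_{F,\beta,r}$ lies in case (III) and is a semisimple supersingular $\cH$-module, producing the desired supersingular subquotient.

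For the ``if'' direction, assume $F = \qp$ and $\Phi$ is of type $A_1$. Then $h-1 = 1$ and $f = 1$, so the filtration has only two nonzero steps, and I need to show neither $\gr_0$ nor $\gr_1$ has a supersingular simple subquotient. Proposition \ref{gr0} identifies $\gr_0$ with a direct sum of copies of $\ind_{\cH_T}^{\cH}(\chi)$, whose simple subquotients are (by definition) non-supersingular, since $\bT$ is a proper Levi. For $\gr_1$, by Proposition \ref{gr1} it is a direct sum of copies of $\fn_{\qp,\beta,0}$, which by the case analysis preceding Proposition \ref{gr1} is either
\[
\ind_{\cH_T}^{\cH}(\chi^{s_\beta}\overline{\beta}) \qquad (\textnormal{case (I)})
\]
or a twist of a nonsplit extension of $\chi_{\textnormal{triv}}$ by $\chi_{\textnormal{sign}}^\star$ (case (II)). In case (I) the simple subquotients are non-supersingular because they are subquotients of a parabolic induction from the proper Levi $\bT$. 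In case (II) the simple subquotients are the characters $\chi_{\textnormal{triv}}$ and $\chi_{\textnormal{sign}}^\star$ (up to twist), whose restrictions to $\cH_\aff$ are twists of the trivial and sign characters respectively, hence non-supersingular by definition. This completes the proof.

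The main point requiring care is the verification that in case (II) the relevant characters $\chi_{\textnormal{triv}}$ and $\chi_{\textnormal{sign}}^\star$ are non-supersingular; this is immediate from the definition via twists of trivial/sign characters on $\cH_\aff$, but must be invoked explicitly. Beyond this, the proof is a direct synthesis of the structure theorems already established in this section.
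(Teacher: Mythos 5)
Your proof is correct and takes the same route as the paper, which simply cites Propositions \ref{ssingprop}, \ref{gr1}, and \ref{gr0}; you spell out the case analysis those propositions encode, including the Jordan--H\"older argument linking a supersingular $\cH_\aff$-character inside $\gr_{h-1}$ to a supersingular simple $\cH$-subquotient, and the check that in type $A_1$ with $F=\qp$ the subquotients of $\gr_0$ and $\gr_1$ (cases (I) and (II) of $\fn_{\qp,\beta,0}$) are non-supersingular.
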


\begin{proof}
This follows from Propositions \ref{ssingprop}, \ref{gr1}, and \ref{gr0}.
\end{proof}

\begin{rmk}
If $h = 2$ (that is, if $\Phi$ is of type $A_1$), then it may happen that $\textnormal{H}^i(I_1,\ind_B^G(\chi))$ has no supersingular subquotients for all $i$.  For an example, let $\bG = \bS\bL_2$ over $\qp$, let $\bB$ denote the upper triangular Borel subgroup, and let $I_1$ denote the ``upper triangular mod-$p$'' pro-$p$-Iwahori subgroup of $G = \textnormal{SL}_2(\qp)$.  Since $p\geq 5$, $I_1$ is torsion-free.  Write $\Pi = \{\alpha\}$; we then have
\begin{eqnarray*}
\textnormal{H}^0\big(I_1,\ind_B^G(\chi)\big) & \cong & \ind_{\cH_T}^{\cH}(\chi),\\
\textnormal{H}^1\big(I_1,\ind_B^G(\chi)\big) & \cong & \textnormal{(possibly split) extension of}~ \fn_{\qp,\alpha,0}~\textnormal{by}~\ind_{\cH_T}^{\cH}(\chi),\\
\textnormal{H}^2\big(I_1,\ind_B^G(\chi)\big) & \cong & \fn_{\qp,\alpha,0},\\
\textnormal{H}^i\big(I_1,\ind_B^G(\chi)\big) & = & 0~\textnormal{for}~i\geq 3.
\end{eqnarray*}
The first isomorphism follows from equation \eqref{h0}, the second from Propositions \ref{gr0} and \ref{gr1}, and the last from the isomorphism of $C$-vector spaces
$$\textnormal{H}^i\big(I_1,\ind_B^G(\chi)\big) \cong \textnormal{H}^i\left(I_1\cap B,C\right)~\oplus~ \textnormal{H}^i\left(I_1\cap s_\alpha^{-1} Bs_\alpha,C\right) = 0~\textnormal{for}~i\geq 3$$
(note that $I_1\cap B$ and $I_1\cap s_\alpha^{-1}Bs_\alpha$ are both compact, torsion-free, $p$-adic analytic groups of dimension 2, and therefore have cohomological dimension 2, cf. \cite[Section 4.5, Example 3]{serre:galoiscoh}).

We sketch a proof of the third isomorphism above.  The case $\chi = \overline{\alpha}$ is handled using a similar method as \cite[Corollary 10.10]{paskunas:montrealfunctor}, so we assume $\chi\neq \overline{\alpha}$.  
\begin{enumerate}[(a)]
\item First, note that we have an isomorphism $\textnormal{H}^2(I_1,\ind_B^G(\chi)) \cong \textnormal{H}^2(I_1\cap B,C)\oplus \textnormal{H}^2(I_1\cap s_\alpha^{-1} Bs_\alpha,C)$.  Since both $I_1\cap B$ and $I_1\cap s_\alpha^{-1}Bs_\alpha$ are Poincar\'e groups of dimension 2, we conclude that $\textnormal{H}^2(I_1,\ind_B^G(\chi))$ is two-dimensional.  
\item Since we have an equivalence of categories between $T$-representations generated by their $T_1$-invariant vectors and $\cH_T$-modules, we get a spectral sequence (cf. \cite[equation (32)]{paskunas:exts})
$$E_2^{i,j} = \Ext^i_{\cH_T}\left(\tau^{T_1},\textnormal{H}^j(T_1,\pi)\right) \Longrightarrow \Ext^{i + j}_T\left(\tau,\pi\right)$$
where $\tau$ and $\pi$ are smooth $T$-representations, and $\tau$ is generated by $\tau^{T_1}$.  Since $\cH_T$ has global dimension 1, and $T_1\cong \zp$ has cohomological dimension 1, the spectral sequence above degenerates at $E_2$ when $\tau$ and $\pi$ are equal to characters of $T$.  From this and the calculation of Ext-spaces for $\cH_T$-modules, we obtain
$$\dim_C\left(\Ext^i_{T}(\chi,\chi)\right) = \binom{2}{i},\quad\textnormal{and}\quad\dim_C\left(\Ext^i_{T}(\chi',\chi)\right) = 0~\textnormal{if}~\chi'\neq\chi.$$
\item Let $\textnormal{H}^i\textnormal{Ord}_B$ denote Emerton's $\delta$-functors of derived ordinary parts from locally admissible $G$-representations over $C$ to locally admissible $T$-representations over $C$ (\cite{emerton:ordII}).  Since any admissible representation of $\textnormal{SL}_2(\qp)$ can be embedded into an admissible representations of $\textnormal{GL}_2(\qp)$, one can proceed as in \cite{emertonpaskunas} to conclude that the functors $\textnormal{H}^i\textnormal{Ord}_B$ for $i\geq 1$ are effaceable, and therefore isomorphic to $\textnormal{R}^i\textnormal{Ord}_B$.  %Let $\pi$ be an admissible representation of $\textnormal{SL}_2(\qp)$.  Since $p\geq 5$, we may diagonalize the action of $Z\cong \mu_2$ on $\pi$ to obtain $\pi \cong \pi^+ \oplus \pi^-$ ($\pi^{\pm 1}$ is the space on which $-\textnormal{id}\in Z$ acts as $\pm 1$).  We can then extend the action of $\textnormal{SL}_2(\qp)$ on $\pi$ to an action of $Z_{\textnormal{GL}}\textnormal{SL}_2(\qp)$ by letting $Z_{\textnormal{GL}}$ act trivially on $\pi^+$ and by the character $x\longmapsto \overline{p^{-\textnormal{val}(x)}x}$ on $\pi^-$.  We then obtain an injection $\pi \longhookrightarrow \textnormal{c-ind}_{Z_{\textnormal{GL}}\textnormal{SL}_2(\qp)}^{\textnormal{GL}_2(\qp)}(\pi)$.  The latter representation is an admissible representation of $\textnormal{GL}_2(\qp)$, so can be embedded into an admissible representation $\Omega$ for which $\Omega|_{U}$ is injective.  
Then \cite[(3.7.4)]{emerton:ordII} gives a spectral sequence
\begin{equation}\label{h2ss1}
E_2^{i,j} = \Ext^i_{T}\left(\tau,\textnormal{H}^j\textnormal{Ord}_B(\pi)\right) \Longrightarrow \Ext^{i + j}_G\left(\ind_B^G(\tau^{s_\alpha}),\pi\right),
\end{equation}
where $\tau$ is a locally admissible $T$-representation, and $\pi$ is a locally admissible $G$-representation.  
\item Since we have an equivalence of categories between $G$-representations generated by their $I_1$-invariant vectors and $\cH$-modules (\cite[Corollary 5.3]{koziol:glnsln}), we get a spectral sequence (cf. \cite[equation (32)]{paskunas:exts} again)
$$E_2^{i,j} = \Ext^i_{\cH}\left(\tau^{I_1},\textnormal{H}^j(I_1,\pi)\right) \Longrightarrow \Ext^{i + j}_G\left(\tau,\pi\right)$$
where $\tau$ and $\pi$ are smooth $G$-representations, and $\tau$ is generated by $\tau^{I_1}$.  In particular, if $\tau = \ind_B^G(\chi')$ and $\pi = \ind_B^G(\chi)$, then the $\cH$-module $\tau^{I_1}$ has projective dimension 1 (\cite[Remarks 3.1 and 8.2]{koziol:homdim}), and $\textnormal{H}^j(I_1,\pi)$ vanishes for $j\geq 3$.  In this case, the spectral sequence above degenerates at $E_2$, and gives
\begin{equation}\label{h2abut}
\Ext^1_{\cH}\left(\tau^{I_1},\textnormal{H}^2\big(I_1,\ind_B^G(\chi)\big)\right)  \cong \Ext^3_G\left(\tau,\ind_B^G(\chi)\right).
\end{equation}
\end{enumerate}

Now set $\tau := \ind_B^G(\chi^{s_\alpha}\overline{\alpha})$ (note that $\tau$ is irreducible since $\chi^{s_\alpha}\overline{\alpha}$ is not equal to the trivial character).  Since $\textnormal{Ord}_B(\ind_B^G(\chi)) = \chi^{s_\alpha}$, $\textnormal{H}^1\textnormal{Ord}_B(\ind_B^G(\chi)) = \chi\overline{\alpha}^{-1}$, and $\textnormal{H}^j\textnormal{Ord}_B(\ind_B^G(\chi)) = 0$ for $j\geq 2$, we get
\begin{eqnarray*}
\Ext^1_{\cH}\left(\tau^{I_1},\textnormal{H}^2\big(I_1,\ind_B^G(\chi)\big)\right) & \stackrel{\eqref{h2abut}}{\cong} & \Ext^3_G\left(\tau,\ind_B^G(\chi)\right) \\
 & \stackrel{\eqref{h2ss1}}{\cong} & \Ext^2_T\left(\chi\overline{\alpha}^{-1},\chi\overline{\alpha}^{-1}\right) \stackrel{\textnormal{part (b)}}{\neq} 0.
\end{eqnarray*}
By modifying the proof of \cite[Proposition 3.6(2)]{abe:extensions}, we get
$$\Ext^1_{\cH}\left(\tau^{I_1},\textnormal{H}^2\big(I_1,\ind_B^G(\chi)\big)\right)\cong \Ext^1_{\cH_{T}}\left(\chi^{s_\alpha}\overline{\alpha},R_{\cH_T}^{\cH}\big(\textnormal{H}^2\big(I_1,\ind_B^G(\chi)\big)\big)\right)\neq 0.$$
Since $\Ext^i_{\cH_T}(\chi',\chi)\neq 0$ implies $\chi = \chi'$, again using \emph{loc. cit.} gives
$$\Hom_{\cH}\left(\tau^{I_1},\textnormal{H}^2\big(I_1,\ind_B^G(\chi)\big)\right) \cong \Hom_{\cH_{T}}\left(\chi^{s_\alpha}\overline{\alpha},R_{\cH_T}^{\cH}\big(\textnormal{H}^2\big(I_1,\ind_B^G(\chi)\big)\big)\right) \neq 0.$$
Since $\tau^{I_1}$ and $\textnormal{H}^2(I_1,\ind_B^G(\chi))$ are both two-dimensional and $\tau^{I_1}$ is a simple $\cH$-module, we conclude
$$\textnormal{H}^2\big(I_1,\ind_B^G(\chi)\big) \cong \tau^{I_1} \cong \ind_{\cH_T}^{\cH}(\chi^{s_\alpha}\overline{\alpha}).$$
\end{rmk}

\bigskip

\section{Principal series for $\textnormal{GL}_n$}\label{prinseriesgln}

We now specialize to the case $\bG = \bG\bL_n$.  Since the case $n = 2$ is covered by the previous results, we assume $n\geq 3$.  We let $\bT$ denote the diagonal maximal torus, and for $1\leq j , k \leq n$, $j\neq k$, we define $\alpha_{j,k}\in X^*(\bT)$ by
$$\alpha_{j,k}\left(\begin{pmatrix}t_1 & & &  \\ & t_2 & & \\ & & \ddots & \\ & & & t_n\end{pmatrix}\right) = t_jt_k^{-1}.$$
We then have $\Phi = \{\alpha_{j,k}\}_{1\leq j, k \leq n, j\neq k}$.  We define $\bB$ to be the upper triangular Borel subgroup, and $I_1$ to be the subgroup of $\textnormal{GL}_n(\fo)$ which is upper-triangular and unipotent modulo $\varpi$.  Then we have $\Phi^+ = \{\alpha_{j,k}\}_{1\leq j < k \leq n}$ and 
$$\Pi = \{\alpha_i\}_{1\leq i \leq n - 1},$$
where $\alpha_i := \alpha_{i,i+1}$.  Given $1\leq i \leq n - 1$, we set $\Pi_{(i)}:= \Pi\smallsetminus\{\alpha_i\}, \bM_{(i)} := \bM_{\Pi_{(i)}}, \Phi_{(i)} := \Phi_{\bM_{(i)}},$ and $\Phi_{(i)}^+ := \Phi_{\bM_{(i)}}^+$.

We have $W_0 \cong \fS_n$, the symmetric group on $n$ letters, with the obvious isomorphism.  We let $s_i := s_{\alpha_i}$ denote the simple reflection associated to $\alpha_i$, so that $s_i = (i, i + 1)$, and we also define $W_{(i),0} := W_{\bM_{(i)},0}, W_0^{(i)}:=W_0^{\bM_{(i)}}$, and ${}^{(i)}W_0:={}^{\bM_{(i)}}W_0$.

Set
$$\omega := \begin{pmatrix} 0 & 1 & & \\  &  \ddots & \ddots & \\ & & \ddots & 1 \\ \varpi & & & 0\end{pmatrix}\in N_G(T);$$
the element $\omega$ generates $\Omega \cong \bbZ$.  Its reduction $\bo$ in $W_0$ is the $n$-cycle $(n, n - 1, \ldots, 2, 1) = s_{n - 1}s_{n - 2}\cdots s_1$, and the group $\overline{\Omega}$ is cyclic, generated by $\bo$.   The element $\bo$ has order $n$, and for $1\leq i \leq n - 1$, the powers of $\bo$ have the following reduced decomposition:
$$\bo^{- i} = \underbrace{\underbrace{(s_{i}s_{i - 1}\cdots s_{1})}_{i~\textnormal{terms}}\underbrace{(s_{i + 1}s_{i}\cdots s_{2})}_{i~\textnormal{terms}}\cdots \underbrace{(s_{n - 1}s_{n - 2}\cdots s_{n - i})}_{i ~\textnormal{terms}}}_{n - i~ \textnormal{sequences of}~i~\textnormal{terms each}}.$$
In particular, $\boldl(\bo^{ - i}) = i(n - i)$.

Throughout this section, we fix $\beta\in \Phi^+$ such that $\textnormal{ht}(\beta)\geq 2$, and $0\leq r \leq f - 1$.  For brevity and typographical ease, we write $\bar{\eta}^w$ for $\bar{\eta}^w_{w^{-1}(\beta),r}$.

\subsection{Combinatorial preliminaries}

We begin with several combinatorial lemmas.

\begin{lemma}
\label{comb0}
We have $\bo^{-i}\in {}^{(i)}W_0$. More precisely, we have $\bo^{-i} = w_{(i),\circ}w_\circ$, where $w_{(i),\circ}$ denotes the longest element of $W_{(i),0}$.  
\end{lemma}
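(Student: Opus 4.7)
The plan is to verify the equality $\bo^{-i} = w_{(i),\circ} w_\circ$ by checking that both sides define the same permutation of $\{1,\ldots,n\}$, and then to deduce the minimal-coset-representative property from a length count.

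First I would compute both elements as permutations of $\{1,\ldots,n\}$. The cycle description $\bo = (n, n-1, \ldots, 2, 1)$ gives $\bo(j) \equiv j - 1 \pmod n$, hence $\bo^{-i}(j) \equiv j + i \pmod n$. On the other hand, $w_\circ$ sends $j$ to $n+1-j$, and $w_{(i),\circ}$, being the longest element of $W_{(i),0}\cong \fS_i \times \fS_{n-i}$, reverses $\{1,\ldots,i\}$ and $\{i+1,\ldots,n\}$ separately. A brief case split (whether $n+1-j \leq i$ or $n+1-j \geq i+1$, i.e.\ $j \geq n-i+1$ or $j \leq n-i$) gives $w_{(i),\circ}w_\circ(j) = i+j$ or $i+j-n$ respectively, i.e.\ $w_{(i),\circ}w_\circ(j) \equiv j+i \pmod n$. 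Thus the two Weyl group elements coincide.

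Next, I would verify additivity of lengths using the values $\boldl(w_\circ) = \binom{n}{2}$, $\boldl(w_{(i),\circ}) = \binom{i}{2} + \binom{n-i}{2}$, and $\boldl(\bo^{-i}) = i(n-i)$ (the last being given just before the lemma). The elementary identity
$$\binom{i}{2} + \binom{n-i}{2} + i(n-i) = \binom{n}{2}$$
then yields $\boldl(w_\circ) = \boldl(w_{(i),\circ}) + \boldl(\bo^{-i})$.

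Finally, by the uniqueness of the decomposition $W_0 = W_{(i),0} \cdot {}^{(i)}W_0$ with additive lengths (\cite[Section 1.10]{humphreys:book}, recalled in the Weyl group subsection above), the factorization $w_\circ = w_{(i),\circ} \cdot \bo^{-i}$ with $w_{(i),\circ} \in W_{(i),0}$ and $\boldl(w_\circ) = \boldl(w_{(i),\circ}) + \boldl(\bo^{-i})$ forces $\bo^{-i}$ to be the unique minimal-length representative of its left coset modulo $W_{(i),0}$; equivalently, $\bo^{-i} \in {}^{(i)}W_0$. No step of this plan looks hard: the only mild nuisance is pinning down the direction of the cycle $\bo$ and keeping the two case distinctions in the permutation check straight.
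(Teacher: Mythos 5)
Your proof is correct, and it uses the same two ingredients as the paper — a permutation calculation and a length count — but in the opposite order. You first verify the equality $\bo^{-i} = w_{(i),\circ}w_\circ$ directly by computing both sides as permutations, and then deduce $\bo^{-i}\in{}^{(i)}W_0$ from the length-additive identity $\boldl(w_\circ)=\boldl(w_{(i),\circ})+\boldl(\bo^{-i})$ together with the canonical decomposition $W_0 = W_{(i),0}\cdot{}^{(i)}W_0$. The paper reverses this: it first computes $\bo^i(1),\ldots,\bo^i(n)$ and invokes the Bj\"orner--Brenti criterion (that a permutation lies in $W_0^{(i)}$ iff it is increasing on $\{1,\ldots,i\}$ and on $\{i+1,\ldots,n\}$) to get $\bo^i\in W_0^{(i)}$, hence $\bo^{-i}\in{}^{(i)}W_0$; it then applies the same length count and the uniqueness of the longest element to conclude $w_{(i),\circ}\bo^{-i}=w_\circ$. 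The paper's ordering is slightly leaner — it never needs to work out $w_{(i),\circ}w_\circ$ explicitly — while yours avoids the external citation. One cosmetic slip: in your case split the ``respectively'' is reversed, since $j\geq n-i+1$ gives $w_{(i),\circ}w_\circ(j)=i+j-n$ and $j\leq n-i$ gives $i+j$; both still reduce to $j+i\bmod n$, so the conclusion is unaffected. A small caveat on your last step: the uniqueness of the decomposition $W_0=W_{(i),0}\cdot{}^{(i)}W_0$ applies to the \emph{canonical} factorization, so strictly one should argue that because $w_{(i),\circ}$ is the longest element of $W_{(i),0}$ and $w_\circ$ the longest in its coset, the canonical factor in $W_{(i),0}$ must be $w_{(i),\circ}$, forcing the remaining factor $\bo^{-i}$ to be the minimal representative — but this is standard and your reasoning is essentially complete.
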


\begin{proof}
We have
$$\bo^{i}(1) = n - i + 1 < \bo^{i}(2) = n - i + 2 < \cdots < \bo^{i}(i) = n$$
$$\textnormal{and}$$
$$\bo^{i}(i + 1) = 1 < \bo^{i}(i + 2) = 2 < \cdots < \bo^{i}(n) = n - i,$$
so $\bo^{i} \in W^{(i)}_0$ by \cite[Lemma 2.4.7]{bjornerbrenti}, and therefore $\bo^{-i}\in (W^{(i)}_0)^{-1} = {}^{(i)}W_0$.  Since $W_{(i),0}\cong \fS_{i}\times \fS_{n - i}$, we obtain
\begin{eqnarray*}
\boldl(w_{(i),\circ}\bo^{-i}) & = & \boldl(w_{(i),\circ}) + \boldl(\bo^{-i})\\
 & = & \frac{i(i - 1)}{2} + \frac{(n - i)(n - i - 1)}{2} + i(n - i)\\
 & = & \frac{n(n - 1)}{2}\\
 & = & \boldl(w_\circ),
\end{eqnarray*}
and therefore we must have $w_{(i),\circ}\bo^{-i} = w_\circ$ by uniqueness of the longest element.  
\end{proof}

We define integers $a_1, a_2, \ldots, a_{i(n - i)}\in \{1,\ldots, n - 1\}$ by
$$s_{a_1}s_{a_2}\cdots s_{a_{i(n - i)}} := (s_{i}s_{i - 1}\cdots s_{1})(s_{i + 1}s_{i}\cdots s_{2})\cdots (s_{n - 1}s_{n - 2}\cdots s_{n - i}) = \bo^{-i},$$
and consider the set
$$\left\{ \alpha_{a_1},~s_{a_1}(\alpha_{a_2}),~s_{a_1}s_{a_2}(\alpha_{a_3}),~\ldots,~ s_{a_1}\cdots s_{a_{i(n - i) - 1}}(\alpha_{a_{i(n - i)}})\right\}.$$
By \cite[Section 1.7]{humphreys:book}, this set is exactly $\{\alpha\in \Phi^+ : \bo^{i}(\alpha)\in \Phi^-\}$, which is easily checked to be equal to
$$\Phi^+\smallsetminus\Phi^+_{(i)} = \{\alpha_{j,k}: 1\leq j \leq i~\textnormal{and}~i + 1 \leq k \leq n\}.$$

\begin{lemma}
\label{comb1}
Let $1\leq i \leq n - 1$, and let $w\in W_0$.  Consider the following condition on $w$:
\begin{equation}
\label{bruhat}
w < ws_{a_1} < ws_{a_1}s_{a_2} < \cdots < ws_{a_1}s_{a_2}\cdots s_{a_{i(n - i)}} \tag{$\gemini$}
\end{equation}
\begin{enumerate}[(a)]
\item The element $w$ satisfies \eqref{bruhat} if and only if $w\in W_{(i),0}$.  \label{comb1a}
\item The element $w$ violates \eqref{bruhat} at exactly one place if and only if $w\in s_{i}W_{(i),0}$.  In this case, there is exactly one $1\leq j\leq i(n - i) $ such that $ws_{a_1}\cdots s_{a_{j - 1}} > ws_{a_1}\cdots s_{a_{j - 1}}s_{a_j}$, and we have
$$ws_{a_1}\cdots s_{a_{j - 1}}(\alpha_{a_{j}}) = -\alpha_{i}.$$\label{comb1b}
\end{enumerate}
\end{lemma}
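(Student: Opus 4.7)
The plan is to recast condition \eqref{bruhat} as a purely combinatorial statement about the permutation $w \in W_0 = \fS_n$. First I would assemble three standard ingredients: (i) for any $v \in W_0$ and simple reflection $s_a$, the Bruhat relation $v < vs_a$ is equivalent to $v(\alpha_a) \in \Phi^+$; (ii) the roots $\gamma_j := s_{a_1}\cdots s_{a_{j-1}}(\alpha_{a_j})$ enumerate $\Phi^+\setminus\Phi^+_{(i)} = \{\alpha_{j,k} : 1\leq j \leq i < k \leq n\}$, as recorded immediately before the lemma; and (iii) in our conventions, $w$ acts on roots by $w(\alpha_{j,k}) = \alpha_{w(j),w(k)}$. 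Combining these, condition \eqref{bruhat} becomes the statement that $w(j) < w(k)$ for all $1 \leq j \leq i < k \leq n$, while ``\eqref{bruhat} is violated at exactly one place'' becomes ``there is precisely one pair $(j_0,k_0)$ with $j_0 \leq i < k_0$ and $w(j_0) > w(k_0)$.''

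Part \eqref{comb1a} will then be immediate: the displayed inequality forces $\max w(\{1,\ldots,i\}) < \min w(\{i+1,\ldots,n\})$, which, since $w$ is a bijection, means $w(\{1,\ldots,i\}) = \{1,\ldots,i\}$, i.e., $w \in \fS_i \times \fS_{n-i} = W_{(i),0}$; the converse is clear.

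For part \eqref{comb1b}, I would set $S := w(\{1,\ldots,i\})$ and observe that the number of cross-inversion pairs equals $|\{(s,s') \in S \times S^c : s > s'\}|$, a quantity depending only on $S$. The crux will be a short pigeonhole argument showing that this count equals $1$ if and only if $S = \{1,\ldots,i-1,i+1\}$: a case analysis on the largest element of $S$ and the smallest element of $S^c$ shows that any other $i$-subset different from $\{1,\ldots,i\}$ yields at least two cross-inversions (for instance, if the largest element of $S$ is $\geq i+2$, it inverts with both $i$ and $i+1$). The condition $S = \{1,\ldots,i-1,i+1\}$ is equivalent to $s_i w \in W_{(i),0}$, giving $w \in s_i W_{(i),0}$. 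In this case the unique violating pair satisfies $w(j_0) = i+1$ and $w(k_0) = i$, so
$$ws_{a_1}\cdots s_{a_{j_0 - 1}}(\alpha_{a_{j_0}}) = w(\gamma_{j_0}) = \alpha_{w(j_0),w(k_0)} = \alpha_{i+1,i} = -\alpha_i,$$
yielding the final assertion of the lemma.

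The main obstacle will be the combinatorial characterization of $i$-subsets of $\{1,\ldots,n\}$ with exactly one cross-inversion; everything else is a routine application of the dictionary between Bruhat order, length functions, and root-system combinatorics for $\fS_n$.
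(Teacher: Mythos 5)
Your proof is correct, and it takes a genuinely different route from the paper's. Both arguments begin by translating condition \eqref{bruhat}, via \eqref{bruhateqs} and the enumeration of $\Phi^+\smallsetminus\Phi_{(i)}^+$ by the roots $s_{a_1}\cdots s_{a_{j-1}}(\alpha_{a_j})$, into the statement that $w$ sends $\Phi^+\smallsetminus\Phi_{(i)}^+$ into $\Phi^+$ (resp.\ sends exactly one of its elements to $\Phi^-$). From there the paths diverge. The paper finishes abstractly: it writes $w = vu$ with $u\in W_{(i),0}$, $v\in W_0^{(i)}$, uses the fact that $W_{(i),0}$ preserves $\Phi^+\smallsetminus\Phi_{(i)}^+$ while $v$ automatically sends $\Phi^+_{(i)}$ to $\Phi^+$, and concludes $\boldl(v)=0$ in part (a) and $\boldl(v)=1$ (hence $v=s_i$, the only simple reflection outside $W_{(i),0}$) in part (b). You instead work concretely in $W_0\cong\fS_n$: you read off $w(\alpha_{j,k}) = \alpha_{w(j),w(k)}$, reduce everything to the $i$-element image set $S := w(\{1,\ldots,i\})$, and count cross-inversions $|\{(s,s')\in S\times S^c : s>s'\}|$. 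Part (a) becomes $\max S < \min S^c$, forcing $S=\{1,\ldots,i\}$; part (b) becomes a short pigeonhole argument showing the cross-inversion count is $1$ exactly when $S=\{1,\ldots,i-1,i+1\}$, i.e.\ $w\in s_iW_{(i),0}$. Your approach is more elementary (no need for minimal coset representatives or the length function), at the cost of being specific to type $A$; the paper's approach would transfer with essentially no change to any maximal standard parabolic of any Coxeter group.

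Two small polishing remarks. First, your illustrative parenthetical ``if the largest element of $S$ is $\geq i+2$, it inverts with both $i$ and $i+1$'' is not literally correct when $i$ or $i+1$ lies in $S$; the correct statement is that $\max S = m \geq i+2$ forces $m$ to invert with $m-i\geq 2$ elements of $S^c$ lying below $m$, which is what your count actually uses. Second, in the final display you overload the symbol $j_0$, using it simultaneously as the sequence index in $s_{a_1}\cdots s_{a_{j_0-1}}(\alpha_{a_{j_0}})$ and as the row coordinate in $\alpha_{j_0,k_0}$; these are different integers, and it would be cleaner to write $\gamma_\ell = \alpha_{j_0,k_0}$ for the appropriate $\ell$ and conclude $ws_{a_1}\cdots s_{a_{\ell-1}}(\alpha_{a_\ell}) = w(\gamma_\ell) = \alpha_{w(j_0),w(k_0)} = -\alpha_i$. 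Neither issue affects the validity of the argument.
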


\begin{proof}
\begin{enumerate}[(a)]
\item By equation \eqref{bruhateqs}, $w$ satisfies \eqref{bruhat} if and only if
$$w(\alpha_{a_1})  \in \Phi^+,\quad ws_{a_1}(\alpha_{a_2})  \in  \Phi^+,\quad \ldots,\quad ws_{a_1}\cdots s_{a_{i(n - i) - 1}}(\alpha_{a_{i(n - i)}}) \in  \Phi^+,$$
if and only if 
$$w(\Phi^+\smallsetminus\Phi^+_{(i)}) \subset \Phi^+$$  
(by the remarks preceding the lemma).  It is well-known that $W_{(i),0}$ preserves $\Phi^+\smallsetminus\Phi^+_{(i)}$.  Therefore it suffices to prove that if $w\in W_0$ satisfies $w(\Phi^+\smallsetminus\Phi_{(i)}^+)\subset \Phi^+$, then $w\in W_{(i),0}$.  Let us write $w = vu$, with $u\in W_{(i),0}, v\in W_0^{(i)}$; then by the previous comment the condition $w(\Phi^+\smallsetminus\Phi^+_{(i)})\subset \Phi^+$ implies $v(\Phi^+\smallsetminus\Phi^+_{(i)})\subset \Phi^+$.  However, equation \eqref{bruhateqs} and the definition of $W_0^{(i)}$ imply that $v(\Phi^+_{(i)})\subset \Phi^+$, so that $v(\Phi^+)\subset \Phi^+$.  Consequently, $\boldl(v) = 0$, and the claim follows.

\item  Similarly to part (a), $w$ violates \eqref{bruhat} at exactly one place if and only if there exists exactly one element $\alpha\in \Phi^+\smallsetminus\Phi_{(i)}^+$ for which $w(\alpha)\in \Phi^-$.  Suppose first that $w = s_{i}u$ for $u\in W_{(i),0}$.  Since $W_{(i),0}$ preserves $\Phi^+\smallsetminus\Phi_{(i)}^+$, we have $w(\Phi^+\smallsetminus\Phi_{(i)}^+) = s_{i}(\Phi^+\smallsetminus\Phi_{(i)}^+)$, and there is exactly one element of this set which is negative, namely $-\alpha_{i}$.  Conversely, suppose $w\in W_0$ is such that exactly one element of $w(\Phi^+\smallsetminus\Phi_{(i)}^+)$ is negative.  Write $w = vu$, with $u\in W_{(i),0}, v\in W_0^{(i)}$.  Again, since $w(\Phi^+\smallsetminus\Phi_{(i)}^+) = v(\Phi^+\smallsetminus\Phi_{(i)}^+)$ and $v(\Phi^+_{(i)})\subset \Phi^+$, we conclude that $v(\Phi^+)$ contains exactly one negative element.  Consequently, $\boldl(v) = 1$, and the only simple reflection not contained in $W_{(i),0}$ is $s_{i}$.  
\end{enumerate}
\end{proof}

We now describe the set $\cW_\beta'$.  We let $W_{\beta,0}$ denote the Weyl group of $\bM_\beta$.  Thus, if $\beta = \alpha_{m,m + a}$, then $\hgt(\beta) = a$, we have $\bM_\beta \cong \bG_{\textnormal{m}}^{m - 1}\times \bG\bL_{a + 1}\times \bG_{\textnormal{m}}^{n - m - a}$, and $W_{\beta,0}\cong \fS_{a + 1}$ is generated by $\{s_m, \ldots, s_{m + a - 1}\}$.  We let $\Omega_{\bM_\beta}$ denote the group of length 0 elements in the extended affine Weyl group of $M_{\beta}$, and let $\overline{\Omega_{\bM_\beta}}$ denote its image in $W_{\beta,0}\subset W_0$, which is a cyclic group of order $a + 1$.  Finally, we let ${}^\beta W_0$ denote the set of minimal coset representatives of $W_{\beta,0}\backslash W_0$.

\begin{propn}\label{Wbeta'}
Let $\beta = \alpha_{m,m+a}\in \Phi^+$.  We then have
$$\cW_\beta' = w_{\beta,\circ}\textnormal{stab}_{W_{\beta,0}}(\beta)\cdot \overline{\Omega_{\bM_\beta}}\cdot {}^\beta W_0,$$
where $w_{\beta,\circ}$ denotes the longest element of $W_{\beta,0}$.  In particular, we have $|\cW_\beta'| = n!/a = n!/\hgt(\beta)$.  
\end{propn}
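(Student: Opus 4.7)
The plan is first to extract, from the commutator structure of $I_1\cap w^{-1}Bw$, a purely combinatorial condition on $w\in W_0 = \fS_n$ characterizing membership in $\cW_\beta'$, and then to match the resulting set of permutations with the coset decomposition on the right-hand side.

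For the first step, I would analyze $(I_1\cap w^{-1}Bw)^{\textnormal{ab}}$ directly.  In type $A$, the only relevant instance of \eqref{comm} is the telescoping identity $[u_{\alpha_{p,r}}(x), u_{\alpha_{r,q}}(y)] = u_{\alpha_{p,q}}(c\,xy)$, and commutators involving $T_1$ only raise the valuation by one via $[t, u_\gamma(y)] = u_\gamma((\gamma(t)-1)y)$.  Careful valuation bookkeeping shows that $I_1\cap U_{w^{-1}(\beta)}\subseteq [I_1\cap w^{-1}Bw, I_1\cap w^{-1}Bw]$ if and only if there exists a chain of distinct indices $p = r_0, r_1,\ldots, r_k = q$ with $k\geq 2$ (where $p := w^{-1}(m)$, $q := w^{-1}(m+a)$) satisfying $w(r_{i-1}) < w(r_i)$ for all $i$ and having at most $\mathbf{1}_{p > q}$ descents.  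A direct check confirms that length-two chains already exhaust this criterion, so letting $s_j := w^{-1}(m+j)$ for $j = 1,\ldots, a-1$, one deduces: $w\in \cW_\beta'$ iff either $p < q$ and no $s_j$ lies in the open interval $(p, q)$, or $p > q$ and every $s_j$ lies in the closed interval $[q, p]$.

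Next I would invoke the parabolic decomposition $w = uv$ with $u\in W_{\beta,0}$ and $v\in {}^\beta W_0$.  Since $u$ only permutes the values in $\{m,\ldots, m+a\}$, the set $P := w^{-1}(\{m,\ldots, m+a\})$ depends only on $v$, and the defining property of ${}^\beta W_0$ forces $v|_P$ to be the unique order-preserving bijection $P\stackrel{\sim}{\to}\{m,\ldots, m+a\}$.  Writing $P = \{p_0 < p_1 < \cdots < p_a\}$ and $w^{-1}(m+j) = p_{\sigma(j)}$ for a permutation $\sigma$ of $\{0,1,\ldots, a\}$ determined by $u$ alone, the combinatorial criterion becomes a condition on $\sigma$: either $\sigma(a) = \sigma(0) + 1$ (with $\sigma(0)\in \{0,\ldots, a-1\}$ and $\sigma|_{\{1,\ldots,a-1\}}$ an arbitrary bijection onto the complement), or $\sigma(0) = a$ and $\sigma(a) = 0$ (with $\sigma|_{\{1,\ldots,a-1\}}$ an arbitrary permutation of $\{1,\ldots, a-1\}$).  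This yields $a(a-1)! + (a-1)! = (a+1)!/a$ admissible $u$'s for each $v$, and consequently $|\cW_\beta'| = |{}^\beta W_0|\cdot (a+1)!/a = n!/a$.

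Finally, I would identify the set of admissible $u$'s with $w_{\beta,\circ}\textnormal{stab}_{W_{\beta,0}}(\beta)\cdot \overline{\Omega_{\bM_\beta}}$ by a direct calculation in coordinates.  Setting $u(m + j) = m + \tilde u(j)$, the reversal element satisfies $\widetilde{w_{\beta,\circ}}(j) = a - j$, while a generator $\bo_\beta$ of $\overline{\Omega_{\bM_\beta}}$ acts via $\widetilde{\bo_\beta}(j) \equiv j - 1 \pmod{a+1}$.  For $\tau\in \textnormal{stab}_{W_{\beta,0}}(\beta)$ and $0\leq k\leq a$, the product $w_{\beta,\circ}\tau\bo_\beta^k$ sends $k \mapsto a$ and $(k-1)\bmod (a+1)\mapsto 0$, and permutes the remaining $a - 1$ indices via $\tau$.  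Matching $k = 0$ with the case $\sigma(0) = a, \sigma(a) = 0$, and $k\in\{1,\ldots,a\}$ with the case $\sigma(0) = k-1, \sigma(a) = k$, establishes the desired equality.  The principal obstacle is the first step: one must justify rigorously that no higher nested commutator or subtler valuation issue escapes the simple sum-of-roots analysis, in particular for the $p > q$ case where one must carefully distinguish $U_{w^{-1}(\beta)}\cap \fp$ from $U_{w^{-1}(\beta)}\cap \fp^2$ landing in the commutator subgroup.
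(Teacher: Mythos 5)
Your overall structure is sound, and the answer you arrive at is correct; the second half of your argument is genuinely different from the paper's, and it is worth comparing the two. You and the paper agree on the key combinatorial reduction: $w\in\cW_\beta'$ iff, with $p := w^{-1}(m)$, $q := w^{-1}(m+a)$ and $s_j := w^{-1}(m+j)$, either $p<q$ and no $s_j$ lies in $(p,q)$, or $p>q$ and every $s_j$ lies in $(q,p)$; this is precisely equation \eqref{relnforW'} of the paper restated in coordinates. After that the routes diverge. The paper proves the inclusion $w_{\beta,\circ}\textnormal{stab}_{W_{\beta,0}}(\beta)\cdot\overline{\Omega_{\bM_\beta}}\cdot{}^\beta W_0 \subseteq \cW_\beta'$ (checking stability under the three factors separately), counts the left side by the product decomposition, and then counts $|\cW_\beta'|$ directly using $\overline{\Omega}$-stability to normalize $w^{-1}(m)=n$ and a hockey-stick identity; equality of finite sets then follows. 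You instead use the parabolic factorization $w = uv$ with $u\in W_{\beta,0}$, $v\in{}^\beta W_0$, observe that the criterion depends only on the permutation $\sigma = \tilde u^{-1}$ of $\{0,\dots,a\}$, classify the admissible $\sigma$'s as ``$\sigma(a)=\sigma(0)+1$'' or ``$\sigma(0)=a,\,\sigma(a)=0$,'' and then match these $\sigma$'s with $w_{\beta,\circ}\textnormal{stab}_{W_{\beta,0}}(\beta)\cdot\overline{\Omega_{\bM_\beta}}$ by an explicit coordinate computation. Your approach establishes the identity by an explicit bijection rather than by a counting argument, and avoids the binomial identity, at the cost of more coordinate bookkeeping.

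Two remarks on rigor. First, the gap you flag honestly at the end --- justifying that only the $A_2$-type two-step commutators and the $T_1$-conjugation are relevant, and that no nested commutator or valuation surprise can push $I_1\cap U_{w^{-1}(\beta)}$ into the derived subgroup unexpectedly --- is present in the same form in the paper (equation \eqref{commforW'} is asserted, not proved); so this is a shared routine-calculation step, not a defect peculiar to your argument, though you would still need to supply it. Second, the phrasing of your chain criterion (``$w(r_{i-1})<w(r_i)$ for all $i$ and having at most $\mathbf{1}_{p>q}$ descents'') reads a little awkwardly --- the descent count refers to the index sequence $r_0,\dots,r_k$, not to $w$ applied to it --- but the subsequent computation with the $s_j$'s is correct, and I verified that length-two chains do exhaust the condition (any chain with $\leq \mathbf{1}_{p>q}$ descents contains an intermediate index $s_j$ for which the length-two chain $p, s_j, q$ already has $\leq\mathbf{1}_{p>q}$ descents). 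So the proposal is correct as an alternative proof.
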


\begin{proof}
Recall that the set $\cW_\beta'$ is defined as the set of $w\in W_0$ for which $I_1\cap U_{w^{-1}(\beta)}\not\subset[I_1\cap w^{-1}Bw, I_1\cap w^{-1}Bw]$.  
%For $\bG = \bG\bL_n$ the formula \eqref{comm} simplifies to 
%$$[u_{\alpha}(x),u_{\alpha'}(y)] = \begin{cases}u_{\alpha + \alpha'}(\pm xy) & \textnormal{if}~\alpha + \alpha'\in \Phi,\\ 1 &  \textnormal{if}~\alpha + \alpha'\not\in \Phi.\end{cases}$$
%Expanding an element of $I_1\cap w^{-1}Bw$ in terms of root subgroups and using the above fact shows that $[I_1\cap w^{-1}Bw, I_1\cap w^{-1}Bw]$ is generated by ``simple commutators'' $[u_{w^{-1}(\alpha)}(x),u_{w^{-1}(\alpha')}(y)]$ for $\alpha,\alpha'\in \Phi^+$.  Commuting elements further shows that any element of $[I_1\cap w^{-1}Bw, I_1\cap w^{-1}Bw]$ can be written as $\prod_{\alpha\in \Phi^+}u_{w^{-1}(\alpha)}(x_\alpha)$, where $u_{w^{-1}(\alpha)}(x_\alpha) = [u_{w^{-1}(\alpha')}(x_{\alpha'}),u_{w^{-1}(\alpha'')}(x_{\alpha''})]$.  
This condition holds if and only if, for every $m < j < m + a$, we have 
\begin{equation}\label{commforW'}
I_1\cap U_{w^{-1}(\beta)} = I_1\cap U_{w^{-1}(\alpha_{m,m+a})}\supsetneq[I_1\cap U_{w^{-1}(\alpha_{m,j})},I_1\cap U_{w^{-1}(\alpha_{j,m+a})}],
\end{equation}
which holds if and only if
\begin{equation}\label{relnforW'}
1 + \mathbf{1}_{\Phi^-}\left(w^{-1}(\alpha_{m,m+a})\right) = \mathbf{1}_{\Phi^-}\left(w^{-1}(\alpha_{m,j})\right) + \mathbf{1}_{\Phi^-}\left(w^{-1}(\alpha_{j,m+a})\right)
\end{equation}
for every $m < j < m + a$.

We first show that $w_{\beta,\circ}\textnormal{stab}_{W_{\beta,0}}(\beta)\cdot \overline{\Omega_{\bM_\beta}}\cdot {}^\beta W_0$ is contained in $\cW_\beta'$.  If $w\in w_{\beta,\circ}\textnormal{stab}_{W_{\beta,0}}(\beta)$, then 
$$w^{-1}(\alpha_{m,m + a}) = \alpha_{m + a, m}\in \Phi^-,$$ 
and 
$$w^{-1}(\alpha_{m,j}) = \alpha_{m + a,w^{-1}(j)}\in \Phi^-,~ w^{-1}(\alpha_{j,m + a}) = \alpha_{w^{-1}(j),m}\in \Phi^-$$ 
for all $m < j < m + a$ as well (since $\textnormal{stab}_{W_{\beta,0}}(\beta)$ preserves $\{m + 1, \ldots, m + a - 1\}$).  Therefore, both sides of equation \eqref{relnforW'} are equal to 2, so $w_{\beta,\circ}\textnormal{stab}_{W_{\beta,0}}(\beta)\subset \cW_\beta'$.  Next, if $w\in W_{\beta,0}\cap \cW_\beta'$, then the subgroups appearing in equation \eqref{commforW'} are contained in $I_1\cap M_{\beta}$, which is preserved by conjugation by $\Omega_{\bM_{\beta}}$.  Hence, if $w\in W_{\beta,0}\cap \cW_\beta'$, then $w\bo'\in W_{\beta,0}\cap \cW_\beta'$ for every $\bo'\in \overline{\Omega_{\bM_\beta}}$, which shows $w_{\beta,\circ}\textnormal{stab}_{W_{\beta,0}}(\beta)\cdot \overline{\Omega_{\bM_\beta}}\subset \cW_\beta'$.  Finally, if $v\in {}^\beta W_0$ then $v^{-1}(\Phi^+_{\bM_\beta})\subset \Phi^+$ by equation \eqref{bruhateqs} and the definition of ${}^\beta W_0$.  This implies $\mathbf{1}_{\Phi^-}(v^{-1}(\alpha)) = \mathbf{1}_{\Phi^-}(\alpha)$ for $\alpha\in \Phi_{\bM_\beta}$.  Combining these facts with equation \eqref{relnforW'} gives the claim.

Next, we show that $w_{\beta,\circ}\textnormal{stab}_{W_{\beta,0}}(\beta)\cdot \overline{\Omega_{\bM_\beta}}\cdot {}^\beta W_0$ has order $n!/a$.  Since $w_{\beta,\circ}\textnormal{stab}_{W_{\beta,0}}(\beta)\cdot \overline{\Omega_{\bM_\beta}} \subset W_{\beta,0}$, the product map
$$w_{\beta,\circ}\textnormal{stab}_{W_{\beta,0}}(\beta)\cdot \overline{\Omega_{\bM_\beta}} \times {}^\beta W_0 \longrightarrow w_{\beta,\circ}\textnormal{stab}_{W_{\beta,0}}(\beta)\cdot \overline{\Omega_{\bM_\beta}}\cdot {}^\beta W_0$$
is a bijection.  Furthermore, since $\overline{\Omega_{\bM_\beta}}$ fixes no elements of $\Phi_{\bM_\beta}$, we have $\textnormal{stab}_{W_{\beta,0}}(\beta)\cap \overline{\Omega_{\bM_\beta}} = \{1\}$.  This implies 
\begin{eqnarray*}
\left|w_{\beta,\circ}\textnormal{stab}_{W_{\beta,0}}(\beta)\cdot \overline{\Omega_{\bM_\beta}}\cdot {}^\beta W_0\right| & = & \left|\textnormal{stab}_{W_{\beta,0}}(\beta)\right|\cdot \left|\overline{\Omega_{\bM_\beta}}\right|\cdot \left|{}^\beta W_0\right|\\
 & = & (a - 1)!(a + 1)\frac{n!}{(a + 1)!} \\
 & = & \frac{n!}{a}.
\end{eqnarray*}

Finally, we compute the order of $\cW_\beta'$.  Let $w\in \cW_\beta'$; since $\cW_\beta'$ is stable by right multiplication by the cyclic group $\overline{\Omega}$, we may multiply $w$ by an element of $\overline{\Omega}$ and suppose $w^{-1}(m) = n$.  This implies $w^{-1}(\alpha_{m,m + a}) = \alpha_{n,w^{-1}(m + a)}\in \Phi^-$.  Therefore in order for equation \eqref{relnforW'} to be satisfied, we must have
$$n > w^{-1}(j) > w^{-1}(m + a)$$
for every $m < j < m + a$.  For $0\leq k \leq n - a - 1$, the number of $w\in W_0$ with $w^{-1}(m) = n$, $w^{-1}(m + a) = n - a - k$, and $n > w^{-1}(j) > w^{-1}(m + a)$ for $m < j < m + a$ is equal to $\binom{a - 1 + k}{a - 1}(a-1)!(n - a - 1)!$.  Hence, we obtain
\begin{eqnarray*}
\left|\cW_\beta'\right| & = & \left|\overline{\Omega}\right|\sum_{k = 0}^{n - a - 1}\binom{a - 1 + k}{a - 1}(a-1)!(n - a - 1)!\\
 & = & n(a - 1)!(n - a - 1)!\sum_{k = 0}^{n - a - 1}\binom{a - 1 + k}{a - 1}\\
 & = & n(a - 1)!(n - a - 1)!\binom{n - 1}{a}\\
 & = & \frac{n!}{a},
\end{eqnarray*}
and the proposition follows.  
\end{proof}

\subsection{Right adjoint calculation -- Maximal Levis}

Our next goal will be to calculate the right adjoints of parabolic induction on $\gr_\bullet$.  In order to do this, we need to compute the action of certain diagonal elements.  For brevity, we write $\T_j$ for the operator $\T_{\widehat{s_j}}$.  Recall that, by Lemmas \ref{TsalphaGr} and \ref{TomegaGr}, we have
\begin{eqnarray*}
\bar{\eta}^w\cdot\T_{j} & = & \delta\mathbf{1}_{\Phi^-}(w(\alpha_j))\cdot\bar{\eta}^w + \zeta\left(\mathbf{1}_{\Phi^+\smallsetminus\{\beta\}}(w(\alpha_j)) + \varepsilon \delta_{w(\alpha_j),-\beta}\delta_{F,\qp}\right)\cdot \bar{\eta}^{ws_j}\\
\bar{\eta}^w\cdot\T_{\omega^{\pm i}} & = & \zeta'\cdot \bar{\eta}^{w\bo^{\pm i}}\\
\bar{\eta}^w\cdot\T_{t_0} & = & \zeta''\cdot\bar{\eta}^{w}
\end{eqnarray*}
for some $\delta\in C$ and $\varepsilon, \zeta,\zeta',\zeta''\in C^\times$.  We will often write such formulas as
$$\bar{\eta}^w\cdot\T_{j}  =  \delta\cdot\bar{\eta}^w + \delta'\cdot \bar{\eta}^{ws_j},$$
for $\delta,\delta'\in C$, with the understanding that $\delta' = 0$ if $ws_j\not\in \cW_{\beta}'$.

Now define
$$\fm_{\beta,r} := \textnormal{span}\{\bar{\eta}^{w}\}_{w\in \cW_\beta'},$$
so that $\gr_a = \bigoplus_{\hgt(\beta) = a}\bigoplus_{r = 0}^{f - 1}\fm_{\beta,r}$.  By Proposition \ref{Wbeta'}, this is a $C$-vector space of dimension $n!/\hgt(\beta)$.  By the equations above, $\fm_{\beta,r}$ is stable by the action of $\cH$.

Fix $1\leq i \leq n - 1$.  We will first compute the right adjoints on $\fm_{\beta,r}$ for the maximal Levi subgroups $\bM_{(i)}$.  Set 
$$\lambda_i := \textnormal{diag}(\underbrace{1,\ldots, 1}_{i~\textnormal{times}}, \underbrace{\varpi, \ldots, \varpi}_{n - i~\textnormal{times}})\in T.$$
We have a reduced decomposition in $\tW$
\begin{eqnarray*}
\lambda_{i}^{-1} & = &  \widehat{\bo^{-i}}\omega^{-(n - i)}t_0\\
 & = &  (\widehat{s_{i}}\widehat{s_{i - 1}}\cdots \widehat{s_{1}})(\widehat{s_{i + 1}}\widehat{s_{i}}\cdots \widehat{s_{2}})\cdots (\widehat{s_{n - 1}}\widehat{s_{n - 2}}\cdots \widehat{s_{n - i}})\omega^{-(n - i)}t_0
\end{eqnarray*}
for some $t_0\in T_0$.  This gives
$$\T_{\lambda_{i}^{-1}}  =  \T_{i}\cdots \T_{1}\T_{i + 1}\cdots \T_{2}\cdots \T_{n - 1}\cdots \T_{n - i}\T_{\omega^{-(n - i)}}\T_{t_0}.$$
In order to compute $R_{\cH_{M_{(i)}}}^{\cH}(\fm_{\beta,r})$, we must choose an element $\lambda^+$ which is central in $\tW_{\bM_{(i)}}$ and which satisfies $\langle\alpha,\nu(\lambda^+)\rangle < 0$ for every $\alpha\in \Phi^+\smallsetminus\Phi^+_{\bM_{(i)}}$.  We take $\lambda^+ = \lambda_{i}^{-1}$, and obtain an isomorphism of vector spaces
$$R_{\cH_{M_{(i)}}}^{\cH}(\fm_{\beta,r})\cong \bigcap_{N\geq 0}\fm_{\beta,r}\cdot \T_{\lambda_{i}^{-1}}^N.$$

\begin{lemma}
\label{Tlambda1}
Let $w\in \cW_\beta'\cap W_{(i),0}$.  We then have
$$\bar{\eta}^w\cdot\T_{\lambda_{i}^{-1}} = \zeta\mathbf{1}_{\Phi^+_{(i)}}(\beta)\cdot\bar{\eta}^{w}$$
for some $\zeta\in C^\times$.  In particular, the vector space
$$\textnormal{span}\{\bar{\eta}^u\}_{u\in \cW_\beta'\cap W_{(i),0}}$$
is stable by the action of $\T_{\lambda_{i}^{-1}}$.  
\end{lemma}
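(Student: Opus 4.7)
My plan is to expand $\T_{\lambda_i^{-1}}$ using the reduced factorization displayed just above the lemma, and apply the factors to $\bar\eta^w$ one at a time. Setting $w_j := ws_{a_1}\cdots s_{a_{j-1}}$ (with $w_1 = w$), Lemma \ref{comb1}(a) tells us that since $w\in W_{(i),0}$, we have $w_j < w_j s_{a_j}$ for every $1\leq j \leq i(n-i)$, equivalently $w_j(\alpha_{a_j})\in\Phi^+$. Plugging this into Lemma \ref{TsalphaGr}, the $\mathbf{1}_{\Phi^-}$ and $\delta_{w_j(\alpha_{a_j}),-\beta}$ terms vanish, so we are left with
$$\bar\eta^{w_j}\cdot\T_{a_j} \;=\; \zeta_j\,\mathbf{1}_{\Phi^+\smallsetminus\{\beta\}}\bigl(w_j(\alpha_{a_j})\bigr)\,\bar\eta^{w_j s_{a_j}}$$
for some $\zeta_j \in C^\times$ depending on $\chi$, $w_j$, and the structure constants.

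The dichotomy in the lemma is then controlled by whether $\beta$ ever occurs as some $w_j(\alpha_{a_j})$. By the discussion preceding Lemma \ref{comb1}, the multiset $\{w_j(\alpha_{a_j})\}_{j}$ equals $w(\Phi^+\smallsetminus\Phi^+_{(i)})$, and since $w\in W_{(i),0}$ preserves $\Phi^+\smallsetminus\Phi^+_{(i)}$, this is just $\Phi^+\smallsetminus\Phi^+_{(i)}$ itself. Hence if $\beta\in\Phi^+_{(i)}$, every $w_j(\alpha_{a_j})$ lies in $\Phi^+\smallsetminus\{\beta\}$, every scalar $\zeta_j$ survives, and Lemma \ref{TsalphaGr} simultaneously ensures that $w_js_{a_j}\in\cW_\beta'$ at each stage, so the intermediate basis vectors $\bar\eta^{w_js_{a_j}}$ are bona fide nonzero elements of $\fm_{\beta,r}$. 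After all $i(n-i)$ factors we arrive at a nonzero scalar multiple of $\bar\eta^{w\bo^{-i}}$. Conversely, if $\beta\notin\Phi^+_{(i)}$ then some $w_{j^\star}(\alpha_{a_{j^\star}})=\beta$ forces a factor of $0$ and annihilates the product.

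Finally, iterating Lemma \ref{TomegaGr}(b) shows that $\T_{\omega^{-(n-i)}}$ takes $\bar\eta^{w\bo^{-i}}$ to a nonzero scalar multiple of $\bar\eta^{w\bo^{-i}\bo^{-(n-i)}}=\bar\eta^{w\bo^{-n}}=\bar\eta^{w}$ (since $\bo$ has order $n$ in $W_0$), and $\T_{t_0}$ contributes another nonzero scalar by Lemma \ref{TomegaGr}(a). Assembling the factors yields the advertised formula $\bar\eta^w\cdot\T_{\lambda_i^{-1}}=\zeta\,\mathbf{1}_{\Phi^+_{(i)}}(\beta)\,\bar\eta^w$ for some $\zeta\in C^\times$; the stability of $\mathrm{span}\{\bar\eta^u\}_{u\in\cW_\beta'\cap W_{(i),0}}$ under $\T_{\lambda_i^{-1}}$ is then immediate, because the output is either zero or proportional to $\bar\eta^w$ itself. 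The only mild subtlety is confirming that the intermediate $w_js_{a_j}$ remain in $\cW_\beta'$ so that the chain of multiplications makes sense; this is precisely guaranteed by the first assertion of Lemma \ref{TsalphaGr} once we know $w_j(\alpha_{a_j})\in\Phi^+\smallsetminus\{\beta\}$.
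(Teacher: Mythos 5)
Your proof is correct and follows essentially the same route as the paper's: expand $\T_{\lambda_i^{-1}}$ via the fixed reduced decomposition, use Lemma~\ref{comb1}(a) to see every $w_j(\alpha_{a_j})\in\Phi^+$, apply Lemma~\ref{TsalphaGr} at each step so only the $\mathbf{1}_{\Phi^+\smallsetminus\{\beta\}}$ indicators survive, and identify the multiset $\{w_j(\alpha_{a_j})\}_j$ with $\Phi^+\smallsetminus\Phi^+_{(i)}$ to collapse the product of indicators to $\mathbf{1}_{\Phi^+_{(i)}}(\beta)$. Your explicit check that each intermediate $w_js_{a_j}$ lies in $\cW_\beta'$ is a nice addition; the paper leaves this implicit in its coefficient conventions, but the logical content is the same.
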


Recall that we have defined integers $a_1, a_2, \ldots, a_{i(n - i)}$ by 
$$\bo^{-i} = s_{a_1}s_{a_2}\cdots s_{a_{i(n - i)}} := (s_{i}s_{i - 1}\cdots s_{1})(s_{i + 1}s_{i}\cdots s_{2})\cdots (s_{n - 1}s_{n - 2}\cdots s_{n - i}).$$

\begin{proof}
If $w\in \cW_\beta'\cap W_{(i),0}$, then $w$ satisfies \eqref{bruhat}, so that $ws_{a_1}\cdots s_{a_{j - 1}}(\alpha_{a_j})\in \Phi^+$ for all $1\leq j \leq i(n - i)$.  By Lemma \ref{TsalphaGr}, this gives
\begin{eqnarray*}
 \bar{\eta}^w\cdot\T_{\lambda_{i}^{-1}} & = & \bar{\eta}^w\cdot \T_{a_1}\cdots \T_{a_{i(n - i)}}\T_{\omega^{-(n - i)}}\T_{t_0}\\
 & = & \zeta\mathbf{1}_{\Phi^+\smallsetminus\{\beta\}}\left(w(\alpha_{a_1})\right)\cdot\bar{\eta}^{ws_{a_1}}\cdot\T_{a_2}\cdots \T_{a_{i(n - i)}}\T_{\omega^{-(n - i)}}\T_{t_0}\\
 & \vdots &  \\
 & = & \zeta'\prod_{j = 1}^{i(n - i)}\mathbf{1}_{\Phi^+\smallsetminus\{\beta\}}\left(ws_{a_1}\cdots s_{a_{j - 1}}(\alpha_{a_j})\right)\cdot\bar{\eta}^{ws_{a_1}\cdots s_{a_{i(n - i)}}}\cdot \T_{\omega^{-(n - i)}}\T_{t_0}\\
 & = & \zeta''\prod_{j = 1}^{i(n - i)}\mathbf{1}_{\Phi^+\smallsetminus\{\beta\}}\left(ws_{a_1}\cdots s_{a_{j - 1}}(\alpha_{a_j})\right)\cdot\bar{\eta}^{w} \\
 &\stackrel{\star}{=} & \zeta'' \prod_{\alpha\in \Phi^+\smallsetminus\Phi^+_{(i)}}\mathbf{1}_{\Phi^+\smallsetminus\{\beta\}}\left(\alpha\right)\cdot\bar{\eta}^w~ =~   \zeta''\mathbf{1}_{\Phi^+_{(i)}}(\beta)\cdot \bar{\eta}^w,
\end{eqnarray*}
where $\zeta,\zeta', \zeta''\in C^\times$, and where the equality ($\star$) follows from the discussion preceding Lemma \ref{comb1}.  This gives the lemma.
\end{proof}

\begin{lemma}
\label{Tlambda}
Let $w\in \cW_\beta'$.  Then there exists an integer $N_w \geq 0$ such that if $N \geq N_w$, we have
$$\bar{\eta}^w\cdot\T_{\lambda_{i}^{-1}}^N \in \textnormal{span}\{\bar{\eta}^u\}_{u\in \cW_\beta'\cap W_{(i),0}}.$$
\end{lemma}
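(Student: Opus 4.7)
The plan is to proceed by strong induction on $\ell_i(w) := \#\{\alpha \in \Phi^+\smallsetminus\Phi^+_{(i)} : w(\alpha) \in \Phi^-\}$, with base case $\ell_i(w) = 0$ (equivalently, $w \in W_{(i),0}$ by Lemma \ref{comb1}\eqref{comb1a}) given by Lemma \ref{Tlambda1}, where $N_w = 1$ suffices. For the inductive step, I would expand $\bar{\eta}^w \cdot \T_{\lambda_i^{-1}}$ via the reduced decomposition $\T_{\lambda_i^{-1}} = \T_{a_1}\cdots \T_{a_{i(n-i)}}\T_{\omega^{-(n-i)}}\T_{t_0}$ and iteratively apply Lemmas \ref{TsalphaGr} and \ref{TomegaGr}. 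At each step $k$, the operator $\T_{a_k}$ either produces a ``stay'' term (a scalar multiple of the current $\bar\eta^{w_{k-1}}$, available when the intermediate value $w_{k-1}(\alpha_{a_k})\in\Phi^-$) or a ``move'' term (a scalar multiple of $\bar\eta^{w_{k-1}s_{a_k}}$, available when $w_{k-1}(\alpha_{a_k})\in\Phi^+\smallsetminus\{\beta\}$, or $w_{k-1}(\alpha_{a_k})=-\beta$ with $F=\qp$). The net output is a linear combination $\sum_v c_v\bar\eta^{wv\bo^{-(n-i)}}$, where $v$ ranges over subexpressions of $s_{a_1}\cdots s_{a_{i(n-i)}}$ realizable by valid stay/move paths.

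The central combinatorial observation I would exploit is the following: the coefficient of $\bar\eta^w$ itself in this expansion (the ``diagonal'' term) can only arise from the full-move path $v=\bo^{-i}$, because $\bo^n = 1$ forces $v = \bo^{n-i} = \bo^{-i}$ in $W_0$ and no shorter sub-expression of the reduced word for $\bo^{-i}$ can represent it. By Lemma \ref{comb1}\eqref{comb1a}, the full-move path is admissible only when $w\in W_{(i),0}$. For $w\in\cW_\beta'\smallsetminus W_{(i),0}$, at least one step along this path has $w_{k-1}(\alpha_{a_k})\in\Phi^-$; the assumption $\hgt(\beta)\geq 2$ combined with Lemma \ref{comb1}\eqref{comb1b} ensures that such negative images cannot all equal $-\beta$ (in the $\ell_i(w)=1$ case the unique violating value is $-\alpha_i\neq-\beta$, and for $\ell_i(w)\geq 2$ bijectivity of $w$ precludes repeated $-\beta$ values), so the full-move path contributes zero. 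Hence the diagonal coefficient vanishes for every $w\in\cW_\beta'\smallsetminus W_{(i),0}$.

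To conclude, I would combine this zero-diagonal fact with Lemma \ref{Tlambda1}: setting $U := \textnormal{span}\{\bar\eta^u\}_{u\in\cW_\beta'\cap W_{(i),0}}$, Lemma \ref{Tlambda1} identifies $U$ as $\T_{\lambda_i^{-1}}$-stable with diagonal action. Since $\fm_{\beta,r}$ is finite-dimensional, the decreasing chain $\fm_{\beta,r}\supseteq\fm_{\beta,r}\cdot\T_{\lambda_i^{-1}}\supseteq\cdots$ stabilizes at some $V_\infty$ on which $\T_{\lambda_i^{-1}}$ acts bijectively; the zero-diagonal property together with an ordering argument rules out any generalized eigenvector for a nonzero eigenvalue sitting outside $U$, yielding $V_\infty\subseteq U$ and hence the lemma with $N_w$ equal to the stabilization index of the chain. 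The main obstacle is promoting ``zero diagonal on $\cW_\beta'\smallsetminus W_{(i),0}$'' to genuine nilpotency of $\T_{\lambda_i^{-1}}$ on the quotient $\fm_{\beta,r}/U$; this requires bookkeeping on which sub-expressions $v$ actually occur in $S(w)$ and finding a partial order on $\cW_\beta'\smallsetminus W_{(i),0}$ (likely tracking both $\ell_i$ and the right $\bo^{-(n-i)}$-shift) under which the matrix of $\T_{\lambda_i^{-1}}$ becomes strictly upper-triangular. The small example $\bG=\bG\bL_3$, $\beta=\alpha_{1,3}$, $i=1$ (where every orbit starting outside $W_{(1),0}$ reaches $W_{(1),0}$ within two applications of $\T_{\lambda_i^{-1}}$) suggests this combinatorial bound is tractable in general.
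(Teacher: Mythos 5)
Your proposal identifies the right combinatorial facts — the diagonal coefficient of $\bar\eta^w$ in $\bar\eta^w\cdot\T_{\lambda_i^{-1}}$ comes only from the full-move path, and for $w\notin W_{(i),0}$ that path is obstructed because the roots $s_{a_1}\cdots s_{a_{j-1}}(\alpha_{a_j})$ are distinct elements of $\Phi^+\smallsetminus\Phi^+_{(i)}$ so $w$ cannot send two of them to $-\beta$, and when it sends exactly one to a negative root that root is $-\alpha_i\neq-\beta$. But you explicitly flag, and leave unfilled, the step that actually carries the lemma: ``zero diagonal'' is not nilpotency, and the finite-dimensional chain stabilization plus an unspecified partial order does not yet give $V_\infty\subseteq U$. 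A nonzero off-diagonal matrix with zero diagonal can easily fail to be nilpotent, so some genuine triangularity must be exhibited.

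The paper supplies exactly the missing normalization in the first paragraph of its proof. When a ``stay'' occurs at step $j$, i.e.\ $w's_{a_1}\cdots s_{a_{j-1}}(\alpha_{a_j})\in\Phi^-$, the exchange condition lets one rewrite the surviving element $w's_{a_1}\cdots s_{a_{j-1}}$ as $w''s_{a_1}\cdots s_{a_{j-1}}s_{a_j}$ for some $w''<w'$ in Bruhat order. This puts \emph{every} output term, whether it came from a ``stay'' or a ``move,'' in the uniform shape $\bar\eta^{v\,s_{a_1}\cdots s_{a_j}}$ with the prefix $v$ tracked against the Bruhat order. Combined with your observation that the $v=w$ term vanishes for $w\notin W_{(i),0}$ (and $\hgt(\beta)\geq 2$ kills the stray $-\beta$-move contributions), this shows $\bar\eta^w\cdot\T_{\lambda_i^{-1}}\in\textnormal{span}\{\bar\eta^v : v<w\}$, i.e.\ the matrix of $\T_{\lambda_i^{-1}}$ on $\fm_{\beta,r}/U$ \emph{is} strictly triangular with respect to the Bruhat order. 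Induction on $\boldl(w)$ then closes the argument. The partial order you were hunting for is the Bruhat order, and the tool that reveals it is the exchange condition; without that rewriting step your bookkeeping of subexpressions and shifts does not converge to a proof.
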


\begin{proof}
Let $w'\in W_0$, and consider the element $\bar{\eta}^{w's_{a_1}\cdots s_{a_{j - 1}}}$.  Suppose 
$$w's_{a_1}\cdots s_{a_{j - 1}}s_{a_j} < w's_{a_1}\cdots s_{a_{j - 1}};$$ 
then, by the exchange condition on $W_0$ (\cite[Section 1.7]{humphreys:book}), we can write $w's_{a_1}\cdots s_{a_{j - 1}} = w''s_{a_1}\cdots s_{a_{j - 1}}s_{a_j}$ for some $w'' < w'$.  This gives
\begin{eqnarray*}
\bar{\eta}^{w's_{a_1}\cdots s_{a_{j - 1}}}\cdot \T_{a_j} & = & \begin{cases} \delta\cdot\bar{\eta}^{w's_{a_1}\cdots s_{a_{j - 1}}s_{a_j}} & (\textnormal{I}) \\ \delta'\cdot\bar{\eta}^{w's_{a_1}\cdots s_{a_{j - 1}}} + \delta''\cdot\bar{\eta}^{w's_{a_1}\cdots s_{a_{j - 1}}s_{a_j}} & (\textnormal{II}) \end{cases} \\
 & = & \begin{cases} \delta\cdot\bar{\eta}^{w's_{a_1}\cdots s_{a_{j - 1}}s_{a_j}} &  \\ \delta'\cdot\bar{\eta}^{w''s_{a_1}\cdots s_{a_{j - 1}}s_{a_j}} + \delta''\cdot\bar{\eta}^{w's_{a_1}\cdots s_{a_{j - 1}}s_{a_j}} & \end{cases} 
\end{eqnarray*}
where $w's_{a_1}\cdots s_{a_{j - 1}}(\alpha_{a_j}) \in \Phi^+$ in case $(\textnormal{I})$, $w's_{a_1}\cdots s_{a_{j - 1}}(\alpha_{a_{j}}) \in \Phi^-$ in case $(\textnormal{II})$, and where $\delta,\delta', \delta''\in C$.  Hence, we see that $\bar{\eta}^{w's_{a_1}\cdots s_{a_{j - 1}}}\cdot \T_{a_j}$ can be written as a linear combination of elements $\bar{\eta}^{vs_{a_1}\cdots s_{a_{j - 1}}s_{a_j}}$ with $v \leq w'$.

Suppose first that $w\in s_{i}W_{(i),0}$, and let $1\leq j \leq i(n - i)$ denote the unique index where $w$ violates \eqref{bruhat}.  Recall that Lemma \ref{comb1}\eqref{comb1b} implies that $ws_{a_1}\cdots s_{a_{j - 1}}(\alpha_{a_j}) = -\alpha_{i}\in \Phi^-$.  We then have
\begin{eqnarray*}
\bar{\eta}^w\cdot\T_{a_1}\cdots\T_{a_j} & = & \delta\cdot \bar{\eta}^{ws_{a_1}\cdots s_{a_{j - 1}}}\cdot \T_{a_j}\\
 & = & \delta'\cdot \bar{\eta}^{ws_{a_1}\cdots s_{a_{j - 1}}} + \delta''\delta_{ws_{a_1}\cdots s_{a_{j - 1}}(\alpha_{a_j}),-\beta}\cdot\bar{\eta}^{ws_{a_1}\cdots s_{a_{j - 1}}s_{a_j}}\\
 & = & \delta'\cdot \bar{\eta}^{w's_{a_1}\cdots s_{a_{j - 1}}s_{a_j}},
\end{eqnarray*}
where $\delta,\delta',\delta''\in C$, where $w' < w$ (as in the first paragraph), and where the last equality follows from the fact that $\textnormal{ht}(\beta) \geq 2$.  Using the first paragraph, we obtain
\begin{eqnarray}
\bar{\eta}^w\cdot\T_{\lambda_{i}^{-1}} & = & \bar{\eta}^w\cdot \T_{a_1}\cdots \T_{a_{i(n - i)}}\T_{\omega^{-(n - i)}}\T_{t_0} \notag\\
 & = & \delta\cdot\bar{\eta}^{w's_{a_1}\cdots s_{a_j}}\cdot\T_{a_{j + 1}}\cdots \T_{a_{i(n - i)}}\T_{\omega^{-(n - i)}}\T_{t_0} \notag\\
 & = & \left(\sum_{\sub{v\in \cW_\beta'}{v < w}}\delta_v\cdot\bar{\eta}^{vs_{a_1}\cdots s_{a_{i(n - i)}}}\right)\cdot\T_{\omega^{-(n - i)}}\T_{t_0} \notag\\
 & = & \sum_{\sub{v\in \cW_\beta'}{v < w}}\delta_v'\cdot\bar{\eta}^{v},\label{Tlambdaaction1}
\end{eqnarray}
 where $\delta,\delta_v,\delta_v'\in C$.

Suppose now that $w\not\in W_{(i),0}\sqcup s_{i}W_{(i),0}$, so that $w$ violates \eqref{bruhat} at least twice.  Let $1\leq j < k \leq i(n - i)$ denote the first two instances where this happens.  We then have
\begin{eqnarray*}
 \bar{\eta}^w\cdot\T_{a_1}\cdots \T_{a_k} & = & \delta\cdot\bar{\eta}^{ws_{a_1}\cdots s_{a_{j - 1}}}\cdot\T_{a_j}\cdots\T_{a_k}\\
 & = & \left(\delta'\cdot\bar{\eta}^{w's_{a_1}\cdots s_{a_{j - 1}}s_{a_j}}\right. \\
 & & \qquad\qquad + \left.\delta''\delta_{ws_{a_1}\cdots s_{a_{j - 1}}(\alpha_{a_j}),-\beta}\cdot\bar{\eta}^{ws_{a_1}\cdots s_{a_{j - 1}}s_{a_j}}\right)\cdot\T_{a_{j + 1}}\cdots\T_{a_k}\\
  & = & \delta'\cdot\bar{\eta}^{w's_{a_1}\cdots s_{a_{j - 1}}s_{a_j}}\cdot\T_{a_{j + 1}}\cdots\T_{a_k}\\
   & & \qquad+ \delta_{ws_{a_1}\cdots s_{a_{j - 1}}(\alpha_{a_j}),-\beta}\Big(\delta'''\cdot\bar{\eta}^{ws_{a_1}\cdots s_{a_{k - 1}}} \\
   & & \qquad\qquad + \delta''''\delta_{ws_{a_1}\cdots s_{a_{k - 1}}(\alpha_{a_k}),-\beta}\cdot\bar{\eta}^{ws_{a_1}\cdots s_{a_{k - 1}}s_{a_k}}\Big)\\
  & = & \delta'\cdot\bar{\eta}^{w's_{a_1}\cdots s_{a_{j - 1}}s_{a_j}}\cdot\T_{a_{j + 1}}\cdots\T_{a_k}\\
  &  &  \qquad + \delta_{ws_{a_1}\cdots s_{a_{j - 1}}(\alpha_{a_j}),-\beta}\delta'''\cdot\bar{\eta}^{w''s_{a_1}\cdots s_{a_{k - 1}}s_{a_{k}}},
\end{eqnarray*}
where $\delta, \delta', \delta'', \delta''', \delta''''\in C$, and where $w', w'' < w$ by applying the exchange condition (as in the first paragraph).  Moreover, the last equality above follows from the fact that $\delta_{ws_{a_1}\cdots s_{a_{j - 1}}(\alpha_{a_j}),-\beta} = 1$ and $\delta_{ws_{a_1}\cdots s_{a_{k - 1}}(\alpha_{a_k}),-\beta} = 1$ cannot hold simultaneously: if this was the case, we would obtain
$$s_{a_1}\cdots s_{a_{j - 1}}(\alpha_{a_j}) = s_{a_1}\cdots s_{a_{k - 1}}(\alpha_{a_k});$$
however, by the discussion preceding Lemma \ref{comb1}, these are distinct elements of $\Phi^+\smallsetminus\Phi^+_{(i)}$, and we obtain a contradiction.  Therefore, again using the first paragraph we obtain
\begin{eqnarray}
\bar{\eta}^w\cdot\T_{\lambda_{i}^{-1}} & = & \bar{\eta}^w\cdot \T_{a_1}\cdots \T_{a_{i(n - i)}}\T_{\omega^{-(n - i)}}\T_{t_0} \notag\\
 & = & \left(\sum_{\sub{v\in \cW_\beta'}{v < w}}\delta_v\cdot\bar{\eta}^{vs_{a_1}\cdots s_{a_{i(n - i)}}}\right)\cdot\T_{\omega^{-(n - i)}}\T_{t_0}\notag\\
 & = & \sum_{\sub{v\in \cW_\beta'}{v < w}}\delta_v'\cdot\bar{\eta}^{v}, \label{Tlambdaaction2}
\end{eqnarray}
where $\delta_v,\delta_v'\in C$.

We now prove the lemma by induction on the length of $w$.  If $w\in \cW_\beta'\cap W_{(i),0}$, then by Lemma \ref{Tlambda1} we may take $N_w = 0$.  On the other hand, if $w \in \cW_\beta'\smallsetminus W_{(i),0}$ has minimal length, equations \eqref{Tlambdaaction1} and \eqref{Tlambdaaction2} show that $\bar{\eta}^w\cdot\T_{\lambda_{i}^{-1}} \in \textnormal{span}\{\bar{\eta}^u\}_{u\in \cW_\beta'\cap W_{(i),0}}$.  We may now proceed by induction on the length of an element in $w \in \cW_\beta'\smallsetminus W_{(i),0}$ and use equations \eqref{Tlambdaaction1} and \eqref{Tlambdaaction2} along with Lemma \ref{Tlambda1} to conclude.  
\end{proof}

\begin{propn}
\label{rightadj}
For $\textnormal{ht}(\beta)\geq 2$ and $0\leq r \leq f - 1$, we have an isomorphism of vector spaces
$$R_{\cH_{M_{(i)}}}^{\cH}(\fm_{\beta,r})  \cong \begin{cases}\textnormal{span}\{\bar{\eta}^{u}\}_{u\in \cW_\beta' \cap W_{(i),0}} &  \textnormal{if}~\beta\in \Phi^+_{(i)},\\ 0 & \textnormal{if}~\beta\not\in \Phi^+_{(i)}. \end{cases}$$
In particular, $R_{\cH_{M_{(i)}}}^{\cH}(\fm_{\beta,r})\neq 0$ if and only if $\beta\in \Phi^+_{(i)}$.  
\end{propn}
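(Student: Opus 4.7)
The plan is to exploit the description
$$R_{\cH_{M_{(i)}}}^{\cH}(\fm_{\beta,r}) \;\cong\; \bigcap_{N\geq 0}\fm_{\beta,r}\cdot \T_{\lambda_{i}^{-1}}^N$$
from the general discussion of the right adjoint (valid here because $\fm_{\beta,r}$ is finite-dimensional), and then read off both containments from Lemmas \ref{Tlambda1} and \ref{Tlambda}. The key observation from Lemma \ref{Tlambda1} is that, on the subspace $V := \textnormal{span}\{\bar{\eta}^{u}\}_{u\in \cW_\beta' \cap W_{(i),0}}$, the operator $\T_{\lambda_{i}^{-1}}$ is diagonal with respect to the basis $\{\bar{\eta}^u\}$, with eigenvalue $\zeta_u \cdot \mathbf{1}_{\Phi^+_{(i)}}(\beta)$ (for some $\zeta_u \in C^\times$). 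Thus whether $\T_{\lambda_{i}^{-1}}$ acts invertibly or as zero on $V$ is controlled precisely by whether $\beta\in \Phi^+_{(i)}$.

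The dichotomy then proceeds as follows. Suppose first that $\beta\in \Phi^+_{(i)}$. Then $\T_{\lambda_{i}^{-1}}|_V$ is a diagonal operator with nonzero eigenvalues on the basis $\{\bar{\eta}^u\}_{u\in \cW_\beta' \cap W_{(i),0}}$, hence invertible, so $V\cdot \T_{\lambda_{i}^{-1}}^N = V$ for all $N\geq 0$ and $V\subset \bigcap_{N\geq 0}\fm_{\beta,r}\cdot \T_{\lambda_{i}^{-1}}^N$. Conversely, Lemma \ref{Tlambda} provides integers $N_w$ for each $w\in \cW_\beta'$; taking $N_{\max}:=\max_w N_w$ we get $\fm_{\beta,r}\cdot \T_{\lambda_{i}^{-1}}^{N_{\max}}\subset V$, so $\bigcap_{N\geq 0}\fm_{\beta,r}\cdot \T_{\lambda_{i}^{-1}}^N \subset V$ as well. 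Combining the two containments yields the desired identification.

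Now suppose $\beta\notin \Phi^+_{(i)}$. In this case Lemma \ref{Tlambda1} gives $\bar{\eta}^u\cdot \T_{\lambda_{i}^{-1}} = 0$ for every $u\in \cW_\beta'\cap W_{(i),0}$, so $V\cdot \T_{\lambda_{i}^{-1}} = 0$. Combining with Lemma \ref{Tlambda}, for any $w\in \cW_\beta'$ and any $N\geq N_w$ the element $\bar{\eta}^w\cdot \T_{\lambda_{i}^{-1}}^N$ lies in $V$, hence $\bar{\eta}^w\cdot \T_{\lambda_{i}^{-1}}^{N + 1} = 0$. Taking $N_{\max}$ as before we obtain $\fm_{\beta,r}\cdot \T_{\lambda_{i}^{-1}}^{N_{\max} + 1} = 0$, and therefore $\bigcap_{N\geq 0}\fm_{\beta,r}\cdot \T_{\lambda_{i}^{-1}}^N = 0$, as claimed.

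There is essentially no obstacle beyond assembling the two preceding lemmas: Lemma \ref{Tlambda1} handles the ``in place'' behavior on $V$ and Lemma \ref{Tlambda} handles the ``eventual landing'' of arbitrary basis elements in $V$. The only mild subtlety is recording that $V\cdot\T_{\lambda_{i}^{-1}}$ is either all of $V$ or zero (depending on $\mathbf{1}_{\Phi^+_{(i)}}(\beta)$), which follows at once from the diagonal form of $\T_{\lambda_{i}^{-1}}|_V$ in the given basis.
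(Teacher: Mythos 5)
Your argument for the displayed isomorphism is correct and follows the same route as the paper: diagonality (and invertibility vs.\ vanishing, controlled by $\mathbf{1}_{\Phi^+_{(i)}}(\beta)$) of $\T_{\lambda_i^{-1}}$ on $V := \textnormal{span}\{\bar{\eta}^u\}_{u\in \cW_\beta'\cap W_{(i),0}}$ from Lemma~\ref{Tlambda1}, eventual landing of every $\bar{\eta}^w$ in $V$ from Lemma~\ref{Tlambda}, and then the two sandwich inclusions (your uniform $N_{\max}$, available because $\cW_\beta'$ is finite, is a harmless repackaging of the paper's elementwise reasoning).

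However, you do not prove the final sentence of the proposition, and it does not come for free. Your work identifies $R_{\cH_{M_{(i)}}}^{\cH}(\fm_{\beta,r})$ with $V$ when $\beta\in\Phi^+_{(i)}$, but the claim that this is \emph{nonzero} requires knowing that $\cW_\beta'\cap W_{(i),0}\neq\emptyset$ --- otherwise the asserted equivalence ``$R\neq 0 \Leftrightarrow \beta\in\Phi^+_{(i)}$'' could fail in the forward-to-nonzero direction. The paper supplies this via Lemma~\ref{comb0}: $w_\circ\bo^i = w_{(i),\circ}$ lies in $W_{(i),0}$, and $w_\circ\in\cW_\beta'$ for all $\beta$, so $w_{(i),\circ}\in\cW_\beta'\cap W_{(i),0}$ (using that $\cW_\beta'$ is closed under right multiplication by $\overline{\Omega}$). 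You should add this one observation to close the proof.
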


\begin{proof}
Suppose first that $\beta\in \Phi^+_{(i)}$.  If $w\in \cW_\beta'\cap W_{(i),0}$, Lemma \ref{Tlambda1} gives
$$\bar{\eta}^w\cdot \T_{\lambda_{i}^{-1}} = \zeta\cdot \bar{\eta}^w$$
for some $\zeta\in C^\times$.  Combining this with Lemma \ref{Tlambda}, we obtain
$$\textnormal{span}\{\bar{\eta}^{u}\}_{u\in \cW_\beta' \cap W_{(i),0}} \subset \bigcap_{N\geq 0}\fm_{\beta,r}\cdot\T_{\lambda_{i}^{-1}}^N \subset \textnormal{span}\{\bar{\eta}^{u}\}_{u\in \cW_\beta' \cap W_{(i),0}}.$$

Assume now that $\beta\not\in \Phi^+_{(i)}$.  If $w\in \cW_\beta'$, by Lemma \ref{Tlambda} there exists $N\geq 0$ such that 
$$\bar{\eta}^w\cdot \T_{\lambda_{i}^{-1}}^N = \sum_{u\in \cW_\beta' \cap W_{(i),0}}\delta_u\cdot\bar{\eta}^u\in \textnormal{span}\{\bar{\eta}^{u}\}_{u\in \cW_\beta' \cap W_{(i),0}},$$
and using Lemma \ref{Tlambda1} gives
$$\bar{\eta}^w\cdot \T_{\lambda_{i}^{-1}}^{N + 1} = \sum_{u\in \cW_\beta' \cap W_{(i),0}}\delta_u\cdot\bar{\eta}^u\cdot\T_{\lambda_{i}^{-1}} = 0.$$
This implies
$$\bigcap_{N\geq 0}\fm_{\beta,r}\cdot\T_{\lambda_{i}^{-1}}^N = 0.$$

To prove the final claim, it suffices to show that $\cW_\beta'\cap W_{(i),0}\neq \emptyset$ when $\beta\in \Phi^+_{(i)}$.  This follows from Lemma \ref{comb0}, since we have $w_\circ\bo^{i} = w_{(i), \circ}\in \cW_\beta'\cap W_{(i),0}$.  
\end{proof}

\subsection{Right adjoint calculation -- $\bM_\beta$}

The next goal will be to calculate $R_{\cH_{M_\beta}}^{\cH}(\fm_{\beta,r})$.  Let us write $\beta = \alpha_{m,m+a}$.  We have $\bM_\beta = \bigcap_{\sub{i \leq m - 1}{\textnormal{or}~i \geq m + a}}\bM_{(i)}$; therefore, by successively applying the (commuting) elements $\T_{\lambda_{i}^{-1}}$ for $i$ as in the intersection defining $\bM_\beta$, Proposition \ref{rightadj} above shows that the vector space $R_{\cH_{M_\beta}}^{\cH}(\fm_{\beta,r})$ has basis $\{\bar{\eta}^{w}\}_{w\in \bigcap_{i}\cW_{\beta}'\cap W_{(i),0}}$, where $i$ is as in the intersection defining $\bM_\beta$.  By Proposition \ref{Wbeta'}, we have
\begin{eqnarray*}
\bigcap_{i}\cW_{\beta}'\cap W_{(i),0} & = & \cW_\beta' \cap \bigcap_i W_{(i),0}\\
 & = & \cW_\beta'\cap W_{\beta,0}\\
 & = & w_{\beta,\circ}\textnormal{stab}_{W_{\beta,0}}(\beta)\overline{\Omega_{\bM_\beta}}.
\end{eqnarray*}

We define a $C$-vector space filtration on $R_{\cH_{M_\beta}}^{\cH}(\fm_{\beta,r})$ indexed by $\textnormal{stab}_{W_{\beta,0}}(\beta)$ as follows (for generalities on filtrations indexed by posets, see \cite[Section 2.2]{hauseux:parabind}): for $w\in \textnormal{stab}_{W_{\beta,0}}(\beta)$, we set
$$\fil_w' := \textnormal{span}\left\{\bar{\eta}^{w_{\beta,\circ}w'\bo'}\right\}_{w' \leq w, \bo'\in \overline{\Omega_{\bM_\beta}}}.$$

\begin{lemma}\label{Wfilt}
The filtration $\fil'_\bullet$ is $\cH_{M_\beta}$-stable.
\end{lemma}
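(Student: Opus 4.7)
The plan is to check stability under a generating set of $\cH_{M_\beta}$. Recall that $\cH_{M_\beta}$ is generated by $\T^{M_\beta}_{\widehat{s_j}}$ for $\alpha_j\in \Pi_{\bM_\beta}$, by $\T^{M_\beta}_{\omega_\beta}$ for $\omega_\beta \in \widetilde{\Omega_{\bM_\beta}}$, and by the torus elements $\T^{M_\beta}_{t_0}$. Each of these is $M_\beta$-positive (the length function on $\tW_{\bM_\beta,0}$ agrees with the restriction of $\boldl$, and $\Omega_{\bM_\beta}$ is length $0$), so under the embedding $\theta:\cH_{M_\beta}^+\hookrightarrow \cH$ of \eqref{possubalg} they map to $\T_{\widehat{s_j}},\T_{\omega_\beta},\T_{t_0}$. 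Since $R_{\cH_{M_\beta}}^{\cH}(\fm_{\beta,r})$ is identified with a subspace of $\fm_{\beta,r}$ on which the $\cH_{M_\beta}$-action is given by applying these operators of $\cH$, it suffices to show that $\fil'_w$ is preserved by each generator using the explicit formulas of Lemmas \ref{TsalphaGr} and \ref{TomegaGr}.

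The torus and $\widetilde{\Omega_{\bM_\beta}}$-generators are handled immediately. Lemma \ref{TomegaGr}(a) shows that $\T_{t_0}$ acts diagonally on each basis vector $\bar{\eta}^{w_{\beta,\circ}w'\bo'}$, so $\fil'_w$ is trivially stable. For $\omega_\beta\in \widetilde{\Omega_{\bM_\beta}}$, Lemma \ref{TomegaGr}(b) gives
$$\bar{\eta}^{w_{\beta,\circ}w'\bo'}\cdot \T_{\omega_\beta}\ =\ \zeta\cdot\bar{\eta}^{w_{\beta,\circ}w'(\bo'\bo_\beta)}$$
for some $\zeta\in C^\times$, where $\bo_\beta$ is the image of $\omega_\beta$ in $\overline{\Omega_{\bM_\beta}}$. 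Since $\bo'\bo_\beta \in \overline{\Omega_{\bM_\beta}}$ and the index $w'\in\textnormal{stab}_{W_{\beta,0}}(\beta)$ is unchanged, the result is again in $\fil'_w$.

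The decisive case is the action of $\T_{\widehat{s_j}}$ for $\alpha_j\in \Pi_{\bM_\beta}$. Setting $u := w_{\beta,\circ}w'\bo'$, Lemma \ref{TsalphaGr} gives
$$\bar{\eta}^{u}\cdot\T_{\widehat{s_j}}\ =\ \delta\cdot\bar{\eta}^{u}\ +\ \delta'\cdot\bar{\eta}^{us_j},$$
where $\delta'=0$ unless $us_j\in \cW_\beta'$. The first term stays in $\fil'_w$. For the second term, note that $us_j\in W_{\beta,0}$ (both factors are), so if $\delta'\neq 0$ then $us_j\in \cW_\beta'\cap W_{\beta,0}=w_{\beta,\circ}\textnormal{stab}_{W_{\beta,0}}(\beta)\overline{\Omega_{\bM_\beta}}$ by Proposition \ref{Wbeta'}. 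Hence $us_j=w_{\beta,\circ}w''\bo''$ for some $w''\in\textnormal{stab}_{W_{\beta,0}}(\beta),\ \bo''\in\overline{\Omega_{\bM_\beta}}$, and the task reduces to showing $w''\leq w'$ in the Bruhat order.

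To verify this, I would argue combinatorially in $W_{\beta,0}\cong \fS_{a+1}$ acting on $\{m,\ldots,m+a\}$, where $\overline{\Omega_{\bM_\beta}}$ is generated by the single $(a+1)$-cycle $\bo_\beta=s_{m+a-1}\cdots s_m$. For each simple reflection $s_j\in W_{\bM_\beta,0}$, the conjugate $\bo's_j(\bo')^{-1}$ is the reflection attached to the root $\bo'(\alpha_j)\in \Phi_{\bM_\beta}$. If this root is simple of the form $\alpha_{j'}$, then $\bo's_j=s_{j'}\bo'$ and $w'\bo's_j=(w's_{j'})\bo'$; the factor $w's_{j'}$ either lies in $\textnormal{stab}_{W_{\beta,0}}(\beta)$ with length $\leq \boldl(w')$, or fails to lie in this subgroup, in which case $us_j\notin\cW_\beta'$ and $\delta'=0$. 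The exceptional case, where $\bo'(\alpha_j)$ is negative, involves the ``wrap-around'' of the cycle and must be handled by inserting a reduced expression for $\bo'$ and performing a standard braid relation in $\fS_{a+1}$ to extract the $\bo''$-factor; the resulting $w''$ is obtained from $w'$ by either deletion or subword identification, hence $w''\leq w'$ by the subword characterization of the Bruhat order (\cite[Theorem 2.2.2]{bjornerbrenti}). The principal obstacle of the proof is the careful bookkeeping in this exceptional case — specifically, ensuring that whenever $us_j$ remains in $\cW_\beta'$, the $\textnormal{stab}_{W_{\beta,0}}(\beta)$-component of its canonical decomposition never strictly exceeds $w'$ in the Bruhat order.
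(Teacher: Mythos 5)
Your reduction to generators is the right strategy, but there are two misapplications of the auxiliary lemmas and one genuine gap. First, Lemma~\ref{TomegaGr}(b) is stated for $\omega\in\widetilde{\Omega}$, the length-zero subgroup of the full $\tW$; a typical $\omega_\beta\in\widetilde{\Omega_{\bM_\beta}}$ has $\boldl(\omega_\beta)>0$ in $\tW$ (unless $\bM_\beta=\bG$), so one cannot invoke that lemma for it. Instead one must expand $\T_{\omega_\beta}$ along a reduced word in $\tW$ and iterate Lemma~\ref{TsalphaGr}, which is what the paper does. Second, it is not true that every $\omega\in\widetilde{\Omega_{\bM_\beta}}$ is $M_\beta$-positive: length zero in $\tW_{\bM_\beta}$ is unrelated to $M_\beta$-positivity, which is a condition on $\nu$, and for instance $\omega_\beta^{-1}$ and central translations $\lambda(\varpi^{-1})$ for $\lambda\in X_*(\bZ_{\bM_\beta})$ are $M_\beta$-\emph{negative}. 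For those generators the $\cH_{M_\beta}$-action of $\T_\omega^{M_\beta}$ on $R_{\cH_{M_\beta}}^{\cH}(\fm_{\beta,r})$ is not simply the restriction of $\T_\omega\in\cH$. The paper avoids this by choosing one specific $M_\beta$-positive $\omega_\beta$, and treats the central torus elements through the localization description of $R_{\cH_{M_\beta}}^{\cH}$: multiply by a high power of $\T_{\lambda_\beta^+}^{M_\beta}$ to land in the positive cone, apply the $\cH$-operator, and divide by the resulting nonzero scalar.

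The real gap is the step you yourself label the ``principal obstacle'': showing that when the off-diagonal term $\bar{\eta}^{us_j}$ survives, its $\textnormal{stab}_{W_{\beta,0}}(\beta)$-component $w''$ satisfies $w''\leq w'$. This is not bookkeeping; it is the content of the lemma. The paper proves it by a short computation, and only for $\T^{M_\beta}_{\widehat{s_m}}$ (the other simple reflections are $\omega_\beta$-conjugates, so your loop over all $\alpha_j\in\Pi_{\bM_\beta}$ is redundant once $\T^{M_\beta}_{\omega_\beta}$-stability is known): the indicator $\mathbf{1}_{\Phi^+\smallsetminus\{\beta\}}$ from Lemma~\ref{TsalphaGr} is nonzero only if $w'\bo'(\alpha_m)\in\Phi_{\bM_\beta}^-\smallsetminus\{-\beta\}$; since $\bo'(\alpha_m)$ lies in the $\overline{\Omega_{\bM_\beta}}$-stable set $\Pi_{\bM_\beta}\sqcup\{-\beta\}$ and $w'\in\textnormal{stab}_{W_{\beta,0}}(\beta)$ fixes the two endpoint indices of $\beta$, this forces $\bo'(\alpha_m)\in\Pi_{\bM_\beta}\smallsetminus\{\alpha_m,\alpha_{m+a-1}\}$, whence $s_{\bo'(\alpha_m)}\in\textnormal{stab}_{W_{\beta,0}}(\beta)$ and $w's_{\bo'(\alpha_m)}<w'$. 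In particular, your stated dichotomy (``$w's_{j'}\in\textnormal{stab}$ or else $us_j\notin\cW_\beta'$'') is not what gets used: the endpoint and wrap-around cases are killed because the indicator vanishes, and equating that vanishing with $us_j\notin\cW_\beta'$ would itself require proof.
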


\begin{proof}
Write $\beta = \alpha_{m,m + a}$, and let $\omega_\beta$ denote the block diagonal matrix
$$\omega_\beta := \textnormal{diag}(\underbrace{\varpi,~ \ldots,~ \varpi}_{m - 1~\textnormal{times}},~ \omega_{a + 1}, \underbrace{1,~\ldots,~ 1}_{n - m - a~\textnormal{times}})\in N_{M_\beta}(T),$$
where $\omega_{a+1}$ denotes the matrix $\omega$ of size $(a + 1)\times(a + 1)$.  This element has length 0 in the extended affine Weyl group of $M_\beta$.  The algebra $\cH_{M_\beta}$ is generated by $\T_{\omega_\beta}^{M_\beta}, \T_{\widehat{s_m}}^{M_\beta}$ and the elements $\T_{t}^{M_\beta}$ for $t\in Z_{M_\beta}T_0$, where $Z_{M_\beta}$ denotes the center of $M_\beta$.  It therefore suffices to show that the filtration is stable by these elements.  

\begin{enumerate}[(a)]
\item Let $w'\in \textnormal{stab}_{W_{\beta,0}}(\beta)$ and $\bar{\omega}'\in \overline{\Omega_{\bM_\beta}}$.  Since $\omega_\beta$ is $M_\beta$-positive, the action of $\T_{\omega_\beta}^{M_\beta}$ on $\bar{\eta}^{w_{\beta,\circ}w'\bo'}\in R_{\cH_{M_\beta}}^{\cH}(\fm_{\beta,r})$ is given by
$$\bar{\eta}^{w_{\beta,\circ}w'\bo'}\cdot \T_{\omega_\beta}^{M_\beta} = \bar{\eta}^{w_{\beta,\circ}w'\bo'}\cdot \T_{\omega_\beta}.$$
The element $\omega_\beta$ has a minimal expression 
$$\omega_\beta = (\widehat{s_{m + a}}\cdots \widehat{s_{n - 1}})(\widehat{s_{m - 1}}\cdots \widehat{s_{n - 2}})\cdots (\widehat{s_{1}}\cdots \widehat{s_{n - m}})\omega^m t_0$$
for some $t_0\in T_0$, and one easily checks that
$$w_{\beta,\circ}w'\bo'\left\{\alpha_{m + a},~s_{m + a}(\alpha_{m + a + 1}),~\ldots~, ~s_{m + a}\cdots s_{n - m + 1}(\alpha_{n - m})\right\}\subset \Phi^+\smallsetminus\{\beta\}.$$
Therefore, using Lemma \ref{TsalphaGr}, we obtain
\begin{eqnarray*}
\bar{\eta}^{w_{\beta,\circ}w'\bo'}\cdot \T_{\omega_\beta} & = &  \bar{\eta}^{w_{\beta,\circ}w'\bo'}\cdot\T_{m + a}\cdots \T_{n - m}\T_{\omega^m}\T_{t_0}\\
& = & \zeta\cdot \bar{\eta}^{w_{\beta,\circ}w'\bo's_{m + a}\cdots s_{n - m}\bo^m}\\
& = & \zeta\cdot \bar{\eta}^{w_{\beta,\circ}w'\bo'\bo_\beta},
\end{eqnarray*}
where $\zeta\in C^\times$.  Therefore, $\fil'_\bullet$ is stable under $\T_{\omega_\beta}^{M_\beta}$.

\item Next, we compute the action of $\T_{\widehat{s_m}}^{M_\beta}$.  Since $\widehat{s_m}$ is $M_\beta$-positive, we have
$$\bar{\eta}^{w_{\beta,\circ}w'\bo'}\cdot \T_{\widehat{s_m}}^{M_\beta} = \bar{\eta}^{w_{\beta,\circ}w'\bo'}\cdot \T_{m}.$$
Notice that $w_{\beta,\circ}w'\bo'(\alpha_m)$ cannot equal $-\beta$; if this were the case, we would have $\bo'(\alpha_m) = \beta.$  However, the left-hand side is contained in $\Pi_{\bM_\beta} \sqcup \{-\beta\}$, and since $\hgt(\beta)\geq 2$ the equality cannot hold.  Therefore, using Lemma \ref{TsalphaGr}, we get
$$\bar{\eta}^{w_{\beta,\circ}w'\bo'}\cdot \T_{m}  =   \delta \cdot\bar{\eta}^{w_{\beta,\circ}w'\bo'} + \zeta\mathbf{1}_{\Phi^+\smallsetminus\{\beta\}}\left(w_{\beta,\circ}w'\bo'(\alpha_m)\right)\cdot\bar{\eta}^{w_{\beta,\circ}w'\bo's_m},$$
where $\delta\in C, \zeta\in C^\times$.  Now, we have 
\begin{eqnarray*}
\mathbf{1}_{\Phi^+\smallsetminus\{\beta\}}\left(w_{\beta,\circ}w'\bo'(\alpha_{m})\right) = 1 & \Longleftrightarrow & w_{\beta,\circ}w'\bo'(\alpha_m)\in \Phi_{\bM_\beta}^+\smallsetminus\{\beta\}\\
& \Longleftrightarrow & w'\bo'(\alpha_m)\in \Phi_{\bM_\beta}^-\smallsetminus\{-\beta\}.
\end{eqnarray*}
The latter condition implies $\bo'(\alpha_m)\in \Pi_{\bM_\beta}\smallsetminus\{\alpha_m,\alpha_{m + a - 1}\}$ and $w's_{\bo'(\alpha_m)} < w'$ with $w's_{\bo'(\alpha_m)}\in \textnormal{stab}_{W_{\beta,0}}(\beta)$.  In this case, we have $w_{\beta,\circ}w'\bo's_{m} = w_{\beta,\circ}w's_{\bo'(\alpha_m)}\bo'$, and therefore $\fil'_\bullet$ is stable by $\T_{\widehat{s_m}}^{M_\beta}$.

\item Finally, we compute the action of $\T_{t}^{M_\beta}$.  One easily checks using Lemma \ref{TomegaGr} that the filtration is stable under the action of $\T_{t}^{M_\beta}$ for $t\in T_0$.  Thus, let $t := \lambda(\varpi)$ for $\lambda\in X_*(\bZ_{\bM_\beta})$, and let $\lambda_{\beta}^+$ denote the element $\prod_{\sub{i \leq m - 1}{\textnormal{or}~i \geq m + a}}\lambda_{i}^{-1}$.  The element $\lambda_{\beta}^+$ is strictly $M_\beta$-positive, and therefore there exists $N\geq 0$ such that $(\lambda_{\beta}^+)^Nt$ is $M_\beta$-positive.  Note that any $M_\beta$-positive element in $Z_{M_\beta}T_0$ can be written as a product of an element of $T_0$, an element of $Z$, and nonnegative integral powers of the elements $\lambda_{i}^{-1}$ for $i\leq m - 1$ and $i\geq m + a$.  Therefore, using the isomorphism
$$R_{\cH_{M_\beta}}^{\cH}(\fm_{\beta,r}) = \textnormal{Hom}_{\cH_{M_\beta}^+}(\cH_{M_\beta},\fm_{\beta,r}) \cong \varprojlim_{v\mapsto v\cdot\T_{\lambda_\beta^+}}\fm_{\beta,r}$$
and applying the calculation from Lemma \ref{Tlambda1} twice along with Lemma \ref{TomegaGr} (note that $t$ has length 0 in the extended affine Weyl group of $M_\beta$), we obtain
$$\bar{\eta}^{w_{\beta,\circ}w'\bo'}\cdot \T_{t}^{M_\beta} = \zeta^{-1}\cdot \bar{\eta}^{w_{\beta,\circ}w'\bo'}\cdot \T_{(\lambda_\beta^+)^Nt}^{M_\beta} = \zeta^{-1}\zeta'\cdot \bar{\eta}^{w_{\beta,\circ}w'\bo'},$$
where $\zeta\in C^\times$ (resp. $\zeta'\in C^\times$) is the action of $\T_{(\lambda_\beta^+)^N}$ (resp. $\T_{(\lambda_\beta^+)^Nt}$) on $\bar{\eta}^{w_{\beta,\circ}w'\bo'}$.  Thus, $\fil'_\bullet$ is stable by $\T_{t}^{M_\beta}$, and the lemma follows.  
\end{enumerate}
\end{proof}

Since $\fil'_\bullet$ is stable by the action of $\cH_{M_\beta}$, we may define the associated graded $\cH_{M_\beta}$-module: for $w\in \textnormal{stab}_{W_{\beta,0}}(\beta)$, set
$$\gr'_w := \fil'_w\Big/ \sum_{w' < w}\fil'_{w'}.$$
For an element $\bar{\eta}^{v}\in \fil'_w$, we denote by $\bar{\bar{\eta}}^{v}$ its image in $\gr'_w$.

\begin{rmk}\label{fil'notsplit}
The filtration $\fil'_\bullet$ does not split in general.  As an example, we may take $G = \textnormal{GL}_4(\qp)$, and $\beta = \alpha_{1,4}$, the highest root.  Then $\textnormal{stab}_{W_{\beta,0}}(\beta) = \langle s_2\rangle$, and $\cW_\beta' = w_\circ \textnormal{stab}_{W_{\beta,0}}(\beta) \overline{\Omega} = w_\circ\langle s_2\rangle\langle s_3s_2s_1\rangle$.  Let $\chi$ denote a character of $T$ such that $w(\chi\overline{\beta}^{-1})|_{T_0} \neq \chi\overline{\beta}^{-1}|_{T_0}$ for every $w\in W_0, w\neq 1$.  This implies that $\chi\overline{\beta}^{-1}\circ \alpha^\vee$ is a nontrivial character of $\fo^\times$ for every $\alpha\in \Phi$ (and therefore the $\delta$-functions in Lemma \ref{TsalphaGr} are all equal to 0).  Consider the short exact sequence of $\cH$-modules
\begin{flalign}\label{gr'notsplit}
0 \longrightarrow \gr_1' = \textnormal{span}\left\{ \bar{\eta}^{w_\circ},~\bar{\eta}^{s_2s_1s_2},~\bar{\eta}^{s_1s_3},~\bar{\eta}^{s_2s_3s_2} \right\} \longrightarrow \fm_{\beta,0} & & 
\end{flalign}
\begin{flushright}
$ \longrightarrow \gr_{s_2}' = \textnormal{span}\left\{ \bar{\bar{\eta}}^{s_1s_2s_3s_2s_1},~\bar{\bar{\eta}}^{s_1s_2},~\bar{\bar{\eta}}^{s_2s_1s_3},~\bar{\bar{\eta}}^{s_3s_2} \right\} \longrightarrow 0.$
 \end{flushright}
Using Lemma \ref{TsalphaGr}, we have 
\begin{center}
\begin{tabular}{rl}
 $\bar{\eta}^{w_\circ}\cdot\T_1 = 0$, & $\hspace{32pt} \bar{\eta}^{s_2s_1s_2} \cdot \T_1 = 0,$ \\
  $\bar{\eta}^{s_1s_3} \cdot \T_1 = 0$, & $\hspace{32pt} \bar{\eta}^{s_2s_3s_2}\cdot \T_1 = 0$,\\
  & \\
 $\bar{\eta}^{s_1s_2s_3s_2s_1} \cdot \T_1 = 0$, & $\qquad\qquad \bar{\eta}^{s_1s_2}\cdot \T_1 = d_{\alpha_1,s_1s_2s_1(\beta)}\bar{\eta}^{s_1s_2s_1},$  \\
  $\bar{\eta}^{s_2s_1s_3} \cdot \T_1 = 0$, & $\qquad\qquad \bar{\eta}^{s_3s_2}\cdot \T_1 = 0.$
\end{tabular}
\end{center}
Now, since $\bar{\bar{\eta}}^{s_1s_2}\in \gr_{s_2}'$ is killed by $\T_1$, its image under any splitting $\gr_{s_2}'\longrightarrow \fm_{\beta,0}$ must also be killed by $\T_1$.  The equations above show that this cannot happen, and therefore \eqref{gr'notsplit} does not split.  
\end{rmk}

We may now describe $R_{\cH_{M_\beta}}^{\cH}(\fm_{\beta,r})$.

\begin{propn}\label{rightadjbeta}
Let $\beta = \alpha_{m,m+a}\in \Phi^+$, with $\hgt(\beta)\geq 2$.  The $\cH_{M_\beta}$-module $R_{\cH_{M_\beta}}^{\cH}(\fm_{\beta,r})$ is supersingular of dimension $|\textnormal{stab}_{W_{\beta,0}}(\beta)|\cdot|\overline{\Omega_{\bM_\beta}}| = (a + 1)!/a$.  More precisely, $R_{\cH_{M_\beta}}^{\cH}(\fm_{\beta,r})$ has an $\cH_{M_\beta}$-stable filtration indexed by $\textnormal{stab}_{W_{\beta,0}}(\beta)$.  The graded piece indexed by $w\in \textnormal{stab}_{W_{\beta,0}}(\beta)$ is an $(a + 1)$-dimensional cyclic supersingular module, and the action of $\T_{t_0}^{M_\beta}, t_0\in T_0,$ on this graded piece is given by the characters
$$\T_{t_0}^{M_\beta}\longmapsto (\chi\overline{\beta}^{-p^r})\left(\widehat{w_{\beta,\circ}w\bo'}t_0\widehat{w_{\beta,\circ}w\bo'}^{-1}\right)^{-1}$$
for $\bo'\in \overline{\Omega_{\bM_\beta}}$.  
\end{propn}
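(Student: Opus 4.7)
The plan is to exploit the $\cH_{M_\beta}$-stable filtration $\fil'_\bullet$ constructed in Lemma \ref{Wfilt} and analyze each graded piece using the explicit action formulas derived in that lemma's proof.

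First, for the dimension claim, $\bM_\beta = \bigcap_i\bM_{(i)}$ with $i$ ranging over indices $\leq m-1$ or $\geq m+a$; iterating Proposition \ref{rightadj} (the $\T_{\lambda_i^{-1}}$ commute pairwise, being torus elements) identifies $R_{\cH_{M_\beta}}^\cH(\fm_{\beta,r})$ with the $C$-span of $\{\bar\eta^v\}_{v\in \cW_\beta'\cap W_{\beta,0}}$. By Proposition \ref{Wbeta'}, this index set is the direct product $w_{\beta,\circ}\textnormal{stab}_{W_{\beta,0}}(\beta)\cdot\overline{\Omega_{\bM_\beta}}$, of cardinality $(a-1)!(a+1)=(a+1)!/a$.

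Next, fix $w\in\textnormal{stab}_{W_{\beta,0}}(\beta)$ and consider $\gr'_w$, which has $C$-basis $\{\bar{\bar\eta}^{w_{\beta,\circ}w\bo'}\}_{\bo'\in\overline{\Omega_{\bM_\beta}}}$ of size $a+1$. The three action computations in the proof of Lemma \ref{Wfilt} specialize upon passing to the quotient modulo $\sum_{w'<w}\fil'_{w'}$: part (a) sends $\bar{\bar\eta}^{w_{\beta,\circ}w\bo'}\cdot \T_{\omega_\beta}^{M_\beta}$ to a nonzero scalar multiple of $\bar{\bar\eta}^{w_{\beta,\circ}w\bo'\bo_\beta}$, showing $\gr'_w$ is cyclic on any single basis vector since $\bo_\beta$ generates $\overline{\Omega_{\bM_\beta}}$ cyclically of order $a+1$; part (b), together with its $\overline{\Omega_{\bM_\beta}}$-conjugates that handle the remaining simple Iwahori generators $\T_{\widehat{s_{m+j}}}^{M_\beta}$ for $1\leq j\leq a-1$, shows each basis vector is preserved by every such operator, acted on by the diagonal $\delta$-scalar alone (the off-diagonal shift $w_{\beta,\circ}w s_{\bo'(\alpha_{m+j})}\bo'$ strictly decreases the stabilizer index and thus lands in a smaller filtration piece); and part (c) yields the $\T_{t_0}^{M_\beta}$-character formula stated in the proposition.

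Finally, for supersingularity I would invoke the second remark following the definition of supersingularity: a finite-dimensional $\cH_{M_\beta}$-module is supersingular iff each simple $\cH_{M_\beta,\aff}$-subquotient of its restriction is a supersingular character (not a twist of trivial or sign on any irreducible component). Since $\bM_\beta$ has a unique non-abelian factor $\bG\bL_{a+1}$, only one component of $\cH_{M_\beta,\aff}$ matters. The previous step realizes $\gr'_w|_{\cH_{M_\beta,\aff}}$ as a direct sum of $a+1$ characters (one per basis vector), each sending $\T_{\salpha}^{M_\beta}$ to $0$ or $-1$ depending on the diagonal $\delta$-scalar (the action of the affine reflection $\widehat{s_{(-\beta,1)}}$ is obtained by conjugating $\T_{\widehat{s_m}}^{M_\beta}$ by a suitable power of $\T_{\omega_\beta}^{M_\beta}$). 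The main obstacle is then the combinatorial check that the pattern of $\delta$-scalars across all $\fat\alpha\in\Pi_{\bM_\beta,\aff}$ is incompatible with a uniform $0$ (twist of trivial) or uniform $-1$ (twist of sign) on any single basis vector — using crucially that $w_{\beta,\circ}w\bo'$ sends the affine simple coroot to $-\beta^\vee$ but handles the finite simple coroots differently, and that $\hgt(\beta)\geq 2$ rules out degenerate overlaps. Once this combinatorial check is in place, supersingularity of $R_{\cH_{M_\beta}}^\cH(\fm_{\beta,r})$ follows piece by piece from the filtration.
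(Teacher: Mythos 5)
Your proposal follows the same route as the paper: the dimension comes from iterating Proposition \ref{rightadj} over the $\T_{\lambda_i^{-1}}$ and then counting via Proposition \ref{Wbeta'}; the filtration and the diagonal graded action are those of Lemma \ref{Wfilt} (with $\hgt(\beta)\geq 2$ doing exactly the job you describe, namely suppressing the $\delta_{w(\alpha),-\beta}$-term of Lemma \ref{TsalphaGr}); and the $T_0$-character is read off from Lemma \ref{TomegaGr}. Cyclicity via $\T_{\omega_\beta}^{M_\beta}$ and the reduction of supersingularity to the pattern of $0$/$-1$ scalars on $\cH_{M_\beta,\aff}$ are also as in the paper, which explicitly writes the formula $\bar{\bar{\eta}}^{w_{\beta,\circ}w\bo'}\cdot\T_{\omega_\beta^b\widehat{s_m}\omega_\beta^{-b}}^{M_\beta} = -\mathbf{1}_{\Phi^-}(w_b(\alpha_m))\delta_{(\chi\overline{\beta}^{-p^r})\circ w_b(\alpha_m^\vee),1}\bar{\bar{\eta}}^{w_{\beta,\circ}w\bo'}$ with $w_b := w_{\beta,\circ}w\bo'\bo_\beta^b$, and then appeals to the argument of Proposition \ref{ssingprop}.

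The one place your heuristic goes astray is the parenthetical that $w_{\beta,\circ}w\bo'$ sends the affine simple coroot to $-\beta^\vee$: that is the wrong sign and also not quite the relevant computation. What actually happens is that for the unique $b$ with $\bo'\bo_\beta^b(\alpha_m) = -\beta$, one has $w_b(\alpha_m) = w_{\beta,\circ}w(-\beta) = w_{\beta,\circ}(-\beta) = \beta \in \Phi^+$, so the $\mathbf{1}_{\Phi^-}$-factor vanishes and that generator acts by $0$, ruling out a sign twist; and for the $b$ with $\bo'\bo_\beta^b = 1$, one has $w(\alpha_m)\in\Phi_{\bM_\beta}^+$ (since $w\in\textnormal{stab}_{W_{\beta,0}}(\beta)$ fixes both $m$ and $m+a$), hence $w_b(\alpha_m) = w_{\beta,\circ}w(\alpha_m)\in\Phi^-$, so if the $T_0$-character satisfied the trivial-twist condition that generator would act by $-1$, a contradiction. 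With that correction, which is precisely the argument of Proposition \ref{ssingprop}, your outline closes.
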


\begin{proof}
The definition of the filtration and its graded pieces is as above, and the precise action of $\T_{t_0}^{M_\beta}$ on $\gr'_w$ follows from Lemma \ref{TomegaGr}.  To verify the claim about supersingularity, we must compute the action of $\cH_{M_\beta,\aff}$, which is generated by $\{\T_{t_0}^{M_\beta}\}_{t_0\in T_0}$ and $\{\T_{\omega_\beta^b\widehat{s_m}\omega_\beta^{-b}}^{M_\beta}\}_{0\leq b \leq a}$.  If $\bo'\in \overline{\Omega_{\bM_\beta}}$, then the proof of Lemma \ref{Wfilt} shows that
\begin{eqnarray*}
\bar{\bar{\eta}}^{w_{\beta,\circ}w\bo'}\cdot\T_{\omega_\beta^b\widehat{s_m}\omega_\beta^{-b}}^{M_\beta} & = & \bar{\bar{\eta}}^{w_{\beta,\circ}w\bo'}\cdot\left(\T_{\omega_\beta}^{M_\beta}\right)^b\T_{\widehat{s_m}}^{M_\beta}\left(\T_{\omega_\beta}^{M_\beta}\right)^{-b} \\
&  = & -\mathbf{1}_{\Phi^-}\left(w_b(\alpha_m)\right)\delta_{(\chi\overline{\beta}^{-p^r})\circ w_b(\alpha_m^\vee),1}\bar{\bar{\eta}}^{w_{\beta,\circ}w\bo'},
\end{eqnarray*}
where $w_b := w_{\beta,\circ}w\bo'\bo_\beta^b$.  Proceeding as in the proof of Proposition \ref{ssingprop} shows that the $\cH_{M_\beta,\aff}$-character spanned by $\bar{\bar{\eta}}^{w_{\beta,\circ}w\bo'}$ is supersingular, and consequently $\gr_w'$ is supersingular.  
\end{proof}

\begin{rmk}\label{gr'irred}
Suppose $\chi$ is sufficiently generic; namely, suppose that the characters 
$$T_0 \ni t_0\longmapsto (\chi\overline{\beta}^{-p^r})\left(\widehat{w_{\beta,\circ}w\bo'}t_0\widehat{w_{\beta,\circ}w\bo'}^{-1}\right)^{-1}$$
for $w\in \textnormal{stab}_{W_{\beta,0}}$ and $\bo'\in \overline{\Omega_{\bM_\beta}}$ are all distinct.  Then the graded modules $\gr'_w$ are all \emph{simple} supersingular $\cH_{M_\beta}$-modules.  Indeed, if $\fv$ is a nonzero $\cH_{M_\beta}$-stable submodule of $\gr_w'$, and $v\in \fv$ is a nonzero vector, then Fourier inversion on the elements $\{\T_{t_0}^{M_\beta}\}_{t_0\in T_0}$ shows that the $T_0$-isotypic components of $v$ are also contained in $\fv$.  By the genericity assumption, the $T_0$-eigenspaces of $\gr_w'$ are all one-dimensional, which implies that $\bar{\bar{\eta}}^{w_{\beta,\circ}w\bo'}\in \fv$ for some $\bo'\in \overline{\Omega_{\bM_\beta}}$.  Since this element generates $\gr_w'$, we obtain $\fv = \gr_w'$ and the claim follows.  
\end{rmk}

With the above results now in place, we describe the $\cH$-module $\fm_{\beta,r}$.

\begin{thm}\label{mbetaind}
For $\hgt(\beta)\geq 2$ and $0\leq r \leq f - 1$, we have an isomorphism of right $\cH$-modules
$$\fm_{\beta,r}\cong \ind_{\cH_{M_\beta}}^{\cH}\left(R_{\cH_{M_\beta}}^{\cH}(\fm_{\beta,r})\right),$$
where $R_{\cH_{M_\beta}}^{\cH}(\fm_{\beta,r})$ is the supersingular $\cH_{M_\beta}$-module described in Proposition \ref{rightadjbeta}.  In particular, if $\alpha_0$ denotes the highest root, then the $\cH$-module $\fm_{\alpha_0,r}$ is supersingular.  
\end{thm}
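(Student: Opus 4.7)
My plan is to apply the counit of the adjunction between $\ind_{\cH_{M_\beta}}^{\cH}$ and $R_{\cH_{M_\beta}}^{\cH}$ and show it is an isomorphism by combining a dimension count with a direct surjectivity argument. Setting $a := \hgt(\beta)$, Proposition \ref{Wbeta'} gives $\dim_C \fm_{\beta,r} = |\cW_\beta'| = n!/a$, while Proposition \ref{rightadjbeta} gives $\dim_C R_{\cH_{M_\beta}}^{\cH}(\fm_{\beta,r}) = (a+1)!/a$. Since parabolic induction multiplies dimension by $|W_0/W_{M_\beta,0}| = n!/(a+1)!$, both sides have dimension $n!/a$.

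Next, the embedding $R_{\cH_{M_\beta}}^{\cH}(\fm_{\beta,r}) \hookrightarrow \fm_{\beta,r}$ via $f \mapsto f(1)$ identifies the right adjoint with $\textnormal{span}\{\bar{\eta}^{w_{\beta,\circ}w\bo'}\}_{w \in \textnormal{stab}_{W_{\beta,0}}(\beta),\, \bo' \in \overline{\Omega_{\bM_\beta}}}$, so the image of the counit coincides with the $\cH$-submodule of $\fm_{\beta,r}$ generated by these elements. By Proposition \ref{Wbeta'}, every remaining basis vector has the form $\bar{\eta}^{uv}$ with $u = w_{\beta,\circ}w\bo' \in W_{M_\beta,0}$ and $v \in {}^\beta W_0$. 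Fixing a reduced expression $v = s_{j_1}\cdots s_{j_\ell}$, I would prove by induction on $k$ that
$$\bar{\eta}^u \cdot \T_{j_1}\cdots \T_{j_k} = \zeta_k \cdot \bar{\eta}^{us_{j_1}\cdots s_{j_k}}$$
for some $\zeta_k \in C^\times$. The critical combinatorial input is that the inversion set $N(v^{-1}) = \{s_{j_1}\cdots s_{j_{k-1}}(\alpha_{j_k})\}_{k=1}^{\ell}$ lies in $\Phi^+ \smallsetminus \Phi_{M_\beta}^+$ (since $v^{-1} \in W_0^{M_\beta}$ sends all of $\Phi^+_{M_\beta}$ to $\Phi^+$), and since $u \in W_{M_\beta,0}$ preserves $\Phi^+ \smallsetminus \Phi_{M_\beta}^+$ setwise, the root $u \cdot s_{j_1}\cdots s_{j_{k-1}}(\alpha_{j_k})$ stays in $\Phi^+ \smallsetminus \Phi_{M_\beta}^+ \subset \Phi^+ \smallsetminus \{\beta\}$ at every stage. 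Lemma \ref{TsalphaGr} then both propagates membership in $\cW_\beta'$ and gives the claimed action formula with scalar in $C^\times$.

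A surjection between finite-dimensional spaces of equal dimension is an isomorphism, which yields the main claim. For the final assertion, when $\beta = \alpha_0$ is the highest root the smallest standard Levi containing $\bU_{\alpha_0}$ is $\bG$ itself, so $\bM_{\alpha_0} = \bG$ and $\fm_{\alpha_0,r} \cong R_{\cH}^{\cH}(\fm_{\alpha_0,r})$ is supersingular by Proposition \ref{rightadjbeta}.

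The hardest part will be the surjectivity step: specifically, running the induction cleanly so that each intermediate index $us_{j_1}\cdots s_{j_k}$ remains in $\cW_\beta'$ (so Lemma \ref{TsalphaGr} continues to apply at the next step) and so that the corresponding root remains in $\Phi^+ \smallsetminus \{\beta\}$ (so that only the generic nonzero term contributes and the degenerate cases involving $\delta_{w(\alpha_j),-\beta}\delta_{F,\qp}$ or $\mathbf{1}_{\Phi^-}$ do not interfere). Both of these requirements are secured by the single Coxeter-theoretic observation that elements of $W_0^{M_\beta}$ preserve $\Phi^+_{M_\beta}$ inside $\Phi^+$.
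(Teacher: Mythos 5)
Your proposal is correct and follows essentially the same route as the paper: apply the counit of the adjunction $(\ind_{\cH_{M_\beta}}^{\cH}, R_{\cH_{M_\beta}}^{\cH})$, identify $R_{\cH_{M_\beta}}^{\cH}(\fm_{\beta,r})$ with $\textnormal{span}\{\bar{\eta}^{w_{\beta,\circ}w\bo'}\}$, and show via Lemma~\ref{TsalphaGr} that applying $\T_{\widehat{s_{j_1}}}\cdots \T_{\widehat{s_{j_k}}}$ along a reduced word for $v\in{}^{\beta}W_0$ yields nonzero scalar multiples of the remaining basis vectors. Your combinatorial input (the inversion set of $v$ lies in $\Phi^+\smallsetminus\Phi^+_{\bM_\beta}$ and $W_{\bM_\beta,0}$ preserves that set) packages in one observation what the paper establishes in two: length-additivity along the parabolic decomposition to get positivity of the intermediate roots, plus a disjointness argument comparing the inversion set with $\Pi_{\bM_\beta}\sqcup\{-\beta\}$ to exclude $\beta$; both are equivalent. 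The paper phrases the conclusion as the counit carrying a basis bijectively to a basis rather than invoking a separate dimension count, but this is cosmetic and your surjectivity-plus-dimension argument is fine.
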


\begin{proof}
The counit of the adjunction $(\ind_{\cH_{M_\beta}}^{\cH},~R_{\cH_{M_\beta}}^{\cH})$ is given by the $\cH$-equivariant map
\begin{eqnarray*}
\kappa: \ind_{\cH_{M_\beta}}^{\cH}\left(R_{\cH_{M_\beta}}^{\cH}(\fm_{\beta,r})\right) & \longrightarrow &  \fm_{\beta,r}\\
\bar{\eta}\otimes h & \longmapsto & \bar{\eta}\cdot h.
\end{eqnarray*}
By the discussion preceding Lemma \ref{Wfilt} and \cite[Remark 4.1]{olliviervigneras}, $\ind_{\cH_{M_\beta}}^{\cH}(R_{\cH_{M_\beta}}^{\cH}(\fm_{\beta,r}))$ has a basis given by
$$\left\{\bar{\eta}^{w_{\beta,\circ}w\bo'}\otimes \T_{\widehat{w}'}\right\},$$
where $w\in \textnormal{stab}_{W_{\beta,0}}(\beta), \bo'\in \overline{\Omega_{\bM_\beta}}$, and $w'\in {}^\beta W_0$.  On the other hand, by Proposition \ref{Wbeta'}, $\fm_{\beta,r}$ has a basis
$$\left\{\bar{\eta}^{w_{\beta,\circ}w\bo'w'}\right\},$$
where $w\in \textnormal{stab}_{W_{\beta,0}}(\beta), \bo'\in \overline{\Omega_{\bM_\beta}}$, and $w'\in {}^\beta W_0$.  It therefore suffices to show that $\kappa(\bar{\eta}^{w_{\beta,\circ}w\bo'}\otimes \T_{\widehat{w}'}) = \bar{\eta}^{w_{\beta,\circ}w\bo'}\cdot\T_{\widehat{w}'}$ is a nonzero scalar multiple of $\bar{\eta}^{w_{\beta,\circ}w\bo'w'}$.

Suppose $w' = s_{b_1}\cdots s_{b_k}$ is a minimal expression of $w'\in {}^\beta W_0$.  Then \cite[Lemma 2.4.3]{bjornerbrenti} implies that each subexpression $s_{b_1}\cdots s_{b_j}$ for $1 \leq j \leq k$ also lies in ${}^\beta W_0$, and therefore
\begin{eqnarray*}
\boldl(w_{\beta,\circ}w\bo's_{b_1}\cdots s_{b_{j - 1}}s_{b_j}) & = & \boldl(w_{\beta,\circ}w\bo') + \boldl(s_{b_1}\cdots s_{b_{j - 1}}s_{b_j})\\
 & = & \boldl(w_{\beta,\circ}w\bo') + \boldl(s_{b_1}\cdots s_{b_{j - 1}})  + \boldl(s_{b_j})\\
 & = & \boldl(w_{\beta,\circ}w\bo's_{b_1}\cdots s_{b_{j - 1}})  + \boldl(s_{b_j}).
\end{eqnarray*}
Thus, by equation \eqref{bruhateqs}, we have $w_{\beta,\circ}w\bo's_{b_1}\cdots s_{b_{j - 1}}(\alpha_{b_j}) \in \Phi^+$ for every $1\leq j \leq k$.  Next, we claim that $w_{\beta,\circ}w\bo's_{b_1}\cdots s_{b_{j - 1}}(\alpha_{b_j}) \neq \beta$ for every $1\leq j \leq k$.  Indeed, if we had $w_{\beta,\circ}w\bo's_{b_1}\cdots s_{b_{j - 1}}(\alpha_{b_j}) = \beta$, then we would obtain
$$s_{b_1}\cdots s_{b_{j - 1}}(\alpha_{b_j}) = -\bo'^{-1}(\beta).$$
However, by \cite[Section 1.7]{humphreys:book}, the left-hand side is contained in $\{\alpha\in \Phi^+: w^{-1}(\alpha)\in \Phi^-\}$, while the right-hand side is contained in $\Pi_{\bM_\beta}\sqcup\{-\beta\}$.  Since these sets have empty intersection, we obtain a contradiction.

We therefore see that $w_{\beta,\circ}w\bo's_{b_1}\cdots s_{b_{j - 1}}(\alpha_{b_j}) \in \Phi^+\smallsetminus\{\beta\}$ for every $1\leq j \leq k$.  The claim of the first paragraph now follows from Lemma \ref{TsalphaGr}.  
\end{proof}

\begin{cor}\label{mbetaindcor}
For $2\leq a \leq h - 1$, the associated graded module $\gr_a$ has the form
$$\gr_a \cong \bigoplus_{\sub{\beta\in \Phi^+}{\hgt(\beta) = a}}\bigoplus_{r = 0}^{f - 1}\ind_{\cH_{M_\beta}}^{\cH}(\fn_{\beta,r}),$$
where $\fn_{\beta,r}$ is the supersingular $\cH_{M_\beta}$-module described in Proposition \ref{rightadjbeta}.  In particular, the $\cH$-module $\gr_{h - 1}$ is supersingular.  
\end{cor}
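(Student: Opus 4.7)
The plan is to read off the Corollary as an essentially formal consequence of Theorem \ref{mbetaind} together with the bookkeeping already established for $\fm_{\beta,r}$ and $\gr_a$.

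First I would recall the decomposition
$$\gr_a ~=~ \bigoplus_{\sub{\beta\in \Phi^+}{\hgt(\beta) = a}}\bigoplus_{r = 0}^{f - 1}\fm_{\beta,r}$$
which was noted immediately after the definition of $\fm_{\beta,r}$, together with the fact (established in the discussion there) that each $\fm_{\beta,r}$ is $\cH$-stable. Then, for each positive root $\beta$ with $\hgt(\beta) = a \geq 2$, Theorem \ref{mbetaind} provides an isomorphism of $\cH$-modules $\fm_{\beta,r} \cong \ind_{\cH_{M_\beta}}^{\cH}\bigl(R_{\cH_{M_\beta}}^{\cH}(\fm_{\beta,r})\bigr)$. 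Setting $\fn_{\beta,r} := R_{\cH_{M_\beta}}^{\cH}(\fm_{\beta,r})$ and invoking Proposition \ref{rightadjbeta} (which identifies the right adjoint as a supersingular $\cH_{M_\beta}$-module of the stated form), we obtain the desired decomposition by summing over all $\beta\in \Phi^+$ with $\hgt(\beta) = a$ and $0\leq r\leq f-1$.

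For the final assertion about $\gr_{h-1}$, I would use the fact that $\Phi$ is of type $A_{n-1}$ to conclude that the only positive root of height $h - 1 = n - 1$ is the highest root $\alpha_0 = \alpha_{1,n}$. The smallest standard Levi subgroup containing $\bU_{\alpha_{1,n}}$ is $\bG$ itself, so $\bM_{\alpha_0} = \bG$ and $\cH_{M_{\alpha_0}} = \cH$. Consequently, the parabolic induction in the main decomposition is trivial on this summand, and
$$\gr_{h - 1} ~\cong~ \bigoplus_{r = 0}^{f - 1} \fn_{\alpha_0,r},$$
which is a direct sum of supersingular $\cH$-modules by Proposition \ref{rightadjbeta}, and hence is itself supersingular.

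There is essentially no obstacle: all the real work was already carried out in Theorem \ref{mbetaind} (and the supporting Lemmas \ref{Tlambda1}, \ref{Tlambda}, \ref{Wfilt} and Proposition \ref{rightadjbeta}). The only point that requires even a moment's care is the uniqueness of the height-$(h-1)$ root in type $A_{n-1}$, which is standard.
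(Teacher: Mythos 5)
Your proof is correct and follows exactly the same route the paper intends for this Corollary: it is an immediate consequence of the decomposition $\gr_a = \bigoplus_{\hgt(\beta)=a}\bigoplus_r \fm_{\beta,r}$, Theorem \ref{mbetaind}, and Proposition \ref{rightadjbeta}, with the final assertion coming from the observation that in type $A_{n-1}$ the unique positive root of height $h-1$ is $\alpha_0$ and $\bM_{\alpha_0} = \bG$, so the parabolic induction on that summand is trivial (which is also precisely why the last sentence of Theorem \ref{mbetaind} holds).
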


\subsection{Height filtration redux}

We now descibe when the filtration $\fil_\bullet$ splits.

\begin{lemma}\label{splitfilt}
The short exact sequence of $\cH$-modules
$$0\longrightarrow \gr_0\longrightarrow \fil_1\longrightarrow \gr_1\longrightarrow 0$$
is nonsplit if and only if $F = \qp$ and $\chi = \chi^{s_\beta}\overline{\beta}$ for some $\beta\in \Pi$.  
\end{lemma}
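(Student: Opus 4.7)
The plan is to compute the obstruction to splitting as an explicit Hochschild 1-cocycle and to decide when it is a coboundary. I take the canonical $C$-vector-space section $\sigma:\gr_0\oplus\gr_1\to\fil_1$ sending the basis elements $\theta^w$ and $\bar{\eta}^w_{w^{-1}(\beta),r}$ to themselves, and form the cocycle $\delta_h(\bar{v}):=\sigma(\bar{v})\cdot h-\sigma(\bar{v}\cdot h)\in\gr_0$ for $h\in\cH$, $\bar{v}\in\gr_1$. The short exact sequence splits if and only if $\delta$ is cohomologous to zero in the Hochschild complex $C^\bullet(\cH,\Hom_C(\gr_1,\gr_0))$.

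The main input is a direct calculation of $(\eta^v_{v^{-1}(\beta),r}\cdot\T_{\widehat{s_{\fat{\alpha}}}})(t)(\widehat{w})$ on $t\in T_1$ via Corollary \ref{Tsalphatorus}. Because $\eta^v_{v^{-1}(\beta),r}$ vanishes on $T_1$, the first two summands $\zeta\psi_{ws_\alpha}(t^{s_\alpha})$ and $-\delta_{\chi\circ w(\alpha^\vee),1}\psi_w(t)$ drop out, leaving only the Gauss-type sum $\sum_{x\in k_F^\times}\chi\circ w(\alpha^\vee)([x])\cdot\psi_w(u_{-\fat{\alpha}}((1-\alpha(t))[x]))$. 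Using Lemma \ref{conj2}(c) this reduces to a character sum on $k_F^\times$, nonzero exactly when $w=v$, $\fat{\alpha}=(\alpha,0)$ with $\alpha\in\Pi$, $v(\alpha)=-\beta$, and $\chi\circ\beta^\vee|_{\mu_{q-1}}=\overline{(\cdot)}^{p^r}$; in that case the value is a nonzero scalar multiple of $\theta^v\in\gr_0$ with $\theta(t):=\overline{\varpi^{-1}(\beta(t)-1)}^{p^r}$. Parallel computations via Lemma \ref{Tomega} and abelianness of $T$ show that $\delta$ vanishes on $\T_\omega$ and $\T_{t_0}$, so the obstruction is concentrated at the $\T_{\widehat{s_\alpha}}$-generators.

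Finally, I would solve the cocycle equation $\delta_h(\bar{v})=\rho(\bar{v}\cdot h)-\rho(\bar{v})\cdot h$ for a candidate $\rho:\gr_1\to\gr_0$, using the explicit $\gr_0$-action from Proposition \ref{gr0}. The $\delta_{F,\qp}$-term in Lemma \ref{TsalphaGr} is decisive: when $F\neq\qp$ it vanishes, so $\bar{\eta}^v\cdot\T_{\widehat{s_\alpha}}$ is a scalar multiple of $\bar{\eta}^v$ alone and the cocycle equations decouple across $v$; an explicit $\rho$ of the form $\rho(\bar{\eta}^v)=c\cdot\theta^1$ for a suitable $c\in C^\times$ kills $\delta$. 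When $F=\qp$ the extra term $-\zeta\bar{\eta}^{vs_\alpha}$ couples $\rho(\bar{\eta}^v)$ with $\rho(\bar{\eta}^{vs_\alpha})$; pairing this with the analogous relation coming from $\T_{\widehat{s_{(-\alpha,1)}}}$ yields a $2\times 2$ linear system over $\Hom^{\textnormal{cts}}(T_1,C)$ whose determinant (a polynomial in the various $\zeta$-scalars, hence in $\chi$) vanishes precisely when $\chi=\chi^{s_\beta}\overline{\beta}$, and at these degenerate points one checks directly that the right-hand side $\theta$ does not lie in the image of the coefficient matrix.

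The hard part is this final consistency check, especially isolating the uniformizer constraint inside $\chi=\chi^{s_\beta}\overline{\beta}$ (beyond the tame constraint extracted from the Gauss sum) as arising from the $\zeta$-scalars coming from Lemma \ref{TsalphaGr}. A clean way to organize the verification is via the parabolic-induction adjunction $\Ext^1_\cH(\ind_{\cH_{M_\beta}}^{\cH}\fn_{\qp,\beta,0},\gr_0)\cong\Ext^1_{\cH_{M_\beta}}(\fn_{\qp,\beta,0},R_{\cH_{M_\beta}}^{\cH}\gr_0)$, which matches $\delta$ with the nonsplit extension class intrinsic to $\fn_{\qp,\beta,0}$ (the second case in the classification after the definition of $\fn_{F,\beta,r}$), and this is active precisely when $F=\qp$ and $\chi=\chi^{s_\beta}\overline{\beta}$.
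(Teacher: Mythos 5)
Your explicit-cocycle framework is valid in principle: the extension class does live in $\textnormal{HH}^1(\cH,\Hom_C(\gr_1,\gr_0))\cong\Ext^1_\cH(\gr_1,\gr_0)$, and you have correctly located the two computational inputs that drive the answer --- Corollary \ref{Tsalphatorus} shows that the only $\gr_0$-contribution of $\sigma(\bar{\eta}^v)\cdot\T_{\salpha}$ comes from the Gauss sum (forcing $v(\alpha)=-\beta$ and a constraint on $\chi\circ\beta^\vee$ over $\mu_{q-1}$), and the $\delta_{F,\qp}$-term in Lemma \ref{TsalphaGr} is exactly what couples $\bar{\eta}^v$ to $\bar{\eta}^{vs_\alpha}$. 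The paper, however, avoids the cocycle entirely. For the ``only if'' direction it uses the adjunction $\Ext^1_\cH(\ind_{\cH_{M_\beta}}^\cH(\fn_{F,\beta,r}),\ind_{\cH_T}^\cH(\chi))\cong\Ext^1_{\cH_T}(L_{\cH_T}^{\cH_{M_\beta}}(\fn_{F,\beta,r}),\chi)$; since the left adjoint $L$ annihilates supersingular modules, the trichotomy in the definition of $\fn_{F,\beta,r}$ forces $F=\qp$ at once, Abe's description then gives $L(\fn_{\qp,\beta,0})=\chi^{s_\beta}\overline{\beta}$, and the rigidity $\Ext^1_{\cH_T}(\chi',\chi)\neq 0\Leftrightarrow\chi'=\chi$ closes the claim. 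For the ``if'' direction the paper constructs the extension explicitly from $\theta(t)=\overline{p^{-1}(1-\beta(t^{-1}))}$ (essentially what your Gauss sum produces) and verifies nonsplitting by applying $L_{\cH_T}^\cH$ and computing the $\T_{\lambda_m^{-1}}^T$-action on the two one-dimensional localizations.

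The genuine gap in your plan is the coboundary analysis. The ``$2\times 2$ linear system whose determinant vanishes precisely when $\chi=\chi^{s_\beta}\overline{\beta}$'' is asserted, not derived; the Gauss sum alone constrains $\chi\circ\beta^\vee$ only on $\mu_{q-1}$, and you offer no concrete mechanism for extracting the remaining constraint on the value at a uniformizer (you flag this yourself as ``the hard part'', and it is --- the paper's ``if'' direction shows it needs tracking the $\T_\omega$-scalars through a localization). Your closing adjunction $\Ext^1_\cH(\ind_{\cH_{M_\beta}}^\cH\fn,\gr_0)\cong\Ext^1_{\cH_{M_\beta}}(\fn,R_{\cH_{M_\beta}}^\cH\gr_0)$ is correct but less efficient than the paper's: it requires first working out $R(\ind_{\cH_T}^\cH(\chi))$, whereas placing $L$ on the first slot reduces the whole ``only if'' direction to whether $\fn_{F,\beta,r}$ is supersingular, which is given. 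As written, ``this is active precisely when $F=\qp$ and $\chi=\chi^{s_\beta}\overline{\beta}$'' is the conclusion of the lemma, not a step in its proof.
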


\begin{proof}
By Propositions \ref{gr0} and \ref{gr1}, the short exact sequence above gives an element of 
$$\Ext^1_{\cH}\left(\gr_1,\gr_0\right) \cong \bigoplus_{\beta\in \Pi}\bigoplus_{r = 0}^{f - 1}\Ext^1_{\cH}\left(\ind_{\cH_{M_{\beta}}}^{\cH}(\fn_{F,\beta,r}),\ind_{\cH_T}^{\cH}(\chi)\right)^{\oplus d},$$
where $d$ is as in Proposition \ref{gr0}, and the module $\fn_{F,\beta,r}$ was defined in Subsection \ref{gr1sect} (recall that $\fn_{F,\beta,r}$ is supersingular if and only if $F\neq \qp$).  Since the left adjoint of parabolic induction is exact, we have
$$\Ext^1_{\cH}\left(\ind_{\cH_{M_{\beta}}}^{\cH}\left(\fn_{F,\beta,r}\right),\ind_{\cH_T}^{\cH}(\chi)\right)  \cong  \displaystyle{\Ext^1_{\cH_T}\left(L_{\cH_T}^{\cH}\left(\ind_{\cH_{M_{\beta}}}^{\cH}(\fn_{F,\beta,r})\right),\chi\right)} \cong  \displaystyle{\Ext^1_{\cH_T}\left(L_{\cH_T}^{\cH_{M_\beta}}(\fn_{F,\beta,r}),\chi\right)},$$
where the last isomorphism follows from \cite[Corollary 5.8]{abe:inductions}.

Suppose that the short exact sequence above is nonsplit, so that $\Ext^1_{\cH}\left(\gr_1,\gr_0\right)\neq 0$.  Then there exists some $\beta\in \Pi$ and $0\leq r \leq f - 1$ such that $\Ext^1_{\cH_T}(L_{\cH_T}^{\cH_{M_\beta}}(\fn_{F,\beta,r}),\chi)\neq 0$, which implies $L_{\cH_T}^{\cH_{M_\beta}}(\fn_{F,\beta,r})\neq 0$.  By the description of the module $\fn_{F,\beta,r}$ we must have $F = \qp$, and thus $L_{\cH_T}^{\cH_{M_\beta}}(\fn_{\qp,\beta,0})\cong \chi^{s_\beta}\overline{\beta}$ by \cite[Corollary 5.8 and Theorem 5.20]{abe:inductions}.  Since $\Ext^1_{\cH_T}(\chi',\chi) \neq 0$ if and only if $\chi' = \chi$, one direction follows.

Suppose now that $F = \qp$ and $\chi = \chi^{s_\beta}\overline{\beta}$ for some $\beta\in \Pi$.  Since the center of $\bG\bL_n$ is connected, this condition implies $\chi\circ\beta^\vee(x) = \overline{p^{-\textnormal{val}(x)}x}$ for all $x\in \qp^\times$, and thus $\fn_{\qp,\beta,0}\cong \ind_{\cH_{T}}^{\cH_{M_\beta}}(\chi^{s_\beta}\overline{\beta})\cong \ind_{\cH_{T}}^{\cH_{M_\beta}}(\chi)$.  Let $\theta:T_1\longrightarrow C$ denote the character $\theta(t) = \overline{p^{-1}(1 - \beta(t^{-1}))}$, and let $\{\theta^w\}_{w\in W_0}\subset \textnormal{Fil}_0$ denote the elements of $\textnormal{H}^1(I_1,\ind_B^G(\chi))$ associated to $\theta$ as in Proposition \ref{gr0}.  Let $\fat{\alpha} = (\alpha,\ell)\in \Pi_\aff$, let $\eta^w$ denote the element $\eta^w_{w^{-1}(\beta),0}$, and $\bar{\eta}^w$ its image in $\gr_1$.  Using Lemma \ref{TsalphaGr} and Corollary \ref{Tsalphatorus}, we have
\begin{eqnarray}
\eta^{w}\cdot \T_{\salpha} & = & -\mathbf{1}_{\Phi^-}(w(\alpha))\delta_{(\chi\overline{\beta}^{-1})\circ w(\alpha^\vee),1}\cdot\eta^w \label{Tsactionfil}\\
 & & + \zeta\cdot\left(d_{\alpha,s_\alpha w^{-1}(\beta)}\mathbf{1}_{\Phi^+\smallsetminus\{\beta\}}(w(\alpha))  - \delta_{w(\alpha),-\beta}\right)\cdot \eta^{ws_\alpha} - \delta_{w(\alpha), -\beta}\cdot \theta^w,\notag
\end{eqnarray}
with $\zeta\in C^\times$ as in Lemma \ref{TsalphaGr}.  Combining this with Lemma \ref{Tomega} and Propositions \ref{gr0} and \ref{gr1} shows that we have a short exact sequence of $\cH$-modules
$$0\longrightarrow \textnormal{span}\left\{\theta^w\right\}_{w\in W_0}\longrightarrow \textnormal{span}\left\{\theta^w,~\eta^w\right\}_{w\in W_0}\longrightarrow \textnormal{span}\left\{\bar{\eta}^w\right\}_{w\in W_0}\longrightarrow 0,$$
which is a self-extension of $\ind_{\cH_{T}}^{\cH}(\chi)$.  We claim this self-extension is nonsplit.  To prove this, it suffices to show that the exact sequence above remains nonsplit after applying the functor $L_{\cH_T}^{\cH}$.

Using the isomorphisms $\textnormal{span}\{\theta^w\}_{w\in W_0}\cong \ind_B^G(\chi)^{I_1}$ of Proposition \ref{gr0} and $\ind_B^G(\chi)^{I_1}\cong \ind_{\cH_T}^{\cH}(\chi)$ of \cite[Proposition 4.4]{olliviervigneras}, we see that $L_{\cH_T}^{\cH}(\textnormal{span}\{\theta^w\}_{w\in W_0}) = C\theta^{w_\circ}\otimes 1$.  Likewise, the isomorphisms $\textnormal{span}\{\bar{\eta}^w\}_{w\in W_0}\cong \ind_{\cH_{M_\beta}}^{\cH}(\fn_{\qp,\beta,0})$ of Proposition \ref{gr1} and $\fn_{\qp,\beta,0}\cong \ind_{\cH_T}^{\cH_{M_\beta}}(\chi)$ show that $L_{\cH_T}^{\cH}(\textnormal{span}\{\bar{\eta}^w\}_{w\in W_0}) = C\bar{\eta}^{s_\beta w_\circ}\otimes 1$.  Therefore, it suffices to show that the exact sequence 
$$0\longrightarrow C\theta^{w_\circ}\otimes 1 \longrightarrow C\theta^{w_\circ}\otimes 1~\oplus~ C\eta^{s_\beta w_\circ}\otimes 1 \longrightarrow C\bar{\eta}^{s_\beta w_\circ}\otimes 1 \longrightarrow 0$$
is a nonsplit extension of $\chi$ by $\chi$ as $\cH_T$-modules.  This holds if and only if there exists $\lambda\in T$ such that
$$(\eta^{s_\beta w_\circ}\otimes 1)\cdot \T_\lambda^T \neq \chi(\lambda)^{-1}\cdot(\eta^{s_\beta w_\circ}\otimes 1).$$
Write $\beta = \alpha_m$, so that $s_\beta = s_m$.  Then the element $\lambda_m^{-1}$ is $T$-positive, the element $\widehat{w_\circ}^{-1}\lambda_m^{-1}\widehat{w_\circ}$ is $T$-negative, and consequently we have
\begin{eqnarray*}
(\eta^{s_\beta w_\circ}\otimes 1)\cdot \T_{\lambda_m^{-1}}^T & = & \eta^{s_\beta w_\circ} \otimes \T_{\widehat{w_\circ}^{-1}\lambda_m^{-1}\widehat{w_\circ}}^{T}\\
& = & \eta^{s_\beta w_\circ}\otimes \T_{\widehat{w_\circ}^{-1}\lambda_m^{-1}\widehat{w_\circ}}^{T,*}\\
& = & \eta^{s_\beta w_\circ}\cdot \T_{\widehat{w_\circ}^{-1}\lambda_m^{-1}\widehat{w_\circ}}^{*}\otimes 1.
\end{eqnarray*}
A minimal expression for $\widehat{w_\circ}^{-1}\lambda_m^{-1}\widehat{w_\circ}$ is given by
$$\widehat{w_\circ}^{-1}\lambda_m^{-1}\widehat{w_\circ} = t_0\omega^{-(n - m)}\widehat{s_m}\cdots \widehat{s_{1}}\widehat{s_{m + 1}}\cdots \widehat{s_2}\cdots \widehat{s_{n + 1}}\cdots \widehat{s_{n - m}}$$
for some $t_0\in T_0$, and one easily checks that the unique element of $\Phi^+\smallsetminus\Phi_{(m)}^+$ which is sent to $\Phi^-$ by the element $s_mw_{(m),\circ}$ is $s_{m}\cdots s_{n - m + 1}(\alpha_{n - m})$ (cf. Lemma \ref{comb1}\eqref{comb1b}).  Therefore, we get
\begin{eqnarray*}
 \eta^{s_\beta w_{\circ}}\cdot \T_{\widehat{w_\circ}^{-1}\lambda_m^{-1}\widehat{w_\circ}}^* & = & \eta^{s_m w_{\circ}}\cdot\T_{t_0}\T_{\omega^{-(n - m)}}\T_{m}^*\cdots \T_1^*\T_{m + 1}^*\cdots \T_2^*\cdots \T_{n - 1}^*\cdots \T_{n - m}^* \\
 & \stackrel{\textnormal{Lem}.~\ref{comb0}}{=} & \zeta\cdot \eta^{s_mw_{(m),\circ}}\cdot \T_{m}^*\cdots \T_1^*\T_{m + 1}^*\cdots \T_2^*\cdots \T_{n - 1}^*\cdots \T_{n - m}^* \\
 & \stackrel{\eqref{Tsactionfil}}{=} & \Bigg(\zeta'\cdot \eta^{s_mw_{(m),\circ}s_m\cdots s_{n - m + 1}} \\
 & &\qquad + \sum_{w' < \bo^{n - m}s_{n - m}}\delta_{w'}\eta^{s_mw_{(m),\circ}w'} + \delta'_{w'}\theta^{s_mw_{(m),\circ}w'}\Bigg)\cdot \T_{n - m}^*\\
 & \stackrel{\eqref{Tsactionfil}}{=} & \zeta'\left( \eta^{s_mw_{(m),\circ}s_m\cdots s_{n - m}} - \theta^{s_mw_{(m),\circ}s_m\cdots s_{n - m + 1}}\right)\\
 & &   + \sum_{w' \neq s_mw_{\circ}}\delta_{w'}''\eta^{w'} + \sum_{w'\neq w_{\circ}}\delta'''_{w'}\theta^{w'},
\end{eqnarray*}
where $\zeta,\zeta'\in C^\times$, $\delta_{w'}, \delta_{w'}', \delta_{w'}'', \delta_{w'}'''\in C$.  Since $s_mw_{(m),\circ}s_m\cdots s_{n - m} = s_mw_\circ = s_\beta w_\circ$ and $s_mw_{(m),\circ}s_m\cdots s_{n - m + 1} = w_\circ$, in the localization we obtain 
$$(\eta^{s_\beta w_\circ}\otimes 1)\cdot \T_{\lambda_m^{-1}}^T = \zeta'\cdot \eta^{s_\beta w_\circ}\otimes 1 - \zeta'\cdot \theta^{w_\circ}\otimes 1\not\in \textnormal{span}\left\{\eta^{s_\beta w_\circ}\otimes 1\right\},$$
which gives the claim.  
\end{proof}

\begin{thm}\label{mainthm}
The filtration $\fil_\bullet$ splits as follows:
\begin{eqnarray*}
\textnormal{H}^1\big(I_1,\ind_B^G(\chi)\big) & \cong & \fil_1 \oplus \bigoplus_{a = 2}^{n - 1}\gr_a\\
 & \cong & \fil_1 \oplus \bigoplus_{\sub{\beta\in \Phi^+}{\hgt(\beta)\geq 2}}\bigoplus_{r = 0}^{f - 1}\ind_{\cH_{M_\beta}}^{\cH}(\fn_{\beta,r}),
\end{eqnarray*}
where $\fn_{\beta,r}$ is the supersingular $\cH_{M_\beta}$-module described in Proposition \ref{rightadjbeta}.  The splitting of $\fil_1$ is governed by Lemma \ref{splitfilt}.  In particular, if $F\neq \qp$, we have
$$\textnormal{H}^1\big(I_1,\ind_B^G(\chi)\big) \cong \ind_{\cH_T}^{\cH}(\chi)^{\oplus n([F:\qp] + \mathbf{1}_F(\zeta_p))} \oplus \bigoplus_{\beta\in \Phi^+}\bigoplus_{r = 0}^{f - 1}\ind_{\cH_{M_\beta}}^{\cH}(\fn_{\beta,r}),$$
with $\fn_{\beta,r}$ supersingular.  
\end{thm}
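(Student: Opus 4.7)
The plan is to prove by induction on $a \geq 2$ that the short exact sequence of $\cH$-modules
$$0 \longrightarrow \fil_{a-1} \longrightarrow \fil_a \longrightarrow \gr_a \longrightarrow 0$$
splits. Together with Lemma \ref{splitfilt} (governing the splitting of $\fil_1$) and Corollary \ref{mbetaindcor} (describing each $\gr_a$ for $a \geq 2$ as a direct sum of $\ind_{\cH_{M_\beta}}^{\cH}(\fn_{\beta,r})$ with $\fn_{\beta,r}$ supersingular), this will yield both displayed isomorphisms. The ``in particular'' claim for $F \neq \qp$ is then immediate: Lemma \ref{splitfilt} forces $\fil_1 = \gr_0 \oplus \gr_1$, Proposition \ref{gr0} supplies the multiplicity $n([F:\qp] + \mathbf{1}_F(\zeta_p))$, and the modules $\fn_{F,\beta,r}$ of Subsection \ref{gr1sect} are themselves supersingular precisely because $F \neq \qp$, so they may be absorbed into the uniform sum over all $\beta \in \Phi^+$.

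To split the SES for fixed $a \geq 2$, I would show $\Ext^1_{\cH}(\gr_a, \fil_{a-1}) = 0$. By Corollary \ref{mbetaindcor} and additivity this reduces, for each $\beta$ with $\hgt(\beta) = a$ and each $r$, to showing
$$\Ext^1_{\cH}\bigl(\ind_{\cH_{M_\beta}}^{\cH}(\fn_{\beta,r}),\ \fil_{a-1}\bigr) = 0.$$
Since $\ind := \ind_{\cH_{M_\beta}}^{\cH}$ is exact and its right adjoint $R := R_{\cH_{M_\beta}}^{\cH}$ is exact on finite-dimensional modules (and hence sends injective resolutions of finite-dimensional modules to injective resolutions), the adjunction upgrades to the $\Ext^1$ level:
$$\Ext^1_{\cH}\bigl(\ind(\fn_{\beta,r}),\ \fil_{a-1}\bigr) \cong \Ext^1_{\cH_{M_\beta}}\bigl(\fn_{\beta,r},\ R(\fil_{a-1})\bigr).$$

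The next step is to identify every subquotient of the $\cH_{M_\beta}$-module $R(\fil_{a-1})$ as a parabolic induction from a proper standard sub-Levi of $\bM_\beta$. By exactness of $R$, this reduces to analysing the successive quotients $R(\gr_{a'}) = \bigoplus_{\hgt(\beta') = a',\, r'} R(\fm_{\beta',r'})$ for $a' < a$. Iterating Proposition \ref{rightadj} through the intermediate maximal Levis $\bM_{(i)}$ containing $\bM_\beta$, one finds $R(\fm_{\beta',r'}) = 0$ unless $\beta' \in \Phi_{\bM_\beta}^+$; for such $\beta'$ one automatically has $\bM_{\beta'} \subsetneq \bM_\beta$ a proper standard sub-Levi since $\hgt(\beta') < a$. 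The remaining analysis of $R(\fm_{\beta',r'})$ for $\beta' \in \Phi_{\bM_\beta}^+$ can then be carried out by a direct computation analogous to the proof of Proposition \ref{rightadjbeta}, combined with the identification $\fm_{\beta',r'} \cong \ind_{\cH_{M_{\beta'}}}^{\cH}(\fn_{\beta',r'})$ of Theorem \ref{mbetaind} and transitivity of parabolic induction, yielding $R(\fm_{\beta',r'})$ as an object induced from $\cH_{M_{\beta'}}$ to $\cH_{M_\beta}$.

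The vanishing then follows from the supersingularity of $\fn_{\beta,r}$ (Proposition \ref{rightadjbeta}): by the $L$-vanishing characterization of supersingularity recalled in Subsection \ref{heckesect}, $L_{\cH_{M_{\beta'}}}^{\cH_{M_\beta}}(\fn_{\beta,r}) = 0$ for every proper standard Levi $\bM_{\beta'} \subsetneq \bM_\beta$. The $L$--$\ind$ adjunction at the $\Ext$-level (both functors being exact) gives
$$\Ext^n_{\cH_{M_\beta}}\bigl(\fn_{\beta,r},\ \ind_{\cH_{M_{\beta'}}}^{\cH_{M_\beta}}(\fv)\bigr) \cong \Ext^n_{\cH_{M_{\beta'}}}\bigl(L_{\cH_{M_{\beta'}}}^{\cH_{M_\beta}}(\fn_{\beta,r}),\ \fv\bigr) = 0$$
for all $n$ and all $\cH_{M_{\beta'}}$-modules $\fv$, so d\'evissage along the filtration on $R(\fil_{a-1})$ delivers the required vanishing. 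I expect the main obstacle to be the explicit computation of $R(\fm_{\beta',r'})$ for $\beta' \in \Phi_{\bM_\beta}^+$ with $\hgt(\beta') < a$, i.e., the structural identification of this module with a parabolic induction from the proper sub-Levi $\bM_{\beta'} \subsetneq \bM_\beta$: this hinges on a careful analysis of the positive subalgebras at the two nested levels and generalizes the techniques developed in Subsections \ref{hgtfilt} and \ref{gr1sect}. The supersingularity-and-d\'evissage step above then makes the vanishing essentially formal.
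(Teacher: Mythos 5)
Your proposal uses the same core ingredients as the paper (exactness of the adjoints, Abe's Mackey-type formula, and the fact that applying $L$ to a supersingular module along any proper sub-Levi vanishes), but takes the \emph{dual} route: you apply the right adjoint $R_{\cH_{M_\beta}}^{\cH}$ to the target of the Ext-group, descend to $\cH_{M_\beta}$, then need a structural analysis of $R_{\cH_{M_\beta}}^{\cH}(\fil_{a-1})$ before invoking the $L$-vanishing inside $\cH_{M_\beta}$. The paper instead applies the globally exact left adjoint $L_{\cH_M}^{\cH}$ to the \emph{source} $\ind_{\cH_{M'}}^{\cH}(\fn')$, invokes \cite[Corollary 5.8]{abe:inductions} to identify $L_{\cH_M}^{\cH}(\ind_{\cH_{M'}}^{\cH}(\fn')) \cong \ind_{\cH_{M'\cap M}}^{\cH_M}(L_{\cH_{M'\cap M}}^{\cH_{M'}}(\fn'))$, and immediately kills this by supersingularity of $\fn'$ over $\cH_{M'}$ (the module attached to the \emph{higher} graded piece, exactly as in your final step). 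The paper's route avoids both of the issues below.

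The first issue is one you yourself flag as the ``main obstacle'': you need that every subquotient of $R_{\cH_{M_\beta}}^{\cH}(\fil_{a-1})$ is a parabolic induction from a proper standard sub-Levi of $\bM_\beta$. This requires either an $R$-analogue of Abe's Mackey formula for $R_{\cH_{M_\beta}}^{\cH}(\ind_{\cH_{M_{\beta'}}}^{\cH}(\fn_{\beta',r'}))$ (not quoted in the paper) or a fresh computation analogous to Proposition \ref{rightadjbeta} but for $\beta'$ with $\hgt(\beta') < a$; neither is carried out, so the proof is not complete as written. The second issue is the parenthetical claim that ``$R$ is exact on finite-dimensional modules (and hence sends injective resolutions of finite-dimensional modules to injective resolutions).'' This inference is not valid: the terms of an injective resolution of a finite-dimensional module are typically infinite-dimensional, so ``exact on finite-dimensional modules'' does not control $R$ applied to the resolution, and hence does not a priori give the Ext-level adjunction $\Ext^1_{\cH}(\ind_{\cH_{M_\beta}}^{\cH}(\fn_{\beta,r}),\fil_{a-1}) \cong \Ext^1_{\cH_{M_\beta}}(\fn_{\beta,r}, R_{\cH_{M_\beta}}^{\cH}(\fil_{a-1}))$. (Such an adjunction \emph{can} be established --- the paper itself invokes a version via \cite[Proposition 3.6(2)]{abe:extensions} in a remark --- but your justification is insufficient.) By contrast the paper's use of $L_{\cH_M}^{\cH}$, which is a localization and therefore genuinely exact on the whole module category, makes the Ext-adjunction unproblematic. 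So: same key ideas, but your execution leaves real gaps that the paper's choice of adjoint renders moot.
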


\begin{proof}
We prove by induction that $\fil_b \cong \fil_1 \oplus \bigoplus_{a = 2}^{b}\gr_a$ as $\cH$-modules, the case $b = 1$ being trivial.  Suppose the claim holds for some integer $b\geq 1$; we then have a short exact sequence of $\cH$-modules
$$0\longrightarrow \fil_b \longrightarrow \fil_{b + 1} \longrightarrow \gr_{b + 1} \longrightarrow 0,$$
which gives an element of 
$$\Ext^1_{\cH}(\gr_{b + 1},\fil_b) \cong \Ext^1_{\cH}(\gr_{b + 1},\fil_1)\oplus\bigoplus_{a = 2}^{b}\Ext^1_{\cH}(\gr_{b + 1},\gr_a).$$

By Corollary \ref{mbetaindcor}, the space $\Ext^1_{\cH}(\gr_{b + 1},\gr_a)$ is a direct sum of Ext-spaces of the form $\Ext^1_{\cH}(\ind_{\cH_{M'}}^{\cH}(\fn'),\ind_{\cH_{M}}^{\cH}(\fn))$, where $M$ (resp. $M'$) is a standard Levi subgroup such that $\Pi_{\bM} = \{\alpha_{m},~\ldots,~\alpha_{m + a - 1}\}$ with $0 \leq a \leq b$ (resp. $\Pi_{\bM'} = \{\alpha_{m'},~\ldots,~\alpha_{m' + b}\}$), and $\fn$ (resp. $\fn'$) is a finite-length supersingular $\cH_M$-module (resp. $\cH_{M'}$-module).  Since the left adjoint of parabolic induction is exact, \cite[Corollary 5.8]{abe:inductions} implies
$$\Ext^1_{\cH}\left(\ind_{\cH_{M'}}^{\cH}(\fn'),\ind_{\cH_{M}}^{\cH}(\fn)\right)  \cong  \Ext^1_{\cH_M}\left(L_{\cH_M}^{\cH}(\ind_{\cH_{M'}}^{\cH}(\fn')),\fn\right) \cong \Ext^1_{\cH_M}\left(\ind_{\cH_{M'\cap M}}^{\cH_{M}}(L_{\cH_{M'\cap M}}^{\cH_{M'}}(\fn')),\fn\right) =  0,$$
where the last line follows form the fact that $L_{\cH_{M'\cap M}}^{\cH_{M'}}(\fn') = 0$ (note that $M'\cap M$ is a proper subgroup of $M'$).  Hence $\Ext^1_{\cH}(\gr_{b + 1},\gr_a) = 0$.

By the long exact sequence of Ext-spaces, in order to prove $\Ext^1_{\cH}(\gr_{b + 1},\fil_1) = 0$, it suffices to show that $\Ext^1_{\cH}(\gr_{b + 1},\gr_0) = \Ext^1_{\cH}(\gr_{b + 1},\gr_1) = 0$.  This follows exactly as above.  Therefore, we obtain $\Ext^1_{\cH}(\gr_{b + 1},\fil_b) = 0$, and the result follows.
\end{proof}

\subsection{Relation to extensions}\label{subsectexts}

To conclude, we discuss an application of the above results to extensions of $G$-representations.  Suppose that $\tau$ is an irreducible, admissible, supersingular representation of $G$, and suppose we have a short exact sequence
\begin{equation}\label{ssingext}
0 \longrightarrow \ind_B^G(\chi) \longrightarrow \pi \longrightarrow \tau \longrightarrow 0.
\end{equation}
Unlike the classical case of complex coefficients, this exact sequence does not necessarily split (cf. \cite{hu:exts}).  We have the following necessary condition for the nonsplitting of the above exact sequence.  

\begin{propn}\label{neccond}
If the exact sequence \eqref{ssingext} is nonsplit, then 
$$\textnormal{Hom}_{\cH}\Big(\tau^{I_1},\bigoplus_{r = 0}^{f - 1} \fm_{\alpha_0,r}\Big) \neq 0,$$
where $\fm_{\alpha_0,r} = \fn_{F,\alpha_0,r}$ if $n = 2$.  
\end{propn}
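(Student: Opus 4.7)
The plan is to combine the Emerton--Pa\v{s}k\={u}nas-type spectral sequence recalled earlier in this paper with the decomposition of $\textnormal{H}^1(I_1,\ind_B^G(\chi))$ supplied by Theorem \ref{mainthm}. Since $\tau$ is irreducible admissible supersingular, it is generated by $\tau^{I_1}$, so (cf.\ \cite[equation (32)]{paskunas:exts}) we have the spectral sequence
$$E_2^{i,j} = \Ext^i_{\cH}\left(\tau^{I_1},\textnormal{H}^j(I_1,\ind_B^G(\chi))\right) \Longrightarrow \Ext^{i+j}_G\left(\tau,\ind_B^G(\chi)\right),$$
whose low-degree five-term exact sequence reads
$$0\longrightarrow\Ext^1_{\cH}\left(\tau^{I_1},\ind_B^G(\chi)^{I_1}\right)\longrightarrow\Ext^1_G\left(\tau,\ind_B^G(\chi)\right)\stackrel{\partial}{\longrightarrow}\Hom_{\cH}\left(\tau^{I_1},\textnormal{H}^1(I_1,\ind_B^G(\chi))\right).$$
The nonsplit extension \eqref{ssingext} provides a nonzero class in $\Ext^1_G(\tau,\ind_B^G(\chi))$.

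The first task is to show that the leftmost term vanishes, so that our extension class maps to a nonzero element under $\partial$. Combining the identification $\ind_B^G(\chi)^{I_1}\cong\ind_{\cH_T}^{\cH}(\chi)$ from \eqref{h0} with the adjunction $\Hom_{\cH}(-,\ind_{\cH_T}^{\cH}(-))\cong\Hom_{\cH_T}(L_{\cH_T}^{\cH}(-),-)$ — which passes to $\Ext^1$ because $L_{\cH_T}^{\cH}$ and $\ind_{\cH_T}^{\cH}$ are both exact, so $\ind_{\cH_T}^{\cH}$ preserves injectives — one obtains
$$\Ext^1_{\cH}\left(\tau^{I_1},\ind_{\cH_T}^{\cH}(\chi)\right)\cong\Ext^1_{\cH_T}\left(L_{\cH_T}^{\cH}(\tau^{I_1}),\chi\right).$$
Every simple subquotient of $\tau^{I_1}$ is supersingular (because $\tau$ is), and simple supersingular $\cH$-modules are annihilated by $L_{\cH_T}^{\cH}$ (Remark after the definition of supersingularity); the exactness of $L_{\cH_T}^{\cH}$ then yields $L_{\cH_T}^{\cH}(\tau^{I_1})=0$ via a J\"ordan--H\"older / direct limit reduction to the simple subquotients. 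Hence the image under $\partial$ of our extension class is a nonzero element of $\Hom_{\cH}(\tau^{I_1},\textnormal{H}^1(I_1,\ind_B^G(\chi)))$.

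It remains to show that any such nonzero hom factors through $\bigoplus_{r=0}^{f-1}\fm_{\alpha_0,r}$. Theorem \ref{mainthm} decomposes $\textnormal{H}^1(I_1,\ind_B^G(\chi))$ as the direct sum of $\fil_1$ together with the induced summands $\ind_{\cH_{M_\beta}}^{\cH}(\fn_{\beta,r})$ for $\beta\in\Phi^+$ with $\hgt(\beta)\geq 2$; in turn, $\fil_1$ is an extension of $\gr_1$ by $\gr_0$, both of which are themselves parabolically induced from proper Levis (Propositions \ref{gr0} and \ref{gr1}). The key observation is that, for any proper Levi $\bM\subsetneq\bG$ and any $\cH_M$-module $\fn$, the module $\ind_{\cH_M}^{\cH}(\fn)$ admits no supersingular simple subquotient — this is precisely the definition of supersingularity — and consequently $\Hom_{\cH}(\fm,\ind_{\cH_M}^{\cH}(\fn))=0$ for every supersingular $\cH$-module $\fm$ (otherwise the image would contain a supersingular simple submodule inside such an induced module). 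Applying this to $\fm=\tau^{I_1}$ kills every summand of the decomposition except those indexed by $\beta\in\Phi^+$ with $\bM_\beta=\bG$; for $\bG=\bG\bL_n$ the unique such positive root is the highest root $\alpha_0$, and the surviving summand is precisely $\fm_{\alpha_0,r}$ (the case $n=2$ is analogous, with $\alpha_0=\alpha_1$). The main technical obstacle is ensuring that the adjunction formula $\Hom_{\cH}(-,\ind_{\cH_T}^{\cH}(-))\cong\Hom_{\cH_T}(L_{\cH_T}^{\cH}(-),-)$ extends cleanly to $\Ext^1$ when $\tau^{I_1}$ is possibly infinite-dimensional; this is controlled by the exactness of $L_{\cH_T}^{\cH}$, which allows one to replace $\tau^{I_1}$ by its finite-dimensional submodules or to reduce the vanishing of $L_{\cH_T}^{\cH}(\tau^{I_1})$ to its (simple) supersingular subquotients.
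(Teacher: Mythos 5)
Your proposal takes a genuinely different route from the paper's, and it contains a real gap. You invoke the spectral sequence
$$E_2^{i,j} = \Ext^i_{\cH}\bigl(\tau^{I_1},\textnormal{H}^j(I_1,\ind_B^G(\chi))\bigr) \Longrightarrow \Ext^{i+j}_G\bigl(\tau,\ind_B^G(\chi)\bigr)$$
and its five-term exact sequence to convert the nonzero extension class into a nonzero element of $\Hom_{\cH}(\tau^{I_1},\textnormal{H}^1(I_1,\ind_B^G(\chi)))$. But this spectral sequence, with $\Ext^*_G(\tau,-)$ as abutment, requires at minimum that the natural surjection $\tau^{I_1}\otimes_{\cH}\cind_{I_1}^G(C)\twoheadrightarrow\tau$ be an isomorphism and that $(-)^{I_1}$ carry injective smooth $G$-representations to objects acyclic for $\Hom_{\cH}(\tau^{I_1},-)$. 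These follow from an equivalence of categories between smooth $G$-representations generated by their $I_1$-invariants and $\cH$-modules; the paper cites precisely that equivalence (\cite[Corollary 5.3]{koziol:glnsln}) when it does use this spectral sequence in the $\textnormal{SL}_2(\qp)$ Remark, and it emphasizes at the end of the introduction that such an equivalence is only expected when $F=\qp$ and the root system is of type $A_1$. For $\textnormal{GL}_n(F)$ with $n\geq 3$ or $F\neq\qp$ — the case Proposition \ref{neccond} is actually about — the spectral sequence is not available. Without it you only have the long exact sequence in $I_1$-cohomology, and it is not automatic that a nonsplit $G$-extension forces the connecting map $\delta\colon\tau^{I_1}\to\textnormal{H}^1(I_1,\ind_B^G(\chi))$ to be nonzero; that implication is exactly the content your spectral sequence was meant to supply.

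The paper circumvents this by arguing the contrapositive. Assuming $\Hom_{\cH}(\tau^{I_1},\bigoplus_r\fm_{\alpha_0,r})=0$, it uses supersingularity of $\tau^{I_1}$, the adjunction with $L_{\cH_M}^{\cH}$, and Theorem \ref{mainthm} to show $\Hom_{\cH}(\tau^{I_1},\textnormal{H}^1(I_1,\ind_B^G(\chi)))=0$, hence $\delta=0$ and one gets a short exact sequence of $\cH$-modules $0\to\ind_{\cH_T}^{\cH}(\chi)\to\pi^{I_1}\to\tau^{I_1}\to 0$; this splits since $\Ext^1_{\cH}(\tau^{I_1},\ind_{\cH_T}^{\cH}(\chi))=0$. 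The step your spectral sequence was meant to give for free — passing from an $\cH$-module splitting to a $G$-equivariant one — is then done by hand: tensor the $\cH$-splitting $\sigma$ with the universal module $\bX=\cind_{I_1}^G(C)$, check that the counit $\kappa_\pi$ restricts to an isomorphism $\ind_{\cH_T}^{\cH}(\chi)\otimes_{\cH}\bX\stackrel{\sim}{\to}\ind_B^G(\chi)$ (via \cite[Cor.\ 4.7]{olliviervigneras}), deduce surjectivity of $\kappa_\pi$ and $\kappa_\tau$, and apply the snake lemma to build $\widetilde{\sigma}=\kappa_\pi\circ(\sigma\otimes 1)\circ\kappa_\tau^{-1}$. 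Your second paragraph — that any nonzero $\cH$-linear map $\tau^{I_1}\to\textnormal{H}^1(I_1,\ind_B^G(\chi))$ must factor through the $\fm_{\alpha_0,r}$ because the other summands are induced from proper Levis and $\tau^{I_1}$ is supersingular — is correct and essentially the paper's argument; it is the first half that has to be replaced by the universal-module construction.
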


\begin{proof}
Suppose $\textnormal{Hom}_{\cH}(\tau^{I_1},\bigoplus_{r = 0}^{f - 1} \fm_{\alpha_0,r}) = 0$.  By \cite[Theorem 5.3]{olliviervigneras}, the (finite-dimensional) $\cH$-module $\tau^{I_1}$ is supersingular, and therefore 
$$\textnormal{Hom}_{\cH}\left(\tau^{I_1},\ind_{\cH_M}^{\cH}(\fn)\right) \cong \textnormal{Hom}_{\cH}\left(L_{\cH_M}^{\cH}(\tau^{I_1}),\fn\right) = 0$$
if $\bM$ is a proper Levi subgroup of $\bG$.  Hence, by Theorem \ref{mainthm}, we have
$$\textnormal{Hom}_{\cH}\Big(\tau^{I_1},\textnormal{H}^1\big(I_1,\ind_B^G(\chi)\big)\Big) = 0.$$

Taking $I_1$-invariants of \eqref{ssingext} gives an exact sequence of $\cH$-modules
$$0 \longrightarrow \ind_{\cH_T}^{\cH}(\chi) \longrightarrow \pi^{I_1} \longrightarrow \tau^{I_1} \longrightarrow \textnormal{H}^1\big(I_1,\ind_B^G(\chi)\big).$$
The assumption of the first paragraph guarantees that the last map is 0, so we obtain a short exact sequence
$$0 \longrightarrow \ind_{\cH_T}^{\cH}(\chi) \longrightarrow \pi^{I_1} \longrightarrow \tau^{I_1} \longrightarrow 0.$$
Again using that $\tau^{I_1}$ is supersingular, we get $\Ext_{\cH}^1(\tau^{I_1}, \ind_{\cH_T}^{\cH}(\chi)) = 0$, and therefore the exact sequence above splits.  Let $\sigma:\tau^{I_1}\longrightarrow \pi^{I_1}$ denote an $\cH$-equivariant splitting.  Letting $\bX := \cind_{I_1}^G(C)$ denote the universal module, we thus obtain a short exact sequence of $G$-representations
\begin{center}
\begin{tikzcd}
0 \ar[r] & \ind_{\cH_T}^{\cH}(\chi)\otimes_{\cH}\bX \ar[r] & \pi^{I_1}\otimes_{\cH}\bX \ar[r] & \tau^{I_1}\otimes_{\cH}\bX \ar[r] \ar[l, bend left, "\sigma\otimes 1"] & 0
\end{tikzcd}
\end{center}

Consider now the counit $\kappa_{\pi}:\pi^{I_1}\otimes_{\cH}\bX \longrightarrow \pi$ of the adjunction $(-\otimes_{\cH}\bX,~(-)^{I_1})$.  We have $\kappa_\pi(v\otimes\mathbf{1}_{I_1g^{-1}}) = g.v$, where $v\in \pi^{I_1}$ and $\mathbf{1}_{I_1g^{-1}}\in \bX$ is the characteristic function of $I_1g^{-1}$.  The representation $\ind_B^G(\chi)$ is generated by its space of $I_1$-invariant vectors, and by \cite[Cor. 4.7]{olliviervigneras} the restriction of $\kappa_{\pi}$ to $\ind_{\cH_T}^{\cH}(\chi)\otimes_{\cH}\bX$ is an isomorphism onto $\ind_B^G(\chi)$.  Since $\pi^{I_1}\supsetneq \ind_B^G(\chi)^{I_1}$, we see that $\pi$ is generated by its $I_1$-invariant vectors, and thus $\kappa_\pi$ is surjective.  Finally, since $\tau$ is irreducible, the analogously defined map $\kappa_\tau$ is surjective.  This gives the following commutative diagram with exact rows:
\begin{center}
\begin{tikzcd}
0 \ar[r] & \ind_{\cH_T}^{\cH}(\chi)\otimes_{\cH}\bX \ar[r] \ar[d,"{\rotatebox{90}{$\sim$}}"] & \pi^{I_1}\otimes_{\cH}\bX \ar[r] \ar[d,twoheadrightarrow,"\kappa_\pi"] & \tau^{I_1}\otimes_{\cH}\bX \ar[r] \ar[l, bend left, "\sigma\otimes 1"] \ar[d,twoheadrightarrow,"\kappa_\tau"] & 0\\
0 \ar[r] & \ind_B^G(\chi) \ar[r] & \pi \ar[r] & \tau \ar[r] & 0
\end{tikzcd}
\end{center}
By the snake lemma, we get an isomorphism of $G$-representations $\sigma\otimes 1: \ker(\kappa_\tau) \stackrel{\sim}{\longrightarrow} \ker(\kappa_{\pi})$.

We now define a splitting $\widetilde{\sigma}:\tau\longrightarrow \pi$ as follows.  Let $v\in \tau$, and write $v = \sum_{i}g_i.v_i$, where $v_i\in \tau^{I_1}, g_i\in G$.  We then set $\widetilde{\sigma}(v) := \sum_i g_i.\sigma(v_i)$, so that $\widetilde{\sigma} = \kappa_\pi\circ(\sigma\otimes 1)\circ\kappa_\tau^{-1}$.  The paragraph above implies that this map is well-defined, and it is clearly $G$-equivariant.  Hence, our original short exact sequence splits.  
\end{proof}

The result above appears to be compatible (in a weak sense) with Serre weight conjectures.  We demonstrate an instance of this with an example.  Take $F = \qp, n = 3$, so that $G = \textnormal{GL}_3(\qp)$, and set $K := \textnormal{GL}_3(\zp)$.  Suppose that $\overline{\rho}:\textnormal{Gal}(\qpb/\qp) \longrightarrow \bB(\fpb) \subset \textnormal{GL}_3(\fpb)$ is a sufficiently generic, maximally nonsplit mod-$p$ Galois representation.  The conjectures of Breuil--Herzig--Schraen predict the existence of a smooth admissible finite-length $G$-representation $\Pi(\overline{\rho})$, such that $\Pi(\overline{\rho})$ contains as subquotients two nonsplit extensions of a supersingular representation by a principal series representation, and such that the representations appearing in the $K$-socle of $\Pi(\overline{\rho})^{\textnormal{ss}}$ are given by the set of Serre weights $\textnormal{W}^?(\overline{\rho}^{\textnormal{ss}})$ (see \cite[Definition 9.2.5]{geeherzigsavitt} for the definition of $\textnormal{W}^?$).

Suppose $\overline{\rho}$ satisfies
$$\overline{\rho} \cong \begin{pmatrix} \omega^a\textnormal{nr}_{\mu_1} & *_1 & * \\ 0 & \omega^b\textnormal{nr}_{\mu_2} & *_2 \\ 0 & 0 & \omega^c\textnormal{nr}_{\mu_3}\end{pmatrix},$$
with $*_1, *_2$ nonsplit extensions, and $2\leq a - b, b - c, a - c \leq p - 3$.  Here $\omega$ denotes the mod-$p$ cyclotomic character, and $\textnormal{nr}_\mu$ is the unramified character taking the value $\mu\in \fpb^\times$ on geometric Frobenius elements.  Let $\tau$ denote the (unique) supersingular representation appearing in $\Pi(\overline{\rho})^{\textnormal{ss}}$.  Conjecturally, we should have two nonsplit extensions 
$$0\longrightarrow \ind_B^G(\chi_1) \longrightarrow \pi_1 \longrightarrow \tau  \longrightarrow 0,$$
$$0\longrightarrow \ind_B^G(\chi_2) \longrightarrow \pi_2 \longrightarrow \tau  \longrightarrow 0,$$
where $\chi_1|_{T_0} = \overline{(c, a - 1, b - 2)}$ (resp. $\chi_2|_{T_0} = \overline{(b,c - 1,a - 2)}$) and 
$$\chi_1\left(\begin{pmatrix}p^r & 0 & 0 \\ 0 & p^s & 0 \\ 0 & 0 & p^t\end{pmatrix}\right) = \mu_3^r\mu_1^s\mu_2^t\qquad (\textnormal{resp.}~ \chi_2\left(\begin{pmatrix}p^r & 0 & 0 \\ 0 & p^s & 0 \\ 0 & 0 & p^t\end{pmatrix}\right) = \mu_2^r\mu_3^s\mu_1^t).$$  
Here $\overline{(a',b',c')}$ denotes the smooth character of $T_0$ obtained by composing the algebraic character $(a',b',c')\in X^*(\bT) \cong \bbZ^3$ with the reduction modulo $p$ (as in Subsubsection \ref{miscchar}).  Proposition \ref{neccond} above then implies that $\tau^{I_1}$ must have a quotient isomorphic to the 3-dimensional module $\fm_{\alpha_0,0,1}$ (resp. $\fm_{\alpha_0,0,2}$) defined as the supersingular part of $\textnormal{H}^1(I_1,\ind_B^G(\chi_1))$ (resp. $\textnormal{H}^1(I_1,\ind_B^G(\chi_2))$).  The element $\T_{\left(\begin{smallmatrix}x & 0 & 0 \\ 0 & y & 0 \\ 0 & 0 & z\end{smallmatrix}\right)}$, for $x,y,z\in \zp^\times$, acts on these supersingular quotients by 
$$\overline{x}^{1 - a}\overline{y}^{1 - c}\overline{z}^{1 - b},~\overline{x}^{1 - c}\overline{y}^{1 - b}\overline{z}^{1 - a},~\textnormal{and}~\overline{x}^{1 - b}\overline{y}^{1 - a}\overline{z}^{1 - c}.$$

On the other hand, by the Serre weight conjectures of Herzig and Gee--Herzig--Savitt (\cite[Conjecture 6.9]{herzig:serreconjduke} and \cite[Section 1 and Conjecture 7.2.7]{geeherzigsavitt}) the $K$-socle of $\tau$ should satisfy 
$$\textnormal{soc}_K(\tau) ~ \cong ~ F(a + p - 2, c + p - 2, b - 1)~ \oplus~ F(c + 2p - 3, b + p - 2, a - 1) ~\oplus~ F(b + p - 2, a - 1, c - 1),$$
where $F(a',b',c')$ is the algebraic representation of $\textnormal{GL}_3(\fpb)$ of highest weight $(a',b',c')\in X^*(\bT) \cong \bbZ^3$, viewed as a representation of $K$ via $K \longtwoheadrightarrow \textnormal{GL}_3(\bbF_p) \longhookrightarrow \textnormal{GL}_3(\fpb)$.  The action of $\T_{\left(\begin{smallmatrix}x & 0 & 0 \\ 0 & y & 0 \\ 0 & 0 & z\end{smallmatrix}\right)}$, for $x,y,z\in \zp^\times$, on the 3-dimensional space $\soc_K(\tau)^{I_1}$ is given by
$$\overline{x}^{1 - a}\overline{y}^{1 - c}\overline{z}^{1 - b},~\overline{x}^{1 - c}\overline{y}^{1 - b}\overline{z}^{1 - a},~\textnormal{and}~\overline{x}^{1 - b}\overline{y}^{1 - a}\overline{z}^{1 - c},$$
which exactly matches the above.  (In fact, we can upgrade this to an isomorphism $\textnormal{soc}_K(\tau)^{I_1}\cong \fm_{\alpha_0,0,1}$ (resp. $\textnormal{soc}_K(\tau)^{I_1}\cong \fm_{\alpha_0,0,2}$) of modules over the ``finite Hecke algebra'' $\cH_K := \End_K(\cind_{I_1}^K(C))$.)

\bigskip

\appendix

\section{Calculation of parabolic induction}\label{app}

We now carry out the proof of Proposition \ref{gr1} by computing the action of $\cH$ on the space $\fn_{F,\beta,r}\otimes_{\cH_{M_\beta}^+}\cH$.  In order to do this, we will use Vign\'eras' alcove walk Bernstein basis (\cite[Chapter 5]{vigneras:hecke1}).  We first recall the relevant facts.

\subsection{Preparation}

\subsubsection{} For $w\in \tW$ we let $E(w) := E_{o_{-\Pi}}(w)$ denote the alcove walk basis associated to the spherical orientation $o_{-\Pi}$.  We let $E^{M_\beta}(w)$ denote the analogously defined alcove walk basis of $\cH_{M_\beta}$.  In several places below, we will need to relate the bases $\{E(w)\}_{w\in \tW}$ and $\{\T_w\}_{w\in \tW}$.  Note first that if $w\in W_0$, then \cite[Example 5.32]{vigneras:hecke1} implies
$$E(\widehat{w}) = \T_{\widehat{w}}.$$
Next, if $w\in W_0$ and $\omega\in \widetilde{\Omega}$, then
\begin{equation}\label{bernsteinomega}
E(\widehat{w}\omega) = \T_{\widehat{w}\omega} = \T_{\widehat{w}}\T_\omega = E(\widehat{w})\T_\omega
\end{equation}
(cf. \emph{op. cit.}, Corollary 5.27).  Finally, if $w\in W_0$ and $\fat{\alpha} = (-\alpha_0,1)$ (where $\alpha_0$ is the highest root of $\Phi$), then applying the definitions in Chapter 5 of \emph{op. cit.} gives
\begin{equation}\label{alcovewalk}
\T_{\widehat{w}}\T_{\salpha} = \T_{\widehat{w}\salpha} =  \begin{cases}E(\widehat{w}\salpha) + \sum_{x\in k_F^\times}E(\widehat{w}\alpha_0^\vee([x])) & \textnormal{if}~w(\alpha_0)\in \Phi^+,\\ E(\widehat{w}\salpha) & \textnormal{if}~w(\alpha_0)\in \Phi^-.\end{cases}
\end{equation}

\subsubsection{}  Now let $w\in W_0$, $\lambda\in T$, and write $w = uv$, where $u\in \langle s_\beta\rangle$ and $v\in {}^{\beta}W_0$ (we use the notation ${}^{\beta}W_0$ for ${}^{\bM_{\beta}}W_0$).  Choose $\lambda_\beta^+\in T$ such that $\lambda_\beta^+$ is in the center of $\tW_{\bM_\beta}$, and such that $\langle \gamma,\nu(\lambda_\beta^+)\rangle < 0$ for $\gamma\in \Phi^+\smallsetminus\{\beta\}$.  Let $n\geq 0$ be such that $(\lambda_\beta^+)^n\lambda$ is $M_\beta$-positive.  Then, for $v_i\in \fn_{F,\beta,r}$, we have
\begin{eqnarray}
 v_i\otimes E(\lambda \widehat{w}) & \stackrel{\textnormal{\cite[Lem. 2.6(2)]{abe:inductions}}}{=} & v_i\cdot E^{M_\beta}(\lambda_\beta^+)^{-n}\otimes E((\lambda_\beta^+)^n)E(\lambda\widehat{w})\notag\\
 & \stackrel{\textnormal{\cite[Thm. 5.25]{vigneras:hecke1}}}{=} & q_{(\lambda_\beta^+)^n,\lambda\widehat{w}}\left(v_i\cdot E^{M_\beta}(\lambda_\beta^+)^{-n}\otimes E((\lambda_\beta^+)^n\lambda\widehat{u}\widehat{v})\right)\notag\\
 & \stackrel{\sub{\textnormal{\cite[Lem. 2.18(2)]{abe:inductions}},}{\textnormal{\cite[Lem. 5.34]{vigneras:hecke1}}}}{=} & q_{(\lambda_\beta^+)^n,\lambda\widehat{w}}\left(v_i\cdot E^{M_\beta}(\lambda_\beta^+)^{-n}\otimes E((\lambda_\beta^+)^n\lambda\widehat{u})E(\widehat{v})\right)\notag\\
 & \stackrel{\textnormal{\cite[Lem. 2.6(2)]{abe:inductions}}}{=} & q_{(\lambda_\beta^+)^n,\lambda\widehat{w}}\left(v_i\cdot \left(E^{M_\beta}(\lambda_\beta^+)^{-n}E^{M_\beta}((\lambda_\beta^+)^n\lambda\widehat{u})\right)\otimes E(\widehat{v})\right)\notag\\
 & = & q_{(\lambda_\beta^+)^n,\lambda\widehat{w}}\left(v_i\cdot E^{M_\beta}(\lambda\widehat{u})\otimes E(\widehat{v})\right)\notag\\
 & \stackrel{\textnormal{\cite[Ex. 5.32]{vigneras:hecke1}}}{=} & q_{(\lambda_\beta^+)^n,\lambda\widehat{w}}\left(v_i\cdot E^{M_\beta}(\lambda\widehat{u})\otimes \T_{\widehat{v}}\right). \label{bernsteincomp}
\end{eqnarray}
Here, $q_{w,w'} := q^{(\boldl(w) + \boldl(w') - \boldl(ww'))/2}$.

\subsubsection{} We need one more combinatorial fact.  Fix $w,v\in W_0$ and $\beta\in \Phi$.  In the sequel, we will need to understand the discrepancy between the conjugation actions of $\widehat{w}\widehat{v}$ and $\widehat{wv}$ on $\bU_{v^{-1}w^{-1}(\beta)}$.  Noting that $\widehat{w}\widehat{v}\widehat{wv}^{-1}\in T$, we have
\begin{eqnarray*}
u_\beta\left(d_{v,v^{-1}w^{-1}(\beta)}d_{w,w^{-1}(\beta)}x\right) & = & \widehat{w} u_{w^{-1}(\beta)}\left(d_{v,v^{-1}w^{-1}(\beta)}x\right)\widehat{w}^{-1}\\
 & = & \widehat{w}\widehat{v} u_{v^{-1}w^{-1}(\beta)}\left(x\right)\widehat{v}^{-1}\widehat{w}^{-1}\\
 & = & (\widehat{w}\widehat{v}\widehat{wv}^{-1})\widehat{wv} u_{v^{-1}w^{-1}(\beta)}\left(x\right)\widehat{wv}^{-1}(\widehat{w}\widehat{v}\widehat{wv}^{-1})^{-1}\\
 & = & (\widehat{w}\widehat{v}\widehat{wv}^{-1}) u_{\beta}\left(d_{wv,v^{-1}w^{-1}(\beta)}x\right)(\widehat{w}\widehat{v}\widehat{wv}^{-1})^{-1}\\
 & = & u_{\beta}\left(\beta(\widehat{w}\widehat{v}\widehat{wv}^{-1})d_{wv,v^{-1}w^{-1}(\beta)}x\right).
\end{eqnarray*}
This gives the cocycle relation
\begin{equation}\label{strcstcocycle}
d_{v,v^{-1}w^{-1}(\beta)}d_{w,w^{-1}(\beta)} = \beta(\widehat{w}\widehat{v}\widehat{wv}^{-1})d_{wv,v^{-1}w^{-1}(\beta)}.
\end{equation}
Furthermore, if $\beta\in \Pi$ and $v^{-1}w^{-1}(\beta)\in \Phi^-$, then 
$$d_{wv,v^{-1}w^{-1}(\beta)} = d_{\beta,-\beta}d_{s_\beta wv,v^{-1}w^{-1}s_\beta(\beta)} = -d_{s_\beta wv,v^{-1}w^{-1}s_\beta(\beta)}$$ 
(\cite[Lemma 9.2.2(ii)]{springer}), and we get
\begin{equation}\label{strcstcocycle1.5}
d_{v,v^{-1}w^{-1}(\beta)}d_{w,w^{-1}(\beta)} = -\beta(\widehat{w}\widehat{v}\widehat{wv}^{-1})d_{s_\beta wv,v^{-1}w^{-1}s_\beta(\beta)}.
\end{equation}
Similarly, using $\fs_{-\alpha_0}$ instead of $\widehat{v}$, we have
\begin{equation}\label{strcstcocycle2}
d_{-\alpha_0,s_{\alpha_0}w^{-1}(\beta)}d_{w,w^{-1}(\beta)} = \beta(\widehat{w}\fs_{-\alpha_0}\widehat{ws_{\alpha_0}}^{-1})d_{ws_{\alpha_0},s_{\alpha_0}w^{-1}(\beta)}.
\end{equation}
Furthermore, if $\beta\in \Pi$ and $s_{\alpha_0}w^{-1}(\beta)\in \Phi^-$, then 
$$d_{ws_{\alpha_0},s_{\alpha_0}w^{-1}(\beta)} = d_{\beta,-\beta}d_{s_\beta ws_{\alpha_0},s_{\alpha_0}w^{-1}s_\beta(\beta)} = -d_{s_\beta ws_{\alpha_0},s_{\alpha_0}w^{-1}s_\beta(\beta)},$$ 
and we get
\begin{equation}\label{strcstcocycle2.5}
d_{-\alpha_0,s_{\alpha_0}w^{-1}(\beta)}d_{w,w^{-1}(\beta)} = -\beta(\widehat{w}\fs_{-\alpha_0}\widehat{ws_{\alpha_0}}^{-1})d_{s_\beta ws_{\alpha_0},s_{\alpha_0}w^{-1}s_\beta(\beta)}.
\end{equation}

\subsection{Proof} We may now begin the proof.  By \cite[Remark 4.1]{olliviervigneras}, the space $\fn_{F,\beta,r}\otimes_{\cH_{M_\beta}^+}\cH$ has a basis given by $\{v_1\otimes \T_{\widehat{w}}, v_2\otimes\T_{\widehat{w}}\}_{w\in {}^{\beta}W_0}$.  We define a $C$-linear isomorphism by
\begin{eqnarray*}
\ff: \fn_{F,\beta,r}\otimes_{\cH_{M_\beta}^+}\cH & \longrightarrow & \bigoplus_{v\in W_0}C\bar{\eta}^v_{v^{-1}(\beta),r}\\
v_1\otimes\T_{\widehat{w}} & \longmapsto & d_{w,w^{-1}(\beta)}\cdot \bar{\eta}^w_{w^{-1}(\beta),r}\\
v_2\otimes\T_{\widehat{w}} & \longmapsto & d_{w,w^{-1}(\beta)}\cdot \bar{\eta}^{s_\beta w}_{w^{-1}s_\beta(\beta),r}\\
\end{eqnarray*}
We will prove that $\ff$ is $\cH$-equivariant.  For brevity, we denote the element $\bar{\eta}^w_{w^{-1}(\beta),r}$ by $\bar{\eta}^w$.

\begin{enumerate}[$\bullet$]
\item  Suppose first that $\alpha\in \Pi$ and $w\in {}^{\beta}W_0$, so that $w^{-1}(\beta)\in \Phi^+$.  If $s_{\alpha}w^{-1}(\beta)\in \Phi^-$, then $w^{-1}(\beta) = \alpha$.  Thus
\begin{eqnarray*}
(v_i\otimes\T_{\widehat{w}})\cdot\T_{\widehat{s_\alpha}} & \stackrel{\eqref{bruhateqs}}{=} & \begin{cases} v_i\otimes\T_{\widehat{ws_\alpha}} & (\textnormal{I}) \\ v_i\otimes\T_{\widehat{ws_\alpha}} & (\textnormal{II}) \\  v_i\otimes\T_{\widehat{ws_\alpha}}\T_{\widehat{s_\alpha}}^2 & (\textnormal{III}) \end{cases}\\
 & = & \begin{cases}v_i\otimes\T_{\widehat{ws_\alpha}} & \\ (v_i\cdot\T_{\widehat{s_\beta}}^{M_\beta})\otimes\T_{\widehat{s_\beta ws_\alpha}} & \\ v_i\otimes\T_{\widehat{w}}c_{\alpha} &\end{cases}\\
 & = & \begin{cases}v_i\otimes\T_{\widehat{ws_\alpha}} & \\ (v_i\cdot\T_{\widehat{s_\beta}}^{M_\beta})\otimes\T_{\widehat{w}} & \\ -\delta_{(\chi\overline{\beta}^{-p^r})^{s_\beta^{i - 1}}\circ w(\alpha^\vee),1}v_i\otimes\T_{\widehat{w}} & \end{cases} 
\end{eqnarray*}
where $w(\alpha)\in \Phi^+\smallsetminus\{\beta\}$ in case $(\textnormal{I})$; $w(\alpha) = \beta$ in case $(\textnormal{II})$; and $w(\alpha)\in \Phi^-$ in case $(\textnormal{III})$.  This gives
\begin{eqnarray*}
\ff\left((v_1\otimes\T_{\widehat{w}})\cdot\T_{\widehat{s_\alpha}}\right) & = & \ff\left(\begin{cases}v_1\otimes\T_{\widehat{ws_\alpha}} & (\textnormal{I}) \\ 0 & (\textnormal{II}) \\ -\delta_{(\chi\overline{\beta}^{-p^r})\circ w(\alpha^\vee),1}v_1\otimes\T_{\widehat{w}} & (\textnormal{III}) \end{cases}\right)\\
 & = & \begin{cases}d_{ws_\alpha,s_\alpha w^{-1}(\beta)}\cdot\bar{\eta}^{ws_\alpha} &  \\ 0 & \\ -\delta_{(\chi\overline{\beta}^{-p^r})\circ w(\alpha^\vee),1}d_{w,w^{-1}(\beta)}\cdot\bar{\eta}^w & \end{cases}\\
 & \stackrel{\eqref{defofstrcst}}{=} & d_{w,w^{-1}(\beta)}\begin{cases}d_{\alpha,s_\alpha w^{-1}(\beta)}\cdot\bar{\eta}^{ws_\alpha} & \\ 0 & \\ -\delta_{(\chi\overline{\beta}^{-p^r})\circ w(\alpha^\vee),1}\cdot\bar{\eta}^w & \end{cases}\\
 & \stackrel{\textnormal{Lem.}~\ref{TsalphaGr}}{=} & (d_{w,w^{-1}(\beta)}\cdot\bar{\eta}^w)\cdot \T_{\widehat{s_\alpha}}\\
 & = & \ff\left(v_1\otimes\T_{\widehat{w}}\right)\cdot \T_{\widehat{s_\alpha}}\\
 & & \\
 & & \\
\ff\left((v_2\otimes\T_{\widehat{w}})\cdot\T_{\widehat{s_\alpha}}\right) & = & \ff\left(\begin{cases}v_2\otimes\T_{\widehat{ws_\alpha}} & (\textnormal{I}) \\  -\chi\circ\beta^\vee(-1)\cdot\delta_{F,\qp}\cdot v_1\otimes\T_{\widehat{w}} & (\textnormal{II}) \\  \qquad - \delta_{(\chi\overline{\beta}^{-p^r})\circ\beta^\vee,1}\cdot v_2\otimes\T_{\widehat{w}} & \\ -\delta_{(\chi^{s_\beta}\overline{\beta}^{p^r})\circ w(\alpha^\vee),1}v_2\otimes\T_{\widehat{w}} & (\textnormal{III})\end{cases}\right)\\
 & = & \begin{cases}d_{ws_{\alpha},s_\alpha w^{-1}(\beta)}\cdot\bar{\eta}^{s_\beta ws_\alpha} & \\  -\chi\circ\beta^\vee(-1)\delta_{F,\qp}d_{w,w^{-1}(\beta)}\cdot \bar{\eta}^w & \\ \qquad - \delta_{(\chi\overline{\beta}^{-p^r})\circ\beta^\vee,1}d_{w,w^{-1}(\beta)}\cdot \bar{\eta}^{s_\beta w} & \\ -\delta_{(\chi^{s_\beta}\overline{\beta}^{p^r})\circ w(\alpha^\vee),1}d_{w,w^{-1}(\beta)}\bar{\eta}^{s_\beta w} & \end{cases}\\
 & \stackrel{\eqref{defofstrcst}}{=} & d_{w,w^{-1}(\beta)}\begin{cases}d_{\alpha,s_\alpha w^{-1}(\beta)}\cdot\bar{\eta}^{s_\beta ws_\alpha} & \\  -\chi\circ\beta^\vee(-1)\delta_{F,\qp}\cdot \bar{\eta}^w  & \\ \qquad - \delta_{(\chi\overline{\beta}^{-p^r})\circ\beta^\vee,1}\cdot \bar{\eta}^{s_\beta w} & \\ -\delta_{(\chi^{s_\beta}\overline{\beta}^{p^r})\circ w(\alpha^\vee),1}\bar{\eta}^{s_\beta w} & \end{cases}\\
 & \stackrel{\textnormal{Lem.}~\ref{TsalphaGr}}{=} & (d_{w,w^{-1}(\beta)}\cdot\bar{\eta}^{s_\beta w})\cdot \T_{\widehat{s_\alpha}}\\
 & = & \ff\left(v_2\otimes \T_{\widehat{w}}\right)\cdot\T_{\widehat{s_\alpha}}.
\end{eqnarray*}

\item  We now take $\fat{\alpha} = (-\alpha_0,1)$ and $w\in {}^{\beta}W_0$, and examine the element $\widehat{w}\salpha = w(\alpha_0^\vee)(\varpi^{-1})\widehat{w}\fs_{-\alpha_0}$.  By a length computation (using \cite[Ch. VI, \S 1, Proposition 25(iv)]{bourbaki:lie}), we have
\begin{equation}\label{length}
q_{(\lambda_\beta^+)^n,\widehat{w}\salpha} = 0~\Longleftrightarrow~ w(\alpha_0)\in \Phi^+\smallsetminus\{\beta\}.
\end{equation}
Consider now the equation
$$s_{\alpha_0}w^{-1}(\beta) = w^{-1}(\beta) - \langle w^{-1}(\beta),\alpha_0^\vee\rangle\alpha_0.$$
If $w\in {}^{\beta}W_0$ and $w(\alpha_0)\in \Phi^-$, we have $w^{-1}(\beta)\in \Phi^+\smallsetminus\{\alpha_0\}$, and therefore $\langle w^{-1}(\beta),\alpha_0^\vee\rangle \in \{0,1\}$ by \cite[Ch. VI, \S 1, Proposition 25(iv)]{bourbaki:lie}.  Hence, by the equation above, 
\begin{equation}\label{affineroot}
\begin{gathered}
s_{\alpha_0}w^{-1}(\beta)\in \Phi^+~ \Longrightarrow~ \langle\beta,w(\alpha_0^\vee)\rangle = 0,\\
s_{\alpha_0}w^{-1}(\beta)\in \Phi^-~ \Longrightarrow~ \langle\beta,w(\alpha_0^\vee)\rangle = 1.
\end{gathered}
\end{equation}

Thus
\begin{eqnarray*}
(v_i\otimes\T_{\widehat{w}})\cdot \T_{\salpha} & \stackrel{\eqref{alcovewalk}, \eqref{bernsteincomp}, \eqref{length}}{=} & \begin{cases} \sum_{x\in k_F^\times} v_i\cdot \T_{w(\alpha_0^\vee)([x])}^{M_\beta}\otimes\T_{\widehat{w}} & (\textnormal{I})\\ 
v_i\cdot E^{M_\beta}(\widehat{w}\salpha\widehat{w}^{-1})\otimes \T_{\widehat{w}} & (\textnormal{II})\\ 
\qquad\qquad  + \sum_{x\in k_F^\times} v_i\cdot \T_{\beta^\vee([x])}^{M_\beta}\otimes\T_{\widehat{w}} & \\ 
v_i\cdot E^{M_\beta}(\widehat{w}\salpha\widehat{ws_{\alpha_0}}^{-1})\otimes \T_{\widehat{ws_{\alpha_0}}} & (\textnormal{III}) \\ 
v_i\cdot E^{M_\beta}(\widehat{w}\salpha\widehat{ws_{\alpha_0}}^{-1}\widehat{s_\beta})\otimes \T_{\widehat{s_{\beta}ws_{\alpha_0}}} & (\textnormal{IV}) \end{cases}\\
 & \stackrel{\sub{\textnormal{\cite[Ex. 5.33]{vigneras:hecke1},}}{\eqref{defofstrcst}, \eqref{conjlift}}}{=} & \begin{cases} -\delta_{(\chi\overline{\beta}^{-p^r})^{s_\beta^{i - 1}}\circ w(\alpha_0^\vee),1} v_i\otimes \T_{\widehat{w}} & \\ 
 v_i\cdot \T^{M_\beta,*}_{\widehat{s_{\beta^*}}}\T^{M_\beta}_{\beta^\vee(d_{w,-\alpha_0})}\otimes \T_{\widehat{w}} & \\ 
 \qquad\qquad  -\delta_{(\chi\overline{\beta}^{-p^r})\circ\beta^\vee,1} v_i\otimes \T_{\widehat{w}}& \\ 
 v_i\cdot \T^{M_\beta}_{\widehat{w}\salpha\widehat{ws_{\alpha_0}}^{-1}}\otimes \T_{\widehat{ws_{\alpha_0}}} & \\
  v_i\cdot \T^{M_\beta}_{\widehat{w}\salpha\widehat{ws_{\alpha_0}}^{-1}\widehat{s_\beta}}\otimes \T_{\widehat{s_{\beta}ws_{\alpha_0}}} & \end{cases}
\end{eqnarray*}
where $w(\alpha_0)\in \Phi^+\smallsetminus\{\beta\}$ in case $(\textnormal{I})$; $w(\alpha_0) = \beta$ in case $(\textnormal{II})$; $w(\alpha_0)\in \Phi^-$ and $s_{\alpha_0}w^{-1}(\beta)\in \Phi^+$ in case $(\textnormal{III})$; and $w(\alpha_0)\in \Phi^-$ and $s_{\alpha_0}w^{-1}(\beta)\in \Phi^-$ in case $(\textnormal{IV})$.  This gives
\begin{eqnarray*}
 \ff\left((v_1\otimes \T_{\widehat{w}})\cdot\T_{\salpha}\right) & \stackrel{\eqref{affineroot}}{=} & \ff\left(\begin{cases} -\delta_{(\chi\overline{\beta}^{-p^r})\circ w(\alpha_0^\vee),1} v_1\otimes \T_{\widehat{w}} &(\textnormal{I})\\
  -\chi\circ\beta^\vee(-d_{w,-\alpha_0} \varpi)\cdot \delta_{F,\qp}v_2 \otimes \T_{\widehat{w}}& (\textnormal{II})\\ 
  \qquad\qquad  -\delta_{(\chi\overline{\beta}^{-p^r})\circ\beta^\vee,1} v_1\otimes \T_{\widehat{w}}  & \\ 
  (\chi\overline{\beta}^{-p^r})\left(\widehat{ws_{\alpha_0}}\salpha^{-1}\widehat{w}^{-1}\right)\cdot v_1\otimes \T_{\widehat{ws_{\alpha_0}}} &(\textnormal{III}) \\
    -\chi\left(\widehat{ws_{\alpha_0}}\salpha^{-1}\widehat{w}^{-1}\right) & (\textnormal{IV}) \\
     \qquad \cdot \overline{\beta\left(\widehat{w}\fs_{-\alpha_0}\widehat{ws_{\alpha_0}}^{-1}\right)}^{p^r}\cdot v_2 \otimes \T_{\widehat{s_{\beta}ws_{\alpha_0}}} & \end{cases}\right)\\
 & = & \begin{cases} -\delta_{(\chi\overline{\beta}^{-p^r})\circ w(\alpha_0^\vee),1} d_{w,w^{-1}(\beta)}\bar{\eta}^w &\\
  -\chi\circ\beta^\vee(-d_{w,-\alpha_0} \varpi)\delta_{F,\qp}d_{w,w^{-1}(\beta)}\cdot \bar{\eta}^{s_\beta w} & \\ 
  \qquad\qquad  -\delta_{(\chi\overline{\beta}^{-p^r})\circ\beta^\vee,1}d_{w,w^{-1}(\beta)}\cdot \bar{\eta}^w  & \\
   (\chi\overline{\beta}^{-p^r})\left(\widehat{ws_{\alpha_0}}\salpha^{-1}\widehat{w}^{-1}\right)d_{ws_{\alpha_0},s_{\alpha_0}w^{-1}(\beta)}\cdot \bar{\eta}^{ws_{\alpha_0}} & \\ 
    -\chi\left(\widehat{ws_{\alpha_0}}\salpha^{-1}\widehat{w}^{-1}\right) &  \\ 
    \qquad \cdot \overline{\beta\left(\widehat{w}\fs_{-\alpha_0}\widehat{ws_{\alpha_0}}^{-1}\right)}^{p^r}d_{s_\beta ws_{\alpha_0},s_{\alpha_0}w^{-1}s_\beta(\beta)}\cdot \bar{\eta}^{ws_{\alpha_0}} & \end{cases}\\
 & \stackrel{\eqref{strcstcocycle2},\eqref{strcstcocycle2.5}}{=} & d_{w,w^{-1}(\beta)}\begin{cases} -\delta_{(\chi\overline{\beta}^{-p^r})\circ w(\alpha_0^\vee),1}\bar{\eta}^w & (\textnormal{I}) \\ 
 -\chi\circ\beta^\vee(-d_{w,-\alpha_0} \varpi)\delta_{F,\qp}\cdot \bar{\eta}^{s_\beta w}& (\textnormal{II}) \\ 
 \qquad\qquad  -\delta_{(\chi\overline{\beta}^{-p^r})\circ\beta^\vee,1}\cdot \bar{\eta}^w  & \\
  \chi\left(\widehat{ws_{\alpha_0}}\salpha^{-1}\widehat{w}^{-1}\right)d_{-\alpha_0,s_{\alpha_0}w^{-1}(\beta)}\cdot \bar{\eta}^{ws_{\alpha_0}} & (\textnormal{III}) + (\textnormal{IV}) \end{cases}\\
 & \stackrel{\textnormal{Lem.}~\ref{TsalphaGr}}{=} & (d_{w,w^{-1}(\beta)}\cdot \bar{\eta}^{w})\cdot \T_{\salpha}\\
 & = & \ff\left(v_1\otimes \T_{\widehat{w}}\right)\cdot\T_{\salpha}\\
 & & \\
 & & \\
 \ff\left((v_2\otimes \T_{\widehat{w}})\cdot\T_{\salpha}\right) & \stackrel{\eqref{affineroot}}{=} & \ff\left(\begin{cases} -\delta_{(\chi^{s_\beta}\overline{\beta}^{p^r})\circ w(\alpha_0^\vee),1} v_2\otimes \T_{\widehat{w}} & (\textnormal{I})\\ 
 0 &(\textnormal{II}) \\  
 (\chi^{s_\beta}\overline{\beta}^{p^r})\left(\widehat{ws_{\alpha_0}}\salpha^{-1}\widehat{w}^{-1}\right)\cdot v_2\otimes \T_{\widehat{ws_{\alpha_0}}} &(\textnormal{III}) \\ 
 -\chi\left(\widehat{s_\beta ws_{\alpha_0}}\salpha^{-1}\widehat{s_\beta w}^{-1}\right) & (\textnormal{IV}) \\
  \qquad \cdot \overline{\beta\left(\widehat{w}\fs_{-\alpha_0}\widehat{ws_{\alpha_0}}^{-1}\right)}^{-p^r}\cdot v_1\otimes \T_{\widehat{s_{\beta}ws_{\alpha_0}}} & \end{cases}\right)\\
& = & \begin{cases} -\delta_{(\chi^{s_\beta}\overline{\beta}^{p^r})\circ w(\alpha_0^\vee),1} d_{w,w^{-1}(\beta)}\cdot\bar{\eta}^{s_\beta w} & \\
 0 & \\ 
  (\chi^{s_\beta}\overline{\beta}^{p^r})\left(\widehat{ws_{\alpha_0}}\salpha^{-1}\widehat{w}^{-1}\right)d_{ws_{\alpha_0},s_{\alpha_0}w^{-1}(\beta)}\cdot \bar{\eta}^{s_\beta ws_{\alpha_0}} &  \\
   -\chi\left(\widehat{s_\beta ws_{\alpha_0}}\salpha^{-1}\widehat{s_\beta w}^{-1}\right) & \\ 
   \qquad \cdot \overline{\beta\left(\widehat{w}\fs_{-\alpha_0}\widehat{ws_{\alpha_0}}^{-1}\right)}^{-p^r}d_{s_\beta ws_{\alpha_0},s_{\alpha_0}w^{-1}s_\beta(\beta)}\cdot \bar{\eta}^{s_\beta ws_{\alpha_0}} & \end{cases}\\
& \stackrel{\eqref{bruhateqs}, \eqref{strcstcocycle2},\eqref{strcstcocycle2.5}}{=} & d_{w,w^{-1}(\beta)}\begin{cases} -\delta_{(\chi^{s_\beta}\overline{\beta}^{p^r})\circ w(\alpha_0^\vee),1} \cdot\bar{\eta}^{s_\beta w} & (\textnormal{I}) \\
 0 & (\textnormal{II})\\  
 \chi\left(\widehat{s_\beta ws_{\alpha_0}}\salpha^{-1}\widehat{s_\beta w}^{-1}\right) & (\textnormal{III}) + (\textnormal{IV})\\
  \qquad \cdot d_{-\alpha_0,s_{\alpha_0}w^{-1}s_\beta(\beta)}\cdot \bar{\eta}^{s_\beta ws_{\alpha_0}} & \end{cases}\\
& \stackrel{\textnormal{Lem.}~\ref{TsalphaGr}}{=} & (d_{w,w^{-1}(\beta)}\cdot \bar{\eta}^{s_\beta w})\cdot \T_{\salpha}\\
& = & \ff\left(v_w\otimes\T_{\widehat{w}}\right)\cdot \T_{\salpha}.
\end{eqnarray*}

\item  Suppose $t\in T_0$ or $t\in Z$.  Checking that the map $\ff$ is equivariant for the operators $\T_t$ is a straightforward exercise which follows directly from the definitions, and is left to the reader.

\item Finally, let $\omega\in \widetilde{\Omega}$.  Adjusting by an element of $T_0$, we may assume that $\omega\widehat{\bo}^{-1} = \lambda(\varpi)$ for some $\lambda\in X_*(\bT)$.  We may furthermore assume that $\omega\not\in Z$.  Therefore, by projecting $\omega$ to the adjoint group $\bG_{\textnormal{ad}}(F)$ and using the description of $\Omega_{\textnormal{ad}}$ found in \cite[Proposition 1.18]{iwahorimatsumoto}, we have that
$$\omega = \lambda_{\alpha}(\varpi)^{-1}\widehat{w_{(\alpha),\circ}w_\circ},$$
for some $\alpha\in \Pi$, where $w_{(\alpha),\circ}$ is the longest element of $W_{\Pi\smallsetminus\{\alpha\},0}$, and $\lambda_\alpha\in X_*(\bT)$ is a cocharacter satisfying $\langle\gamma,\lambda_\alpha\rangle = \delta_{\gamma,\alpha}$ for $\gamma\in \Pi$ and $\langle\alpha_0,\lambda_\alpha\rangle = 1$.  In particular, we have $\bo = w_{(\alpha),\circ}w_\circ$.

Take $w\in {}^{\beta}W_0$, so that $w^{-1}(\beta) \in \Phi^+$.  By expanding $w^{-1}(\beta)$ as a linear combination of simple roots and examining the action of $\bo$ on $\Pi\sqcup\{-\alpha_0\}$, we obtain
\begin{equation}\label{omegapairing}
\begin{gathered}
\bo^{-1}w^{-1}(\beta)\in \Phi^+ \Longleftrightarrow  \big\langle\beta,\nu(\widehat{w}\omega\widehat{w\bo}^{-1})\big\rangle =  \left\langle w^{-1}(\beta),\lambda_\alpha\right\rangle = 0,\\
\bo^{-1}w^{-1}(\beta)\in \Phi^- \Longleftrightarrow  \big\langle\beta,\nu(\widehat{w}\omega\widehat{w\bo}^{-1})\big\rangle =  \left\langle w^{-1}(\beta),\lambda_\alpha\right\rangle = 1.
\end{gathered}
\end{equation}

Now, we have
\begin{eqnarray*}
(v_i\otimes \T_{\widehat{w}})\cdot \T_{\omega} & = & v_i\otimes \T_{\widehat{w}\omega}\\
 & \stackrel{\eqref{bernsteinomega}}{=} & v_i\otimes E(\widehat{w}\omega)\\
 & \stackrel{\eqref{bernsteincomp}}{=} & \begin{cases} v_i\cdot E^{M_\beta}(\widehat{w}\omega\widehat{w\bo}^{-1})\otimes \T_{\widehat{w\bo}} & (\textnormal{I})\\ 
 v_i\cdot E^{M_\beta}(\widehat{w}\omega\widehat{w\bo}^{-1}\widehat{s_\beta})\otimes \T_{\widehat{s_\beta w\bo}} & (\textnormal{II}) \end{cases}\\
 & = & \begin{cases} v_i\cdot \T_{\widehat{w}\omega\widehat{w\bo}^{-1}}^{M_\beta}\otimes \T_{\widehat{w\bo}} & \\
  v_i\cdot \T_{\widehat{w}\omega\widehat{w\bo}^{-1}\widehat{s_\beta}}^{M_\beta}\otimes \T_{\widehat{s_\beta w\bo}} &  \end{cases}
\end{eqnarray*}
where $\bo^{-1}w^{-1}(\beta)\in \Phi^+$ in case $(\textnormal{I})$ and $\bo^{-1}w^{-1}(\beta)\in \Phi^-$ in case $(\textnormal{II})$.  The equality which is labeled by \eqref{bernsteincomp} uses the fact that the operator $\T_{\omega}$ is invertible (and therefore $q_{(\lambda_\beta^+)^n,\widehat{w}\omega} = 1$).  This gives
\begin{eqnarray*}
\ff\left((v_1\otimes \T_{\widehat{w}})\cdot\T_\omega\right) & \stackrel{\eqref{omegapairing}}{=} &  \ff\left(\begin{cases} \chi\left(\widehat{w\bo}\omega^{-1}\widehat{w}^{-1}\right)\overline{\beta\left(\widehat{w\bo}\omega^{-1}\widehat{w}^{-1}\right)}^{-p^r}\cdot v_1\otimes \T_{\widehat{w\bo}} & (\textnormal{I}) \\
  -\chi\left(\widehat{w\bo}\omega^{-1}\widehat{w}^{-1}\right)\overline{\beta\left(\widehat{w\bo}\bo^{-1}\widehat{w}^{-1}\right)}^{-p^r}\cdot v_{2}\otimes \T_{\widehat{s_\beta w\bo}} & (\textnormal{II}) \end{cases}\right)\\
 & = & \begin{cases} \chi\left(\widehat{w\bo}\omega^{-1}\widehat{w}^{-1}\right)\overline{\beta\left(\widehat{w\bo}\omega^{-1}\widehat{w}^{-1}\right)}^{-p^r}d_{w\bo,\bo^{-1}w^{-1}(\beta)}\cdot \bar{\eta}^{w\bo} & \\ -\chi\left(\widehat{w\bo}\omega^{-1}\widehat{w}^{-1}\right)\overline{\beta\left(\widehat{w\bo}\bo^{-1}\widehat{w}^{-1}\right)}^{-p^r}d_{s_\beta w\bo,\bo^{-1}w^{-1}s_\beta(\beta)}\cdot \bar{\eta}^{w\bo} &  \end{cases}\\
 & \stackrel{\eqref{strcstcocycle},\eqref{strcstcocycle1.5}}{=} & d_{\bo,\bo^{-1}w^{-1}(\beta)}d_{w,w^{-1}(\beta)}\chi\left(\widehat{w\bo}\omega^{-1}\widehat{w}^{-1}\right)\cdot \bar{\eta}^{w\bo}\\
 & \stackrel{\textnormal{Lem.}~\ref{TomegaGr}}{=} & (d_{w,w^{-1}(\beta)}\cdot \bar{\eta}^w)\cdot \T_{\omega}\\
 & = & \ff\left(v_1\otimes\T_{\widehat{w}}\right)\cdot\T_{\omega}\\
 & & \\
 & & \\
 \ff\left((v_2\otimes\T_{\widehat{w}})\cdot\T_{\omega}\right) & \stackrel{\eqref{omegapairing}}{=} & \ff\left(\begin{cases} \chi^{s_\beta}\left(\widehat{w\bo}\omega^{-1}\widehat{w}^{-1}\right)\overline{\beta\left(\widehat{w\bo}\omega^{-1}\widehat{w}^{-1}\right)}^{p^r}\cdot v_2\otimes \T_{\widehat{w\bo}} & (\textnormal{I}) \\
  -\chi^{s_\beta}\left(\widehat{w\bo}\omega^{-1}\widehat{w}^{-1}\beta^\vee(-1)\right)\overline{\beta\left(\widehat{w\bo}\bo^{-1}\widehat{w}^{-1}\right)}^{p^r}\cdot v_1\otimes \T_{\widehat{s_\beta w\bo}} & (\textnormal{II})\end{cases}\right)\\
 & \stackrel{\eqref{bruhateqs}}{=} &  \begin{cases} \chi\left(\widehat{s_\beta w\bo}\omega^{-1}\widehat{s_\beta w}^{-1}\right)\overline{\beta\left(\widehat{w\bo}\omega^{-1}\widehat{w}^{-1}\right)}^{p^r}d_{w\bo, \bo^{-1}w^{-1}(\beta)}\cdot \bar{\eta}^{s_\beta w\bo} & \\
  -\chi\left(\widehat{s_\beta w\bo}\omega^{-1}\widehat{s_\beta w}^{-1}\right)\overline{\beta\left(\widehat{w\bo}\bo^{-1}\widehat{w}^{-1}\right)}^{p^r}d_{s_\beta w\bo, \bo^{-1}w^{-1}s_\beta(\beta)}\cdot \bar{\eta}^{s_\beta w\bo} &  \end{cases}\\
 & \stackrel{\eqref{strcstcocycle},\eqref{strcstcocycle1.5}}{=} & d_{\bo,\bo^{-1}w^{-1}(\beta)}d_{w,w^{-1}(\beta)}\chi\left(\widehat{s_\beta w\bo}\omega^{-1}\widehat{s_\beta w}^{-1}\right)\cdot \bar{\eta}^{s_\beta w\bo}\\
 & \stackrel{\textnormal{Lem.}~\ref{TomegaGr}}{=} & (d_{w,w^{-1}(\beta)}\cdot \bar{\eta}^{s_\beta w})\cdot \T_{\omega}\\
 & = & \ff\left(v_2\otimes \T_{\widehat{w}}\right)\cdot \T_{\omega}.
\end{eqnarray*}

\end{enumerate}

\bibliographystyle{amsalpha}
\bibliography{refs}

\end{document}